\newcommand{\vardbtilde}[1]{\tilde{\raisebox{0pt}[0.85\height]{$\tilde{#1}$}}}
\newlength{\dhatheight}
\newcommand{\doublehat}[1]{%
    \settoheight{\dhatheight}{\ensuremath{\hat{#1}}}%
    \addtolength{\dhatheight}{-0.35ex}%
    \hat{\vphantom{\rule{1pt}{\dhatheight}}%
    \smash{\hat{#1}}}}
\newcommand{\triplehat}[1]{%
    \settoheight{\dhatheight}{\ensuremath{\doublehat{#1}}}%
    \addtolength{\dhatheight}{-0.35ex}%
    \hat{\vphantom{\rule{1pt}{\dhatheight}}%
    \smash{\doublehat{#1}}}}
\newtheorem{theorem}{Theorem}[section]
\newtheorem{corollary}[theorem]{Corollary}
\newtheorem{lemma}[theorem]{Lemma}
\newtheorem{definition}[theorem]{Definition}
\newtheorem{proposition}[theorem]{Proposition}
\newtheorem{remark}[theorem]{Remark}
\newtheorem*{theorem*}{Theorem}
\title{Hilbert transforms and maximal functions along flat curves on the Heisenberg group}
\author{Lingxiao Zhang}
\begin{document}
\maketitle

\begin{abstract}
We establish the $L^p$ boundedness of Hilbert transforms and maximal functions along flat curves in the Heisenberg group. This generalizes the $\mathbb{R}^n$  result by Carbery, Christ, Vance, Wainger, and Watson. What is new about our result compared to the Heisenberg group generalization by Carbery, Wainger, and Wright is that we allow all three components of the curves to vary independently, we keep the original form of the conditions required in the $\mathbb{R}^n$ case, and our method is likely to be generalized to other stratified nilpotent groups. 
\end{abstract}


\section{Introduction}
Consider the singular Radon transform 
$$
Rf(x)= \psi(x) \int f(\gamma_t(x))\,K(t)\,dt,
$$
where $\psi(x)\in C_c^\infty(\mathbb{R}^n)$, $K(t)$ is a product kernel on $\mathbb{R}^N$ with compact support, and $\gamma_t(x)$ is a $C^\infty$ function mapping from a neighborhood of $(t,x)=(0,0)\in \mathbb{R}^N\times \mathbb{R}^n$ to $\mathbb{R}^n$ with $\gamma_0(x)\equiv x$. (The product kernel with only one parameter is the Calder\'on-Zygmund kernel. See the definition of the product kernel in Definition 2.1.1 of Nagel, Ricci, and Stein \cite{NRS01}.) We are interested in the case where $\gamma_t(x)$ is not real-analytic. In the case where $\gamma_t(x)$ is real-analytic, Stein, Street, and Zhang \cite{L2,LP,ANALYTIC,ME} proves a necessary and sufficient condition on $\gamma_t(x)$ such that the operator $R$ is bounded on $L^p(\mathbb{R}^n) (1<p<\infty)$, for every $\psi(x)$ with sufficiently small support and every $K(t)$ with sufficiently small support. 

In the case where $\gamma_t(x)$ is not real-analytic, the same condition is no longer necessary. In particular, when $K(t)$ is a Calder\'on-Zygmund kernel on $\mathbb{R}$, the Stein-Street condition says if we write the Taylor expansion of
$$
W(t,x):=\frac{d}{d\epsilon}\Big|_{\epsilon=1} \gamma_{\epsilon t} \circ \gamma_t^{-1}(x) \sim \sum_{n=0}^\infty t^n X_n(x),
$$
finitely many members of $\{\text{the vector fields } X_n \text{ and their iterated commutators}\}$ should span $W(t,x)$, with certain control on the coefficients (see Definition 4.2 of \cite{LP}). However, the following operator
$$
Lf(x)=\int_{-\frac{1}{3}}^{\frac{1}{3}} f(x_1-t, x_2-e^{-\frac{1}{|t|}}\,\text{sgn}\,t)K(t)\,dt,
$$
where $K(t)$ is a Calder\'on-Zygmund kernel on $\mathbb{R}$, does not satisfy the Stein-Street condition, but is still bounded on $L^p(\mathbb{R}^2)$ for every $1<p<\infty$ and every Calder\'on-Zygmund kernel $K(t)$ on $\mathbb{R}$; this is implied by the proof of Carbery, Vance, Wainger, Watson, and Wright \cite{withoutfourier}.

In the non-analytic case, characterizing which singular Radon transforms are bounded on $L^p$ is open. For example, it is an open conjecture by Stein that 
$$
Sf(x)=\int_{-\sigma}^\sigma f\big(x-tV(x)\big)\,\frac{dt}{t}
$$
is bounded on $L^2$ for some $\sigma>0$, as long as the vector field $V(x)$ is Lipschitz. If $V(x)$ is real analytic, by Bourgain \cite{bourgain}, there exists $\sigma>0$ such that $S$ is bounded on $L^p$ for every $1<p<\infty$; this is also implied by Street and Stein \cite{ANALYTIC}. Thus one only needs to deal with the non-analytic case. Considerable effort and progress have been dedicated to this problem, including Bateman and Thiele \cite{THIELE} and Lacey and Li \cite{LACEYLI10}. See \cite{LACEYLI10} for a history. Nonetheless, the validity of the conjecture is still unknown even if we assume $V(x)$ is $C^\infty$. Additional techniques might be needed to address non-analytic singular Radon transforms.

A breakthrough in the non-analytic singular Radon transforms appears in Theorems \ref{R2} and \ref{RN} below due to Carbery, Christ, Vance, Wainger, and Watson. They study the following operators on which we will focus in the paper:
\begin{equation}\label{H}
H_\Gamma f(x) = \int_{-1}^1 f\big(x-\Gamma(t)\big)\,\frac{dt}{t},
\end{equation}
\begin{equation}\label{M}
M_\Gamma f(x)= \sup_{0<\sigma\leq 1} \frac{1}{\sigma} \int_0^\sigma \big|f\big(x-\Gamma(t)\big)\big|\,dt,
\end{equation}
where $\Gamma: [-1,1]\to \mathbb{R}^n$ is a continuous odd curve. Thus $\Gamma(0)=0$. Assume $\Gamma(t)=\big(\alpha_1(t), \alpha_2(t), \ldots, \alpha_n(t)\big)$ with $\alpha_1(t)=t$ and $\Gamma'(0)=(1,0,\ldots, 0)$. This may be viewed as a normalization for curves with $|\Gamma'(0)|\neq0$. We call such operators (local) Hilbert transform and (local) maximal function along the flat odd curve $\Gamma(t)$. 

Note the upper and lower limits of the integral in (\ref{H}) are not essential and can be replaced by any positive number and its opposite number, respectively. Similarly, for (\ref{M}), the supremum can be taken over the interval from $0$ to any positive number. 

Carbery, Christ, Vance, Wainger, and Watson establish an $L^p$ boundedness result in the $\mathbb{R}^2$ case:

\begin{theorem}[\cite{CARBERYCHRIST}]\label{R2}
Suppose $\Gamma(t)=\big(t,\alpha(t)\big)$ is a continuous odd curve in $\mathbb{R}^2$ with $\alpha'(0)=0$, and is $C^2$ for $t\in (0,\infty)$. 
Denote
$$
h(t)=t\alpha'(t)-\alpha(t).
$$
If $\alpha''(t)>0$ for $t>0$, and
$$
h'(t)\gtrsim \frac{h(t)}{t}, \quad \text{for } t>0,
$$
then $H_\Gamma$ is bounded on $L^p(\mathbb{R}^2)$ for $1<p< \infty$, and $M_\Gamma$ is bounded on $L^p(\mathbb{R}^2)$ for $1<p\leq \infty$. 
\end{theorem}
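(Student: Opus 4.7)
The plan is to prove both statements by a Littlewood–Paley decomposition adapted to the doubling forced by the hypothesis $h'(t) \gtrsim h(t)/t$. Since $h'(t) = t\alpha''(t)$, this hypothesis says $(\log h)'(t) \gtrsim 1/t$, which integrates to a genuine doubling $h(2t) \geq 2^c h(t)$ for some $c>0$. I would pick $t_k \downarrow 0$ with $h(t_k)$ geometric (say $h(t_k) = 2^{-k}$), and decompose
$$
H_\Gamma f = \sum_k H_k f, \qquad H_k f(x) = \int_{t_{k+1} \leq |t| \leq t_k} f\bigl(x - \Gamma(t)\bigr)\, \frac{dt}{t},
$$
with the oddness of $\Gamma$ keeping each piece mean-zero, and analogously decompose the supremum defining $M_\Gamma$ into pieces $M_k$ on annular scales $|t| \in [t_{k+1}, t_k]$.

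Next I would rescale each piece by the anisotropic dilation $D_k(y_1,y_2) = (t_k y_1, h(t_k) y_2)$, combined with a linear shear in $y_2$ that subtracts off the tangent direction $\alpha'(t_k) y_1$. Writing out the change of variables, one finds that the rescaled curve is $s \mapsto (s,\beta_k(s))$ on a fixed interval $[c_0,1]$, with
$$
\beta_k''(s) \;=\; \frac{t_k^2\,\alpha''(t_k(1+s))}{h(t_k)} \;\sim\; 1
$$
uniformly in $k$ — this uniform lower bound is exactly the content of the hypothesis $t_k^2\alpha''(t_k) = t_k h'(t_k) \gtrsim h(t_k)$. Hence every rescaled piece $\widetilde H_k$ is a Hilbert transform along a convex curve of unit-sized curvature, and a standard van der Corput / stationary-phase analysis together with the cancellation of $ds/s$ gives the uniform multiplier bound $|\widetilde m_k(\xi)| \lesssim \min(1, |\xi|^{-\epsilon})$ for some $\epsilon>0$. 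For the maximal side, each rescaled piece $\widetilde M_k$ is a maximal average along a nondegenerate convex curve and is bounded on $L^p$ with uniform constants by Nagel–Stein–Wainger.

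To sum the pieces I would build a Littlewood–Paley partition adapted to the dilations: projections $P_k$ onto the annulus $\{|D_k^*\xi| \sim 1\}$. The multiplier decay yields geometric decay of $\|H_k(I - P_k)\|_{L^2 \to L^2}$ in $|k-k'|$, so the off-diagonal mass is summable trivially, while the diagonal $\sum_k H_k P_k$ is controlled on $L^2$ by Plancherel plus almost-orthogonality of the $P_k$. The passage from $L^2$ to $L^p$, $1<p<\infty$, I would obtain by the vector-valued Fefferman–Stein inequality applied to the square function $\bigl(\sum_k |H_k P_k f|^2\bigr)^{1/2}$, together with bounds on the pointwise control $\sup_k |M_k f|$ by the rescaled maximal bounds from the previous step. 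The main obstacle is the almost-orthogonality: because the curves are flat rather than of finite type, one cannot rely on polynomial Fourier decay of the surface measure and must exploit the cancellation of $dt/t$ in concert with the quantitative convexity $\beta_k'' \sim 1$; moreover the dilations $D_k$ are anisotropic with $k$-dependent aspect ratio $t_k / h(t_k)$, so verifying that a single frequency decomposition genuinely diagonalizes the family $\{H_k\}$ is the delicate point.
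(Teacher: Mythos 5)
Your route is the Fourier-transform strategy of \cite{CARBERYCHRIST} (anisotropic rescaling, van der Corput, a Littlewood--Paley decomposition adapted to the scaling maps), which is a legitimate choice --- indeed it is essentially the original proof --- but it is genuinely different from the paper's own method, which is the $TT^*$ route of \cite{withoutfourier} (see Remark~\ref{after}: the Section~4 analysis specializes to $\mathbb{R}^n$, avoids the Fourier transform, and works instead by proving $L^1$-regularity of the pushforward densities $\kappa,\hat\kappa,\chi,\hat\chi$). Either approach can in principle close the argument, but your sketch has a genuine gap in exactly the spot you flag as ``delicate.''

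Two concrete problems. First, the hypothesis $h'(t)\gtrsim h(t)/t$ gives only a one-sided bound on the rescaled curvature: $t^2\alpha''(t)/h(t)=th'(t)/h(t)\gtrsim 1$, so $\beta_k''\gtrsim 1$ but \emph{not} $\beta_k''\sim 1$. For $\alpha(t)=e^{-1/t}$ one has $th'(t)/h(t)\sim 1/t\to\infty$, and then neither a uniform upper bound on the rescaled geometry nor the claimed ``uniform constants by Nagel--Stein--Wainger'' for $\widetilde M_k$ is available; the paper instead derives the bound on $\mathcal{M}_\Gamma$ from the $\mathcal{H}_\Gamma$ analysis by the bootstrap of Section~11 of \cite{CWW}. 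Second, and more fundamentally, the single family $\{P_k\}$ of annular projections onto $\{|D_k^*\xi|\sim 1\}$ is precisely the structure the paper says does \emph{not} suffice: the introduction records (citing the remark following Eq.~(3.14) of \cite{CARBERYCHRIST}) that ``one single doubling metric space does not suffice to achieve Theorems~\ref{R2} and \ref{RN},'' and that one must instead build a \emph{sequence of interrelated} scaling maps and metrics. Because your $D_k$ have $k$-varying aspect ratio $t_k/h(t_k)$, the sets $\{|D_k^*\xi|\sim 1\}$ need not tile $\mathbb{R}^2$ with bounded overlap, so ``Plancherel plus almost-orthogonality of the $P_k$'' and the vector-valued Fefferman--Stein step for $\bigl(\sum_k|H_kP_kf|^2\bigr)^{1/2}$ are exactly the points that require the Calder\'on--Zygmund and Littlewood--Paley machinery of Sections~2--3 (adapted from \cite{CARBERYCHRIST,notion}); your proposal assumes this rather than supplies it.
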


Note $\alpha(t)''>0$ for $t>0$ implies that $h(t)>0$ and $h'(t)=t\alpha''(t)>0$ for $t>0$.

Later, Carbery, Vance, Wainger, and Watson prove the $\mathbb{R}^n$ result:

\begin{theorem}[\cite{VANCE}]\label{RN}
Suppose $\Gamma(t)=\big(t,\alpha_2(t), \ldots, \alpha_n(t)\big)$ is a continuous odd curve in $\mathbb{R}^n$ with $\Gamma'(0)=(1,0,\ldots, 0)$, and is $C^n$ for $t\in(0,\infty)$.  Denote
$$
N_j(t)=\det
\begin{pmatrix}
t & \alpha_2(t) & \cdots & \alpha_j(t)\\
1 & \alpha_2'(t) & \cdots & \alpha_j'(t)\\
\vdots & \vdots & \vdots & \vdots\\
0 & \alpha_2^{(j-1)}(t) & \cdots & \alpha_j^{(j-1)}(t)
\end{pmatrix},
\quad 1\leq j\leq n,
$$
$$
D_j(t) = \det
\begin{pmatrix}
1 & \alpha_2'(t) & \cdots & \alpha_j'(t)\\
0 & \alpha_2''(t) & \cdots & \alpha_j''(t)\\
\vdots & \vdots & \vdots & \vdots\\
0 & \alpha_2^{(j)}(t) & \cdots & \alpha_j^{(j)}(t)
\end{pmatrix},
\quad 1\leq j \leq n,
$$
and
$$
h_j(t) = \frac{N_j(t)}{D_{j-1}(t)}, \quad 2\leq j \leq n.
$$
If 
\begin{equation}\label{positived}
D_j(t)>0, \quad 1\leq j \leq n, \quad t>0,   
\end{equation}
and
$$
h_j'(t) \gtrsim \frac{h_j(t)}{t}, \quad 2\leq j \leq n, \quad t>0,
$$
then $H_\Gamma$ is bounded on $L^p(\mathbb{R}^n)$ for $1<p<\infty$, and $M_\Gamma$ is bounded on $L^p(\mathbb{R}^n)$ for $1<p\leq \infty$.
\end{theorem}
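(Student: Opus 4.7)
The plan is to follow the Fourier-analytic strategy of Carbery--Vance--Wainger--Watson, lifting the proof of Theorem~\ref{R2} to $\mathbb{R}^n$ with the osculating-flag quantities $N_j$, $D_j$, $h_j$ playing the role of $h$. I would first dyadically decompose $H_\Gamma = \sum_{k \geq 0} H_k$, where the kernel of $H_k$ is supported on $|t| \asymp 2^{-k}$. The positivity hypothesis $D_j(t) > 0$ on $(0,\infty)$ makes the matrix $\bigl(\alpha_i^{(j)}(t)\bigr)$ non-singular, so on the frequency side one can apply a $t$-dependent lower-triangular change of variables $\xi \mapsto \eta$ adapted to the osculating flag in which the coefficient of $(t - 2^{-k})^j$ in the Taylor expansion of $\xi \cdot \Gamma(t)$ around $t = 2^{-k}$ equals, up to constants, $\eta_j \, h_j(2^{-k})$; this is exactly how the ratios $N_j/D_{j-1}$ enter, and the simultaneous positivity of all $D_j$ guarantees the phase polynomial is non-degenerate in every variable.

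The key pointwise estimate for the Fourier multiplier
\[
m_k(\xi) \;=\; \int e^{-2\pi i\,\xi\cdot\Gamma(t)}\,\psi(2^k t)\,\frac{dt}{t}
\]
combines a cancellation bound $|m_k(\xi)| \lesssim \sum_{j} |h_j(2^{-k})\eta_j|$, obtained by writing $m_k(\xi)$ as the integral of $e^{-2\pi i\xi\cdot\Gamma(t)} - 1$ against the odd kernel $dt/t$, with a van der Corput decay bound $|m_k(\xi)| \lesssim \bigl(\max_j |h_j(2^{-k})\eta_j|\bigr)^{-1/n}$ coming from the non-degeneracy of the Taylor polynomial. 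The growth hypothesis $h_j'(t) \gtrsim h_j(t)/t$ integrates to $h_j(2^{-k})/h_j(2^{-k-1}) \geq 2^{c}$ for some $c > 0$ independent of $k$, so each sequence $\{h_j(2^{-k})\}_k$ behaves geometrically, which is the essential lacunarity input.

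Summation is then executed by almost-orthogonality: partition frequency space into multi-dyadic shells $\{|h_j(2^{-k})\eta_j| \asymp 2^{\ell_j}\}_{j=2}^n$, apply Plancherel, and run a Cotlar--Stein argument in $k$ using the two multiplier bounds above to obtain $\|H_\Gamma\|_{L^2 \to L^2} \lesssim 1$. The range $1 < p < \infty$ then follows by a standard vector-valued / square-function extension. The maximal operator $M_\Gamma$ is handled in the same dyadic framework, with the trivial $L^\infty$ bound replacing cancellation and with a $g$-function reducing $L^2$ boundedness to essentially the same multiplier estimate, followed by interpolation against $L^\infty$.

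The main obstacle is the multi-directional flatness of $\Gamma$. Because there is no polynomial lower bound on the $h_j$, the $\eta$-supports of the $m_k$ overlap badly and naive Littlewood-Paley fails; one must use all $n - 1$ lacunarity conditions $h_j' \gtrsim h_j/t$ \emph{simultaneously} to build a single multi-parameter decomposition compatible with every $h_j$ at once. Verifying the almost-orthogonality with constants summable against the van der Corput gain $\bigl(\max_j |h_j(2^{-k})\eta_j|\bigr)^{-1/n}$---where the exponent $1/n$ is just barely enough to beat the combined lacunarity over $n-1$ directions, and where the geometric growth rate $2^c$ from the hypothesis is precisely what makes the sum over $k$ converge---is where the technical work concentrates.
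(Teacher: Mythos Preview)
Theorem~\ref{RN} is not proved in this paper; it is quoted from \cite{VANCE} as background, so there is no proof here to compare your proposal against. Your sketch follows the Fourier-analytic strategy of the original source \cite{VANCE} (multiplier bounds via cancellation and van der Corput, lacunarity from $h_j' \gtrsim h_j/t$, almost-orthogonality summation), which is the correct historical approach for this statement.

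It is worth noting, though, that the present paper develops a \emph{different} route to the same class of results: a $TT^*$ method with a sequence of Carnot--Carath\'eodory-type metrics and an associated Littlewood--Paley theory, avoiding the Fourier transform entirely (following \cite{withoutfourier}). In Remark~\ref{after} the author explains how this machinery specializes back to the $\mathbb{R}^n$ setting of Theorem~\ref{RN}, with the Jacobian $J = \det D\Phi$ playing the role your multiplier estimates play. The Fourier approach you outline is cleaner and more direct for $\mathbb{R}^n$, but does not transfer to the Heisenberg group or other non-commutative settings, which is precisely why the paper builds the $TT^*$ apparatus instead.
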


By Lemmas 1 and 2 in Nagel, Vance, Wainger, and Weinberg \cite{NVWW}, (\ref{positived}) implies $h_j>0$ and $h_j'>0$ for $t>0$.

When $n=2$, $h_2(t)$ and $D_2(t)$ coincide with $h(t)$ and $\alpha''(t)$ in Theorem \ref{R2}, respectively. Thus Theorem \ref{RN} coincide with Theorem \ref{R2} in the $n=2$ case. When $n>2$, for $t>0$, in place of $\alpha''(t)>0$, the convexity condition becomes $D_j(t)>0$ for each $j$. 

Singular integrals in a doubling metric space (i.e., a space of homogeneous type in the sense of Coifman and Weiss) are often used to help bound the singular Radon transforms, as in Christ, Nagel, Stein, and Wainger \cite{CNSW} and in Street and Stein \cite{LP}. However, one single doubling metric space does not suffice to achieve Theorems \ref{R2} and \ref{RN} (see the remark following Equation (3.14) in \cite{CARBERYCHRIST}). Instead, Carbery, Christ, Vance, Wainger, and Watson \cite{CARBERYCHRIST} design a sequence of scaling maps and accordingly a sequence of interrelated doubling metrics. See also Carbery, Vance, Wainger, and Wright \cite{notion} for this variation of the notion of a space of homogeneous type.

Unfortunately, there are only a few generalizations using these powerful techniques. The Heisenberg group case is usually the first step toward generalizing to cases that are not translation-invariant. Kim \cite{KIM05} generalized the result to operators along curves $\big(t,\alpha(t), 0\big)$ in the Heisenberg group, using the group Fourier transform. To expedite future generalizations, Carbery, Vance, Wainger, Watson, and Wright \cite{withoutfourier} give a proof of Theorem \ref{RN} avoiding the Fourier transform and using the $TT^*$ method. A substantial progress has been made by Carbery, Wainger, and Wright \cite{CWW}, using the $TT^*$ method. They generalized the result to operators along curves $\big(t,\alpha(t), |t|\alpha(t)\big)$ in the Heisenberg group, under a different condition; see Theorem \ref{CWW} below. 

In the Heisenberg group $\mathbb{H}^1$, (\ref{H}) and (\ref{M}) becomes
\begin{equation}\label{hilbert}
\mathcal{H}_\Gamma f(x)=\int_{-1}^1 f\big(x\cdot \Gamma(t)^{-1}\big) \,\frac{dt}{t},
\end{equation}
\begin{equation}\label{maximal}
\mathcal{M}_\Gamma f(x)=\sup_{0<\sigma\leq 1} \frac{1}{\sigma} \int_0^\sigma \big|f\big(x\cdot \Gamma(t)^{-1}\big)\big|\,dt,
\end{equation}
where the group multiplication and inverse are defined by
$$
x\cdot y= (x_1,x_2,x_3)\cdot (y_1, y_2, y_3)=\big(x_1+y_1, x_2+y_2, x_3+y_3+\frac{1}{2}(x_1y_2-x_2y_1)\big),
$$
and
$$
x^{-1}=(x_1, x_2, x_3)^{-1}=(-x_1, -x_2, -x_3).
$$

\begin{theorem}[Carbery, Wainger, and Wright \cite{CWW}]\label{CWW}
Suppose $\Gamma(t)=\big(t, \alpha(t), |t|\alpha(t)\big)$ is a $C^3$ odd curve in $\mathbb{H}^1$ with $\alpha'(0)=0$. Denote
$$
\lambda(t) =\frac{t\alpha''(t)}{\alpha'(t)}.
$$
If $\alpha''(t)>0$ for $t>0$, and $\lambda$ is decreasing and positive for $t>0$, such that
$$
\lim_{t\downarrow 0} \frac{t\lambda'(t)}{\lambda(t)^2} \log^+\lambda(t) =0,
$$
then $\mathcal{H}_\Gamma$ is bounded on $L^p(\mathbb{H}^1)$ for $1<p<\infty$, and $\mathcal{M}_\Gamma$ is bounded on $L^p(\mathbb{H}^1)$ for $1<p\leq \infty$.
\end{theorem}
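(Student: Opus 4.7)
The plan is to establish $L^2(\mathbb{H}^1)$ boundedness of $\mathcal{H}_\Gamma$ by a dyadic decomposition $\mathcal{H}_\Gamma = \sum_{k\geq 0} H_k$, with $H_k$ supported on $|t|\sim 2^{-k}$, and then apply Cotlar-Stein almost orthogonality. Rather than invoke the group Fourier transform on $\mathbb{H}^1$ (which obscures the curvature hypotheses on $\alpha$), I would follow the Fourier-free $TT^*$ approach of Carbery, Vance, Wainger, Watson, and Wright \cite{withoutfourier}: bound $\|H_j H_k^*\|_{L^2\to L^2}\lesssim 2^{-\epsilon|j-k|}$ for some $\epsilon>0$ by kernel estimates on iterated $TT^*$ compositions. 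Once the $L^2$ bound is secured, $L^p$ for $1<p<\infty$ follows from a Littlewood-Paley decomposition adapted to the Heisenberg dilations. For the maximal function $\mathcal{M}_\Gamma$, I would dominate the supremum by an $\ell^2$ square function over dyadic scales, apply the same almost-orthogonality estimates, and interpolate with the trivial $L^\infty$ bound to cover the range $1<p\leq\infty$.

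The Heisenberg-specific input is the product
$$
\Gamma(t)^{-1}\cdot \Gamma(s) = \Big(s-t,\ \alpha(s)-\alpha(t),\ |s|\alpha(s)-|t|\alpha(t) + \tfrac{1}{2}\big(s\alpha(t) - t\alpha(s)\big)\Big),
$$
so that the third coordinate mixes the intrinsic contribution $|s|\alpha(s)-|t|\alpha(t)$ with the Heisenberg commutator $\tfrac{1}{2}(s\alpha(t)-t\alpha(s))$. Because $\alpha$ is not homogeneous, no single dilation of $\mathbb{H}^1$ normalizes all pieces $H_k$ at once. Instead, I would introduce for each $k$ a curve-adapted automorphism of $\mathbb{H}^1$ that rescales the $x_2$-coordinate by $\alpha(2^{-k})^{-1}$ and the $x_3$-coordinate by $(2^{-k}\alpha(2^{-k}))^{-1}$, so that $\Gamma$ on $|t|\sim 2^{-k}$ is mapped to a unit-size model curve whose profile is governed by $\lambda(2^{-k})$. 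This is the Heisenberg analogue of the sequence of interrelated doubling metrics in \cite{CARBERYCHRIST}. Under this rescaling, $H_j H_k^*$ becomes a composition of two model operators whose kernels overlap only in a $\lambda$-sensitive way when $|j-k|$ is small, producing the desired geometric off-diagonal decay.

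The hardest step is the $L^\infty$ kernel bound for $(H_k H_k^*)^n$ underlying the $TT^*$ iteration. The $2n$-fold integral has a Jacobian expressible through iterated divided differences of $\alpha$, $\alpha'$, and $\alpha''$; positivity and monotonicity of $\lambda$ guarantee its nondegeneracy, but the magnitude of this Jacobian worsens as $\lambda(2^{-k})\to\infty$. To compensate, one must take $n$ of order $\log\lambda(2^{-k})$, and each iteration contributes a multiplicative error controlled by $\frac{t\lambda'(t)}{\lambda(t)^2}$ evaluated at $t\sim 2^{-k}$. The hypothesis
$$
\lim_{t\downarrow 0}\frac{t\lambda'(t)}{\lambda(t)^2}\log^+\lambda(t) = 0
$$
is precisely what makes the accumulated error, of size $\frac{t\lambda'(t)}{\lambda(t)^2}\cdot\log\lambda(t)$, tend to $0$, so that the Cotlar-Stein sum converges. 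The principal obstacle is therefore bookkeeping: separating the regimes $\lambda\lesssim 1$ (treated by van der Corput and standard Euclidean techniques) and $\lambda\gg 1$ (where the iterated $TT^*$ must be carried out to logarithmic depth), and then verifying that the constants in both regimes match uniformly across dyadic scales.
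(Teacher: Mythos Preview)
This theorem is not proved in the present paper. It is quoted from Carbery, Wainger, and Wright \cite{CWW} as background in the introduction, with no proof supplied here; the paper's own contribution is Theorem~\ref{1}, which treats curves $\Gamma(t)=(t,\alpha(t),\beta(t))$ with three independent components under a different set of hypotheses. So there is no proof in this paper to compare your proposal against.

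That said, your sketch is broadly consistent with what the paper reports about the method of \cite{CWW}: the paper explicitly says that \cite{CWW} uses the $TT^*$ method (not the group Fourier transform), and your description of curve-adapted rescalings at each dyadic scale matches the paper's remark that a single doubling metric does not suffice and that one needs a sequence of interrelated metrics in the style of \cite{CARBERYCHRIST}. Your account of why the hypothesis $\lim_{t\downarrow 0}\frac{t\lambda'(t)}{\lambda(t)^2}\log^+\lambda(t)=0$ enters---as the condition ensuring that an error accumulated over $\sim\log\lambda$ iterations of $TT^*$ stays bounded---is the standard heuristic for that assumption. Whether the details you indicate (Jacobian via divided differences, splitting into $\lambda\lesssim 1$ and $\lambda\gg 1$ regimes) match \cite{CWW} exactly cannot be adjudicated from this paper alone; you would need to consult \cite{CWW} directly.
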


The following is the main result of the paper. We allow a third independent component in $\mathbb{H}^1$, and keep the basic form of the conditions in Theorems \ref{R2} and \ref{RN} so as to make the most of their techniques.

\begin{theorem}\label{1}
Suppose $\Gamma(t)=\big(t, \alpha(t), \beta(t)\big)$ is a continuous odd curve in $\mathbb{H}^1$ with $\alpha'(0)=0$, and is $C^3$ for $t \in(0,\infty)$. Denote
$$
N_1(t)=t, \quad N_2(t)=t\alpha'(t)-\alpha(t),\quad D_1(t)=1, \quad D_2(t)=\alpha''(t), \quad h_2(t)=\frac{N_2(t)}{D_1(t)},
$$
$$
\bar \beta(t)=\beta(t)-\frac{1}{2}\int_0^t h_2, \quad \bar N_3(t) = \det 
\begin{pmatrix}
t & \alpha(t) & \bar \beta(t)\\
1 & \alpha'(t) & \bar \beta'(t)\\
0 & \alpha''(t) & \bar \beta''(t)
\end{pmatrix},
\quad \bar h_3(t)= \frac{\bar N_3(t)}{D_2(t)},
$$
and
\begin{align*}
\bar D_3(t) &=\alpha''(t)\bar \beta'''(t)-\alpha'''(t)\bar \beta''(t),\\
\hat D_3(t)&= \alpha''(t)\big(\bar \beta'''(t)+2\alpha''(t)\big)-\alpha'''(t)\big(\bar \beta''(t)+\alpha'(t)\big),\\
\doublehat D_3(t)&= \alpha''(t)\big(\bar \beta'''(t)-\alpha''(t)\big)-\alpha'''(t)\big(\bar \beta''(t)-\alpha'(t)\big),\\
\triplehat D_3(t) &=\alpha''(t)\big(\bar \beta'''(t)-\alpha''(t)\big)-\alpha'''(t)\Big(\bar \beta''(t)-\alpha'(t)+\frac{\alpha(t)}{t}\Big).    
\end{align*}
If $D_2(t), \bar D_3(t), \hat D_3(t), \doublehat D_3(t), \triplehat D_3(t)>0$ for $t>0$, and 
\begin{equation}\label{infinitesimal}
h_2'(t) \gtrsim \frac{h_2(t)}{t}, \quad \bar h_3'(t) \gtrsim \frac{\bar h_3(t)}{t}, \quad \bar h_3(t) \gtrsim \int_0^t h_2, \quad \text{for } t>0,
\end{equation}
then $\mathcal{H}_\Gamma$ is bounded on $L^p(\mathbb{H}^1)$ for $1<p<\infty$, and $\mathcal{M}_\Gamma$ is bounded on $L^p(\mathbb{H}^1)$ for $1<p\leq \infty$.
\end{theorem}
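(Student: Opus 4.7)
The plan is to adapt the dyadic-decomposition / rescaling scheme of Carbery, Christ, Vance, Wainger, and Watson to the Heisenberg group, in the spirit of the treatment of $\Gamma(t)=(t,\alpha(t),|t|\alpha(t))$ by Carbery, Wainger, and Wright. The structure is: (i) dyadic decomposition in $t$, (ii) rescaling each dyadic piece by a group automorphism of $\mathbb{H}^1$ adapted to the scale, (iii) uniform $L^p$ and oscillatory bounds on the rescaled model operators via $TT^*$, and (iv) almost-orthogonal summation in a sequence of interrelated doubling quasimetrics on $\mathbb{H}^1$, as in \cite{notion}.

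First I would pick a smooth partition of unity $\{\eta(2^k t)\}_{k\ge 0}$ on $(0,1]$ and write $\mathcal{H}_\Gamma=\sum_{k\ge 0}(H_k^++H_k^-)$. The oddness of $\Gamma$ lets me work on $t>0$, and the dyadic pieces of $\mathcal{M}_\Gamma$ fit the same framework. At scale $t_k=2^{-k}$ introduce the group automorphism
\[
\Phi_k(y_1,y_2,y_3)=\bigl(t_k y_1,\; h_2(t_k)\,y_2,\; t_k h_2(t_k)\,y_3\bigr).
\]
The constraint that the $y_3$-scale equal the product of the $y_1$- and $y_2$-scales is forced by the Heisenberg law. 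Because $h_2'\gtrsim h_2/t$ gives $\int_0^t h_2\sim t\, h_2(t)$, the third inequality in (\ref{infinitesimal}) says $\bar h_3(t)\gtrsim t\,h_2(t)$, which is precisely what makes $\Phi_k$ a faithful change of variables for the curve's $x_3$-behavior and matches the group-induced scale in the third coordinate. The substitution $x=\Phi_k(y)$ turns $H_k$ into
\[
\widetilde H_k g(y)=\int g\bigl(y\cdot\Gamma_k(s)^{-1}\bigr)\,\eta(s)\,\frac{ds}{s},\qquad \Gamma_k(s)=\Phi_k^{-1}\bigl(\Gamma(t_k s)\bigr),
\]
with $\Gamma_k$ ranging over a compact family of curves in $\mathbb{H}^1$; the positivity of $D_2$ and of $\bar D_3,\hat D_3,\doublehat D_3,\triplehat D_3$, together with the two doubling conditions in (\ref{infinitesimal}), pin down the non-degeneracy of this family uniformly in $k$.

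Next I would prove uniform bounds for $\widetilde H_k$ and the off-diagonal decay needed to sum the $H_k$, by the $TT^*$ method as in \cite{withoutfourier} and \cite{CWW}. The kernel of $\widetilde H_k\widetilde H_j^*$ is governed by the two-parameter map $(s,s')\mapsto \Gamma_k(s)^{-1}\cdot\Gamma_j(s')$; van der Corput / stationary phase on this map yields the required estimate once one identifies the correct non-vanishing Jacobian/Hessian. Here $\bar D_3$ is the torsion of $\Gamma$ after the \emph{group-compatible} displacement $\tfrac12\int_0^t h_2$ is peeled off from $\beta$ (this is exactly the role of $\bar\beta$); the three variants $\hat D_3$, $\doublehat D_3$, $\triplehat D_3$ arise as Jacobian determinants of the curves produced when the Heisenberg multiplication mixes $x_1$ and $x_2$ contributions — once in the on-diagonal $TT^*$ kernel, once in its off-diagonal analogue coming from pairing distinct scales, and once in the auxiliary curve associated with the maximal operator. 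Their simultaneous positivity supplies the four non-vanishing Hessians that the four required van der Corput bounds demand. Almost-orthogonality $\|H_j H_k^*+H_j^*H_k\|_{L^2\to L^2}\lesssim 2^{-\delta|j-k|}$ then follows by organising the estimates with the sequence of interrelated doubling quasimetrics $\rho_k$ on $\mathbb{H}^1$, each adapted to $\Phi_k$.

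The main obstacle I expect is steps (iii)--(iv): establishing uniform, scale-invariant oscillatory bounds for the $TT^*$ kernels in the presence of the non-abelian law and extracting geometric off-diagonal decay. The fact that four distinct determinants $\bar D_3,\hat D_3,\doublehat D_3,\triplehat D_3$ must all be positive — rather than a single convexity-type hypothesis as in Theorem \ref{RN} — strongly suggests that each governs one of these four oscillatory-integral sub-estimates, and the delicate technical core of the proof is to correctly match each determinant to the stationary-phase computation it enables, while keeping the constants uniform across $k$.
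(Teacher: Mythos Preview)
Your high-level architecture (dyadic decomposition, rescaling, $TT^*$, almost-orthogonality in a family of doubling quasimetrics) matches the paper, but two of your concrete choices would block the argument.

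First, your rescaling by the Heisenberg automorphism $\Phi_k(y)=(t_ky_1,\,h_2(t_k)y_2,\,t_kh_2(t_k)y_3)$ does not normalize the curve. The third component of $\Gamma(t_ks)$ is of size $\bar h_3(t_k)$, and the hypothesis $\bar h_3(t)\gtrsim\int_0^t h_2\sim t\,h_2(t)$ gives only a \emph{lower} bound; $\bar h_3(t)/(t\,h_2(t))$ can be arbitrarily large, so after your $\Phi_k^{-1}$ the rescaled curves $\Gamma_k$ do not lie in a compact family. The paper does not rescale by group automorphisms at all: it uses lower-triangular linear maps $\widetilde A_j$ whose $(3,3)$ entry is $2^{-j}h_2(2^{-j})+\bar h_3(2^{-j})$ (Definition~\ref{matrix}). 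These are not group homomorphisms, but they satisfy a twisted commutator identity $[X_j,Y_j]=2\xi_jT_j$ with $\xi_j\le\tfrac12$ (equation~(\ref{xi})), which is enough to run a Calder\'on--Zygmund theory for the associated balls (Section~\ref{section2}) and a Littlewood--Paley theory built from auxiliary operators $\mathcal{D}_j$ (Section~\ref{littlewood}). The almost-orthogonality proved is $\|\mathcal{H}_j\mathcal{D}_k^*\|_{2\to2}+\|\mathcal{H}_j^*\mathcal{D}_k\|_{2\to2}\lesssim 2^{-\epsilon|j-k|}$ (Theorem~\ref{main}), not $\|H_jH_k^*\|$.

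Second, a two-parameter $TT^*$ does not suffice in a three-dimensional group: the map $(s,s')\mapsto\Gamma_k(s)^{-1}\cdot\Gamma_j(s')$ lands in $\mathbb{R}^3$, so its pushforward of Lebesgue measure is singular and there is no van der Corput gain. The paper instead analyzes $\mathcal{D}_k^*\mathcal{H}_j\mathcal{H}_j^*\mathcal{H}_j\mathcal{H}_j^*\mathcal{D}_k$, producing a $3$-to-$3$ map $\Phi:(t,s,u)\mapsto z$ (equation~(\ref{zw})) whose pushforward measure has an $L^1$ density with quantitative H\"older-type regularity (Proposition~\ref{density}). There is no stationary phase; the argument is purely measure-theoretic, and the hard step is estimating the Jacobian $J=\det D\Phi$ from below in an integrated sense (Proposition~\ref{good}). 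The four determinants $\bar D_3,\hat D_3,\doublehat D_3,\triplehat D_3$ do not correspond to four separate oscillatory estimates: they simultaneously guarantee the monotonicity of four ratios $(C_3^-)''/C_2''$, $(C_3^+)''/C_2''$, $\hat C_3''/C_2''$, $\doublehat C_3''/C_2''$ (Lemma~\ref{standardmonotone}), which is the hypothesis needed to apply a single sublevel-set lemma (Lemma~\ref{mj}) uniformly in the scale $j$.
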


Again, by Lemmas 1 and 2 in Nagel, Vance, Wainger, and Weinberg \cite{NVWW}, $D_2, \bar D_3>0$ for $t>0$ implies $h_2, \bar h_3>0$ and $h_2', \bar h_3'>0$ for $t>0$. 

The theorem still holds if the kernel $\frac{1}{t}$ in (\ref{hilbert}) is replaced by any Calder\'on-Zygmund kernel on $\mathbb{R}$; see Remark \ref{decompose}.

A closely related work using different techniques is by Kim \cite{KIM02}, which studies the $\big(t, \alpha(t), |t|\alpha(t)\big)$ case in the Heisenberg group, under the condition that there exists $C>1$ such that $\alpha'(Ct) \geq 2\alpha'(t)$ for $t>0$. It continues in the line of Carlsson, Christ, Cordoba, Duoandikoetxea, Rubio de Francia, Vance, Wainger, and Weinberg \cite{doublingtime}.

\begin{remark}
Even in the $\big(t,\alpha(t), |t|\alpha(t)\big)$ case in $\mathbb{H}^1$, our theorem covers certain situations not covered in Kim \cite{KIM02} or Carbery, Wainger, and Wright \cite{CWW}. Consider the odd curve given for small positive $t$ by $\alpha(t)= \frac{t}{\log \frac{1}{t}}$, (initially constructed in \cite{CARBERYCHRIST} for the $\mathbb{R}^2$ case). It satisfies the condition in Theorem \ref{1}, and thus the Hilbert transform and maximal function along the curve are bounded on $L^p(\mathbb{H}^1)$ for every $1<p<\infty$. But such curve does not satisfy the condition of \cite{KIM02} or that of \cite{CWW}. See a detailed comparison of related works in history in Sections 1.1 and 1.3 of \cite{CARBERYCHRIST}.
\end{remark}

Our method is likely to be generalized to other stratified nilpotent groups. See Remarks \ref{before} and \ref{after} for what barrier has been eliminated between $\mathbb{R}^n$ cases and cases that are not translation-invariant.

It is worth noting that Theorem \ref{1} does not exhaust all the cases for the Heisenberg group that can utilize the techniques by Carbery, Christ, Vance, Wainger, and Watson. The condition in Theorem \ref{1} suggests that it is the case that ``$\beta(t)$ dominates $\alpha(t)$''. This paper uses this case to present a method that removes obstructions to generalizing the techniques to translation non-invariant cases. Verifying the cases ``$\alpha(t)$ dominates $\beta(t)$'' and ``$\alpha(t)$ and $\beta(t)$ do not dominate each other'', together with cases of other stratified nilpotent groups, is not the purpose of this paper.

In Section \ref{section2}, we set up a sequence of metrics that gives rise to a Calder\'on-Zygmund theory. Such Calder\'on-Zygmund theory leads to a Littlewood-Paley theory in Section \ref{littlewood}, which will be well-suited to address the $L^p$ boundedness of $\mathcal{H}_\Gamma$ and $\mathcal{M}_\Gamma$. Section \ref{section4} is the central part of this paper, where we use the $TT^*$ method. The goal of Section \ref{section4} is to prove Theorem \ref{main}. At the beginning of Section \ref{section4}, we reduce the matter to proving the regularity of certain measures (Proposition \ref{density}). In Subsection \ref{4.1}, we reduce Proposition \ref{density} to an estimation of a Jacobian (Proposition \ref{good}) and the regularity of the Jacobian and a partition of the unity functions (Proposition \ref{estimateofderivative}). We prove Proposition \ref{good} in Subsection \ref{jacobian} and prove Proposition \ref{estimateofderivative} in Subsection \ref{unity}. To maintain the coherence of the proof in Section \ref{section4} and to avoid repeating similar arguments, we omit several cases which we discuss in Subsection \ref{4.4}. Having shown Theorem \ref{main}, our conclusion (Theorem \ref{1}) follows as in Section \ref{finalsection}.

\section{Calder\'on-Zygmund theory}\label{section2}
In this section, we will determine a sequence of metrics $\{\rho_l(x,y)\}_{l\in \mathbb{Z}}$ in the Heisenberg group. A similar argument as in Section 2 of \cite{CARBERYCHRIST} will then establish a Calder\'on-Zygmund theory with respect to these metrics; we will include the proof to convince the reader that an adaptation of the argument in Section 2 of \cite{CARBERYCHRIST} to our metrics in the Heisenberg group remains valid. Such Calder\'on-Zygmund theory will give rise to a corresponding Littlewood-Paley theory in Section \ref{littlewood}, which will be well-suited to address the $L^p$ boundedness of $\mathcal{H}_\Gamma$ and $\mathcal{M}_\Gamma$. 

Identify the Heisenberg group $\mathbb{H}^1$ with $\mathbb{R}^3$ using coordinates $(x_1, x_2, x_3)$, where the group multiplication is
$$
x\cdot y=(x_1, x_2, x_3)\cdot (y_1, y_2, y_3) = \big(x_1+y_1, x_2 + y_2, x_3+y_3 +\frac{1}{2}(x_1y_2-x_2y_1)\big).
$$
A basis of left-invariant vector fields can be taken as
$$
X=\partial_{x_1}-\frac{1}{2}x_2\partial_{x_3}, \quad Y=\partial_{x_2} + \frac{1}{2}x_1\partial_{x_3}, \quad T=\partial_{x_3},
$$
which satisfies
$$
[X, Y]=T, \quad [X,T]=[Y,T]=0.
$$
We have
$$
x=(x_1, x_2, x_3)=e^{x_1X+x_2Y+x_3T}0,
$$
and
$$
x\cdot y=e^{x_1X+x_2Y+x_3T}e^{y_1X+y_2Y+y_3T}0 =x\cdot e^{y_1X+y_2Y+y_3T}0.
$$

\begin{definition}\label{matrix}
Denote $\bar h_\beta(t) = t\bar \beta'(t)-\bar \beta(t)$.
For each $t>0$, define $3\times 3$ matrices 
$$
A(t) = 
\begin{pmatrix}
t & 0 & 0\\
\alpha(t) & h_2(t) & 0\\
\bar \beta(t) & \bar h_\beta(t) & \bar h_3(t)
\end{pmatrix},
$$
and
$$
\widetilde A(t)=
\begin{pmatrix}
t & 0 & 0\\
\alpha(t) & h_2(t) & 0\\
\bar \beta(t) & \bar h_\beta(t) & th_2(t) +\bar h_3(t)
\end{pmatrix}.
$$
\end{definition}

We have
$$
A(t)^{-1}=
\begin{pmatrix}
\frac{1}{t} & 0 & 0\\
-\frac{\alpha(t)}{th_2(t)} & \frac{1}{h_2(t)} & 0\\
\frac{\alpha(t)\bar h_\beta(t)/h_2(t)-\bar \beta(t)}{t\bar h_3(t)}& -\frac{\bar h_\beta(t)}{h_2(t)\bar h_3(t)} & \frac{1}{\bar h_3(t)}
\end{pmatrix},
$$
and
$$
\widetilde A(t)^{-1}=
\begin{pmatrix}
\frac{1}{t} & 0 & 0\\
-\frac{\alpha(t)}{th_2(t)} & \frac{1}{h_2(t)} & 0\\
\frac{\alpha(t)\bar h_\beta(t)/h_2(t)-\bar \beta(t)}{t(th_2(t)+\bar h_3(t))}& -\frac{\bar h_\beta(t)}{h_2(t)(th_2(t)+\bar h_3(t))} & \frac{1}{th_2(t)+\bar h_3(t)}
\end{pmatrix}.
$$
By Proposition 4.2 in \cite{VANCE}, there exist  $\epsilon>0$ and $C_0\geq 1$ such that
\begin{equation}\label{frobenius}
\|A(s)^{-1} A(t)\|\leq C_0 \big(\frac{t}{s}\big)^\epsilon, \quad \text{for } 0<t\leq s,    
\end{equation}
where $\|\cdot\|$ denotes the Frobenius norm of the matrix, i.e., $\|(a_{ij})_{i,j}\|=(\sum_{i,j}a_{ij}^2)^{1/2}$. In particular,
\begin{equation}\label{h2doubling}
\frac{h_2(t)}{h_2(s)} \leq C_0 \big(\frac{t}{s}\big)^\epsilon, \quad \text{ for } 0<t\leq s.
\end{equation}
By enlarging $C_0\geq 1$ in (\ref{frobenius}) if necessary, we have
\begin{equation}\label{rivere}
\|\widetilde A(s)^{-1} \widetilde A(t)\|\leq C_0 \big(\frac{t}{s}\big)^\epsilon, \quad \text{for } 0<t\leq s.  
\end{equation}

\begin{definition}
Associated with each $t>0$, define a basis of left-invariant vector fields on $\mathbb{H}^1$:
$$
X_{t} = tX + \alpha(t) Y +\bar \beta(t)T, \quad Y_{t} = h_2(t) Y + \bar h_\beta(t)T, \quad T_{t} = \big(th_2(t) + \bar h_3(t)\big)T.
$$
Denote
$$
\xi_{t}= \frac{1}{2} \frac{th_2(t)}{th_2(t)+\bar h_3(t)} \leq \frac{1}{2}.
$$
\end{definition}

We have
\begin{equation}\label{commutator}
[X_{t}, Y_{t}]=2\xi_{t}T_{t}.
\end{equation}

\begin{definition}
Associated with each $t>0$, we define the ball with center $x$ and radius $r>0$ as
\begin{align*}
B_{t}(x,r)&=\big\{x\cdot e^{z_1X_{t}+z_2Y_{t}+z_3T_{t}}0:|z_1|,|z_2|<r, |z_3|<r^2\big\}\\
&=\big\{x\cdot \big(\widetilde A(t) z\big):z\in [-r,r]^2\times [-r^2, r^2]\big\},
\end{align*}
where $\widetilde A(t)z$ denotes the matrix $\widetilde A(t)$ multiply the vector $z$. In particular,
\begin{equation}\label{invariantball}
B_{t}(0,r)=x^{-1}\cdot B_{t}(x,r).
\end{equation}
\end{definition}

Recall that $h_2(t), \bar h_3(t)$ are positive increasing functions for $t>0$. Thus 
$$
th_2(t)\bar h_3(t)\to 0, \quad \text{ as }t\to 0, \qquad th_2(t)\bar h_3(t) \to \infty, \quad \text{ as } t \to \infty.
$$

\begin{definition}
Let $\{t_l\}_{l\in \mathbb{Z}}$ be a doubly infinite sequence convergent with $t_l\to 0$ as $l\to \infty$ and $t_l\to \infty$ as $l\to -\infty$, such that
\begin{equation}\label{doubling}
1\leq c_0:=\sup_{l\in \mathbb{N}} \frac{t_lh_2(t_l)\bar h_3(t_l)}{t_{l+1} h_2(t_{l+1}) \bar h_3(t_{l+1})} <\infty.
\end{equation}
This can be achieved by letting $\{t_l\}$ include all the numbers $\{2^{-j}: j\in \mathbb{Z}\}$, and also include finitely many more numbers between $2^{-j-1}$ and $2^{-j}$ if the function $th_2(t)\bar h_3(t)$ grows rapidly between $t=2^{-j-1}$ and $t=2^{-j}$. 
\end{definition}

Let $|\cdot|$ denote the Lebesgue measure of any set in $\mathbb{H}^1\cong \mathbb{R}^3$. We have
\begin{equation}\label{area}
\begin{aligned}
|B_{t}(x,r)|&=\int_{[-r,r]^2\times [-r^2, r^2]} \Big| \det \frac{\partial \big(x\cdot (\widetilde A(t)z)\big)}{\partial z}\Big|\,dz\\
&=\int_{[-r,r]^2\times [-r^2, r^2]} \Big| \det \frac{\partial \big(\widetilde A(t)z\big)}{\partial z}\Big|\,dz\\
&=\int_{[-r,r]^2\times [-r^2, r^2]} t h_2(t)\big(th_2(t)+\bar h_3(t)\big)\,dz\\
&=8r^4 t h_2(t)\big(th_2(t)+\bar h_3(t)\big).
\end{aligned}
\end{equation}
Therefore by (\ref{doubling}), for every $l\in \mathbb{Z}$,
\begin{equation}\label{ratio}
\frac{|B_{t_l}(x,r)|}{|B_{t_{l+1}}(x,r)|} =\frac{t_l h_2(t_l)\big(t_lh_2(t_l)+\bar h_3(t_l)\big)}{t_{l+1} h_2(t_{l+1})\big(t_{l+1}h_2(t_{l+1})+\bar h_3(t_{l+1})\big)} \leq c_0^2.
\end{equation}

For every $r\geq 1$ and every $t'\leq t$, if $y\in B_{t'}(x,r)$, we can write
$$
y=x\cdot \big(\widetilde A(t')z'\big), \quad \text{ for some } z'\in [-r,r]^2\times [-r^2, r^2].
$$
Thus
$$
y=x\cdot \big(\widetilde A(t)  \widetilde A(t)^{-1} \widetilde A(t') z'\big).
$$
Due to (\ref{rivere}), by denoting $z=(z_1, z_2, z_3):=\widetilde A(t)^{-1} \widetilde A(t') z'$, we have
$$
|z_1|\leq C_0 r, \quad |z_2|\leq 2C_0 r, \quad |z_3|\leq 3C_0 r^2 \leq (2C_0r)^2.
$$
Hence $y\in B_{t}(x,2C_0r)$. Therefore for every $r\geq 1$ and $t'\leq t$, 
\begin{equation}\label{smallalsoinbig}
B_{t'}(x,r)\subseteq B_{t}(x,2C_0r). 
\end{equation}
In particular, for every $r\geq 1$ and $l'\geq l$, 
\begin{equation}\label{smallinbig}
B_{t_{l'}}(x,r)\subseteq B_{t_l}(x,2C_0r).
\end{equation}

For any $y\in \bigcap_{l\in \mathbb{Z}} B_{t_l}(x,r)$, fixing an $l_0\in \mathbb{Z}$, we have for every $l\in \mathbb{Z}$,
$$
x^{-1}\cdot y =\widetilde A(t_{l_0})z_0=\widetilde A(t_l)z_l, 
$$
where $z_0, z_l\in [-r,r]^2\times [-r^2, r^2]$. Then by (\ref{rivere}),
$$
|z_0|=|\widetilde A(t_{l_0})^{-1}\widetilde A(t_l)z_l|\leq C_0 \big(\frac{t_l}{t_{l_0}}\big)^\epsilon |z_l| \to 0, \quad \text{as } l\to \infty,
$$
where $|z_0|$ denotes the $l^2$ norm of the vector $z_0\in \mathbb{R}^3$. Thus $z_0=0$ and $y=x$. Hence 
\begin{equation}\label{singleton}
\bigcap_{l\in \mathbb{Z}} B_{t_l}(x,r)=\{x\}.
\end{equation}

For any $y\in \mathbb{H}^1$, fixing an $l_0\in \mathbb{Z}$, denote
$$
z_0 := \widetilde A(t_{l_0})^{-1} \big(x^{-1} \cdot y\big), \quad z_l:= \widetilde A(t_l)^{-1}\widetilde A(t_{l_0})z_0, \quad \text{for } l\in \mathbb{Z}.
$$
By (\ref{rivere}), we have
$$
|z_l| \to 0, \quad \text{as } l\to -\infty.
$$
So we can pick an $l$ such that $z_l\in [-r,r]^2\times [-r^2, r^2]$. Then since
$$
x^{-1}\cdot y = \widetilde A(t_{l_0}) z_0 = \widetilde A(t_l) z_l,
$$
we have $y\in B_{t_l}(x,r)$. Hence
\begin{equation}\label{all}
\bigcup_{l\in \mathbb{Z}} B_{t_l}(x,r) = \mathbb{H}^1.
\end{equation}

\begin{definition}
For each $l\in \mathbb{Z}$, define
$$
\rho_l(x,y)=\sup \{r>0: y\not\in B_{t_l}(x,r)\}, \quad \forall x,y\in \mathbb{H}^1.
$$
Denote
$$
\rho_l(x,\Omega) = \inf_{y\in \Omega} \rho_l(x,y), \quad \forall x\in \mathbb{H}^1, \forall \Omega \subseteq \mathbb{H}^1.
$$
\end{definition}

Thus
$$
B_{t_l}(x,r)=\{y: \rho_l(x,y)<r\},
$$
and by (\ref{invariantball}), we have
\begin{equation}\label{invariantmetric}
\rho_l(x,y)=\rho_l(0,x^{-1}\cdot y).
\end{equation}

\begin{lemma}
For each $l\in \mathbb{Z}$, $\rho_l$ is a metric.
\end{lemma}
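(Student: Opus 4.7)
The plan is to verify the three metric axioms for $\rho_l$ by working in the exponential coordinates induced by the left-invariant frame $\{X_{t_l}, Y_{t_l}, T_{t_l}\}$, in which the ball $B_{t_l}(x,r)$ is exactly the image under $x \cdot \exp(\,\cdot\,)0$ of the box $[-r,r]^2 \times [-r^2, r^2]$. The key structural fact is that (\ref{commutator}) gives $[X_{t_l}, Y_{t_l}] = 2\xi_{t_l} T_{t_l}$ with $T_{t_l}$ central in the span, so the Baker-Campbell-Hausdorff series terminates at one commutator and furnishes a clean multiplication law in these coordinates.

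Non-negativity and $\rho_l(x,x)=0$ follow at once from the definition (under the convention $\sup\emptyset=0$) by taking $z=0$. For $x\neq y$, the left-invariance (\ref{invariantmetric}) reduces the matter to $\rho_l(0,u)>0$ when $u\neq 0$, and I would observe that $\widetilde A(t_l)^{-1}u$ is a nonzero vector in $\mathbb{R}^3$ and therefore lies outside $[-r,r]^2\times[-r^2,r^2]$ for every sufficiently small $r$. Symmetry reduces, again by (\ref{invariantmetric}), to invariance of $B_{t_l}(0,r)$ under group inversion; this follows from the symmetry of the defining box under $z\mapsto -z$, the linearity of $\widetilde A(t_l)$, and the fact that Heisenberg inversion is coordinate-wise negation.

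The substantive step is the triangle inequality. Given $\rho_l(x,y)<r_1$ and $\rho_l(y,z)<r_2$, I would write $x^{-1}\cdot y=\exp(a_1 X_{t_l}+a_2 Y_{t_l}+a_3 T_{t_l})0$ with $|a_1|,|a_2|\leq r_1$ and $|a_3|\leq r_1^2$, and analogously $y^{-1}\cdot z$ with coefficients $b_i$ bounded by $r_2$. The BCH formula then yields $x^{-1}\cdot z=\exp(c_1 X_{t_l}+c_2 Y_{t_l}+c_3 T_{t_l})0$ with $c_1=a_1+b_1$, $c_2=a_2+b_2$, and $c_3=a_3+b_3+\xi_{t_l}(a_1 b_2-a_2 b_1)$, so that $|c_1|,|c_2|\leq r_1+r_2$ and $|c_3|\leq r_1^2+r_2^2+2\xi_{t_l} r_1 r_2$. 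The algebraic punchline, which is precisely why $\xi_t$ carries the factor $\tfrac{1}{2}$ in its definition, is that $\xi_{t_l}\leq\tfrac{1}{2}$ forces $|c_3|\leq r_1^2+r_2^2+r_1 r_2\leq (r_1+r_2)^2$, placing $z\in B_{t_l}(x,r_1+r_2)$ and giving $\rho_l(x,z)\leq \rho_l(x,y)+\rho_l(y,z)$ after taking infima. I expect the only minor technical annoyance to be the strict-versus-closed distinction in the definition of $B_{t_l}$, which I would handle by inserting auxiliary slack $\epsilon>0$ in the radii and letting $\epsilon\to 0$ at the end.
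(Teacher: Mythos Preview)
Your proposal is correct and follows essentially the same route as the paper: positivity from the nonzero vector $\widetilde A(t_l)^{-1}(x^{-1}\cdot y)$, symmetry from $(\widetilde A(t_l)z)^{-1}=\widetilde A(t_l)(-z)$, and the triangle inequality from the Baker--Campbell--Hausdorff formula together with $\xi_{t_l}\le\tfrac12$ to get $|c_3|\le r_1^2+r_2^2+r_1r_2<(r_1+r_2)^2$. The only cosmetic difference is that you route symmetry through inversion-invariance of $B_{t_l}(0,r)$ via (\ref{invariantmetric}), whereas the paper shows $y\in B_{t_l}(x,r)\Leftrightarrow x\in B_{t_l}(y,r)$ directly; both are equivalent one-line arguments.
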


\begin{proof}
It is clear that $\rho_l(x,y)=0$ implies $x=y$. Note that
$$
y=x\cdot \big(\widetilde A(t_l) z\big), \quad \text{for some } z\in [-r,r]^2\times [-r^2, r^2],
$$
if and only if 
$$
x=y\cdot \big(\widetilde A(t_l) z\big)^{-1} = y\cdot \big(-\widetilde A(t_l) z\big) = y\cdot \big(\widetilde A(t_l) (-z)\big), \quad \text{for some } z\in [-r,r]^2\times [-r^2, r^2].
$$
Thus $y\in B_{t_l}(x,r)$ if and only if $x\in B_{t_l}(y,r)$. Therefore $\rho_l(x,y)=\rho_l(y,x)$. 

If $y\in B_{t}(x,r_1)$ and $w\in B_{t}(y,r_2)$, then $x^{-1}\cdot y\in B_t(0,r_1)$ and $y^{-1}\cdot w \in B_t(0, r_1)$, and thus there exists $z\in [-r_1, r_1]^2\times [-r_1^2, r_1^2], z'\in [-r_2, r_2]^2\times [-r_2^2, r_2^2]$ such that
\begin{align*}
x^{-1}\cdot w&=(x^{-1}\cdot y) \cdot (y^{-1} \cdot w)= \big(\widetilde A(t)z\big) \cdot \big(\widetilde A(t)z'\big)\\
&=e^{z_1X_{t}+z_2Y_{t}+z_3T_{t}}e^{z_1'X_{t}+z_2'Y_{t} +z_3'T_{t}}0\\
&=e^{(z_1+z_1')X_{t} + (z_2+z_2')Y_{t} + (z_3+z_3'+\xi_{t}(z_1z_2'-z_2z_1'))T_{t}}0,
\end{align*}
where the last the equality is due to (\ref{commutator}) and the Baker-Campbell-Hausdorff formula. We have
$$
|z_1+z_1'|, |z_2+z_2'|<r_1+r_2, \quad \big|z_3+z_3'+\xi_{t}(z_1z_2'-z_2z_1')\big|< r_1^2+r_2^2 + r_1r_2<(r_1+r_2)^2.
$$
Thus $x^{-1}\cdot w\in B_t(0, r_1+r_2)$ and $w\in B_{t}(x,r_1+r_2)$. Therefore 
\begin{equation}\label{alsotriangle}
B_t(0,r_1)\cdot B_t(0,r_2)\subseteq B_t(0,r_1+r_2),
\end{equation}
and
\begin{equation}\label{triangle}
\rho_l(x,w)\leq \rho_l(x,y)+\rho_l(y,w).
\end{equation}
Hence for each $l\in \mathbb{Z}$, $\rho_l$ is a metric on $\mathbb{H}^1$.
\end{proof}

We now show for any $r\geq 1$, there exists $C_1>0$ such that $B_{t_l}(x,r) \subseteq B_{t_{l+1}}(x,C_1r)$. Let $\{B_{t_{l+1}}(x_i, r)\}_i$ be a maximal disjoint collection such that $x_i\in B_{t_l}(x,r)$. By (\ref{smallinbig}) and (\ref{triangle}), 
$$
B_{t_{l+1}}(x_i,r)\subseteq B_{t_l}(x_i, 2C_0r) \subseteq B_{t_l}\big(x, (2C_0+1)r\big).
$$
Thus by (\ref{ratio}), there are at most $c_0^2 (2C_0+1)^4$ members in the disjoint collection $\{B_{t_{l+1}}(x_i, r)\}_i$. We have
$$
B_{t_l}(x,r)\subseteq \bigcup_i B_{t_{l+1}}(x_i, 2r).
$$
In fact, if there exists $\bar x\in B_{t_l}(x,r)\backslash \bigcup_i B_{t_{l+1}}(x_i, 2r)$, then by (\ref{triangle}),
$$
B_{t_{l+1}}(\bar x, r) \cap B_{t_{l+1}}(x_i, r)=\emptyset, \quad \forall i,
$$
which contradicts with $\{B_{t_{l+1}}(x_i, r)\}_i$ being maximal. Since $ \bigcup_i B_{t_{l+1}}(x_i, 2r)$ is path-connected, we can reorder $ \{B_{t_{l+1}}(x_i, 2r)\}_i$ so that each member intersects at least one of its predecessors. Thus by (\ref{triangle}), for any $y\in B_{t_l}(x,r) \subseteq \bigcup_i B_{t_{l+1}}(x_i, 2r)$,
$$
\rho_{l+1}(y,x) \leq 2r \cdot 2c_0^2(2C_0+1)^4.
$$
Denote $C_1=4c_0^2(2C_0+1)^4>1$. Therefore for every $r\geq 1$,
\begin{equation}\label{biginsmall}
B_{t_l}(x,r) \subseteq B_{t_{l+1}}(x, C_1r).
\end{equation}

\begin{remark}
Since the dilation automorphisms on the Heisenberg group are non-isotropic, (\ref{smallinbig}) and (\ref{biginsmall}) do not hold for $r<1$. This is the reason we can not direclty apply the result of \cite{notion}, which establishes a criterion for the validity of the Calder\'on-Zygmund theory corresponding to a given sequence of metrics. \cite{notion} covers more general cases than the translation-invariant cases in Section 2 of \cite{CARBERYCHRIST}. However, \cite{notion} requires (\ref{smallinbig}) and (\ref{biginsmall}) to hold for all $r>0$.
\end{remark}

\begin{definition}\label{notation}
Recall $C_0 \geq 1$ is defined in (\ref{frobenius}) and (\ref{rivere}), $c_0\geq 1$ is defined in (\ref{doubling}), and
$$
C_1:=4c_0^2(2C_0+1)^4>1
$$
as in (\ref{biginsmall}). Let
$$
A_0:=8C_0>6C_0>1,
$$
and fix a constant $A_1>C_1A_0$.
\end{definition}

We have the following covering lemma:
\begin{lemma}\label{covering}
For every nonempty open proper subset $\Omega$ in $\mathbb{H}^1$,
$$
\Omega = \bigcup_{B\in \mathcal{E}} B,
$$
where
$$
\mathcal{E}=\{B_{t_l}(x,1):x\in \Omega, l\in \mathbb{Z}, A_0<\rho_l(x,\partial \Omega)< A_1\}.
$$
If in addition $|\Omega|<\infty$, then there exists a sequence of disjoint balls $B_{t_{l_i}}(x_i,1)\in \mathcal{E}$ such that 
$$
\Omega =\bigcup_i B_{t_{l_i}}(x_i, 6C_0).    
$$
\end{lemma}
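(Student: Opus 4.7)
The plan is to handle the two assertions in sequence: first produce, for each $x \in \Omega$, an admissible index $l$ making $B_{t_l}(x,1) \in \mathcal{E}$, and then extract a countable disjoint subfamily by a level-sorted Vitali procedure whose $6C_0$-expansion recovers $\Omega$. For the covering assertion, fix $x \in \Omega$ and study $f(l) := \rho_l(x, \partial\Omega)$. Using the quantitative estimate (\ref{rivere}) underlying (\ref{singleton}) and (\ref{all}), and noting that $\partial\Omega$ is nonempty since $\Omega$ is a proper subset, the sequence $f$ tends to $\infty$ as $l \to \infty$ and to $0$ as $l \to -\infty$. From (\ref{biginsmall}), for any $r > \max(f(l), 1)$ the ball $B_{t_l}(x, r)$ meets $\partial\Omega$ and is contained in $B_{t_{l+1}}(x, C_1 r)$, yielding the growth bound $f(l+1) \leq C_1 \max(f(l), 1)$. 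Let $l(x)$ be the smallest integer with $f(l(x)) > A_0$; since $A_0 \geq 1$, this forces $f(l(x)) \leq C_1 A_0 < A_1$, so $B_{t_{l(x)}}(x, 1) \in \mathcal{E}$, and $f(l(x)) > A_0 > 1$ ensures $B_{t_{l(x)}}(x, 1) \subseteq \Omega$. Hence $\Omega = \bigcup_{B \in \mathcal{E}} B$.

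For the second assertion, assume $|\Omega| < \infty$. Every ball $B \in \mathcal{E}$ lies in $\Omega$ so $|B| \leq |\Omega|$; since $|B_{t_l}(x,1)| = 8 t_l h_2(t_l)(t_l h_2(t_l) + \bar h_3(t_l)) \to \infty$ as $l \to -\infty$ (because $t h_2(t) \to \infty$ as $t \to \infty$), the set of levels $l$ appearing in $\mathcal{E}$ is bounded below by some integer $l_{\min}$. I then run a greedy selection upward in $l$: for $l = l_{\min}, l_{\min}+1, l_{\min}+2, \ldots$, among the balls in $\mathcal{E}$ at level $l$ that are disjoint from all previously selected balls, adjoin a maximal disjoint subfamily, which is finite at each level since $|\Omega| < \infty$ and each such ball has a fixed positive measure. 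List the resulting countable disjoint collection as $\{B_{t_{l_i}}(x_i, 1)\}_i$.

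To verify $\Omega \subseteq \bigcup_i B_{t_{l_i}}(x_i, 6C_0)$, fix $x \in \Omega$ and set $l = l(x)$. By construction, the ball $B_{t_l}(x, 1)$ was examined at stage $l$ and, if not itself selected, was rejected because it meets some previously selected ball $B_{t_{l_i}}(x_i, 1)$ with $l_i \leq l$, either blocked at an earlier stage or by a ball chosen in the same stage's maximal subfamily. Choose $y \in B_{t_l}(x, 1) \cap B_{t_{l_i}}(x_i, 1)$; by (\ref{smallinbig}) with $r = 1$, $B_{t_l}(x, 1) \subseteq B_{t_{l_i}}(x, 2C_0)$, so $\rho_{l_i}(x, y) < 2C_0$. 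Combining with $\rho_{l_i}(y, x_i) < 1$ and the triangle inequality (\ref{triangle}), $\rho_{l_i}(x, x_i) < 2C_0 + 1 \leq 6C_0$, hence $x \in B_{t_{l_i}}(x_i, 6C_0)$. The main obstacle is precisely guaranteeing $l_i \leq l(x)$: without it, the smaller ball $B_{t_{l_i}}(x_i, 6C_0)$ need not reach $x$, and this is exactly what the level-sorted ordering of the greedy procedure achieves, relying crucially on the lower bound $l_{\min}$ secured from $|\Omega| < \infty$.
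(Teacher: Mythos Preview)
Your proof is correct and follows essentially the same approach as the paper's. Both arguments locate, for each $x\in\Omega$, an index $l$ with $A_0<\rho_l(x,\partial\Omega)<A_1$ by exploiting the growth control from (\ref{biginsmall}) at the first index where $\rho_l(x,\partial\Omega)$ exceeds $A_0$; and both extract the disjoint subfamily by a greedy selection ordered by level (the paper picks one ball at a time with least available index, you process an entire level at once---these are equivalent), using (\ref{smallinbig}) and the triangle inequality to show the $6C_0$-dilates cover $\Omega$. The only omission is that you do not explicitly note $B_{t_{l_i}}(x_i,6C_0)\subseteq\Omega$, but this is immediate from $\rho_{l_i}(x_i,\partial\Omega)>A_0>6C_0$.
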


\begin{proof}
To show $\Omega = \cup_\mathcal{E} B$, it suffices to show that for any given $x\in \Omega$, there exists $l\in \mathbb{Z}$ such that
$$
A_0<\rho_l(x,\partial \Omega)< A_1.
$$
For this $x$, applying (\ref{all}) with $r=A_0$, there exists $l\in \mathbb{Z}$ such that
$$
B_{t_l}(x,A_0)\cap \partial \Omega \neq \emptyset,
$$
and thus $\rho_l(x, \partial \Omega )<A_0$. By (\ref{rivere}), fixing an $l_0\in \mathbb{Z}$, we have
$$
\|\widetilde A(t_l)\| \leq \|\widetilde A(t_{l_0})\| \cdot \|\widetilde A(t_{l_0})^{-1} \widetilde A(t_l)\|\to 0, \quad \text{ as } l\to \infty.
$$
Then by the definition of the ball $B_{t_l}(x,r)$,
$$
\rho_l(x, \partial \Omega)\to \infty, \quad \text{as } l\to \infty.
$$
Thus there is a greatest $k$ satisfying $\rho_k(x, \partial \Omega) \leq A_0$. By (\ref{biginsmall}), we have
$$
A_0<\rho_{k+1}(x,\partial \Omega) =\inf_{y\in \partial \Omega} \rho_{k+1}(x,y) \leq \inf_{y\in \partial \Omega} C_1\rho_k(x,y) =C_1\rho_k(x,\partial \Omega)\leq C_1A_0<A_1.
$$

Suppose in addition $|\Omega|<\infty$. Since any member $B_{t_l}(x,1)$ in $\mathcal{E}$ is contained in $\Omega$, by (\ref{area}), the indices $l$ of the members in $\mathcal{E}$ are bounded below. Pick any member $B_{t_{l_1}}(x_1, 1)\in \mathcal{E}$ with the least index $l_1$. Having chosen $B_{t_{l_i}}(x_i,1)$ for $1\leq i \leq m$, choose $B_{t_{l_{m+1}}}(x_{m+1},1)$ to be any member in $\mathcal{E}$ disjoint from $\bigcup_{1\leq i\leq m} B_{t_{l_i}}(x_i,1)$ with the lease index $l_{m+1}$. Hence if $B_{t_{l_0}}(x_0,1)\in \mathcal{E}$, then there exists $l_i\leq l_0$ such that $B_{t_{l_0}}(x_0,1)$ intersects $B_{t_{l_i}}(x_i,1)$. By (\ref{smallinbig}), $B_{t_{l_i}}(x_0,2C_0)$ intersects $B_{t_{l_i}}(x_i,2C_0)$. Then by (\ref{triangle}),
$$
B_{t_{l_0}}(x_0,1)\subseteq B_{t_{l_i}}(x_0, 2C_0)\subseteq B_{t_{l_i}}(x_i, 6C_0).
$$
And since $6C_0<A_0$,
$$
\Omega = \bigcup_i B_{t_{l_i}}(x_i, 6C_0).
$$
\end{proof}

\begin{definition}[Maximal functions]
For each $r\geq 1$, denote
$$
\mathcal{B}_r=\{B_{t_l}(x,r):l\in \mathbb{Z}, x\in \mathbb{H}^1\}.
$$
Define a maximal function for each $r\geq 1$:
$$
\mathcal{M}_{\mathcal{B}_r}f (x) = \sup_{x\in B\in \mathcal{B}_r} \frac{1}{|B|} \int_B |f(y)|\,dy, \quad \forall f\in L^1_{loc}.
$$
\end{definition}

The rest of the section is devoted to the next two lemmas, which follow similarly as in \cite{notion} and Section 2 of \cite{CARBERYCHRIST}.
\begin{lemma}[Maximal inequality]\label{maximalfunction}
For each $r\geq 1$, $\mathcal{M}_{\mathcal{B}_r}$ is of weak type $(1,1)$ and bounded on $L^p(\mathbb{H}^1)$ for $1<p\leq \infty$, with bounds depending only on $p$ and $C_0$. Specifically, for each $f\in L^1(\mathbb{H}^1)$ and each $\alpha>0$,
$$
|\{x: \mathcal{M}_{\mathcal{B}_r}f(x)>\alpha\}|\leq (6C_0)^4 \frac{\|f\|_1}{\alpha}.
$$
\end{lemma}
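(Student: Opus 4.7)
The plan is the classical route to a maximal inequality: establish the weak-type $(1,1)$ bound by a Vitali-style covering argument, note the trivial $L^\infty$ bound, and interpolate. Since $\mathcal{B}_r$ mixes balls at all scales $l\in\mathbb{Z}$, the covering step cannot rely on a single metric; the main obstacle is to implement the Vitali selection across scales, using the cross-scale inclusion (\ref{smallinbig}) together with the triangle inequality (\ref{triangle}) for each $\rho_l$. I expect this combinatorial step to carry all the weight of the proof, while the $L^\infty$ bound and the interpolation are routine.

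The key lemma I would prove first is the following cross-scale containment: if $B_1=B_{t_{l_1}}(y_1,r)$ and $B_2=B_{t_{l_2}}(y_2,r)$ are balls in $\mathcal{B}_r$ with $l_1\leq l_2$ (so $B_2$ is the ``smaller'' one) and $B_1\cap B_2\neq\emptyset$, then $B_2\subseteq B_{t_{l_1}}(y_1,6C_0r)$. Indeed, (\ref{smallinbig}) gives $B_2\subseteq B_{t_{l_1}}(y_2,2C_0r)$; picking any point in $B_1\cap B_2\subseteq B_{t_{l_1}}(y_1,r)\cap B_{t_{l_1}}(y_2,2C_0r)$ and applying (\ref{triangle}) in $\rho_{l_1}$ yields $\rho_{l_1}(y_1,y_2)<(2C_0+1)r$, whence $B_2\subseteq B_{t_{l_1}}(y_1,(4C_0+1)r)\subseteq B_{t_{l_1}}(y_1,6C_0r)$. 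By (\ref{area}), the enlarged ball has volume $(6C_0)^4|B_1|$, exactly the constant appearing in the claim.

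With this in hand the weak-type bound follows from a greedy Vitali selection. Fix $f\in L^1$ and $\alpha>0$, and for each $x\in E_\alpha:=\{\mathcal{M}_{\mathcal{B}_r}f>\alpha\}$ pick $B_x=B_{t_{l(x)}}(y(x),r)\in\mathcal{B}_r$ with $x\in B_x$ and $\int_{B_x}|f|>\alpha|B_x|$; Chebyshev forces $|B_x|<\|f\|_1/\alpha$, so by (\ref{area}) the indices $l(x)$ are uniformly bounded below. Scanning in order of increasing $l$, at each index pick a maximal disjoint subfamily among the balls not meeting previous selections; this produces a disjoint subcollection $\{B_{x_i}\}$ such that every $B_x$ meets some $B_{x_i}$ with $l(x_i)\leq l(x)$. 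The cross-scale lemma then gives $E_\alpha\subseteq\bigcup_i B_{t_{l(x_i)}}(y(x_i),6C_0r)$, and
$$
|E_\alpha|\leq\sum_i(6C_0)^4|B_{x_i}|\leq\frac{(6C_0)^4}{\alpha}\sum_i\int_{B_{x_i}}|f|\leq(6C_0)^4\frac{\|f\|_1}{\alpha}.
$$
The pointwise bound $\mathcal{M}_{\mathcal{B}_r}f\leq\|f\|_\infty$ is immediate, and Marcinkiewicz interpolation produces the $L^p$ bounds for $1<p<\infty$ with constants depending only on $p$ and $C_0$.
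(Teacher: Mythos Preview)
Your proposal is correct and follows essentially the same Vitali-type argument as the paper: greedy selection of disjoint balls in order of increasing scale index, followed by the cross-scale containment $B_2\subseteq B_{t_{l_1}}(y_1,6C_0r)$ obtained from (\ref{smallinbig}) and the triangle inequality (\ref{triangle}). The only cosmetic difference is that the paper first passes to an arbitrary compact subset of the level set so as to work with a finite cover, thereby making the greedy selection terminate without appeal to maximal disjoint families.
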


\begin{lemma}[Calder\'on-Zygmund theory]\label{CZ}
Suppose the convolution operator
$$
Tf(x)=\int f(x\cdot y^{-1}) K(y)\,dy.
$$
is bounded on $L^2(\mathbb{H}^1)$. If there exists $C_K>0$ such that for every $l\in \mathbb{Z}$ and every $y\in B_{t_l}(0,6C_0)$,
\begin{equation}\label{left}
\int_{B_{t_l}(0,A_0)^C} \big|K(y\cdot x)-K(x)\big|\,dx \leq C_K,
\end{equation}
and
\begin{equation}\label{right}
\int_{B_{t_l}(0,A_0)^C} \big|K(x\cdot y)-K(x)\big|\,dx \leq C_K,
\end{equation}
then $T$ is of weak type $(1,1)$ and bounded on $L^p(\mathbb{H}^1)$ for $1<p<\infty$, with bounds depending only on $\|T\|_{2\to 2}, C_K, p$, and $A_1$.
\end{lemma}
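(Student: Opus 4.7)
The plan is to carry out the classical Calderón--Zygmund argument adapted to the family of left-invariant balls $\{B_{t_l}(x,r)\}$: first establish the weak-type $(1,1)$ bound for $T$ directly, then interpolate against the $L^2$ hypothesis to obtain $L^p$ for $1 < p \leq 2$, and finally dualize to cover $2 < p < \infty$. Given $f \in L^1(\mathbb{H}^1)$ and $\alpha > 0$, set $\Omega = \{\mathcal{M}_{\mathcal{B}_{A_1}} f > \alpha\}$; Lemma \ref{maximalfunction} yields $|\Omega| \lesssim \alpha^{-1}\|f\|_1$. Assuming $\Omega \neq \mathbb{H}^1$ (else the bound is trivial), Lemma \ref{covering} produces a disjoint family $Q_i := B_{t_{l_i}}(x_i, 1)$ with $A_0 < \rho_{l_i}(x_i, \partial \Omega) < A_1$ and $\Omega = \bigcup_i B_{t_{l_i}}(x_i, 6C_0)$. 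Because some point $y \in \partial\Omega$ lies in $B_{t_{l_i}}(x_i, A_1) \in \mathcal{B}_{A_1}$ and $\mathcal{M}_{\mathcal{B}_{A_1}} f(y) \leq \alpha$, formula (\ref{area}) gives the standard bound $|Q_i|^{-1}\int_{Q_i} |f| \lesssim A_1^4 \alpha$. The usual decomposition $f = g + \sum_i b_i$ with $b_i = (f - f_{Q_i})\chi_{Q_i}$ satisfies $\|g\|_\infty \lesssim \alpha$, $\|g\|_1 \leq \|f\|_1$, $\int b_i = 0$, and $\|b_i\|_1 \lesssim \alpha |Q_i|$. Chebyshev and the $L^2$ hypothesis dispose of the good part: $|\{|Tg| > \alpha/2\}| \lesssim \alpha^{-1}\|T\|_{2\to 2}^2 \|f\|_1$.

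For the bad part, set $\Omega^* = \bigcup_i B_{t_{l_i}}(x_i, A_0)$; formula (\ref{area}) forces $|\Omega^*| \leq A_0^4 \sum_i |Q_i| \leq A_0^4 |\Omega| \lesssim \alpha^{-1}\|f\|_1$. Rewrite $T$ in convolution form $Tf(x) = \int f(z) K(z^{-1}\cdot x)\,dz$ (using that Lebesgue measure on $\mathbb{H}^1$ is bi-invariant), substitute $z = x_i \cdot u$ so that $z^{-1}\cdot x = u^{-1}\cdot w$ with $w := x_i^{-1}\cdot x$, and invoke the cancellation $\int \tilde b_i = 0$ for $\tilde b_i(u) := b_i(x_i \cdot u)$ to obtain
$$
Tb_i(x) = \int_{B_{t_{l_i}}(0,1)} \tilde b_i(u) \big[K(u^{-1}\cdot w) - K(w)\big]\,du.
$$
For $x \notin \Omega^*$, identity (\ref{invariantmetric}) places $w$ outside $B_{t_{l_i}}(0, A_0)$; and since $u^{-1} \in B_{t_{l_i}}(0,1) \subseteq B_{t_{l_i}}(0, 6C_0)$, Fubini together with hypothesis (\ref{left}) applied with $y = u^{-1}$ yields $\int_{(\Omega^*)^c} |Tb_i|\,dx \leq C_K \|b_i\|_1 \lesssim C_K \alpha |Q_i|$. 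Summing over $i$, applying Chebyshev, and combining with the bound on $|\Omega^*|$ completes the weak-type $(1,1)$ estimate.

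The point requiring care is that the substitution $z = x_i \cdot u$ must place $u$ cleanly in a ball at the origin with no conjugation factor appearing; this works only because the balls are left-invariant ($B_{t_{l_i}}(x_i,r) = x_i \cdot B_{t_{l_i}}(0,r)$), since in a non-abelian group the conjugate of a small element need not remain small. Both hypotheses in the statement are essential: (\ref{left}) controls $T$ directly as above, while (\ref{right}) is invoked for the adjoint $T^*$, whose convolution kernel is $K^*(x) = \overline{K(x^{-1})}$. The change of variables $w \mapsto w^{-1}$ (which preserves Lebesgue measure on $\mathbb{H}^1$ and sends $B_{t_l}(0, A_0)^c$ to itself, since the balls at the origin are symmetric under inversion) converts condition (\ref{right}) on $K$ into the analogue of (\ref{left}) for $K^*$, so the same weak-$(1,1)$ argument applies to $T^*$; interpolation and duality then yield $L^p$ boundedness of $T$ for $2 < p < \infty$.
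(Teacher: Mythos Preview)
Your overall strategy is the same as the paper's, and the treatment of the bad part and of the adjoint is correct. There is, however, a genuine gap in your good/bad decomposition. You set $Q_i := B_{t_{l_i}}(x_i,1)$ (the \emph{disjoint} balls from the covering lemma) and take $b_i=(f-f_{Q_i})\chi_{Q_i}$, so that $g=f$ on $(\bigcup_i Q_i)^c$. But the covering lemma only guarantees $\Omega=\bigcup_i B_{t_{l_i}}(x_i,6C_0)$; the disjoint unit balls $Q_i$ do \emph{not} cover $\Omega$. Hence on the nonempty set $\Omega\setminus\bigcup_i Q_i$ you have $g=f$ with no pointwise control, and the claim $\|g\|_\infty\lesssim\alpha$ is unjustified.

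The paper repairs exactly this point: from the disjoint $B_{t_{l_i}}(x_i,1)$ it builds pairwise disjoint sets $P_i$ with
\[
B_{t_{l_i}}(x_i,1)\subseteq P_i\subseteq B_{t_{l_i}}(x_i,6C_0),\qquad \Omega=\bigcup_i P_i,
\]
by the greedy rule $P_{m+1}=B_{t_{l_{m+1}}}(x_{m+1},6C_0)\setminus\big(P_1\cup\cdots\cup P_m\cup\bigcup_{k\neq m+1}B_{t_{l_k}}(x_k,1)\big)$. One then takes $b_i=(f-f_{P_i})\chi_{P_i}$ and $g=f_{P_i}$ on $P_i$; the bound $|f_{P_i}|\le A_1^4\alpha$ follows from $|P_i|\ge|B_{t_{l_i}}(x_i,1)|$ and $P_i\subseteq B_{t_{l_i}}(x_i,A_1)$ together with the boundary point $y\in\partial\Omega\cap B_{t_{l_i}}(x_i,A_1)$. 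Note this also forces the support of $b_i$ into $B_{t_{l_i}}(x_i,6C_0)$ rather than the unit ball, which is precisely why the Hörmander hypothesis is stated for $y\in B_{t_l}(0,6C_0)$ and not merely $y\in B_{t_l}(0,1)$. Once you insert this partition step, the rest of your argument goes through unchanged.
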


\begin{proof}[Proof of Lemma \ref{maximalfunction}]
Since $\|\mathcal{M}_{\mathcal{B}_r}f\|_\infty \leq \|f\|_\infty$, it suffices to show for every $f\in L^1(\mathbb{H}^1)$ and every $\alpha>0$,
$$
|\{x: \mathcal{M}_{\mathcal{B}_r}f(x)>\alpha\}|\leq (6C_0)^4 \frac{\|f\|_1}{\alpha}.
$$
Fix $r\geq 1$. Denote $\Omega =\{x: \mathcal{M}_{\mathcal{B}_r}f(x)>\alpha\}$. If $\Omega$ is empty, then we are done. Otherwise $\Omega$ is a nonempty open set. Fix an arbitrary compact $L\subseteq \Omega$. It suffices to show $|L|\leq (6C_0)^4 \frac{\|f\|_1}{\alpha}$. By the definition of $\mathcal{M}_{\mathcal{B}_r}$, $L$ is covered by balls in a finite subcollection $\mathcal{B}_r'\subseteq \mathcal{B}_r$, where for each $B_{t_l}(x,r)\in \mathcal{B}_r'$,
$$
\frac{1}{|B_{t_l}(x,r)|}\int_{B_{t_l}(x,r)}|f| >\alpha.
$$
Choose $B_{t_{l_1}}(x_1,r)$ to be any member of $\mathcal{B}_r'$ with least index $l_1$. Having chosen $B_{t_{l_i}}(x_i,r)$ for $1\leq i\leq m$, choose $B_{t_{l_{m+1}}}(x_{m+1},r)$ to be any member in $\mathcal{B}_r'$ disjoint from $\bigcup_{1\leq i\leq m} B_{t_{l_i}}(x_i,r)$ with least index $l_{m+1}$. Hence any member $B_{t_{l_0}}(x_0,r)$ in $\mathcal{B}_r'$ intersects a ball $B_{t_{l_i}}(x_i,r)$ with $l_i\leq l_0$. By (\ref{smallinbig}), $B_{t_{l_i}}(x_0, 2C_0r)$ intersects $B_{t_{l_i}}(x_i, 2C_0r)$. Then by (\ref{triangle}),
$$
B_{t_{l_0}}(x_0, r) \subseteq B_{t_{l_i}}(x_0, 2C_0r) \subseteq B_{t_{l_i}}(x_i, 6C_0r).
$$
Therefore
$$
L \subseteq \bigcup_i B_{t_{l_i}}(x_i, 6C_0r).
$$
Hence
\begin{align*}
|L| &\leq \sum_i |B_{t_{l_i}}(x_i, 6C_0r)|= (6C_0)^4 \sum_i |B_{t_{l_i}}(x_i, r)|\\
&<(6C_0)^4 \sum_i\frac{1}{\alpha} \int_{B_{t_{l_i}}(x_i, r)}|f| \leq (6C_0)^4 \frac{\|f\|_1}{\alpha}.
\end{align*}
\end{proof}

\begin{proof}[Proof of Lemma \ref{CZ}]
Denote by $\bar K(y)$ the complex conjugate of $K(y)$, and denote $\widetilde{\bar K}(y):= \bar K(y^{-1})$. The adjoint operator of $T$ is
$$
T^*f(x)= \int f(x\cdot y^{-1}) \widetilde{\bar K}(y)\,dy.
$$
If the kernel $K$ satisfies (\ref{right}), then the kernel $\widetilde{\bar K}$ satisfies (\ref{left}). Once we derive that $T$ is of weak type $(1,1)$ from $K$ satisfying (\ref{left}), we will have that $T^*$ is also of weak type $(1,1)$ since $K$ satisfies (\ref{right}). Then both $T, T^*$ will be bounded on $L^p(\mathbb{H}^1)$ for $1<p\leq 2$, and hence $T$ will be bounded on $L^p(\mathbb{H}^1)$ for $1<p<\infty$. Thus it suffices to derive that $T$ is of weak type $(1,1)$ from (\ref{left}).

Let $f\in L^1(\mathbb{H}^1)$ and $\alpha>0$ be given. Recall from Definition \ref{notation} that $A_1>A_0>6C_0>1$. Applying Lemma \ref{maximalfunction} with $r=A_1$, the open set
$$
\Omega=\{x: \mathcal{M}_{\mathcal{B}_{A_1}}f(x)>\alpha\}
$$
satisfies
\begin{equation}\label{regionbound}
|\Omega|\leq (6C_0)^4 \frac{\|f\|_1}{\alpha} \leq A_1^4 \frac{\|f\|_1}{\alpha}.
\end{equation}
Applying Lemma \ref{covering} to $\Omega$, there exists a sequence of disjoint balls $B_{t_{l_i}}(x_i,1)$, where for each $i$, 
$$
x_i\in \Omega, \quad A_0<\rho_{l_i}(x_i, \partial \Omega)<A_1,
$$
and
$$
\Omega = \bigcup_i B_{t_{l_i}}(x_i, 6C_0).
$$
Let $P_1= B_{t_{l_1}}(x_1, 6C_0) \backslash \bigcup_{k\neq 1} B_{t_{l_k}}(x_k, 1)$. Having chosen $P_i$ for $1\leq i \leq m$, let
$$
P_{m+1}=B_{t_{l_{m+1}}}(x_{m+1}, 6C_0)\backslash \big(P_1\cup \cdots \cup P_m \cup \big(\bigcup_{k\neq m+1} B_{t_{l_k}}(x_k, 1)\big)\big).
$$
Then the $P_i$ are disjoint,
$$
B_{t_{l_i}}(x_i, 1) \subseteq P_i \subseteq B_{t_{l_i}}(x_i, 6C_0), \quad \forall i,
$$
and
$$
\Omega = \bigcup_i P_i.
$$

Hence we can write $f=g+b$, where
$$
g(x)=
\left\{
\begin{aligned}
&f(x), \quad x\not\in \Omega,\\
&\frac{1}{|P_i|}\int_{P_i} f, \quad x\in P_i.
\end{aligned}
\right.
$$
We have $|g(x)|\leq A_1^4 \alpha$ almost everywhere. In fact, if $x\not\in \Omega$, then $\mathcal{M}_{\mathcal{B}_{A_1}}f(x)\leq \alpha$, thus by Lemma \ref{maximalfunction} and by (\ref{singleton}), $|g(x)|\leq \alpha$ almost everywhere for $x\not\in \Omega$. If $x\in P_i$, for this $i$, by Lemma \ref{covering}, there exists $y\in \partial \Omega$ with $\rho_{l_i}(x_i,y)<A_1$. Then
$$
|g(x)|\leq \frac{1}{|P_i|} \int_{P_i} |f| \leq \frac{1}{|B_{t_{l_i}}(x_i, 1)|} \int_{B_{t_{l_i}}(x_i, A_1)} |f| \leq A_1^4 \mathcal{M}_{\mathcal{B}_{A_1}}f(y)\leq A_1^4 \alpha.
$$
Thus 
$$
|\{x: Tg(x)>\frac{\alpha}{2}\}| \leq \frac{4}{\alpha^2} \|Tg\|_2^2 \leq \|T\|_{2\to 2} \frac{4}{\alpha^2} A_1^4 \alpha \|f\|_1 = 4A_1^4\|T\|_{2\to 2} \frac{\|f\|_1}{\alpha}.
$$

By (\ref{regionbound}), it suffices to bound $|\{x\not\in \Omega: |Tb(x)|>\alpha/2\}|$. Denote
$$
b_i(x)= 
\left\{
\begin{aligned}
&f(x)-\frac{1}{|P_i|} \int_{P_i} f, \quad x\in P_i,\\
&0, \quad x\not\in P_i.
\end{aligned}
\right.
$$
Then $b=\sum_i b_i$, $\int b_i =0$, and $\sum_i \|b_i\|_1 \leq 2\|f\|_1$. Since $B_{t_{l_i}}(x_i, A_0) \subseteq \Omega$, by (\ref{left}),
\begin{align*}
&\quad |\{x\not\in \Omega: |Tb(x)|>\alpha/2\}| \leq \frac{2}{\alpha} \int_{\Omega^C} |Tb| \leq \frac{2}{\alpha} \sum_i \int_{B_{t_{l_i}}(x_i, A_0)^C}|Tb_i|\\
&= \frac{2}{\alpha} \sum_i \int_{B_{t_{l_i}}(x_i, A_0)^C}\Big|\int b_i(x\cdot y^{-1})K(y)\,dy\Big|\,dx\\
&= \frac{2}{\alpha} \sum_i \int_{B_{t_{l_i}}(x_i, A_0)^C}\Big|\int b_i(y)\big(K(y^{-1}\cdot x)-K(x_i^{-1}\cdot x)\big)\,dy\Big|\,dx\\
&\leq \frac{2}{\alpha} \sum_i \int_{B_{t_{l_i}}(x_i, 6C_0)} |b_i(y)| \int_{B_{t_{l_i}}(x_i, A_0)^C} \big|K((y^{-1}\cdot x_i)\cdot (x_i^{-1}\cdot x))-K(x_i^{-1}\cdot x)\big)\big|\,dx\,dy\\
&= \frac{2}{\alpha} \sum_i \int_{B_{t_{l_i}}(0, 6C_0)} |b_i(y)| \int_{B_{t_{l_i}}(0, A_0)^C} \big|K(y'\cdot x')-K(x')\big)\big|\,dx'\,dy'\\
&\leq 4C_K \frac{\|f\|_1}{\alpha}.
\end{align*}
\end{proof}

\section{Littlewood-Paley theory}\label{littlewood}
In this section, we will determine a sequence of convolution operators $\{\mathcal{D}_j\}_{j\in \mathbb{N}}$ and apply Lemma \ref{CZ}. This will establish a Littlewood-Paley theory by the argument in Section 11 of Street and Stein \cite{LP}. Such Littlewood-Paley theory will help bound $\mathcal{M}_\Gamma$ and $\mathcal{H}_\Gamma$ on $L^p(\mathbb{H}^1)$ in Sections \ref{section4} and \ref{finalsection}.

\begin{definition}\label{jmatrix} 
Associated with each $j\in \mathbb{N}$, we denote a basis of left-invariant vector fields on $\mathbb{H}^1$:
$$
X_j = 2^{-j}X + \alpha(2^{-j}) Y +\bar \beta(2^{-j})T, \quad Y_j = h_2(2^{-j}) Y + \bar h_\beta(2^{-j})T, \quad T_j = \big(2^{-j}h_2(2^{-j}) + \bar h_3(2^{-j})\big)T.
$$ 
Denote
$$
\widetilde A_j = 
\begin{pmatrix}
2^{-j} & 0 & 0\\
\alpha(2^{-j}) & h_2(2^{-j}) & 0\\
\bar \beta(2^{-j}) & \bar h_\beta (2^{-j}) & 2^{-j}h_2(2^{-j}) + \bar h_3(2^{-j})
\end{pmatrix},
$$
and
$$
\xi_j= \frac{1}{2}\frac{2^{-j}h_2(2^{-j})}{2^{-j}h_2(2^{-j})+\bar h_3(2^{-j})}\leq \frac{1}{2}, \qquad S_j = 2^{-j}h_2(2^{-j})\big(2^{-j}h_2(2^{-j}) + \bar h_3(2^{-j})\big).
$$ 
\end{definition}
We have
\begin{equation}\label{xi}
[X_j, Y_j] =2\xi_jT_j, \qquad \det \widetilde A_j = S_j.
\end{equation}

\begin{definition}\label{auxiliary}
Fix a function $\varsigma_0(y)\in C_c^\infty[-1,1]^3$ with $\int \varsigma_0(y)\,dy=1$.
Let
$$
\varsigma_j(y)=
\left\{
\begin{aligned}
& \varsigma_0(y), \quad j=0,\\
& \varsigma_0(y)- \frac{S_j}{S_{j-1}} \varsigma_0\big( \widetilde A_{j-1}^{-1} \widetilde A_j y\big), \quad j>0.
\end{aligned}
\right.
$$ 
Define a sequence of convolution operators
\begin{align*}
\mathcal{D}_jf(x) &=\frac{1}{S_j}\int f(x\cdot y^{-1}) \varsigma_j(\widetilde A_j^{-1}y)\,dy \\
&= \int f\big(x\cdot e^{-y_1X_j - y_2 Y_j - y_3 T_j}0\big) \varsigma_j(y)\,dy, \quad j\in \mathbb{N}.
\end{align*}
\end{definition}

It is clear from the definition that
$$
\int \varsigma_j(y)\,dy=0, \quad \text{for } j>0.
$$
By (\ref{rivere}), 
$$
\|\widetilde A_{j-1}^{-1} \widetilde A_j\| \leq C_0 \big( \frac{2^{-j}}{2^{-j+1}}\big)^\epsilon \leq C_0.
$$
And since $\frac{S_j}{S_{j-1}} \leq 1$, the $\varsigma_j(y)$ have all the derivatives bounded uniformly in $j$. As functions of $y$,
\begin{equation}\label{support}
\begin{aligned}
\text{supp}\,\varsigma_j(y) &\subseteq 
\left\{
\begin{aligned}
&\text{supp}\,\varsigma_0(y), \quad j=0,\\
&\text{supp}\, \varsigma_0(y) \bigcup \text{supp}\,\varsigma_0\big(\widetilde A_{j-1}^{-1} \widetilde A_j y\big), \quad j>0,
\end{aligned}
\right.
\\
&\subseteq
\left\{
\begin{aligned}
&[-1,1]^3, \quad j=0,\\
&[-1,1]^3 \bigcup \big(\widetilde A_j^{-1} \widetilde A_{j-1} [-1,1]^3\big), \quad j>0.
\end{aligned}
\right.
\end{aligned}
\end{equation}

The Fourier transform of $\varsigma_j(y)$ on $\mathbb{R}^3$ satisfies
$$
\hat \varsigma_j\big((\widetilde A_j)^T \xi\big)=
\left\{
\begin{aligned}
& \hat \varsigma_0\big((\widetilde A_0)^T\xi\big), \quad j=0,\\
& \hat \varsigma_0\big((\widetilde A_j)^T\xi\big) - \hat \varsigma_0\big((\widetilde A_{j-1})^T\xi\big), \quad j>0,
\end{aligned}
\right.
$$ 
and $\hat \varsigma_0(0)=\int \varsigma_0=1$.
For every $\xi\in\mathbb{R}^3$, by (\ref{rivere}),
$$
|(\widetilde A_j)^T \xi| \leq \|(\widetilde A_j)^T (\widetilde A_0^T)^{-1} \| \cdot |(\widetilde A_0)^T \xi| \lesssim C_0 2^{-\epsilon j} |\xi|  \to 0, \quad \text{ as } j\to \infty.
$$
Then we have
$$
\sum_{j=0}^\infty \hat \varsigma_j\big((\widetilde A_j)^T \xi \big)= \lim_{j\to \infty} \hat \varsigma_0\big((\widetilde A_j)^T \xi \big) =\hat \varsigma_0(0)=1.
$$
Hence
$$
\sum_{j=0}^\infty \frac{1}{S_j} \varsigma_j(\widetilde A_j^{-1}y) = \delta_0(y),
$$
where $\delta_0(y)$ denotes the Dirac delta function at $0$ in $\mathbb{R}^3$. Therefore
\begin{equation}\label{identity}
\sum_{j=0}^\infty \mathcal{D}_j = \text{Id},
\end{equation}
where Id denotes the identity operator. 

Note $\|\varsigma_0\|_1$ is a fixed finite number. For $j>1$,
$$
\|\varsigma_j\|_1 \leq \|\varsigma_0\|_1 + \frac{S_j}{S_{j-1}} \|\varsigma_0\big(\widetilde A_{j-1}^{-1} \widetilde A_j \cdot \big)\|_1 = 2\|\varsigma_0\|_1.
$$
Hence $\|\mathcal{D}_j\|_{p\to p} \lesssim 1$ for every $1\leq p\leq \infty$, where the implicit constant is independent of $j$. We will use $\{\mathcal{D}_j\}_{j=0}^\infty$ to establish a Littlewood-Paley theory. 

We first need the following uniform ``regularily'' of the $\varsigma_j$:
\begin{lemma}\label{ldeltaone}
For every $j\in \mathbb{N}$ and every $z\in \mathbb{R}^3$ with $|z|\lesssim 1$,
\begin{align*}
\int \big|\varsigma_j\big(y_1+z_1, y_2+z_2, y_3+z_3+\xi_j(y_1z_2-y_2z_1)\big)-\varsigma_j(y)\big|\,dy\lesssim |z|,\\
\int \big|\varsigma_j\big(z_1+y_1, z_2+y_2, z_3+y_3+\xi_j(z_1y_2-z_2y_1)\big)-\varsigma_j(y)\big|\,dy\lesssim |z|,  
\end{align*}
where the implicit constants are independent of $j$.
\end{lemma}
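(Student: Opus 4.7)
The plan is to interpret the shifted argument $(y_1+z_1, y_2+z_2, y_3+z_3+\xi_j(y_1z_2-y_2z_1))$ as right-multiplication $y\ast_j z$ in the Heisenberg-type group law induced on $\mathbb{R}^3$ by the frame $X_j, Y_j, T_j$. Since $[X_j, Y_j] = 2\xi_j T_j$ and the Heisenberg group is $2$-step nilpotent, the Baker--Campbell--Hausdorff formula yields
\[
e^{y_1 X_j + y_2 Y_j + y_3 T_j}\cdot e^{z_1 X_j + z_2 Y_j + z_3 T_j}\cdot 0 \;=\; \widetilde{A}_j(y \ast_j z),
\]
so that $\widetilde{A}_j^{-1}:(\mathbb{H}^1, \cdot)\to(\mathbb{R}^3, \ast_j)$ is a Lie group isomorphism, and hence $\widetilde{A}_{j-1}^{-1}\widetilde{A}_j:(\mathbb{R}^3, \ast_j)\to(\mathbb{R}^3, \ast_{j-1})$ is an isomorphism between the two rescaled group laws.

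Using $\varsigma_j = \varsigma_0 - (S_j/S_{j-1})\,\varsigma_0\circ \widetilde{A}_{j-1}^{-1}\widetilde{A}_j$, I split each integral into two pieces by the triangle inequality. For the $\varsigma_0$ piece, the bound $\lesssim |z|$ is a standard fundamental-theorem-of-calculus estimate: along the path $s\mapsto y\ast_j(sz)$ (respectively $s\mapsto(sz)\ast_j y$) the velocity has the form $(z_1, z_2, z_3 + \xi_j(\cdots))$ where the bilinear correction is controlled by $\xi_j\leq \tfrac12$ and $\mathrm{supp}\,\varsigma_0\subseteq [-1,1]^3$, so the resulting $L^1$ integral is uniformly bounded by $\|\varsigma_0\|_{C^1}$ times $|z|$.

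For the remaining piece, I push the shift through the isomorphism: for the right-translation integral,
\[
\widetilde{A}_{j-1}^{-1}\widetilde{A}_j(y\ast_j z) \;=\; \bigl(\widetilde{A}_{j-1}^{-1}\widetilde{A}_j y\bigr)\ast_{j-1}\bigl(\widetilde{A}_{j-1}^{-1}\widetilde{A}_j z\bigr),
\]
and then change variables to $u=\widetilde{A}_{j-1}^{-1}\widetilde{A}_j y$. The Jacobian $S_{j-1}/S_j$ cancels the prefactor exactly, leaving $\int|\varsigma_0(u\ast_{j-1}w)-\varsigma_0(u)|\,du$ with $w=\widetilde{A}_{j-1}^{-1}\widetilde{A}_j z$. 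This is again $\lesssim |w|$ by the same FTC argument at level $j-1$, and $|w|\leq C_0|z|$ by (\ref{rivere}) applied with $s=2^{-j+1}, t=2^{-j}$. The second integral of the lemma (the left-translation version) is handled identically, with $y\ast_j z$ replaced by $z\ast_j y$ throughout.

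The main obstacle, or rather the key insight, is recognizing the isomorphism $\widetilde{A}_{j-1}^{-1}\widetilde{A}_j:(\mathbb{R}^3, \ast_j)\to(\mathbb{R}^3, \ast_{j-1})$ so that one can transport the entire estimate to the fixed, $j$-independent template $\varsigma_0\in C_c^\infty([-1,1]^3)$. A direct attempt without this maneuver would require controlling both $\|\nabla\varsigma_j\|_\infty$ and the Lebesgue measure of $\mathrm{supp}\,\varsigma_j\supseteq \widetilde{A}_j^{-1}\widetilde{A}_{j-1}[-1,1]^3$, whose volume equals $(S_{j-1}/S_j)\cdot 8$ and need not be uniformly bounded in $j$.
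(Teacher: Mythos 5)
Your proof is correct, and it takes a genuinely different route from the paper's. The key insight you identify — that $\widetilde{A}_{j-1}^{-1}\widetilde{A}_j$ is a group isomorphism from $(\mathbb{R}^3, \ast_j)$ to $(\mathbb{R}^3, \ast_{j-1})$, where $\ast_j$ is the pullback of the Heisenberg law via $\widetilde{A}_j$ — is valid (it follows from $f\xi_j = \tfrac{1}{2}ac$ in the lower-triangular entries, exactly the identity encoded in (\ref{xi})), and the change of variables $u = \widetilde{A}_{j-1}^{-1}\widetilde{A}_j y$ does cancel the prefactor $S_j/S_{j-1}$ exactly, reducing the $j$-dependent piece to the $j$-independent base estimate for $\varsigma_0$ at level $j-1$ with $|w|\leq C_0|z|$ by (\ref{rivere}).

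The paper instead works directly with the transported kernel $g(y)=\varsigma_0(\widetilde{A}_{j-1}^{-1}\widetilde{A}_j y)$: it bounds the $L^1$ norm by (measure of $\mathrm{supp}\,g$) $\times$ (sup of the increment), computes the sup via the fundamental theorem of calculus, rewrites the $\theta$-derivative in terms of the vector-field derivatives $\partial_1-\xi_j y_2\partial_3$, $\partial_2+\xi_j y_1\partial_3$, $\partial_3$, and then estimates the coordinates $|y_1|$, $|y_2|$ on $\mathrm{supp}\,g$ and the derivatives $|\partial_3 g|$, $|\nabla g|$ using (\ref{rivere}) and properties of $h_2, \bar h_3$. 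If you trace those estimates through, the product $\xi_j|y_2||\partial_3 g|$ collapses to $\xi_{j-1}\times O(1)$ — so the paper is, in effect, verifying the homomorphism identity coordinate by coordinate without naming it. Your version makes that algebraic structure explicit and buys considerable cleanliness: it eliminates the ad hoc vector-field bookkeeping, relies only on the base $\varsigma_0$ estimate plus the single matrix norm bound (\ref{rivere}), and is visibly uniform in $j$. The cost is that it requires recognizing the conjugated map as a group isomorphism up front, which is exactly the observation you flag as the "main obstacle."

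Your argument is complete as written; the only things worth stating precisely in a polished version are (i) the base estimate (\ref{varsigma0right})–(\ref{varsigma0left}) holds uniformly for all $\xi_{j'}\leq\tfrac{1}{2}$, so in particular at index $j-1$, and (ii) $|w|\leq C_0 2^{-\epsilon}|z|\lesssim 1$ so the hypothesis $|w|\lesssim 1$ of the base estimate is met.
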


\begin{proof}
We prove only the first inequality; the proof of the second one is similar. We first show 
$$
\int \big|\varsigma_0\big(y_1+z_1, y_2+z_2, y_3+z_3+\xi_j(y_1z_2-y_2z_1)\big)-\varsigma_0(y)\big|\,dy\lesssim |z|.
$$
As a function of $y$,
\begin{align*}
&\quad \text{supp}\,|\varsigma_0\big(y_1+z_1, y_2+z_2, y_3+z_3+\xi_j(y_1z_2-y_2z_1)\big)-\varsigma_0(y)\big|\\
&\subseteq \text{supp}\,\varsigma_0\big(y_1+z_1, y_2+z_2, y_3+z_3+\xi_j(y_1z_2-y_2z_1)\big) \bigcup \text{supp}\,\varsigma_0(y)\\
&\subseteq [-1-|z|, 1+|z|]^2 \times [-(1+|z|)^2, (1+|z|)^2].
\end{align*}
Thus for $|z|\lesssim 1$,
\begin{equation}\label{varsigma0right}
\begin{aligned}
&\quad \int \big|\varsigma_0\big(y_1+z_1, y_2+z_2, y_3+z_3+\xi_j(y_1z_2-y_2z_1)\big)-\varsigma_0(y)\big|\,dy\\
&\lesssim \int_{[-1-|z|, 1+|z|]^2 \times [-(1+|z|)^2, (1+|z|)^2]} \big|\big(y_1+z_1, y_2+z_2, y_3+z_3+\xi_j(y_1z_2-y_2z_1)\big)-y\big|\,dy\\
&\leq \int_{[-1-|z|, 1+|z|]^2 \times [-(1+|z|)^2, (1+|z|)^2]} \big(|z|+\xi_j|y_1z_2-y_2z_1|\big)\,dy\\
&\lesssim |z|,
\end{aligned}
\end{equation}
and similarly,
\begin{equation}\label{varsigma0left}
\int \big|\varsigma_0\big(z_1+y_1, z_2+y_2, z_3+y_3+\xi_j(z_1y_2-z_2y_1)\big)-\varsigma_0(y)\big|\,dy \lesssim |z|.
\end{equation}
By taking $\xi_j=\xi_0$, we obtain the desired inequality for $j=0$.

For $j>0$, by the definition of $\varsigma_j$, 
\begin{align*}
&\quad \int \big|\varsigma_j\big(y_1+z_1, y_2+z_2, y_3+z_3+\xi_j(y_1z_2-y_2z_1)\big)-\varsigma_j(y)\big|\,dy\\
&\leq \int \big|\varsigma_0\big(y_1+z_1, y_2+z_2, y_3+z_3+\xi_j(y_1z_2-y_2z_1)\big)-\varsigma_0(y)\big|\,dy\\
&\quad + \frac{S_j}{S_{j-1}}\int \big|\varsigma_0\big(\widetilde A_{j-1}^{-1} \widetilde A_j\big(y_1+z_1, y_2+z_2, y_3+z_3+\xi_j(y_1z_2-y_2z_1)\big)\big)-\varsigma_0\big(\widetilde A_{j-1}^{-1}\widetilde A_j y\big)\big|\,dy\\
&\lesssim |z|+ \frac{S_j}{S_{j-1}}\int \big|\varsigma_0\big(\widetilde A_{j-1}^{-1} \widetilde A_j\big(y_1+z_1, y_2+z_2, y_3+z_3+\xi_j(y_1z_2-y_2z_1)\big)\big)-\varsigma_0\big(\widetilde A_{j-1}^{-1}\widetilde A_j y\big)\big|\,dy.
\end{align*}
As a function of $y$,
\begin{align*}
\text{supp}\,\varsigma_0\big(\widetilde A_{j-1}^{-1}\widetilde A_j y\big)\subseteq \widetilde A_j^{-1}\widetilde A_{j-1}[-1,1]^3.
\end{align*}
We have
$$
\big|\text{supp}\,\varsigma_0\big(\widetilde A_{j-1}^{-1}\widetilde A_j y\big)\big|\leq \big|\widetilde A_j^{-1}\widetilde A_{j-1}[-1,1]^3\big|=8\frac{\det \widetilde A_{j-1}}{\det \widetilde A_j}=8\frac{S_{j-1}}{S_j}.
$$
And since the Jacobian of the map $y\mapsto (y_1+z_1, y_2+z_2, y_3+z_3+\xi_j(y_1z_2-y_2z_1))$ is $1$, we also have
$$
\big|\text{supp}\,\varsigma_0\big(\widetilde A_{j-1}^{-1} \widetilde A_j\big(y_1+z_1, y_2+z_2, y_3+z_3+\xi_j(y_1z_2-y_2z_1)\big)\big)\big| \leq 8\frac{S_{j-1}}{S_j}.
$$
Thus by denoting $g(y):=\varsigma_0(\widetilde A_{j-1}^{-1} \widetilde A_j y)$,
\begin{align*}
&\quad \frac{S_j}{S_{j-1}}\int \big|\varsigma_0\big(\widetilde A_{j-1}^{-1} \widetilde A_j\big(y_1+z_1, y_2+z_2, y_3+z_3+\xi_j(y_1z_2-y_2z_1)\big)\big)-\varsigma_0\big(\widetilde A_{j-1}^{-1}\widetilde A_j y\big)\big|\,dy\\
&\lesssim \sup_y \big|\varsigma_0\big(\widetilde A_{j-1}^{-1} \widetilde A_j\big(y_1+z_1, y_2+z_2, y_3+z_3+\xi_j(y_1z_2-y_2z_1)\big)\big)-\varsigma_0\big(\widetilde A_{j-1}^{-1}\widetilde A_j y\big)\big|\\
&\leq \sup_y \int_0^1 \Big| \frac{d}{d\theta} \varsigma_0\big(\widetilde A_{j-1}^{-1} \widetilde A_j\big(y_1+\theta z_1, y_2+\theta z_2, y_3+\theta z_3+\theta \xi_j(y_1z_2-y_2z_1)\big)\big)\Big|\,d\theta\\
&\leq \sup_{y, \theta} \Big| z_1\partial_1g \big(y_1+\theta z_1, y_2+\theta z_2, y_3+\theta z_3+\theta \xi_j(y_1z_2-y_2z_1)\big) \\
&\qquad + z_2\partial_2 g\big(y_1+\theta z_1, y_2+\theta z_2, y_3+\theta z_3+\theta \xi_j(y_1z_2-y_2z_1)\big)\\
&\qquad + \big(z_3+\xi_j(y_1z_2-y_2z_1)\big) \partial_3 g\big(y_1+\theta z_1, y_2+\theta z_2, y_3+\theta z_3+\theta \xi_j(y_1z_2-y_2z_1)\big)\Big|\\
&= \sup_{y,\theta} \Big| z_1\big(\partial_1-\xi_j(y_2+\theta z_2)\partial_3\big)g\big(y_1+\theta z_1, y_2+\theta z_2, y_3+\theta z_3+\theta \xi_j(y_1z_2-y_2z_1)\big)\\
&\qquad + z_2\big(\partial_2 +\xi_j(y_1+\theta z_1)\partial_3\big) g\big(y_1+\theta z_1, y_2+\theta z_2, y_3+\theta z_3+\theta \xi_j(y_1z_2-y_2z_1)\big)\\
&\qquad + z_3\partial_3 g\big(y_1+\theta z_1, y_2+\theta z_2, y_3+\theta z_3+\theta \xi_j(y_1z_2-y_2z_1)\big)\Big|\\
&\leq \sup_y|z_1 (\partial_1 -\xi_j y_2\partial_3)g(y)| + \sup_y |z_2(\partial_2 + \xi_j y_1\partial_3)g(y)| + \sup_y |z_3\partial_3 g(y)|.
\end{align*}
By Definition \ref{matrix}, for $y\in \text{supp}\,g \subseteq \widetilde A_j^{-1} \widetilde A_{j-1} [-1,1]^3$,
\begin{align*}
|y_1|&\leq 2,\\
|y_2|&\leq \frac{|\alpha(2^{-j+1})-2\alpha(2^{-j})|}{h_2(2^{-j})} + \frac{h_2(2^{-j+1})}{h_2(2^{-j})}=\frac{2^{-j+1}}{h_2(2^{-j})}\Big|\int_{2^{-j}}^{2^{-j+1}} \big(\frac{\alpha(s)}{s}\big)'\,ds\Big| +  \frac{h_2(2^{-j+1})}{h_2(2^{-j})}\\
&=\frac{2^{-j+1}}{h_2(2^{-j})}\Big|\int_{2^{-j}}^{2^{-j+1}} \frac{h_2(s)}{s^2}\,ds\Big| +  \frac{h_2(2^{-j+1})}{h_2(2^{-j})}\leq 3 \frac{h_2(2^{-j+1})}{h_2(2^{-j})}.
\end{align*}
By Definition \ref{matrix}, 
$$
|\partial_3 g(y)| = \big|(\partial_3\varsigma_0)\big(\widetilde A_{j-1}^{-1} \widetilde A_j y\big)\big|\cdot \frac{2^{-j}h_2(2^{-j}) + \bar h_3(2^{-j})}{2^{-j+1}h_2(2^{-j+1})+\bar h_3(2^{-j+1})} \lesssim \frac{2^{-j}h_2(2^{-j}) + \bar h_3(2^{-j})}{2^{-j+1}h_2(2^{-j+1})+\bar h_3(2^{-j+1})} \leq 1. 
$$
Thus
$$
|\xi_j y_2 \partial_3 g| \lesssim \frac{2^{-j}h_2(2^{-j})}{2^{-j}h_2(2^{-j})+\bar h_3(2^{-j})} \frac{h_2(2^{-j+1})}{h_2(2^{-j})} \frac{2^{-j}h_2(2^{-j}) + \bar h_3(2^{-j})}{2^{-j+1}h_2(2^{-j+1})+\bar h_3(2^{-j+1})} \leq 1.
$$
By (\ref{rivere}),
$$
|\nabla g| \leq \|\widetilde A_{j-1}^{-1} \widetilde A_j\| \cdot |\nabla \varsigma_0| \leq C_0|\nabla \varsigma_0|\lesssim 1.
$$
Therefore
\begin{align*}
&\quad \int \big|\varsigma_j\big(y_1+z_1, y_2+z_2, y_3+z_3+\xi_j(y_1z_2-y_2z_1)\big)-\varsigma_j(y)\big|\,dy\\
&\lesssim |z| + \sup_y|z_1 (\partial_1 -\xi_j y_2\partial_3)g(y)| + \sup_y |z_2(\partial_2 + \xi_j y_1\partial_3)g(y)| + \sup_y |z_3\partial_3 g(y)|\\
&\lesssim |z|.
\end{align*}
\end{proof}

\begin{remark}\label{alsoldeltaone}
By the Baker-Campbell-Hausdorff formula and by (\ref{xi}),
$$
(\widetilde A_j y) \cdot (\widetilde A_j z) = \widetilde A_j \big(y_1+z_1, y_2+z_2, y_3+z_3 + \xi_j(y_1z_2-y_2z_1)\big).
$$
Therefore (\ref{varsigma0right}), (\ref{varsigma0left}), and Lemma \ref{ldeltaone} can be written as: for every $j\in \mathbb{N}$ and every $z\in \mathbb{R}^3$ with $|\widetilde A_j^{-1} z|\lesssim 1$, 
\begin{align*}
\frac{1}{S_j}\int \big| \varsigma_0\big(\widetilde A_j^{-1} (y\cdot z)\big) - \varsigma_0\big(\widetilde A_j^{-1} y\big) \big|\,dy \lesssim \big|\widetilde A_j^{-1} z\big|,\\
\frac{1}{S_j}\int \big| \varsigma_0\big(\widetilde A_j^{-1} (z\cdot y)\big) - \varsigma_0\big(\widetilde A_j^{-1} y\big) \big|\,dy \lesssim \big|\widetilde A_j^{-1} z\big|,\\
\frac{1}{S_j}\int \big| \varsigma_j\big(\widetilde A_j^{-1} (y\cdot z)\big) - \varsigma_j\big(\widetilde A_j^{-1} y\big) \big|\,dy \lesssim \big|\widetilde A_j^{-1} z\big|,\\
\frac{1}{S_j}\int \big| \varsigma_j\big(\widetilde A_j^{-1} (z\cdot y)\big) - \varsigma_j\big(\widetilde A_j^{-1} y\big) \big|\,dy \lesssim \big|\widetilde A_j^{-1} z\big|,
\end{align*}
where the implicit constants are independent of $j$.
\end{remark}

\begin{lemma}\label{cotlarstein}
There exists $\epsilon>0$ (by shrinking the $\epsilon$ in (\ref{rivere})) such that for every $j,k\in \mathbb{N}$,
$$
\|\mathcal{D}_k \mathcal{D}_j^* \|_{2 \to 2}, \quad \|\mathcal{D}_k^* \mathcal{D}_j \|_{2 \to 2}, \quad \|\mathcal{D}_k \mathcal{D}_j \|_{2 \to 2}, \quad \|\mathcal{D}_k^* \mathcal{D}_j^* \|_{2 \to 2} \lesssim 2^{-\epsilon|j-k|},
$$
where the implicit constant is independent of $j,k$.
\end{lemma}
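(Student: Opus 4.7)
The idea is to turn each of the four products into a right-convolution on $\mathbb{H}^1$, bound its $L^2\to L^2$ norm by the $L^1$ norm of the convolution kernel, and extract decay by exploiting the cancellation $\int\varsigma_j=0$ for $j>0$ together with the Heisenberg-scaled Lipschitz estimates of Remark \ref{alsoldeltaone}.

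Since $\|T\|_{2\to 2}=\|T^*\|_{2\to 2}$, I would first reduce to $j\geq k$. For the prototypical case $\mathcal{D}_k\mathcal{D}_j^*$, recall that $\mathcal{D}_jf(x)=\int K_j(y)f(x\cdot y^{-1})\,dy$ with $K_j(y)=S_j^{-1}\varsigma_j(\widetilde A_j^{-1}y)$, and that a short computation (using that Lebesgue measure on $\mathbb{H}^1$ is bi-invariant and inversion-invariant) gives $\mathcal{D}_j^*f(x)=\int K_j^*(y)f(x\cdot y^{-1})\,dy$ with $K_j^*(y):=\overline{K_j(y^{-1})}$. The same type of change of variables rewrites the composition as
\[
\mathcal{D}_k\mathcal{D}_j^*f(x)=\int G(v)\,f(x\cdot v^{-1})\,dv,\qquad G(v)=\int K_k(y)\,K_j^*(v\cdot y^{-1})\,dy.
\]
Assuming $j>0$, so that $\int K_j^*=0$, I would subtract $K_k(v)\int K_j^*(v\cdot y^{-1})\,dy=0$ inside the integral and then apply the left translation $u=v\cdot y^{-1}$ and Fubini to obtain
\[
\|G\|_{L^1}\leq \int |K_j^*(u)|\Big(\int |K_k(u^{-1}\cdot v)-K_k(v)|\,dv\Big)\,du.
\]

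The heart of the estimate is the inner integral, which is precisely the object controlled by Remark \ref{alsoldeltaone}: it is $\lesssim |\widetilde A_k^{-1}u^{-1}|=|\widetilde A_k^{-1}u|$ whenever the latter is $O(1)$, the last equality using that group inversion on $\mathbb{H}^1$ is coordinate negation. On $\mathrm{supp}\,K_j^*$ one has $|\widetilde A_j^{-1}u|\lesssim 1$ by the support description \eqref{support}, so the Frobenius bound \eqref{rivere} converts this into
\[
|\widetilde A_k^{-1}u|\leq \|\widetilde A_k^{-1}\widetilde A_j\|\cdot|\widetilde A_j^{-1}u|\leq C_0\,2^{-\epsilon(j-k)},
\]
which is simultaneously $O(1)$ (so the regularity estimate applies) and supplies the decay. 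Combined with the uniform bound $\|K_j^*\|_{L^1}=\|\varsigma_j\|_{L^1}\lesssim 1$, this yields $\|G\|_{L^1}\lesssim 2^{-\epsilon(j-k)}$, and hence by Minkowski's inequality the claimed bound on $\mathcal{D}_k\mathcal{D}_j^*$.

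The remaining three products have kernels of exactly the same shape $\int A(y)B(v\cdot y^{-1})\,dy$ with $A\in\{K_k,K_k^*\}$ and $B\in\{K_j,K_j^*\}$; for $j>0$ the index-$j$ factor has mean zero, and the two statements of Remark \ref{alsoldeltaone} (covering left- and right-translation regularity of $\varsigma_k$) jointly handle both $K_k$ and $K_k^*$, so the identical manipulation applies verbatim. The degenerate case $j=k=0$ is absorbed into $\|\mathcal{D}_0\|_{2\to 2}\lesssim 1$. I expect the main difficulty to be purely bookkeeping: the Lipschitz-type bound in Remark \ref{alsoldeltaone} is phrased in the anisotropic $\widetilde A_k$-metric while the support control on $\varsigma_j$ lives in the $\widetilde A_j$-metric, and aligning the two so that $\|\widetilde A_k^{-1}\widetilde A_j\|$ multiplied by the $\widetilde A_j$-size of the support of $K_j^*$ yields precisely the factor $2^{-\epsilon|j-k|}$ (rather than, say, absorbing the gain into a stray constant) is where the care is required, but the Frobenius inequality \eqref{rivere} was designed exactly to make this alignment work.
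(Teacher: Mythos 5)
Your plan is essentially the same route the paper takes for the nontrivial case: use the cancellation $\int\varsigma_j=0$ for the smaller‐scale factor, rewrite the composed kernel as an integral of a difference, and control the difference by the anisotropic Lipschitz estimate of Remark~\ref{alsoldeltaone} together with the Frobenius bound~\eqref{rivere}; your bounding of $\|G\|_{L^1}$ directly is a cosmetic improvement over the paper's $L^\infty\to L^\infty$ estimate plus interpolation (you avoid halving~$\epsilon$), but it is the same argument at heart.

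There is, however, one step that does not hold as written. You claim that $|\widetilde A_j^{-1}u|\lesssim 1$ on $\mathrm{supp}\,K_j^*$ ``by the support description \eqref{support}.'' But \eqref{support} places $\mathrm{supp}\,\varsigma_j\subseteq[-1,1]^3\cup\bigl(\widetilde A_j^{-1}\widetilde A_{j-1}[-1,1]^3\bigr)$, and the second piece can be unbounded in $j$: the Frobenius inequality~\eqref{rivere} only controls $\|\widetilde A_{j-1}^{-1}\widetilde A_j\|$, not its reverse $\|\widetilde A_j^{-1}\widetilde A_{j-1}\|$, and nothing in the hypotheses prevents, say, $h_2(2^{-j+1})/h_2(2^{-j})\to\infty$. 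Consequently the chain $|\widetilde A_k^{-1}u|\leq\|\widetilde A_k^{-1}\widetilde A_j\|\,|\widetilde A_j^{-1}u|$ is not bounded by $C_0 2^{-\epsilon(j-k)}$. The correct version, which is what the paper actually does, is to skip the intermediate $\widetilde A_j$-estimate: for $u\in\mathrm{supp}\,K_j^*$ one has $u\in-\bigl(\widetilde A_j[-1,1]^3\cup\widetilde A_{j-1}[-1,1]^3\bigr)$, hence
\[
\widetilde A_k^{-1}u\in \widetilde A_k^{-1}\widetilde A_j[-1,1]^3\ \cup\ \widetilde A_k^{-1}\widetilde A_{j-1}[-1,1]^3,
\]
and \eqref{rivere} applied to $\widetilde A_k^{-1}\widetilde A_j$ and to $\widetilde A_k^{-1}\widetilde A_{j-1}$ (which is legitimate because $j>k$ forces $j-1\ge k$) yields $|\widetilde A_k^{-1}u|\lesssim 2^{-\epsilon(j-1-k)}\lesssim 2^{-\epsilon(j-k)}$. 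With this repair, the rest of your argument — including the extension to the other three products via the left/right versions of Remark~\ref{alsoldeltaone} — goes through.
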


\begin{proof}
\textbf{Case } $\mathbf{k=j}$: due to $\|\mathcal{D}_j\|_{2\to 2}, \|\mathcal{D}_k\|_{2\to 2} \lesssim 1$.

\textbf{Case } $\mathbf{k>j}$: we have $k>0$ and thus $\int \varsigma_k(y)\,dy =0$. If $z\in \text{supp}\,\varsigma_k$, by (\ref{support}),
$$
z\in [-1,1]^3 \bigcup \big(\widetilde A_k^{-1} \widetilde A_{k-1} [-1,1]^3\big).
$$
Denote by $\tilde z$ the point satisfying
$$
z_1 X_k + z_2 Y_k + z_3 T_k = \tilde z_1 X_j + \tilde z_2 Y_j + \tilde z_3 T_j.
$$
Then $\widetilde A_k z = \widetilde A_j \tilde z$. Thus
$$
\tilde z= \widetilde A_j^{-1} \widetilde A_k z \in \big(\widetilde A_j^{-1} \widetilde A_k[-1,1]^3\big) \bigcup \big(\widetilde A_j^{-1} \widetilde A_k \widetilde A_k^{-1} \widetilde A_{k-1} [-1,1]^3\big) =\big(\widetilde A_j^{-1} \widetilde A_k[-1,1]^3\big) \bigcup \big(\widetilde A_j^{-1} \widetilde A_{k-1}[-1,1]^3\big).
$$
Hence by (\ref{rivere}),
$$
|\tilde z|\lesssim 2^{-\epsilon|j-k|}.
$$
Since $\int \varsigma_k=0$, by  the Baker-Campbell-Hausdorff formula and (\ref{xi}), we have
\begin{align*}
&\quad |\mathcal{D}_k^* \mathcal{D}_jf(x)|\\ &=\frac{1}{S_jS_k} \Big|\iint f(x\cdot z \cdot y^{-1})\varsigma_j(\widetilde A_j^{-1}y) \bar \varsigma_k(\widetilde A_k^{-1}z) \,dy\,dz\Big|\\
&= \Big|\iint f(x\cdot e^{z_1 X_k + z_2 Y_k + z_3 T_k} e^{-y_1X_j - y_2 Y_j - y_3 T_j}0) \varsigma_j(y) \bar \varsigma_k(z)\,dy\,dz\Big| \\
&= \Big|\iint f(x\cdot e^{(\tilde z_1-y_1)X_j + (\tilde z_2-y_2)Y_j + \big(\tilde z_3 - y_3 -\xi_j (\tilde z_1 y_2 -\tilde z_2 y_1)\big)T_j}0)\varsigma_j(y) \bar \varsigma_k(z)\,dy\,dz\\
&\quad  - \iint f(x\cdot e^{-y_1X_j -y_2Y_j  - y_3 T_j}0)\varsigma_j(y) \bar \varsigma_k(z)\,dy\,dz\Big|\\
&\lesssim \sup_{z\in \text{supp}\,\varsigma_k}\Big|\int f(x\cdot e^{-y_1X_j -y_2Y_j  - y_3 T_j}0) \big(\varsigma_j\big(y_1 + \tilde z_1, y_2 + \tilde z_2, y_3 + \tilde z_3+ \xi_j(y_1\tilde z_2-y_2\tilde z_2)\big) -\varsigma_j(y)\big)\,dy \Big|\\
&\leq \|f\|_\infty \sup_{z\in \text{supp}\,\varsigma_k} \int \big|\varsigma_j\big(y_1 + \tilde z_1, y_2 + \tilde z_2, y_3 + \tilde z_3+ \xi_j(y_1\tilde z_2-y_2\tilde z_2)\big) -\varsigma_j(y)\big|\,dy \\
&\lesssim 2^{-\epsilon|j-k|} \|f\|_\infty,
\end{align*}
where the last inequality is due to Lemma \ref{ldeltaone}. Thus $\|\mathcal{D}_k^*\mathcal{D}_j\|_{\infty \to \infty} \lesssim 2^{-\epsilon|j-k|}$. Interpolate with  $\|\mathcal{D}_k^*\mathcal{D}_j\|_{1\to 1} \lesssim 1$, we have
$$
\|\mathcal{D}_k^* \mathcal{D}_j \|_{2\to 2} \lesssim 2^{-\epsilon|j-k|},
$$
where the value of $\epsilon$ changes but is still independent of $j,k$. Similarly, 
$$
\|\mathcal{D}_k \mathcal{D}_j^* \|_{2\to 2}, \quad \|\mathcal{D}_k \mathcal{D}_j \|_{2\to 2}, \quad \|\mathcal{D}_k^* \mathcal{D}_j^* \|_{2\to 2} \lesssim 2^{-\epsilon|j-k|}.
$$

\textbf{Case } $\mathbf{k<j}$: this is obtained by taking the adjoint of the operators $\mathcal{D}_k^*\mathcal{D}_j$, $\mathcal{D}_k\mathcal{D}_j^*$, $\mathcal{D}_k\mathcal{D}_j$, $\mathcal{D}_k^*\mathcal{D}_j^*$ in Case $k>j$.
\end{proof}

Define 
$$
T=\sum_{j=0}^N \sigma_j \mathcal{D}_j,
$$
where $N\in \mathbb{N}\cup\{\infty\}$, and the $\sigma_j= \pm 1$ are the Rademacher functions. By the Cotlar-Stein Lemma, $T$ is bounded on $L^2(\mathbb{H}^1)$, with a bound independent of the choice of $N$ and $\{\sigma_j\}$. 

We now apply Lemma \ref{CZ} and obtain the following Littlewood-Paley inequality:

\begin{proposition}\label{littlewoodpaley}
For every $1<p<\infty$,
\begin{align*}
\Big\|\Big(\sum_{j=0}^\infty |\mathcal{D}_jf|^2\Big)^{\frac{1}{2}} \Big\|_p \lesssim \|f\|_p,\quad \forall f\in L^p(\mathbb{H}^1)\\
\Big\|\Big(\sum_{j=0}^\infty |\mathcal{D}_j^*f|^2\Big)^{\frac{1}{2}} \Big\|_p \lesssim \|f\|_p, \quad \forall f\in L^p(\mathbb{H}^1).
\end{align*}
\end{proposition}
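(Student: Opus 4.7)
The plan is to apply Lemma \ref{CZ} to the randomized partial-sum operator $T^N_\sigma := \sum_{j=0}^N \sigma_j \mathcal{D}_j$ with $\sigma_j \in \{\pm 1\}$, deduce its $L^p$ boundedness uniformly in $N$ and $\sigma$, and then average over Rademacher functions via Khintchine's inequality to recover the square function bound. The companion estimate for $\mathcal{D}_j^*$ follows by applying the same argument to $\sum_j \sigma_j \mathcal{D}_j^*$.

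First, by Lemma \ref{cotlarstein} and the Cotlar-Stein lemma, $T^N_\sigma$ is bounded on $L^2(\mathbb{H}^1)$ uniformly in $N$ and $\sigma$. Its convolution kernel is $K^N_\sigma(y) = \sum_{j=0}^N \sigma_j S_j^{-1} \varsigma_j(\widetilde A_j^{-1} y)$. To verify (\ref{left}) of Lemma \ref{CZ} uniformly in $l \in \mathbb{Z}$ and $y \in B_{t_l}(0, 6C_0)$, I would choose $j_l$ with $2^{-j_l} \asymp t_l$ and split the sum into coarse scales $j \leq j_l$ and fine scales $j > j_l$. For coarse scales, the regularity estimates of Remark \ref{alsoldeltaone} combined with (\ref{rivere}), after writing $y = \widetilde A(t_l) z$ with $|z| \lesssim 1$, give
$$
\frac{1}{S_j}\int \bigl|\varsigma_j\bigl(\widetilde A_j^{-1}(y \cdot x)\bigr) - \varsigma_j\bigl(\widetilde A_j^{-1} x\bigr)\bigr|\, dx \lesssim \bigl|\widetilde A_j^{-1} y\bigr| \lesssim 2^{-\epsilon(j_l - j)},
$$
which is summable in $j$. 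For fine scales, (\ref{support}) and (\ref{smallalsoinbig}) place the support of $x \mapsto \varsigma_j(\widetilde A_j^{-1} x)$ inside $B_{t_l}(0, 2C_0)$, and (\ref{alsotriangle}) then puts its left-translate by $y \in B_{t_l}(0, 6C_0)$ inside $B_{t_l}(0, 8C_0) = B_{t_l}(0, A_0)$; both terms of the difference therefore vanish on $B_{t_l}(0, A_0)^C$. Condition (\ref{right}) is handled identically using the right-translation versions of the estimates in Remark \ref{alsoldeltaone}.

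Lemma \ref{CZ} then yields $\|T^N_\sigma\|_{L^p \to L^p} \leq C_p$ uniformly in $N$ and $\sigma$ for each $1 < p < \infty$. Replacing the signs by Rademacher functions $r_j(t)$ on $[0,1]$ and applying Khintchine pointwise in $x$,
$$
\Bigl(\sum_{j=0}^N |\mathcal{D}_j f(x)|^2\Bigr)^{p/2} \sim \int_0^1 \Bigl|\sum_{j=0}^N r_j(t) \mathcal{D}_j f(x)\Bigr|^p dt;
$$
integrating in $x$ and invoking the uniform $L^p$ bound yields $\bigl\|(\sum_{j=0}^N |\mathcal{D}_j f|^2)^{1/2}\bigr\|_p \lesssim \|f\|_p$. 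Letting $N \to \infty$ and using monotone convergence completes the first inequality, and the identical argument with each $\mathcal{D}_j$ replaced by $\mathcal{D}_j^*$ gives the second.

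I expect the main obstacle to be the two-regime splitting of the kernel. Because (\ref{smallinbig}) is valid only for $r \geq 1$, the coarse and fine scales must be treated by genuinely different mechanisms---H\"older-type regularity from Remark \ref{alsoldeltaone} on one side, disjointness of supports from the complement region on the other---and the two regimes dovetail at the threshold $j_l$ only because Definition \ref{notation} sets $A_0 = 8C_0$, which leaves exactly the $2C_0$ margin that (\ref{alsotriangle}) demands after a $6C_0$ translation by $y$.
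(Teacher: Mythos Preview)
Your proposal is correct and follows essentially the same route as the paper: Cotlar--Stein for $L^2$, verification of the kernel conditions in Lemma \ref{CZ} via a coarse/fine scale split, and then Khintchine. The only difference is a bookkeeping one: the paper first unpacks each $\varsigma_j$ (for $j>0$) into its two $\varsigma_0$ components via Definition \ref{auxiliary}, so that every term in the kernel has support exactly $B_{2^{-j}}(0,1)$ and the threshold $2^{-j}\gtrless t_l$ is clean; you instead work with $\varsigma_j$ directly, whose support by (\ref{support}) sits in $B_{2^{-j}}(0,1)\cup B_{2^{-j+1}}(0,1)$, so at the single boundary index $j$ with $2^{-j}<t_l\le 2^{-j+1}$ your support inclusion into $B_{t_l}(0,2C_0)$ need not hold for the $j-1$ piece. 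This is harmless (that one term is trivially $O(1)$), but it is worth noting that the paper's telescoping into $\varsigma_0$ is what makes the split land exactly.
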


\begin{proof}
We only prove the first inequality; the second one follows similarly. Once we prove $T$ is bounded on $L^p(\mathbb{H}^1)$ for $1<p<\infty$ with bounds independent of the choice of $N$ and $\{\sigma_j\}$, by Khintchine's inequality, for every $0<N'<\infty$,
\begin{align*}
\Big\| \Big(\sum_{j=0}^{N'} \big|\mathcal{D}_jf\big|^2\Big)^{\frac{1}{2}} \Big\|_p \lesssim \Big\| \Big( \mathbb{E}\Big|\sum_{j=1}^{N'} \sigma_j\mathcal{D}_jf\Big|^p\Big)^{\frac{1}{p}} \Big\|_p = \big(\mathbb{E}\|Tf\|_p^p \big)^{\frac{1}{p}} \lesssim \|f\|_p,
\end{align*}
where the implicit constants are independent of $N'$. Then by the monotone convergence theorem,
$$
\Big\| \Big(\sum_{j=0}^\infty \big|\mathcal{D}_jf\big|^2\Big)^{\frac{1}{2}} \Big\|_p \lesssim \|f\|_p.
$$

We have
$$
Tf(x) = \sum_{j=0}^N \sigma_j \mathcal{D}_jf(x)= \int f(x\cdot y^{-1}) \sum_{j=0}^N \frac{\sigma_j}{S_j} \varsigma_j(\widetilde A_j^{-1} y)\,dy.
$$
By Lemma \ref{CZ}, it suffices to verify the kernel 
$$
K(y):=\sum_{j=0}^N \frac{\sigma_j}{S_j} \varsigma_j\big(\widetilde A_j^{-1} y\big)
$$
satisfies (\ref{left}) and (\ref{right}). Fix arbitrary $l\in \mathbb{Z}$ and arbitrary $y\in B_{t_l}(0, 6C_0)$, we need to bound the following:
\begin{align*}
&\quad \int_{B_{t_l}(0,A_0)^C} |K(y\cdot x)-K(x)|\,dx + \int_{B_{t_l}(0,A_0)^C} |K(x\cdot y)-K(x)|\,dx\\
&\leq \sum_{j=0}^\infty \int_{B_{t_l}(0,A_0)^C} \Big(\frac{1}{S_j} \big|\varsigma_j\big(\widetilde A_j^{-1}(y\cdot x)\big)-\varsigma_j\big(\widetilde A_j^{-1} x\big)\big| + \frac{1}{S_j} \big|\varsigma_j\big(\widetilde A_j^{-1}(x\cdot y)\big)-\varsigma_j\big(\widetilde A_j^{-1} x\big)\big|\Big)\,dx \\
&\leq \int_{B_{t_l}(0,A_0)^C} \Big(\frac{1}{S_0} \big|\varsigma_0\big(\widetilde A_0^{-1}(y\cdot x)\big)-\varsigma_0\big(\widetilde A_0^{-1} x\big)\big| + \frac{1}{S_0} \big|\varsigma_0\big(\widetilde A_0^{-1}(x\cdot y)\big)-\varsigma_0\big(\widetilde A_0^{-1} x\big)\big|\Big)\,dx\\
&\quad +\sum_{j>0} \int_{B_{t_l}(0,A_0)^C} \Big(\frac{1}{S_j} \big|\varsigma_0\big(\widetilde A_j^{-1}(y\cdot x)\big)-\varsigma_0\big(\widetilde A_j^{-1} x\big)\big| + \frac{1}{S_{j-1}} \big|\varsigma_0\big(\widetilde A_{j-1}^{-1}(y\cdot x)\big)-\varsigma_0\big(\widetilde A_{j-1}^{-1} x\big)\big|\\
&\qquad \qquad \qquad \qquad +\frac{1}{S_j} \big|\varsigma_0\big(\widetilde A_j^{-1}(x\cdot y)\big)-\varsigma_0\big(\widetilde A_j^{-1} x\big)\big|+\frac{1}{S_{j-1}} \big|\varsigma_0\big(\widetilde A_{j-1}^{-1}(x\cdot y)\big)-\varsigma_0\big(\widetilde A_{j-1}^{-1} x\big)\big|\Big)\,dx \\
&=2\sum_{j:2^{-j}\geq t_l} \frac{1}{S_j} \int_{B_{t_l}(0,A_0)^C} \Big( \big|\varsigma_0\big(\widetilde A_j^{-1}(y\cdot x)\big)-\varsigma_0\big(\widetilde A_j^{-1} x\big)\big| + \big|\varsigma_0\big(\widetilde A_j^{-1}(x\cdot y)\big)-\varsigma_0\big(\widetilde A_j^{-1} x\big)\big|\Big)\,dx\\
&\quad + 2\sum_{j:2^{-j}< t_l} \frac{1}{S_j} \int_{B_{t_l}(0,A_0)^C} \Big( \big|\varsigma_0\big(\widetilde A_j^{-1}(y\cdot x)\big)-\varsigma_0\big(\widetilde A_j^{-1} x\big)\big| + \big|\varsigma_0\big(\widetilde A_j^{-1}(x\cdot y)\big)-\varsigma_0\big(\widetilde A_j^{-1} x\big)\big|\Big)\,dx.
\end{align*}

Denote 
$$
\tilde y= \widetilde A(t_l)^{-1} y\in \widetilde A(t_l)^{-1}B_{t_l}(0,6C_0)= [-6C_0, 6C_0]^2\times [-36C_0^2, 36C_0^2].
$$
For $2^{-j} \geq t_l$, by (\ref{rivere}),
$$
\big|\widetilde A_j^{-1} y \big|= \big|\widetilde A_j^{-1} \widetilde A(t_l) \tilde y\big| \lesssim \big(\frac{t_l}{2^{-j}}\big)^\epsilon \leq 1.
$$
By Remark \ref{alsoldeltaone},
\begin{align*}
&\quad \sum_{j:2^{-j}\geq t_l} \frac{1}{S_j}\int_{B_{t_l}(0,A_0)^C} \Big(\big|\varsigma_0\big(\widetilde A_j^{-1}(y\cdot x)\big)-\varsigma_0\big(\widetilde A_j^{-1} x\big)\big| +\big|\varsigma_0\big(\widetilde A_j^{-1}(x\cdot y)\big)-\varsigma_0\big(\widetilde A_j^{-1} x\big)\big|\Big)\,dx \\
&\lesssim \sum_{j:2^{-j}\geq t_l} \big|\widetilde A_j^{-1} y \big|\lesssim 1.
\end{align*}

Recall from Definition \ref{notation} that $A_0=8C_0$. By (\ref{invariantmetric}), if $\rho_l(0,x)\geq A_0$ and $\rho_l(0,y)<6C_0$, then 
$$
\rho_l(x,x\cdot y) = \rho_l(0,y)<6C_0, \quad \rho_l(y, y\cdot x)=\rho_l(0,x) \geq A_0,
$$
and thus 
\begin{align*}
\rho_l(0,x\cdot y) &\geq \rho_l(0, x) - \rho_l(x, x\cdot y) > A_0-6C_0=2C_0,\\
\rho_l(0,y\cdot x) &\geq \rho_l(y, y\cdot x)-\rho_l(0,y) > A_0-6C_0 = 2C_0.
\end{align*}
Therefore
\begin{align*}
&\quad \sum_{j:2^{-j}< t_l} \frac{1}{S_j} \int_{B_{t_l}(0,A_0)^C} \Big( \big|\varsigma_0\big(\widetilde A_j^{-1}(y\cdot x)\big)-\varsigma_0\big(\widetilde A_j^{-1} x\big)\big| + \big|\varsigma_0\big(\widetilde A_j^{-1}(x\cdot y)\big)-\varsigma_0\big(\widetilde A_j^{-1} x\big)\big|\Big)\,dx\\
&\leq \sum_{j:2^{-j}< t_l} \int_{B_{t_l}(0,A_0)^C} \frac{\big|\varsigma_0\big(\widetilde A_j^{-1}(y\cdot x)\big)\big|+\big|\varsigma_0\big(\widetilde A_j^{-1} x\big)\big|+ \big|\varsigma_0\big(\widetilde A_j^{-1}(x\cdot y)\big)\big|+\big|\varsigma_0\big(\widetilde A_j^{-1} x\big)\big|}{S_j} \,dx\\
&\leq 4\sum_{j:2^{-j}< t_l} \int_{B_{t_l}(0,2C_0)^C} \frac{\big|\varsigma_0\big(\widetilde A_j^{-1} x\big)\big|}{S_j} \,dx\\
&=0,
\end{align*}
where the last equality follows because for $2^{-j}< t_l$,
$$
\text{supp}\,\varsigma_0\big(\widetilde A_j^{-1} \cdot \big) \subseteq \widetilde A_j[-1,1]^3 = B_{2^{-j}}(0,1)\subseteq B_{t_l}(0,2C_0).
$$
\end{proof}

To obtain the reverse Littlewood-Paley inequality, we need:
\begin{definition}[Street and Stein \cite{LP}]
For notational convenience, we define $\mathcal{D}_j=0$ for $j\in \mathbb{Z}\backslash\mathbb{N}$. Let
$$
\mathcal{U}_M = \sum_{\substack{j,k\in \mathbb{Z}\\ |j-k|\leq M}} \mathcal{D}_j \mathcal{D}_k, \quad \mathcal{R}_M = \sum_{\substack{j,k\in \mathbb{Z}\\ |j-k|>M}} \mathcal{D}_j \mathcal{D}_k.
$$
\end{definition}

By (\ref{identity}),
$$
\mathcal{U}_M+\mathcal{R}_M=\text{Id}.
$$
By Theorem 11.1 in \cite{LP}, Lemma \ref{cotlarstein} and Proposition \ref{littlewoodpaley} imply that for every $1<p<\infty$, there exists $M=M(p)$ such that $\|\mathcal{R}_M\|_{p\to p}<1$, and thus
$$
\mathcal{V}_M := \sum_{m=0}^\infty \mathcal{R}_M^m,
$$
convergent in the uniform operator topology, is the inverse operator of $\mathcal{U}_M$. For this $1<p<\infty$ and $M=M(p)$, denoting by $p'$ the dual of $p$, the adjoint $\mathcal{V}_M^*$ is bounded on $L^{p'}(\mathbb{H}^1)$. Same as in \cite{LP}, Proposition \ref{littlewoodpaley} implies
\begin{align*}
|\langle g,f\rangle| &= |\langle \mathcal{V}_M^* g, \mathcal{U}_Mf\rangle| \leq \sum_{|i|\leq M} \Big|\sum_{\substack{j,k\in \mathbb{Z}\\ |j-k|=i}} \langle \mathcal{D}_j^* \mathcal{V}_M^* g, \mathcal{D}_kf\rangle\Big|\\
&\leq (2M+1) \Big\| \Big( \sum_{j\in \mathbb{Z}} |\mathcal{D}_j^*\mathcal{V}_M^*g|^2\Big)^{\frac{1}{2}}\Big\|_{p'} \Big\| \Big(\sum_{k\in \mathbb{Z}} |\mathcal{D}_kf|^2\Big)^{\frac{1}{2}} \Big\|_p\\
&\lesssim \|g\|_{p'} \Big\| \Big(\sum_{k\in \mathbb{Z}} |\mathcal{D}_kf|^2\Big)^{\frac{1}{2}} \Big\|_p.
\end{align*}
Hence we have
\begin{corollary}[Littlewood-Paley theory]\label{completelittlewood}
For every $1<p<\infty$,
\begin{align*}
&\|f\|_p \lesssim \Big\|\Big(\sum_{j=0}^\infty |\mathcal{D}_jf|^2\Big)^{\frac{1}{2}} \Big\|_p \lesssim \|f\|_p,\quad \forall f\in L^p(\mathbb{H}^1),\\
&\Big\| \Big(\sum_{j=0}^\infty |\mathcal{D}_jg_j|^2\Big)^{\frac{1}{2}}\Big\|_p \lesssim \Big\| \Big(\sum_{j=0}^\infty|g_j|^2\Big)^{\frac{1}{2}}\Big\|_p, \quad \forall \{g_j\}\in L^p(\mathbb{H}^1, l^2(\mathbb{N})).
\end{align*}
\end{corollary}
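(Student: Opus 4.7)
The first double inequality is essentially in hand: the upper bound $\|(\sum_j |\mathcal{D}_j f|^2)^{1/2}\|_p \lesssim \|f\|_p$ is exactly Proposition \ref{littlewoodpaley}, while the lower bound $\|f\|_p \lesssim \|(\sum_j |\mathcal{D}_j f|^2)^{1/2}\|_p$ is obtained by taking the supremum over unit vectors $g \in L^{p'}(\mathbb{H}^1)$ in the duality estimate $|\langle g,f\rangle| \lesssim \|g\|_{p'} \|(\sum_k |\mathcal{D}_k f|^2)^{1/2}\|_p$ derived immediately before the corollary (and discarding the negative indices, which contribute zero). So the only task that requires fresh work is the second (vector-valued) inequality.

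For the vector-valued inequality, the plan is to combine Khintchine's inequality with a duality argument. Let $\{\epsilon_j\}$ be independent Rademacher variables. Applying Khintchine pointwise in $x \in \mathbb{H}^1$ and then integrating,
$$\Big\|\Big(\sum_j |\mathcal{D}_j g_j|^2\Big)^{1/2}\Big\|_p^p \approx \mathbb{E}_\epsilon \Big\|\sum_j \epsilon_j \mathcal{D}_j g_j \Big\|_p^p = \mathbb{E}_\epsilon \Big\|\sum_j \mathcal{D}_j(\epsilon_j g_j)\Big\|_p^p,$$
where the last equality uses the linearity of each $\mathcal{D}_j$. Since $|\epsilon_j g_j| = |g_j|$ pointwise, this reduces the goal to the scalar-valued bound
$$\Big\|\sum_j \mathcal{D}_j h_j\Big\|_p \lesssim \Big\|\Big(\sum_j |h_j|^2\Big)^{1/2}\Big\|_p, \quad \forall \{h_j\} \in L^p(\mathbb{H}^1, l^2(\mathbb{N})),$$
to be applied with $h_j := \epsilon_j g_j$.

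This displayed bound is precisely the adjoint of the second estimate in Proposition \ref{littlewoodpaley}. Namely, define $\mathbf{S}: L^{p'}(\mathbb{H}^1) \to L^{p'}(\mathbb{H}^1, l^2(\mathbb{N}))$ by $\mathbf{S} f = \{\mathcal{D}_j^* f\}_j$; its boundedness is exactly Proposition \ref{littlewoodpaley} at exponent $p'$. Computing the adjoint with respect to the natural pairing (integration on $\mathbb{H}^1$ combined with the $l^2$ inner product in the sequence index) gives $\mathbf{S}^* \{h_j\} = \sum_j \mathcal{D}_j h_j$, mapping $L^p(\mathbb{H}^1, l^2(\mathbb{N})) \to L^p(\mathbb{H}^1)$ with the same norm as $\mathbf{S}$. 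This yields the required inequality.

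I do not expect any substantive obstacle. The one piece of routine bookkeeping is the passage from finite to infinite sums: one applies Khintchine and the duality argument first to truncations $j \leq N$, for which every step above is literal, and then lets $N \to \infty$ by the monotone convergence theorem on both sides, exactly as in the final step of the proof of Proposition \ref{littlewoodpaley}.
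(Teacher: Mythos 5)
Your proposal is correct and follows essentially the same route as the paper: the upper bound is Proposition \ref{littlewoodpaley}, and the lower bound is exactly the duality chain through $\mathcal{U}_M$ and $\mathcal{V}_M$ displayed immediately before the corollary. For the vector-valued inequality the paper simply defers to the argument of Street and Stein; your Khintchine-plus-adjoint argument is the standard way to fill in that step and is sound, including the identification $\mathbf{S}^*\{h_j\}=\sum_j \mathcal{D}_j h_j$ and the truncation/monotone-convergence bookkeeping.
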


\section{Key estimates: \texorpdfstring{$TT^*$}{TEXT} method}\label{section4}
The goal of this section is to prove Theorem \ref{main} below. We can decompose the kernel
\begin{equation}\label{kerneldecompose}
\frac{1}{t}=\sum_{j=0}^\infty 2^j\phi_j(2^jt), \quad \text{ for } t\in [-1,1],  
\end{equation}
where $\{\phi_j\}_{j\in\mathbb{N}}$ is bounded in the Fr\'echet space $C_c^\infty([-8,-2]\cup [2,8])$, and 
$$
\int \phi_j(t)\,dt =0, \quad \forall j>0,
$$
(see Remark \ref{decompose}). Fix $\{\phi_j\}_{j\in \mathbb{N}}$.

\begin{definition}
For each $j\in \mathbb{N}$, define
\begin{align*}
\mathcal{H}_j f(x)&= \int_{-1}^1 f\big(x\cdot \Gamma(t)^{-1}\big)\cdot 2^j\phi_j(2^jt)\,dt =\int_{-2^j}^{2^j} f\big(x\cdot \Gamma(2^{-j}t)^{-1}\big) \phi_j(t)\,dt\\
&=\int_{-2^j}^{2^j} f\big(x\cdot e^{-2^{-j}tX-\alpha(2^{-j}t)Y-\beta(2^{-j}t)T}0\big)\,\phi_j(t)\,dt.
\end{align*}
\end{definition}
Thus $\mathcal{H}_\Gamma = \sum_{j\in \mathbb{N}}\mathcal{H}_j$. Note $\|2^j\phi_j(2^j\cdot)\|_1=\|\phi_j\|_1$ is bounded. Thus $\|\mathcal{H}_j\|_{p\to p} \lesssim 1$ for every $1\leq p\leq \infty$, where the implicit constant is independent of $j$. Since $\text{supp}\,\phi_j\subseteq [-8,-2]\cup [2,8]$, for $j\geq 3$, 
$$
\mathcal{H}_jf(x) =\int f\big(x\cdot  e^{-2^{-j}tX-\alpha(2^{-j}t)Y -\beta(2^{-j}t)T}0 \big)\phi_j(t)\,dt.
$$

As in Section \ref{finalsection}, to obtain the $L^p(\mathbb{H}^1)$ boundedness of $\mathcal{H}_\Gamma$ and $\mathcal{M}_\Gamma$, it suffices to prove the following theorem:
\begin{theorem}\label{main}
There exists $\epsilon>0$ (by shrinking the $\epsilon$ in (\ref{rivere})) such that for every $j,k\in \mathbb{N}$ with $j\geq 2$,
$$
\|\mathcal{H}_j^* \mathcal{D}_k\|_{2 \to 2}, \quad  \|\mathcal{H}_j \mathcal{D}_k^*\|_{2 \to 2} \lesssim 2^{-\epsilon|j-k|},
$$
where the implicit constant is independent of $j,k$.
\end{theorem}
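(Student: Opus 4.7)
The case $|j-k|\le C$ for a fixed constant $C$ is trivial, since $\|\mathcal{H}_j\|_{2\to 2}\lesssim \|\phi_j\|_1\lesssim 1$ and $\|\mathcal{D}_k\|_{2\to 2}\lesssim 1$ by Section~\ref{littlewood}, so the content is to extract exponential decay in $|j-k|$ when $|j-k|$ is large. My approach is the $TT^*$ identity
$$
\|\mathcal{H}_j^*\mathcal{D}_k\|_{2\to 2}^{2}=\|\mathcal{D}_k^*\mathcal{H}_j\mathcal{H}_j^*\mathcal{D}_k\|_{2\to 2},\qquad \|\mathcal{H}_j\mathcal{D}_k^*\|_{2\to 2}^{2}=\|\mathcal{D}_k\mathcal{H}_j^*\mathcal{H}_j\mathcal{D}_k^*\|_{2\to 2},
$$
which replaces $\mathcal{H}_j$ by $\mathcal{H}_j\mathcal{H}_j^*$, right convolution with the push-forward measure
$$
\mu_j=\iint \phi_j(t)\,\overline{\phi_j(s)}\,\delta_{\Gamma(2^{-j}s)^{-1}\cdot\,\Gamma(2^{-j}t)}\,dt\,ds,
$$
and analogously for $\mathcal{H}_j^*\mathcal{H}_j$. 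Since $\mu_j$ is supported on a two-dimensional surface inside the three-dimensional group $\mathbb{H}^1$, it is not absolutely continuous, so I would iterate the $TT^*$ step once or twice more (trading only a fixed root on the final bound). For the iterated convolution $\mu_j^{*N}$ with $N$ large enough, the $2N$-parameter map into $\mathbb{H}^1$ is submersive on a generic set, and the convolution becomes absolutely continuous with a Lebesgue density $\kappa_j$.

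The main task then is to show that $\kappa_j$ is supported in a ball $B_{2^{-j}}(0, O(1))$ of Section~\ref{section2} and is $C^1$-regular at that scale---this is precisely Proposition~\ref{density}. As the paper outlines, it reduces to (i)~a pointwise Jacobian lower bound on a good sub-domain of parameter space (Proposition~\ref{good}) and (ii)~derivative estimates for the Jacobian and for a cut-off isolating the sub-domain (Proposition~\ref{estimateofderivative}). The five determinants $D_2,\bar D_3,\hat D_3,\doublehat D_3,\triplehat D_3$ and the doubling-type bounds~(\ref{infinitesimal}) are imposed precisely so that the Jacobian admits a quantitative lower bound on disjoint sub-domains of parameter space whose union exhausts the generic set.

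With this regularity in hand, I would conclude by pairing $\kappa_j$ against the cancellation $\int\varsigma_k=0$ for $k>0$. Writing
$$
\mathcal{D}_k\kappa_j(x)=\frac{1}{S_k}\int \varsigma_k\bigl(\widetilde A_k^{-1}z\bigr)\bigl(\kappa_j(x\cdot z^{-1})-\kappa_j(x)\bigr)\,dz,
$$
when $k\ge j$ a Taylor expansion of $\kappa_j$ on the small support of $\varsigma_k$ gains the Frobenius factor $\|\widetilde A_j^{-1}\widetilde A_k\|\lesssim 2^{-\epsilon(k-j)}$ from~(\ref{rivere}) per instance of $\mathcal{D}_k$, by arguments in the spirit of Lemma~\ref{ldeltaone} and Remark~\ref{alsoldeltaone}. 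When $k<j$, the support of $\kappa_j$ fits inside a ball of $B_{2^{-k}}$-radius $\lesssim 2^{-\epsilon(j-k)}$, and the mean-zero of $\varsigma_k$ together with its uniform derivative bounds supplies the complementary factor $2^{-\epsilon(j-k)}$. Combining these bounds with the $TT^*$ reduction and taking the $2N$-th root yields $2^{-\epsilon|j-k|}$ (with a smaller $\epsilon$). The main obstacle is the Jacobian analysis underlying Proposition~\ref{good}: it is there that the non-translation-invariant Heisenberg law interacts nontrivially with the flatness of $\Gamma$, forcing the case split across complementary sub-domains on each of which one of the five determinants supplies the lower bound.
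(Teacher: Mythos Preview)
Your outline is close to the paper's for the hard case $j<k$, but two points need correction.

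First, the paper treats the two regimes asymmetrically, and your proposal for $k<j$ is both overcomplicated and contains an error. When $j\ge k+3$ the paper bypasses $TT^*$ entirely: since $\int\phi_j=0$ and the curve $\Gamma(2^{-j}t)$, written in $\widetilde A_k$-coordinates, has size $\lesssim 2^{-\epsilon|j-k|}$, one pairs the cancellation of $\phi_j$ directly against the $L^1$-regularity of $\varsigma_k$ (Lemma~\ref{ldeltaone}) to get $\|\mathcal{H}_j\mathcal{D}_k^*\|_{\infty\to\infty}\lesssim 2^{-\epsilon|j-k|}$ in one stroke. Your route instead forms the iterated density $\kappa_j$ and invokes ``the mean-zero of $\varsigma_k$''; this is the wrong cancellation. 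In that regime $\varsigma_k$ lives at the \emph{coarser} scale and $\kappa_j$ at the finer, so mean-zero of $\varsigma_k$ against smoothness of $\kappa_j$ gains nothing (and $\varsigma_0$ is not mean-zero anyway). What would work is the mean-zero of $\kappa_j$---equivalently of $\phi_j$---against the smoothness of $\varsigma_k$, but then you have rediscovered the paper's direct argument and the $TT^*$ iteration was wasted.

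Second, in the genuine $TT^*$ case $j<k$, you call the density ``$C^1$-regular at that scale'' and speak of Taylor expansion. Proposition~\ref{density} only yields H\"older-$\sigma$ regularity in $L^1$, namely $\int|\kappa(z)-\kappa(\tilde y\odot_j z)|\,dz\lesssim|\tilde y|^\sigma$; this suffices, but $C^1$ is neither claimed nor apparently available. More importantly, your reading of Proposition~\ref{good} as ``a pointwise Jacobian lower bound on a good sub-domain'' and of the five determinants as producing ``a case split across complementary sub-domains on each of which one of the five determinants supplies the lower bound'' misses the actual mechanism. As Remarks~\ref{before} and~\ref{after} stress, unlike in $\mathbb{R}^n$ the zero set $\{J=0\}$ is \emph{not} a union of hyperplanes, so no clean sub-domain decomposition by distance to $\{J=0\}$ is available. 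Instead the paper builds the scalar $F(\tau)$ in~(\ref{bigf}), decomposes $R_1$ into the dyadic level sets $E_l=\{F\approx 2^{-l}\}$, and proves the integrability $\int_{R_1}F^{-\sigma}\lesssim 1$ via van der Corput-type sublevel set estimates (Lemma~\ref{mj}). The determinant hypotheses enter through Lemma~\ref{standardmonotone}, which guarantees monotonicity of the ratios $(C_3^-)''/C_2''$, $(C_3^+)''/C_2''$, $\hat C_3''/C_2''$, $\doublehat C_3''/C_2''$---this monotonicity is what makes Lemma~\ref{mj} applicable, not a pointwise lower bound on separate regions.
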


\begin{remark}\label{decompose}
By Corollary 2.2.2 in \cite{NRS01}, every Calder\'on-Zygmund kernel has a decomposition as in (\ref{kerneldecompose}) satisfying the same properties. For the kernel $\frac{1}{t}$, the proof in this paper uses only the fact that $\frac{1}{t}$ has such decomposition. Therefore Theorem \ref{1} still holds if the kernel $\frac{1}{t}$ is replaced by any other Calder\'on-Zygmund kernel.
\end{remark}

Recall the vector fields $X_j, Y_j, T_j$ and the matrix $\widetilde A_j$ from Definition \ref{jmatrix}, and recall the function $\varsigma_k(y)$ and the operator $\mathcal{D}_k$ from Definition \ref{auxiliary}. Denote by $\bar \varsigma_k(y)$ the complex conjugate of $\varsigma_k(y)$.

\begin{proof}
\textbf{Case }$\mathbf{j\geq k+3}$: we have $j\geq 3$ and 
$$
\int\phi_j =0.
$$
Fixing $j,k$, define functions $c_1(t), c_2(t)$, and $c_3(t)$ on $[-8,8]$ by 
\begin{align*}
\begin{pmatrix}
c_1(t)\\
c_2(t)\\
c_3(t)
\end{pmatrix}
= \widetilde A_{j-3}^{-1}
\begin{pmatrix}
2^{-j}t\\
\alpha(2^{-j}t)\\
\beta(2^{-j}t)
\end{pmatrix}
=
\begin{pmatrix}
2^{-3}t\\
\frac{\alpha(2^{-j}t)-2^{-3}t\alpha(2^{-j+3})}{h_2(2^{-j+3})}\\
\frac{\big(\beta(2^{-j}t)-2^{-3}t\bar \beta(2^{-j+3})\big) - \big(\alpha(2^{-j}t)-2^{-3}t\alpha(2^{-j+3})\big)\bar h_\beta(2^{-j+3})/h_2(2^{-j+3})}{2^{-j+3}h_2(2^{-j+3}) + \bar h_3(2^{-j+3})}
\end{pmatrix},
\end{align*}
and define functions $\tilde c_1(t), \tilde c_2(t)$, and $\tilde c_3(t)$ on $[-8,8]$ by
\begin{align*}
\begin{pmatrix}
\tilde c_1(t)\\
\tilde c_2(t)\\
\tilde c_3(t)
\end{pmatrix}
=\widetilde A_k^{-1}
\begin{pmatrix}
2^{-j}t\\
\alpha(2^{-j}t)\\
\beta(2^{-j}t)
\end{pmatrix}
=\widetilde A_k^{-1} \widetilde A_{j-3}
\begin{pmatrix}
c_1(t)\\
c_2(t)\\
c_3(t)
\end{pmatrix}.
\end{align*}
Thus $c_1(t), c_2(t), c_3(t), \tilde c_1(t), \tilde c_2(t)$, and $\tilde c_3(t)$ are odd functions satisfying
\begin{align*}
2^{-j}tX+\alpha(2^{-j}t)Y + \beta(2^{-j} t) T= c_1(t) X_{j-3} + c_2(t) Y_{j-3} + c_3(t) T_{j-3}=\tilde c_1(t)X_k + \tilde c_2(t) Y_k + \tilde c_3(t) T_k.
\end{align*}
We have $|c_1(t)|\leq 1$ for $|t|\leq 8$. Since
\begin{equation}\label{ratioincrease}
\big(\frac{\bar h_\beta(s)}{h_2(s)}\big)' = \frac{h_2'(s)}{h_2(s)^2} \Big(\frac{\bar \beta''(s)}{\alpha''(s)} h_2(s)-\bar h_\beta(s)\Big) =  \frac{h_2'(s)}{h_2(s)^2} \bar h_3(s)>0, \quad \forall s>0,
\end{equation}
for $t\in (0,8]$,
\begin{align*}
0\geq c_2(t)&= -\frac{2^{-j}t}{h_2(2^{-j+3})}\int_{2^{-j}t}^{2^{-j+3}} \Big(\frac{\alpha(s)}{s}\Big)'\,ds = -\frac{2^{-j}t}{h_2(2^{-j+3})}\int_{2^{-j}t}^{2^{-j+3}} \frac{h_2(s)}{s^2}\,ds \geq -1,\\
0\leq c_3(t) &=\frac{\frac{1}{2}\int_0^{2^{-j}t}h_2 - 2^{-j}t \int_{2^{-j}t}^{2^{-j+3}} \big(\frac{\bar \beta(s)}{s}\big)'\,ds + \frac{\bar h_\beta(2^{-j+3})}{h_2(2^{-j+3})} 2^{-j}t \int_{2^{-j}t}^{2^{-j+3}} \big(\frac{\alpha(s)}{s}\big)'\,ds}{2^{-j+3}h_2(2^{-j+3})+\bar h_3(2^{-j+3})}\\
&=\frac{\frac{1}{2}\int_0^{2^{-j}t}h_2 + 2^{-j}t \int_{2^{-j}t}^{2^{-j+3}} \frac{h_2(s)}{s^2} \big(\frac{\bar h_\beta(2^{-j+3})}{h_2(2^{-j+3})}- \frac{\bar h_\beta(s)}{h_2(s)}\big)\,ds}{2^{-j+3}h_2(2^{-j+3})+\bar h_3(2^{-j+3})}\\
&=\frac{\frac{1}{2}\int_0^{2^{-j}t}h_2 + 2^{-j}t \int_{2^{-j}t}^{2^{-j+3}} \frac{h_2(s)}{s^2} \int_s^{2^{-j+3}} \frac{h_2'(r)}{h_2(r)^2}\bar h_3(r)\,dr\,ds}{2^{-j+3}h_2(2^{-j+3})+\bar h_3(2^{-j+3})}\\
&\leq \frac{\frac{1}{2}(2^{-j}t)h_2(2^{-j}t) + \bar h_3(2^{-j+3})}{2^{-j+3}h_2(2^{-j+3})+\bar h_3(2^{-j+3})} \leq 1.
\end{align*}
By the oddness, $|c_1(t)|, |c_2(t)|, |c_3(t)|\leq 1$ for $t\in [-8,8]$. Thus by (\ref{rivere}),
$$
|(\tilde c_1(t), \tilde c_2(t), \tilde c_3(t))| \lesssim \|\widetilde A_k^{-1} \widetilde A_{j-3}\| \lesssim 2^{-\epsilon|j-k|}, \forall t\in [-8,8].
$$
Therefore by the Baker-Campbell-Hausdorff formula, (\ref{xi}), and Lemma \ref{ldeltaone},
\begin{align*}
&\quad |\mathcal{H}_j \mathcal{D}_k^*f(x) |\\
&= \Big|\iint f(x\cdot e^{-2^{-j}tX- \alpha(2^{-j}t)Y -\beta(2^{-j}t)T} e^{y_1X_k + y_2 Y_k +y_3 T_k}0) \bar \varsigma_k(y) \phi_j(t)\,dy\,dt\Big| \\
&= \Big|\iint f(x\cdot e^{-c_1(t)X_{j-3}- c_2(t)Y_{j-3} -c_3(t)T_{j-3}} e^{y_1X_k + y_2 Y_k +y_3 T_k}0) \bar \varsigma_k(y) \phi_j(t)\,dy\,dt \Big|\\
&= \Big|\iint f(x\cdot e^{-\tilde c_1(t)X_k- \tilde c_2(t)Y_k - \tilde c_3(t)T_k} e^{y_1X_k + y_2 Y_k +y_3 T_k}0) \bar \varsigma_k(y) \phi_j(t)\,dy\,dt\Big| \\
&= \Big|\iint f(x\cdot e^{(y_1-\tilde c_1(t))X_k + (y_2-\tilde c_2(t))Y_k + (y_3-\tilde c_3(t) -\xi_k (y_2 \tilde c_1(t) - y_1 \tilde c_2(t)))T_k}0)\bar \varsigma_k(y) \phi_j(t)\,dy\,dt\\
&\quad - \iint f(x\cdot e^{y_1X_k +y_2Y_k  + y_3 T_k}0)\bar \varsigma_k(y) \phi_j(t)\,dy\,dt\Big|\\
&\lesssim \sup_{|t|\leq 8} \Big|\int f(x\cdot e^{y_1X_k +y_2Y_k  + y_3 T_k}0) \big(\bar \varsigma_k\big(y_1 + \tilde c_1(t), y_2 + \tilde c_2(t), y_3 + \tilde c_3(t)+ \xi_k(y_2 \tilde c_1(t)-y_1 \tilde c_2(t))\big)-\bar \varsigma_k(y)\big)\,dy\Big| \\
&\leq \|f\|_\infty \sup_{|t|\leq 8} \int \big|\varsigma_k\big(y_1 + \tilde c_1(t), y_2 + \tilde c_2(t), y_3 + \tilde c_3(t)+ \xi_k(y_2 \tilde c_1(t)-y_1 \tilde c_2(t))\big) -\varsigma_k(y)\big|\,dy \\
&\lesssim 2^{-\epsilon|j-k|} \|f\|_\infty.
\end{align*}
Thus $\|\mathcal{H}_j \mathcal{D}_k^*\|_{\infty \to \infty} \lesssim 2^{-\epsilon|j-k|}$. Interpolate with  $\|\mathcal{H}_j\mathcal{D}_k^*\|_{1\to 1} \lesssim 1$, we have
$$
\|\mathcal{H}_j \mathcal{D}_k^* \|_{2\to 2} \lesssim 2^{-\epsilon|j-k|},
$$
where the value of $\epsilon$ changes but is still independent of $j,k$. Similarly, $\|\mathcal{H}_j^* \mathcal{D}_k \|_{2\to 2} \lesssim 2^{-\epsilon|j-k|}$.

\textbf{Case }$\mathbf{j=k+2,k+1,k}$: due to $\|\mathcal{H}_j\|_{2\to 2}, \|\mathcal{D}_k\|_{2\to 2} \lesssim 1$, uniformly in $j,k$.

\textbf{Case }$\mathbf{j< k}$: we have $k>0$, and thus
$$
\int \varsigma_k=0.
$$
To show $\|\mathcal{H}_j^* \mathcal{D}_k\|_{2 \to 2}, \|\mathcal{H}_j \mathcal{D}_k^*\|_{2 \to 2} \lesssim 2^{-\epsilon|j-k|}$, it suffices to show 
\begin{equation}\label{cotlarestimate}
\|\mathcal{D}_k^*\mathcal{H}_j\mathcal{H}_j^* \mathcal{H}_j \mathcal{H}_j^* \mathcal{D}_k\|_{\infty \to \infty}, \quad \|\mathcal{D}_k\mathcal{H}_j^*\mathcal{H}_j \mathcal{H}_j^* \mathcal{H}_j \mathcal{D}_k^*\|_{\infty \to \infty} \lesssim 2^{-\epsilon|j-k|},
\end{equation}
where $\epsilon>0$ is allowed to change from line to line finitely many times, but will always be independent of $j,k$. We prove only $\|\mathcal{D}_k^*\mathcal{H}_j\mathcal{H}_j^* \mathcal{H}_j \mathcal{H}_j^* \mathcal{D}_k\|_{\infty \to \infty}\lesssim 2^{-\epsilon|j-k|}$; the other one is similar (see Remark \ref{change}).

Fixing $j$, define functions $C_1(t), C_2(t)$, and $C_3(t)$ on $[-8,8]$ by
\begin{equation}\label{coefficient}
\begin{pmatrix}
C_1(t)\\
C_2(t)\\
C_3(t)
\end{pmatrix}
= \widetilde A_j^{-1}
\begin{pmatrix}
2^{-j}t\\
\alpha(2^{-j}t)\\
\beta(2^{-j}t)
\end{pmatrix}
=
\begin{pmatrix}
t\\
\frac{\alpha(2^{-j}t)-t\alpha(2^{-j})}{h_2(2^{-j})}\\
\frac{\beta(2^{-j}t)-t\bar \beta(2^{-j}) - \big(\alpha(2^{-j}t)-t\alpha(2^{-j})\big)\bar h_\beta(2^{-j})/h_2(2^{-j})}{2^{-j}h_2(2^{-j}) + \bar h_3(2^{-j})}
\end{pmatrix}.
\end{equation}
Thus $C_1(t), C_2(t)$, and $C_3(t)$ are odd functions satisfying
\begin{align*}
2^{-j}tX+\alpha(2^{-j}t)Y + \beta(2^{-j} t) T= C_1(t) X_{j} + C_2(t) Y_{j} + C_3(t) T_j.
\end{align*}
Hence
$$
\mathcal{H}_jf(x)=\int_{-2^j}^{2^j}f\big(x\cdot e^{-C_1(t)X_j-C_2(t)Y_j-C_3(t)T_j}0\big)\phi_j(t)\,dt,
$$
and
\begin{align*}
&\quad |\mathcal{D}_k^*\mathcal{H}_j\mathcal{H}_j^* \mathcal{H}_j \mathcal{H}_j^* \mathcal{D}_kf(x)| \\
&= \Big|\iiint \!\!\! \iiint_{|t|,|s|,|u|,|r|\leq 2^j} f\Big(x\cdot e^{y_1X_k + y_2 Y_k +y_3 T_k} e^{-C_1(t)X_j- C_2(t)Y_j - C_3(t)T_j} e^{C_1(s)X_j+ C_2(s)Y_j + C_3(s)T_j}  \\
&\qquad \qquad \qquad \qquad \qquad \qquad  e^{-C_1(u)X_j- C_2(u)Y_j - C_3(u)T_j} e^{C_1(r)X_j+C_2(r)Y_j+C_3(r)T_j} e^{-v_1X_k-v_2Y_k-v_3T_k} 0\Big)\\
&\qquad \qquad \qquad \qquad \qquad \qquad \qquad \qquad \qquad \qquad \qquad \varsigma_k(v)\bar \phi_j(r)\phi_j(u)\bar \phi_j(s) \phi_j(t)\bar \varsigma_k(y)    \,dv\,dr\,du\,ds\,dt\,dy\Big|.
\end{align*}
We will only bound the part of the above integration on the region $t,s,u,r\geq 
0$, which we denote by $[\mathcal{D}_k^* \mathcal{H}_j \mathcal{H}_j^* \mathcal{H}_j \mathcal{H}_j^* \mathcal{D}_k]_+f(x)$, because the proof for the other parts is similar using the oddness of $C_1(t), C_2(t)$, and $C_3(t)$ (see Remark \ref{change}). 

For $y,v\in \text{supp}\,\varsigma_k\subseteq [-1,1]^3 \cup (\widetilde A_k^{-1} \widetilde A_{k-1}[-1,1]^3)$, let
$$
\tilde y = \widetilde A_j^{-1} \widetilde A_k y, \quad \tilde v =\widetilde A_j^{-1} \widetilde A_k v \in \big(\widetilde A_j^{-1} \widetilde A_k[-1,1]^3\big)\bigcup \big(\widetilde A_j^{-1} \widetilde A_{k-1}[-1,1]^3\big).
$$
By (\ref{rivere}), $|\tilde y|, |\tilde v|\lesssim 2^{-\epsilon|j-k|}$, where the implicit constant is independent of $j,k$. we have
\begin{align*}
y_1X_k+y_2Y_k+y_3T_k= \tilde y_1 X_j + \tilde y_2 Y_j + \tilde y_3 T_j,\\
v_1X_k+v_2Y_k+v_3T_k= \tilde v_1 X_j + \tilde v_2 Y_j + \tilde v_3 T_j.
\end{align*}
Fixing $j$, denote the regions
\begin{equation}\label{regions}
\begin{aligned}
&\bar R_1:=\{(t,s,u,r)\in ([2,8]\cap [0,2^j])^4: t\leq u\}, \quad R_1:=\{(t,s,u)\in ([2,8]\cap [0,2^j])^3: t\leq u\},\\ 
&\bar R_2:=\{(t,s,u,r)\in ([2,8]\cap [0,2^j])^4: r\leq s\}, \quad R_2:=\{(s,u,r)\in ([2,8]\cap [0,2^j])^3: r\leq s\},\\
&\bar R_3:=\{(t,s,u,r)\in ([2,8]\cap [0,2^j])^4: r\geq t\geq u, r\geq s\}, \quad R_3:=\{(t,s,r)\in ([2,8]\cap [0,2^j])^3: r\geq t\geq u, r\geq s\},\\
&\bar R_4:=\{(t,s,u,r)\in ([2,8]\cap [0,2^j])^4: t\geq r\geq s, t\geq u\},\quad R_4:=\{(t,u,r)\in ([2,8]\cap [0,2^j])^3: t\geq r\geq s, t\geq u\}.
\end{aligned}
\end{equation}
Note the region $R_3$ depends on $u\in [2,8]\cap [0,2^j]$, and the region $R_4$ depends on $s\in [2,8]\cap [0,2^j]$. By the Baker-Campbell-Hausdorff formula and (\ref{xi}),
\begin{equation}\label{mainequation}
\begin{aligned}
&\quad |[\mathcal{D}_k^*\mathcal{H}_j\mathcal{H}_j^* \mathcal{H}_j \mathcal{H}_j^* \mathcal{D}_k]_+f(x)| \\
&= \Big|\iiint \!\!\! \iiint_{(t,s,u,r)\in ([2,8]\cap[0,2^j])^4} f\Big(x\cdot e^{y_1X_k + y_2 Y_k +y_3 T_k} e^{-C_1(t)X_j- C_2(t)Y_j - C_3(t)T_j} e^{C_1(s)X_j+ C_2(s)Y_j + C_3(s)T_j}  \\
&\qquad \qquad \qquad \qquad \qquad \qquad  e^{-C_1(u)X_j- C_2(u)Y_j - C_3(u)T_j} e^{C_1(r)X_j+C_2(r)Y_j+C_3(r)T_j} e^{-v_1X_k-v_2Y_k-v_3T_k} 0\Big)\\
&\qquad \qquad \qquad \qquad \qquad \qquad \qquad \qquad \qquad \qquad \qquad \varsigma_k(v)\bar \phi_j(r)\phi_j(u)\bar \phi_j(s) \phi_j(t)\bar \varsigma_k(y)    \,dv\,dr\,du\,ds\,dt\,dy\Big|\\
&\leq \Big|\iiint \!\!\! \iiint_{(t,s,u,r)\in\bar R_1} f\Big(x\cdot e^{\tilde y_1X_j + \tilde y_2 Y_j +\tilde y_3 T_j} \\
&\qquad \qquad \qquad e^{(-t+s-u)X_j +\big(-C_2(t)+C_2(s)-C_2(u)\big)Y_j  +\big(-C_3(t)+C_3(s) -C_3(u)\big)T_j} \\
&\qquad \qquad \qquad e^{-\xi_j\big(tC_2(s) - sC_2(t) - t C_2(u) + s C_2(u)+uC_2(t)-uC_2(s) \big)T_j} \\
&\qquad \qquad \quad e^{C_1(r)X_j+C_2(r)Y_j+C_3(r)T_j} e^{-\tilde v_1X_j-\tilde v_2Y_j-\tilde v_3T_j}0 \Big) \varsigma_k(v)\bar \phi_j(r)\phi_j(u)\bar \phi_j(s) \phi_j(t)\bar \varsigma_k(y)    \,dv\,dr\,du\,ds\,dt\,dy\Big| \\
&\quad + \Big|\iiint \!\!\! \iiint_{(t,s,u,r)\in\bar R_2} f\Big(x\cdot e^{\tilde y_1X_j + \tilde y_2 Y_j +\tilde y_3 T_j} e^{-C_1(t)X_j-C_2(t)Y_j-C_3(t)T_j} \\
&\qquad \qquad \qquad e^{(s-u+r)X_j +\big(C_2(s)-C_2(u)+C_2(r)\big)Y_j  +\big(C_3(s) -C_3(u)+C_3(r)\big)T_j} \\
&\qquad \qquad \qquad e^{-\xi_j\big(s C_2(u)-uC_2(s)-sC_2(r) +uC_2(r) +rC_2(s) -rC_2(u) \big)T_j} \\
&\qquad \qquad \qquad \qquad \qquad \qquad \qquad \qquad e^{-\tilde v_1X_j-\tilde v_2Y_j-\tilde v_3T_j}0\Big) \varsigma_k(v)\bar \phi_j(r)\phi_j(u)\bar \phi_j(s) \phi_j(t)\bar \varsigma_k(y)    \,dv\,dr\,du\,ds\,dt\,dy\Big|\\
&\quad + \Big|\iiint \!\!\! \iiint_{(t,s,u,r)\in\bar R_3} f\Big(x\cdot e^{\tilde y_1X_j + \tilde y_2 Y_j +\tilde y_3 T_j} \\
&\qquad \qquad \qquad e^{(-t+s-u+r)X_j +\big(-C_2(t)+C_2(s)-C_2(u)+C_2(r)\big)Y_j  +\big(-C_3(t)+C_3(s) -C_3(u)+C_3(r)\big)T_j} \\
&\qquad \qquad \qquad e^{-\xi_j\big(s C_2(u)-uC_2(s)-sC_2(r) +uC_2(r) +rC_2(s) -rC_2(u) +tC_2(s)-tC_2(u)+tC_2(r)-sC_2(t)+uC_2(t)-rC_2(t)\big)T_j} \\
&\qquad \qquad \qquad \qquad \qquad \qquad \qquad \qquad e^{-\tilde v_1X_j-\tilde v_2Y_j-\tilde v_3T_j}0\Big) \varsigma_k(v)\bar \phi_j(r)\phi_j(u)\bar \phi_j(s) \phi_j(t)\bar \varsigma_k(y)    \,dv\,dr\,du\,ds\,dt\,dy\Big|\\
&\quad +\Big|\iiint \!\!\! \iiint_{(t,s,u,r)\in\bar R_4} f\Big(x\cdot e^{\tilde y_1X_j + \tilde y_2 Y_j +\tilde y_3 T_j} \\
&\qquad \qquad \qquad e^{(-t+s-u+r)X_j +\big(-C_2(t)+C_2(s)-C_2(u)+C_2(r)\big)Y_j  +\big(-C_3(t)+C_3(s) -C_3(u)+C_3(r)\big)T_j} \\
&\qquad \qquad \qquad e^{-\xi_j\big(s C_2(u)-uC_2(s)-sC_2(r) +uC_2(r) +rC_2(s) -rC_2(u) +tC_2(s)-tC_2(u)+tC_2(r)-sC_2(t)+uC_2(t)-rC_2(t)\big)T_j} \\
&\qquad \qquad \qquad \qquad \qquad \qquad \qquad \qquad  e^{-\tilde v_1X_j-\tilde v_2Y_j-\tilde v_3T_j}0\Big) \varsigma_k(v)\bar \phi_j(r)\phi_j(u)\bar \phi_j(s) \phi_j(t)\bar \varsigma_k(y)    \,dv\,dr\,du\,ds\,dt\,dy\Big| \\
&=: I+I\!I + I\!I\!I + I\!V.
\end{aligned}
\end{equation}
We will have $[2,8]\cap [0,2^j]$ appear repeatedly in the rest of the proof. The interval $[0,2^j]$ slightly modifies the interval only for $j=0,1,2$ and bears no importance. For notational convenience, we write $[2,8]$ instead of $[2,8]\cap [0,2^j]$ below. 

Denote
\begin{equation}\label{zw}
\begin{aligned}
z&=(z_1, z_2, z_3) = \Phi(t,s,u)\\
&:=\Big(-t+s-u, -C_2(t)+C_2(s) -C_2(u), \\
&\qquad -C_3(t)+C_3(s)-C_3(u) - \xi_j\big(tC_2(s)-sC_2(t)-tC_2(u)+sC_2(u)+uC_2(t)-uC_2(s)\big)\Big),
\\ w&=(w_1, w_2, w_3) = \hat \Phi(s,u,r)\\ &:=\Big(s-u+r, C_2(s) -C_2(u)+C_2(r), \\ &\qquad C_3(s)-C_3(u) +C_3(r)- \xi_j\big(sC_2(u)-uC_2(s)-sC_2(r)+uC_2(r)+rC_2(s)-rC_2(u)\big)\Big),\\
\zeta&=(\zeta_1, \zeta_2, \zeta_3) = \Psi(t,s,r) = \hat \Psi(t,u,r)\\
&:=\Big(-t+s-u+r, -C_2(t)+C_2(s)-C_2(u)+C_2(r),  \\
& \qquad -C_3(t)+C_3(s) -C_3(u)+C_3(r)-\xi_j\big(s C_2(u)-uC_2(s)-sC_2(r) +uC_2(r) +rC_2(s) -rC_2(u) \\
&\qquad +tC_2(s)-tC_2(u)+tC_2(r)-sC_2(t)+uC_2(t)-rC_2(t)\big)\Big),
\end{aligned}
\end{equation}
where the function $\Psi$ depends on the parameter $u$, and the function $\hat \Psi$ depends on the parameter $s$. We have
\begin{equation}\label{determinant}
\begin{aligned}
D\Phi(t,s,u)=
\begin{pmatrix}
-1 & 1 & -1\\
&&\\
-C_2'(t) & C_2'(s) & -C_2'(u)\\
&&\\
-C_3'(t)+\xi_j\big( sC_2'(t) & C_3'(s)+\xi_j\big(C_2(t) & -C_3'(u)+\xi_j\big(tC_2'(u)\\
-C_2(s)+ C_2(u)-uC_2'(t) \big) & -tC_2'(s)-C_2(u)+uC_2'(s) \big)  &-sC_2'(u)-C_2(t)+C_2(s) \big)
\end{pmatrix}.
\end{aligned} \end{equation}
We define the following transported measures
\begin{equation}\label{measure}
\begin{aligned}
d\mu= \kappa(z)\,dz:=\Phi_*\big(\phi_j(t)\bar \phi_j(s)\phi_j(u)1_{R_1}(t,s,u)\,dt\,ds\,du\big),\\
d\hat \mu= \hat \kappa(w)\,dw:=\hat \Phi_*\big(\bar \phi_j(s)\phi_j(u)\bar \phi_j(r)1_{R_2}(s,u,r)\,ds\,du\,dr\big),\\
d\nu=\chi(\zeta)\,d\zeta:= \Psi_*\big(\phi_j(t)\bar \phi_j(s)\bar \phi_j(r)1_{R_3}(t,s,r)\,dt\,ds\,dr\big),\\
d\hat \nu=\hat \chi(\zeta)\,d\zeta:= \hat \Psi_*\big(\phi_j(t)\phi_j(u)\bar \phi_j(r)1_{R_4}(t,u,r)\,dt\,du\,dr\big),
\end{aligned}
\end{equation}
by
\begin{align*}
\int g(z)\,d\mu(z) = \iiint_{R_1} g(\Phi(t,s,u)) \phi_j(t)\bar \phi_j(s)\phi_j(u)\,dt\,ds\,du, \quad \forall g\in L_{loc}^1,\\
\int g(w)\,d\hat \mu(w) = \iiint_{R_2} g(\hat \Phi(s,u,r)) \bar \phi_j(s) \phi_j(u)\bar \phi_j(r)\,ds\,du\,dr, \quad \forall g\in L_{loc}^1,\\
\int g(\zeta)\,d\nu(\zeta) = \iiint_{R_3} g(\Psi(t,s,r)) \phi_j(t)\bar \phi_j(s)\bar \phi_j(r)\,dt\,ds\,dr, \quad \forall g\in L_{loc}^1,\\
\int g(\zeta)\,d\hat \nu(\zeta) = \iiint_{R_4} g(\hat \Psi(t,u,r)) \phi_j(t)\phi_j(u)\bar \phi_j(r)\,dt\,du\,dr, \quad \forall g\in L_{loc}^1.
\end{align*}
Proposition \ref{density} below shows $\mu, \hat \mu, \nu, \hat \nu$ are absolutely continuous. The densities $\kappa, \hat \kappa, \chi, \hat \chi$ depends on $j$, $\chi$ depends on $u$, and $\hat \chi$ depends on $s$. Recall $\int \varsigma_k=0$, and $|\tilde y|, |\tilde v|\lesssim 2^{-\epsilon|j-k|}$, where the implicit constant is independent of $j,k$. If we denote 
$$
\tilde x:=e^{C_1(r)X_j+C_2(r)Y_j+C_3(r)T_j} e^{-\tilde v_1X_j-\tilde v_2Y_j-\tilde v_3T_j}0, \quad g_1(\cdot):=f(x\cdot \cdot),
$$
we have
$$
\begin{aligned}
I&\lesssim \sup_{\substack{r\in [2,8]\\ v\in \text{supp}\,\varsigma_k}} \Big| \iiiint_{(t,s,u)\in R_1} g_1\big(e^{\tilde y_1X_j +\tilde y_2 Y_j + \tilde y_3 T_j} e^{z_1 X_j + z_2 Y_j + z_3 T_j} \tilde x\big) \bar \varsigma_k(y)\phi_j(t)\bar \phi_j(s)\phi_j(u)\,dy\,dt\,ds\,du\Big|\\
&=\sup_{\substack{r\in [2,8]\\ v\in \text{supp}\,\varsigma_k}} \Big| \iint g_1\big(e^{\tilde y_1X_j +\tilde y_2 Y_j + \tilde y_3 T_j} e^{z_1 X_j + z_2 Y_j + z_3 T_j} \tilde x\big)  \bar \varsigma_k(y)\kappa(z)\,dy\,dz\Big|\\
&=\sup_{\substack{r\in [2,8]\\ v\in \text{supp}\,\varsigma_k}} \Big| \iint \Big(g_1\big(e^{\tilde y_1X_j +\tilde y_2 Y_j + \tilde y_3 T_j} e^{z_1 X_j + z_2 Y_j + z_3 T_j} \tilde x\big) -g_1\big(e^{z_1 X_j + z_2 Y_j + z_3 T_j} \tilde x\big)\Big) \bar \varsigma_k(y)\kappa(z)\,dy\,dz\Big|\\
&\lesssim \sup_{\substack{r\in [2,8]\\ y,v\in \text{supp}\,\varsigma_k}} \Big| \int \Big(g_1\big(e^{(\tilde y_1 +z_1)X_j +(\tilde y_2+z_2) Y_j + \big((\tilde y_3+z_3+\xi_j(\tilde y_1 z_2-\tilde y_2 z_1)\big) T_j}  \tilde x\big) -g_1\big(e^{z_1 X_j + z_2 Y_j + z_3 T_j} \tilde x\big)\Big) \kappa(z)\,dz\Big|\\
&=\sup_{\substack{r\in [2,8]\\ y,v\in \text{supp}\,\varsigma_k}} \Big| \int g_1\big(e^{(\tilde y_1 +z_1)X_j +(\tilde y_2+z_2) Y_j + \big((\tilde y_3+z_3+\xi_j(\tilde y_1 z_2-\tilde y_2 z_1)\big) T_j}  \tilde x\big) \\
&\qquad \cdot \big(\kappa(z)-\kappa\big(\tilde y_1 +z_1, \tilde y_2+z_2, \tilde y_3+z_3+\xi_j(\tilde y_1 z_2-\tilde y_2 z_1)\big) \big) \,dz\Big|\\
&\leq \|f\|_\infty \sup_{\substack{r\in [2,8]\\ y\in \text{supp}\,\varsigma_k}} \int \big|\kappa(z)-\kappa\big(\tilde y_1 +z_1, \tilde y_2+z_2, \tilde y_3+z_3+\xi_j(\tilde y_1 z_2-\tilde y_2 z_1)\big) \big|\,dz\\
&\lesssim 2^{-\epsilon|j-k|} \|f\|_\infty,
\end{aligned}
$$
where the last inequality is due to Proposition \ref{density} below, the value of $\epsilon>0$ changes but is independent of $j,k$, and the implicit constants are independent of $j,k$. 

Similarly, by Proposition \ref{density} below, if we denote 
$$
g_2(\cdot) := f\Big(x\cdot e^{\tilde y_1X_j + \tilde y_2 Y_j +\tilde y_3 T_j} e^{-C_1(t)X_j-C_2(t)Y_j-C_3(t)T_j}\cdot \Big), 
$$
we have
$$
\begin{aligned}
I\!I&\lesssim \sup_{\substack{t\in [2,8]\\ y\in \text{supp}\,\varsigma_k}} \Big| \iiiint_{(s,u,r)\in R_2} g_2\big(e^{w_1 X_j + w_2 Y_j + w_3 T_j} e^{-\tilde v_1X_j -\tilde v_2 Y_j -\tilde v_3 T_j}0\big) \bar \phi_j(s)\phi_j(u)\bar \phi_j(r)\varsigma_k(v)\,ds\,du\,dr\,dv\Big|\\
&\lesssim \sup_{\substack{t\in [2,8]\\ y,v\in \text{supp}\,\varsigma_k}} \Big| \int \Big(g_2\big(e^{w_1 X_j + w_2 Y_j + w_3 T_j} e^{-\tilde v_1X_j -\tilde v_2 Y_j -\tilde v_3 T_j}0\big) -g_2\big(e^{w_1 X_j + w_2 Y_j + w_3 T_j} 0\big)\Big) \hat \kappa(w)\,dw\Big|\\
&\leq \|f\|_\infty \sup_{\substack{t\in [2,8]\\ v\in \text{supp}\,\varsigma_k}} \int \big|\hat \kappa(w)-\hat \kappa \big(w_1-\tilde v_1, w_2-\tilde v_2, w_3-\tilde v_3 -\xi_j(w_1\tilde v_2-w_2 \tilde v_1)\big) \big|\,dw\\
&\lesssim 2^{-\epsilon|j-k|} \|f\|_\infty, 
\end{aligned}
$$
where the value of $\epsilon>0$ changes but is indpendent of $j,k$, and the implicit constants are independent of $j,k$. By Proposition \ref{density} below, if we denote
$$
\vardbtilde x = e^{-\tilde v_1 X_j-\tilde v_2Y_j-\tilde v_3T_j}0, \quad g_3(\cdot )= f(x\cdot \cdot),
$$
we have
\begin{align*}
I\!I\!I & \lesssim \sup_{\substack{u\in [2,8]\\ v\in \text{supp}\,\varsigma_k}} \Big| \iiiint_{(t,s,r)\in R_3} g_3\big(e^{\tilde y_1X_j+\tilde y_2Y_j +\tilde y_3T_j} e^{\zeta_1X_j + \zeta_2Y_j + \zeta_3T_j}\vardbtilde x\big) \bar \varsigma_k(y) \phi_j(t)\bar \phi_j(s)\bar \phi_j(r)\,dy\,dt\,ds\,dr\Big|\\
&\lesssim \sup_{\substack{u\in [2,8]\\ y,v\in \text{supp}\,\varsigma_k}} \Big|\int \Big(g_3\big(e^{\tilde y_1X_j+\tilde y_2Y_j +\tilde y_3T_j} e^{\zeta_1X_j + \zeta_2Y_j + \zeta_3T_j}\vardbtilde x\big) - g_3\big(e^{\zeta_1X_j + \zeta_2Y_j + \zeta_3T_j}\vardbtilde x\big)\Big) \chi(\zeta)\,d\zeta\Big|\\
&= \sup_{\substack{u\in [2,8]\\ y,v\in \text{supp}\,\varsigma_k}}\Big|\int g_3\big( e^{\tilde y_1X_j+\tilde y_2Y_j +\tilde y_3T_j} e^{\zeta_1X_j + \zeta_2Y_j + \zeta_3T_j}\vardbtilde x \big) \\
&\qquad \qquad \qquad \cdot \Big(\chi(\zeta)- \chi\big(\tilde y_1+\zeta_1, \tilde y_2 + \zeta_2, \tilde y_3 + \zeta_3+\xi_j(\tilde y_1\zeta_2-\tilde y_2\zeta_1)\big)\Big)\,d\zeta\Big|\\
&\leq \|f\|_\infty \sup_{\substack{u\in [2,8]\\ y\in \text{supp}\,\varsigma_k}} \int \Big|\chi(\zeta)- \chi\big(\tilde y_1+\zeta_1, \tilde y_2 + \zeta_2, \tilde y_3 + \zeta_3+\xi_j(\tilde y_1\zeta_2-\tilde y_2\zeta_1)\big)\Big|\,d\zeta\\
&\lesssim 2^{-\epsilon|j-k|} \|f\|_\infty, 
\end{align*}
where the value of $\epsilon>0$ changes but is indpendent of $j,k$, and the implicit constants are independent of $j,k$. By Proposition \ref{density} below, if we denote
$$
g_4(\cdot) = f\Big(x\cdot e^{\tilde y_1X_j + \tilde y_2 Y_j +\tilde y_3 T_j} \cdot \Big), 
$$
we have
\begin{align*}
I\!V &\lesssim \sup_{\substack{s\in [2,8]\\ y\in \text{supp}\,\varsigma_k}} \Big|\iiiint_{(t,u,r)\in R_4}g_4 \big(e^{\zeta_1X_j+\zeta_2Y_j+\zeta_3T_j} e^{-\tilde v_1X_j-\tilde v_2Y_j-\tilde v_3T_j}0\big)\phi_j(t)\phi_j(u)\bar \phi_j(r)\varsigma_k(v)\,dt\,du\,dr\,dv\Big|\\
&\lesssim \sup_{\substack{s\in [2,8]\\ y,v\in \text{supp}\,\varsigma_k}}\Big|\int\Big(g_4 \big(e^{\zeta_1X_j+\zeta_2Y_j+\zeta_3T_j}e^{-\tilde v_1X_j-\tilde v_2Y_j-\tilde v_3T_j}0\big) -g_4\big(e^{\zeta_1X_j+\zeta_2Y_j+\zeta_3T_j}0\big)\Big)\hat \chi(\zeta)\,d\zeta\Big|\\
&=  \sup_{\substack{s\in [2,8]\\y,v\in \text{supp}\,\varsigma_k}}\Big|\int g_4\big(e^{\zeta_1X_j+\zeta_2Y_j+\zeta_3T_j}e^{-\tilde v_1X_j-\tilde v_2Y_j-\tilde v_3T_j}0\big)\\
&\qquad \qquad \qquad \cdot \Big(\hat \chi(\zeta)  -\hat \chi\big(\zeta_1-\tilde v_1, \zeta_2-\tilde v_2, \zeta_3-\tilde v_3-\xi_j(\tilde v_2\zeta_1-\tilde v_1\zeta_2)\big)\Big)\,d\zeta\Big|\\
&\leq \|f\|_\infty \sup_{\substack{s\in [2,8]\\v\in \text{supp}\,\varsigma_k}} \int \Big|\hat \chi(\zeta)  -\hat \chi\big(\zeta_1-\tilde v_1, \zeta_2-\tilde v_2, \zeta_3-\tilde v_3-\xi_j(\tilde v_2\zeta_1-\tilde v_1\zeta_2)\big)\Big|\,d\zeta\\
&\lesssim 2^{-\epsilon|j-k|}\|f\|_\infty,
\end{align*}
where the value of $\epsilon>0$ changes but is indpendent of $j,k$, and the implicit constants are independent of $j,k$.
\end{proof}

The following proposition establishes certain regularity of the densities $\kappa, \hat \kappa, \chi, \hat \chi$.
\begin{proposition}\label{density}
For every $j\in \mathbb{N}$, the measures $\mu, \hat \mu, \nu, \hat \nu$ defined in (\ref{measure}) are absolutely continuous with respect to the Euclidean measure on $\mathbb{R}^3$, and their densities $\kappa, \hat \kappa, \chi, \hat \chi$ sastisfy: 
for $u,s\in [2,8]$ ($\chi$ depends on $u$, $\hat \chi$ depends on $s$),
\begin{align*}
\int |\kappa(z)|\,dz \lesssim 1,\quad \int |\hat \kappa(w)|\,dw\lesssim 1, \quad \int |\chi(\zeta)|\,d\zeta \lesssim 1,  \quad \int |\hat \chi(\zeta)|\,d\zeta \lesssim 1, 
\end{align*}
where the implicit constants are independent of $j$. Moreover, there exists $\sigma>0$ such that for every $j\in \mathbb{N}$, for $0\leq \delta\leq 1$, for $u,s\in [2,8]$, and for $|\tilde y|,|\tilde v|\lesssim \delta$,
\begin{align*}
&\int \big|\kappa(z)-\kappa\big(\tilde y_1 +z_1, \tilde y_2+z_2, \tilde y_3+z_3+\xi_j(\tilde y_1 z_2-\tilde y_2 z_1)\big) \big|\,dz \lesssim \delta^\sigma, \\
&\int \big|\hat \kappa(w)  -\hat \kappa\big(w_1-\tilde v_1, w_2-\tilde v_2, w_3-\tilde v_3-\xi_j(w_1\tilde v_2-w_2\tilde v_1)\big)\big|\,dw \lesssim \delta^\sigma,\\
&\int \big|\chi(\zeta)-\chi\big(\tilde y_1+\zeta_1, \tilde y_2+\zeta_2, \tilde y_3+\zeta_3+\xi_j(\tilde y_1\zeta_2-\tilde y_2\zeta_1) \big)\big|\,d\zeta \lesssim \delta^\sigma,\\
&\int \big|\hat \chi(\zeta)- \hat \chi\big(\zeta_1-\tilde v_1, + \zeta_2-\tilde v_2,  \zeta_3-\tilde v_3-\xi_j(\zeta_1 \tilde v_2-\zeta_2\tilde v_1)\big)\big|\,d\zeta \lesssim \delta^\sigma.
\end{align*}
where the implicit constants are independent of $j$.
\end{proposition}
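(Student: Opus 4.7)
The strategy is to handle the four densities in parallel, since the maps $\Phi, \hat\Phi, \Psi, \hat\Psi$ in (\ref{zw}) share a common structure: each is a signed sum of three or four values of $(2^{-j}tX + \alpha(2^{-j}t)Y + \beta(2^{-j}t)T)$-type expressions re-expressed in the $X_j, Y_j, T_j$ frame, plus a Heisenberg correction of size $\xi_j \leq 1/2$. The $L^1$ bound is immediate: for $g \in L^\infty(\mathbb{R}^3)$, testing against $\mu$ gives $|\int g\,d\mu| \leq \|g\|_\infty \|\phi_j\|_1^3 \lesssim \|g\|_\infty$ uniformly in $j$, since $\{\phi_j\}$ is bounded in $C_c^\infty([-8,-2]\cup[2,8])$, and the same argument works for $\hat\mu, \nu, \hat\nu$. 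Absolute continuity will follow a posteriori from the change-of-variables argument below.

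I would next introduce a dyadic partition of unity $1 = \sum_{\ell \geq 0} \eta_\ell(t,s,u)$ on the integration domain, with $\eta_\ell$ supported where $|\det D\Phi(t,s,u)| \sim 2^{-\ell}$, and split $d\mu = \sum_\ell d\mu_\ell$. On each level $\ell$ the map $\Phi$ restricts to a local diffeomorphism with Jacobian comparable to $2^{-\ell}$, so the standard change-of-variables formula produces a density $\kappa_\ell(z)$ supported on $\Phi(\text{supp}\,\eta_\ell)$. The key geometric input, Proposition \ref{good}, should be a polynomial sublevel volume estimate $|\text{supp}\,\eta_\ell| \lesssim 2^{-\alpha\ell}$ uniform in $j$ for some $\alpha > 0$, which immediately yields $\|\kappa_\ell\|_1 \lesssim 2^{-\alpha\ell}$ and completes the $L^1$ bound. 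For the regularity estimate, Proposition \ref{estimateofderivative} controls the derivatives of the Jacobian and of $\eta_\ell$, so after pulling the Heisenberg-corrected translation $T_{\tilde y}$ back through $\Phi^{-1}$ one obtains that $\kappa_\ell$ is Lipschitz under $T_{\tilde y}$ with constant $\lesssim 2^{\beta\ell}$ for some fixed $\beta$. Thus $\int |\kappa_\ell - \kappa_\ell \circ T_{\tilde y}|\,dz \lesssim \min(2^{-\alpha\ell},\; \delta\cdot 2^{\beta\ell})$, and summing over $\ell$ with the optimal cutoff yields $\delta^\sigma$ for $\sigma = \alpha/(\alpha+\beta) > 0$.

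The principal obstacle is Proposition \ref{good}, the sublevel volume estimate. The Jacobian (\ref{determinant}) combines $C_2'(t), C_2'(s), C_2'(u)$ and $C_3'(t), C_3'(s), C_3'(u)$ with $\xi_j$-order cross-terms, and its nondegeneracy must be read off from the hypotheses of Theorem \ref{1}. The four positivity conditions $\bar D_3, \hat D_3, \doublehat D_3, \triplehat D_3 > 0$ are engineered precisely to match the four sign/variable permutations appearing in $\Phi, \hat\Phi, \Psi, \hat\Psi$, and the infinitesimal doubling conditions (\ref{infinitesimal}) should upgrade pointwise positivity to the quantitative polynomial sublevel bound required, by an argument in the spirit of the $\mathbb{R}^n$ analysis of \cite{VANCE} and \cite{CARBERYCHRIST}. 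Carrying this out cleanly for all four cases, while tracking the $\xi_j$-corrections (which are benign because $\xi_j \leq 1/2$), is the technical heart of the argument and occupies Subsections \ref{jacobian} and \ref{unity}.
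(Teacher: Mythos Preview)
Your high-level architecture is right and matches the paper's: a dyadic decomposition of the density, a sublevel-type volume bound at each level (this is Proposition~\ref{good}), a derivative/Lipschitz bound at each level (this is Proposition~\ref{estimateofderivative}), and then the usual interpolation $\sum_\ell \min(2^{-\alpha\ell},\,2^{\beta\ell}\delta)\lesssim \delta^\sigma$. You also correctly identify that the four maps $\Phi,\hat\Phi,\Psi,\hat\Psi$ are handled in parallel and that the $\bar D_3,\hat D_3,\doublehat D_3,\triplehat D_3$ hypotheses are matched to the various sign patterns.

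There is, however, a genuine gap in your choice of dyadic parameter. You propose $\eta_\ell$ supported where $|\det D\Phi|\sim 2^{-\ell}$. The paper does \emph{not} decompose on the value of $|J|$ alone; it introduces the auxiliary function
\[
F(\tau)^{-2}=\frac{1+|W^t|^2+|W^u|^2+(2\xi_jC_2'(t)C_2'(u))^2+(2\xi_jC_2(s)C_2'(s))^2}{J^2}+\sum_{i=1}^5\frac{1}{\mathrm{dist}(\tau,L_i)^2},
\]
and takes $\psi_l=\rho_l(F(\tau))$. This is not a cosmetic change. Proposition~\ref{estimateofderivative} requires that on each piece $E_l$ one controls not only $1/|J|$ but simultaneously all of $|W^t|/|J|$, $|W^u|/|J|$, $2\xi_jC_2'(t)C_2'(u)/|J|$, $2\xi_jC_2(s)C_2'(s)/|J|$, and the distances to the boundary faces $L_i$ of $R_1$; these ratios enter unavoidably when you pull the Heisenberg translation back through $\Phi^{-1}$ and expand via Cramer's rule. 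On a set where only $|J|\sim 2^{-\ell}$ is known, the integration-by-parts arguments in Subsection~\ref{unity} break down because the cofactors $g_{ij}$ are not comparable to $|J|$. The paper flags exactly this point in Remarks~\ref{before} and~\ref{after}: in the $\mathbb{R}^n$ case $\{J=0\}$ is a finite union of hyperplanes and one can decompose by distance to that set, but here $\{J=0\}$ has no such structure, and $F$ is ``selected with deliberation'' so that the two estimates in Proposition~\ref{estimateofderivative} hold at once.

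Two smaller issues. First, your $L^1$ argument (``testing against $g\in L^\infty$'') shows only that $\mu$ has bounded total variation, not that it is absolutely continuous with an $L^1$ density; the paper obtains absolute continuity by a separate Vitali-type covering of $R_1\setminus\{J=0\}$ by balls $B_m$ on which $\Phi$ is a diffeomorphism (via a contraction-mapping estimate), builds a partition of unity $\{\eta_m\}$ on those balls, and writes $\kappa=\sum_m\kappa_m$ explicitly. Second, the paper in fact runs \emph{two} partitions of unity simultaneously: the $\{\eta_m\}$ on the balls (to make $\Phi^{-1}$ well-defined) and the $\{\psi_l\}$ on the level sets of $F$ (to get the decay/growth trade-off). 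Your sketch collapses these into one, which obscures why the $P_3$ term---the derivative falling on $\eta_m$---vanishes identically (it does, because $\sum_m\eta_m\equiv 1$ near each $\tau$, and the vector field $U$ is defined intrinsically, independent of $m$).
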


\subsection{Regularity of the density}\label{4.1}
The goal of this subsection is to prove Proposition \ref{density}. We first need several lemmas for $C_1(t), C_2(t)$, and $C_3(t)$ defined in (\ref{coefficient}).
\begin{definition}
Recall $\bar \beta(t) = \beta(t)-\frac{1}{2}\int_0^t h_2$, $\bar h_\beta(t) = t\bar \beta'(t)-\bar \beta(t)$. Denote
$$
\bar \beta^+(t) = \beta(t) + \frac{1}{2} \int_0^t h_2, \qquad \bar h_\beta^+(t) = t(\bar \beta^+)'(t)-\bar \beta^+(t),
$$
and
\begin{align*}
&C_3^-(t) = \frac{\bar \beta(2^{-j}t)-t\bar \beta(2^{-j}) - (\alpha(2^{-j}t)-t\alpha(2^{-j}))\bar h_\beta(2^{-j})/h_2(2^{-j})}{2^{-j}h_2(2^{-j}) + \bar h_3(2^{-j})},\\
&C_3^+(t) = \frac{\bar \beta^+(2^{-j}t)-t\bar \beta^+(2^{-j}) - (\alpha(2^{-j}t)-t\alpha(2^{-j}))\bar h_\beta^+(2^{-j})/h_2(2^{-j})}{2^{-j}h_2(2^{-j}) + \bar h_3(2^{-j})},\\
&\hat C_3(t)= C_3(t)+\xi_jtC_2(t),\\
&\doublehat C_3(t) = C_3(t)-\xi_j tC_2(t).
\end{align*}
\end{definition}

We have
\begin{equation}\label{C3}
\begin{aligned}
C_3'(t) - \xi_j\big(tC_2'(t)-C_2(t)\big) &= (C_3^-)'(t),\\
C_3'(t) + \xi_j\big(tC_2'(t)-C_2(t)\big) &= (C_3^+)'(t) +\frac{2^{-j}h_2(2^{-j})-\int_0^{2^{-j}}h_2}{2^{-j}h_2(2^{-j})+\bar h_3(2^{-j})}C_2'(t) + \frac{\int_0^{2^{-j}}h_2}{2^{-j}h_2(2^{-j})+\bar h_3(2^{-j})},\\
C_3'(t) + \xi_j\big(tC_2'(t)+C_2(t)\big) &=\hat C_3'(t), \\
C_3'(t)-\xi_j\big(tC_2'(t)+C_2(t)\big)&=\doublehat C_3'(t),
\end{aligned}
\end{equation}
where
\begin{equation}\label{smallsize}
0\leq \frac{2^{-j}h_2(2^{-j})-\int_0^{2^{-j}}h_2}{2^{-j}h_2(2^{-j})+\bar h_3(2^{-j})}, \quad \frac{\int_0^{2^{-j}}h_2}{2^{-j}h_2(2^{-j})+\bar h_3(2^{-j})}\leq 1.
\end{equation}

\begin{lemma}\label{monotone}
For $t\in [2,8]$, 
$$
C_2(t), C_2'(t), C_2''(t), C_3^-(t), (C_3^-)'(t), (C_3^-)''(t)>0,
$$
and $C_2(t) \leq 7C_2'(t)$.
\end{lemma}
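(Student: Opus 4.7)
The plan is to prove each claim by reducing to properties of $\alpha$ and $\bar\beta$ on the interval $[2^{-j}, 8\cdot 2^{-j}]$, using the hypotheses $D_2 > 0$, $\bar h_3 > 0$, together with the monotonicity identity (\ref{ratioincrease}), which is the only nontrivial input beyond direct differentiation.

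First I would handle $C_2$. Differentiating (\ref{coefficient}) twice in $t$ gives $C_2''(t) = 2^{-2j}\alpha''(2^{-j}t)/h_2(2^{-j}) > 0$ immediately from $D_2 = \alpha'' > 0$. Next, I evaluate $C_2$ and $C_2'$ at $t=1$: direct substitution yields $C_2(1) = 0$, and using $h_2(s) = s\alpha'(s) - \alpha(s)$ one gets $C_2'(1) = h_2(2^{-j})/h_2(2^{-j}) = 1$. Combined with strict convexity, this forces $C_2'(t) > 1$ and $C_2(t) > 0$ on $(1, 8]$. For the last inequality $C_2(t) \le 7 C_2'(t)$, I apply the mean value theorem between $1$ and $t$: there is $\xi \in (1,t)$ with $C_2(t) = C_2(t) - C_2(1) = C_2'(\xi)(t-1)$, and since $C_2'$ is increasing and $t \le 8$, this gives $C_2(t) \le C_2'(t)(t-1) \le 7 C_2'(t)$.

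For $C_3^-$ the strategy is parallel, but the convexity requires more care. Direct substitution shows $C_3^-(1) = 0$, and using $\bar h_\beta(s) = s\bar\beta'(s) - \bar\beta(s)$ together with the corresponding identity for $h_2$ gives $(C_3^-)'(1) = 0$ (the two contributions cancel exactly). The key step is showing $(C_3^-)''(t) > 0$ for $t \in [2,8]$. Differentiating twice:
$$
(C_3^-)''(t) = \frac{2^{-2j}\bigl[\bar\beta''(2^{-j}t)\,h_2(2^{-j}) - \alpha''(2^{-j}t)\,\bar h_\beta(2^{-j})\bigr]}{h_2(2^{-j})\bigl(2^{-j}h_2(2^{-j}) + \bar h_3(2^{-j})\bigr)},
$$
so positivity reduces to $\bar\beta''(s)/\alpha''(s) > \bar h_\beta(2^{-j})/h_2(2^{-j})$ at $s = 2^{-j}t \geq 2^{-j}$. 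This is where (\ref{ratioincrease}) is used: that identity, rewritten, is precisely $\bar\beta''(s)/\alpha''(s) = \bar h_\beta(s)/h_2(s) + \bar h_3(s)/h_2(s)$, so that $\bar\beta''/\alpha'' > \bar h_\beta/h_2$ pointwise, and since $\bar h_\beta/h_2$ is increasing (again by (\ref{ratioincrease})), its value at $s \geq 2^{-j}$ dominates its value at $2^{-j}$. Once convexity is in hand, $(C_3^-)'(1)=C_3^-(1)=0$ together with $(C_3^-)'' > 0$ on $[1,8]$ yields $(C_3^-)'(t), C_3^-(t) > 0$ on $[2,8]$.

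The main obstacle is the $(C_3^-)''$ bound: everything else is a clean two-step "value at $t = 1$ vanishes plus convexity" argument, but here the positivity of the bracket needs both $\bar h_3 > 0$ (giving the strict gap between $\bar\beta''/\alpha''$ and $\bar h_\beta/h_2$) and monotonicity of $\bar h_\beta/h_2$ so that this gap, taken at $s = 2^{-j}$, is preserved for larger $s$. Both ingredients are already packaged in (\ref{ratioincrease}), so the proof will essentially just unpack that identity at the right point.
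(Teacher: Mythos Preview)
Your proposal is correct and follows essentially the same route as the paper: both compute $C_2''$ and $(C_3^-)''$ directly, verify $C_2(1)=C_3^-(1)=(C_3^-)'(1)=0$ and $C_2'(1)=1$, and use (\ref{ratioincrease}) in exactly the way you describe (the paper writes the identity as $\bar\beta''/\alpha'' - \bar h_\beta/h_2 = \bar h_3/h_2$ and then invokes monotonicity of $\bar h_\beta/h_2$). The only cosmetic difference is that the paper bounds $C_2(t)=\int_1^t C_2'\le 7C_2'(t)$ via the integral rather than the mean value theorem, which is the same argument.
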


\begin{lemma}\label{standardmonotone}
For $t\in [2,8]$,
$$
\Big(\frac{(C_3^-)''(t)}{C_2''(t)}\Big)', \quad \Big(\frac{(C_3^+)''(t)}{C_2''(t)}\Big)', \quad \Big(\frac{\hat C_3''(t)}{C_2''(t)}\Big)', \quad \Big(\frac{\doublehat C_3''(t)}{C_2''(t)}\Big)'>0.
$$
\end{lemma}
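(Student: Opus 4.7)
The plan is to compute $C_2''$, $(C_3^{\pm})''$, $\hat C_3''$, $\doublehat C_3''$ explicitly in terms of $s:=2^{-j}t$ and $s_j:=2^{-j}$, so that each of the four quotients in question becomes (up to an additive constant in $t$) a scalar multiple of an explicit rational function of $\bar\beta,\alpha$ and their derivatives. Writing $D:=2^{-j}h_2(2^{-j})+\bar h_3(2^{-j})$ and using $\beta''(s)=\bar\beta''(s)+\tfrac12 s\alpha''(s)$ (immediate from $\bar\beta=\beta-\tfrac12\int_0^{\cdot}h_2$ combined with $h_2'(s)=s\alpha''(s)$) and $(\bar\beta^+)''(s)=\bar\beta''(s)+s\alpha''(s)$, differentiating (\ref{coefficient}) twice gives $C_2''(t)=s_j^2\alpha''(s)/h_2(s_j)$ together with clean closed-form expressions for the other three second derivatives.

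For the $(C_3^\pm)$ cases each quotient reduces to $\frac{h_2(s_j)}{D}\bigl(\frac{\bar\beta''(s)}{\alpha''(s)}+c_\pm(s)\bigr)-\frac{\bar h_\beta^{\pm}(s_j)}{D}$, where $c_-\equiv 0$ and $c_+(s)=s$. Since $\tfrac{d}{dt}=s_j\tfrac{d}{ds}$ preserves signs, the one-line Wronskian identity $\tfrac{d}{ds}\tfrac{\bar\beta''(s)}{\alpha''(s)}=\tfrac{\bar D_3(s)}{\alpha''(s)^2}$ suffices: for $C_3^-$ the derivative equals $\tfrac{s_jh_2(s_j)}{D}\cdot\tfrac{\bar D_3(s)}{\alpha''(s)^2}>0$, and for $C_3^+$ it equals $\tfrac{s_jh_2(s_j)}{D}\bigl(\tfrac{\bar D_3(s)}{\alpha''(s)^2}+1\bigr)>0$, using only $\bar D_3>0$.

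For $\hat C_3=C_3+\xi_jtC_2$ and $\doublehat C_3=C_3-\xi_jtC_2$, the identity $\xi_j/h_2(s_j)=s_j/(2D)$ lets the extra contributions cancel cleanly, yielding
\begin{align*}
\hat C_3''(t) &= \frac{s_j^2}{D}\Bigl[\bar\beta''(s)+s\alpha''(s)+\alpha'(s)-\tfrac{\alpha(s_j)}{s_j}-\alpha''(s)\tfrac{\bar h_\beta(s_j)}{h_2(s_j)}\Bigr],\\
\doublehat C_3''(t) &= \frac{s_j^2}{D}\Bigl[\bar\beta''(s)-\alpha'(s)+\tfrac{\alpha(s_j)}{s_j}-\alpha''(s)\tfrac{\bar h_\beta(s_j)}{h_2(s_j)}\Bigr].
\end{align*}
A second quotient-rule computation then contracts the derivatives to
\begin{align*}
\frac{d}{dt}\frac{\hat C_3''(t)}{C_2''(t)} &= \frac{s_jh_2(s_j)}{D\,\alpha''(s)^2}\Bigl[\hat D_3(s)+\frac{\alpha(s_j)\alpha'''(s)}{s_j}\Bigr],\\
\frac{d}{dt}\frac{\doublehat C_3''(t)}{C_2''(t)} &= \frac{s_jh_2(s_j)}{D\,\alpha''(s)^2}\Bigl[\doublehat D_3(s)-\frac{\alpha(s_j)\alpha'''(s)}{s_j}\Bigr].
\end{align*}

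The hard part is controlling the correction $\alpha(s_j)\alpha'''(s)/s_j$, whose sign is a priori unknown. The saving observation is that $t\in[2,8]$ forces $s_j\le s/2\le s$, and the hypothesis $\alpha''>0$ with $\alpha(0)=0$ then gives two elementary inequalities: $\alpha(s_j)/s_j\le\alpha'(s_j)\le\alpha'(s)$ from the mean-value theorem applied to $\alpha$ on $[0,s_j]$, and $\alpha(s_j)/s_j\le\alpha(s)/s$ from $\tfrac{d}{ds}(\alpha(s)/s)=h_2(s)/s^2>0$. I then split on the sign of $\alpha'''(s)$. When $\alpha'''(s)\ge 0$, $\hat D_3(s)+\alpha(s_j)\alpha'''(s)/s_j\ge\hat D_3(s)>0$ is immediate, and $\doublehat D_3(s)-\alpha(s_j)\alpha'''(s)/s_j\ge\doublehat D_3(s)-\alpha(s)\alpha'''(s)/s=\triplehat D_3(s)>0$. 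When $\alpha'''(s)\le 0$, $\doublehat D_3(s)-\alpha(s_j)\alpha'''(s)/s_j\ge\doublehat D_3(s)>0$ is immediate, and $\hat D_3(s)+\alpha(s_j)\alpha'''(s)/s_j\ge\hat D_3(s)+\alpha'(s)\alpha'''(s)=\bar D_3(s)+2\alpha''(s)^2>0$ by the definition of $\hat D_3$. All four quantities are strictly positive on $[2,8]$, completing the proof.
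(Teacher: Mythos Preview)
Your proof is correct and follows essentially the same route as the paper's own argument: both compute the four quotients explicitly, reduce the $(C_3^\pm)$ cases to $\bar D_3>0$, and for $\hat C_3,\doublehat C_3$ split on the sign of $\alpha'''(s)$, using the elementary bounds $\alpha(s_j)/s_j\le\alpha'(s)$ and $\alpha(s_j)/s_j\le\alpha(s)/s$ to absorb the correction term into one of $\hat D_3$, $\bar D_3+2(\alpha'')^2$, $\doublehat D_3$, or $\triplehat D_3$. Your write-up is somewhat more explicit about why those two inequalities hold, but the content is the same as the paper's terse $\min\{\cdot,\cdot\}$ bounds.
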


The following lemma is from Proposition 3.1 of \cite{VANCE}, of which we need only the $n=3$ case:
\begin{lemma}[\cite{VANCE}]\label{mj}
For a function $f\in C^2(0,\infty)$, denote
$$
\Delta_jf(t):= \Big(\frac{f'(t)}{C_2''(t)}\Big)' \frac{h_2(2^{-j}t)}{h_2(2^{-j})}, \quad j\in \mathbb{N}.
$$
If there exists $j\in \mathbb{N}$ such that $\Delta_jf(t)$ does not change sign on $[2,8]$, and
$$
|f(t)| +|tf'(t)|+|\Delta_jf(t)| \geq 2\lambda_0, \quad \forall t\in [2,8],
$$
for some $\lambda_0>0$, then for every $0\leq \lambda \leq \lambda_0$,
$$
|\{t\in [2,8]: |f(t)|<\lambda\}|\lesssim\lambda^{\frac{1}{2}},
$$
where the implicit constant depends only on $\lambda_0$ and is independent of $j$.
\end{lemma}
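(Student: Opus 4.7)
The plan is to prove Lemma~\ref{mj} via the standard dichotomy for sublevel set estimates, combined with a double integration that makes the $j$-dependent $h_2$-weights cancel. Fix $\lambda\in[0,\lambda_0]$ and set $E:=\{t\in[2,8]:|f(t)|<\lambda\}$. On $E$ we have $|f|<\lambda\leq\lambda_0$, so the hypothesis $|f|+|tf'|+|\Delta_j f|\geq 2\lambda_0$ forces $|tf'(t)|+|\Delta_j f(t)|\geq\lambda_0$ pointwise on $E$. Partition $E\subseteq A\cup B$ with $A:=\{t\in[2,8]:|tf'(t)|\geq\lambda_0/2\}$ and $B:=\{t\in[2,8]:|\Delta_j f(t)|\geq\lambda_0/2\}$.

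Structurally, because $\Delta_j f$ does not change sign and the weight $h_2(2^{-j}t)/h_2(2^{-j})$ is positive, the function $g:=f'/C_2''$ is monotone on $[2,8]$ and so has at most one zero; together with $C_2''>0$ from Lemma~\ref{monotone}, $f$ has at most one critical point and $[2,8]$ decomposes into at most two subintervals $I_+,I_-$ on each of which $f$ is strictly monotone. For $|E\cap A|$: on $A$ one has $|f'|\geq\lambda_0/16$ (since $t\leq 8$), and injectivity of $f|_{I_\pm}$ together with change of variables give $|E\cap A\cap I_\pm|\leq 32\lambda/\lambda_0$; summing and using $\lambda\leq\lambda_0$ yields $|E\cap A|\leq 64\lambda_0^{-1/2}\sqrt{\lambda}$.

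The main step is the bound on $|E\cap B|$. Restricting to a single monotone piece $I_\pm$, let $t_0\in I_\pm$ be the point at which $|g|$ is smallest, so either $g(t_0)=0$ (when the unique critical point of $f$ lies in $I_\pm$) or $t_0$ is a boundary endpoint. Two applications of the fundamental theorem of calculus yield
\begin{equation*}
    f(t)-f(t_0)=\int_{t_0}^{t}C_2''(s)\Bigl[g(t_0)+\int_{t_0}^{s}\Delta_j f(r)\,\frac{h_2(2^{-j})}{h_2(2^{-j}r)}\,dr\Bigr]ds.
\end{equation*}
Using $C_2''(s)=2^{-2j}\alpha''(2^{-j}s)/h_2(2^{-j})$, the identity $\alpha''(r)=h_2'(r)/r$ coming from $h_2(r)=r\alpha'(r)-\alpha(r)$, and the hypothesis $h_2'(r)\gtrsim h_2(r)/r$, one derives the pointwise lower bound $C_2''(s)\gtrsim h_2(2^{-j}s)/\bigl(s^2 h_2(2^{-j})\bigr)$. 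On $B$ the integrand $\Delta_j f$ is sign-definite with $|\Delta_j f|\geq\lambda_0/2$, and monotonicity of $h_2$ gives $\int_{t_0}^{s}h_2(2^{-j})/h_2(2^{-j}r)\,dr\geq(s-t_0)\,h_2(2^{-j})/h_2(2^{-j}s)$. Inserting both estimates the $h_2$-factors cancel and produce $|f(t)-f(t_0)|\gtrsim\lambda_0(t-t_0)^2$; a standard case analysis on whether $|f(t_0)|\lessgtr\lambda$ then shows $E\cap B\cap I_\pm$ is contained in an interval of length $\lesssim\sqrt{\lambda/\lambda_0}$, whence $|E\cap B|\lesssim\sqrt{\lambda/\lambda_0}$.

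The main obstacle is that the lower bound $|\Delta_j f|\geq\lambda_0/2$ on $B$ only yields a lower bound on $(f'/C_2'')'$ weighted by $h_2(2^{-j})/h_2(2^{-j}t)$, a ratio which can be arbitrarily small when $h_2$ grows rapidly; a naive iteration would therefore produce a $j$-dependent constant. The resolution, and the real technical content, is that the $C_2''$ factor appearing in the double integration carries the compensating weight $h_2(2^{-j}s)$ arising from the hypothesis $h_2'(r)\gtrsim h_2(r)/r$; the careful pairing in the displayed estimate above makes the $h_2$-weights telescope, producing an implicit constant depending only on $\lambda_0$. A secondary subtlety is that the inner integration is naturally only over $[t_0,s]\cap B$ rather than all of $[t_0,s]$, but because $g'$ has a fixed sign, dropping the contributions from outside $B$ yields a valid one-sided lower bound and preserves the estimate.
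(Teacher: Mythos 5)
The paper itself does not prove Lemma \ref{mj}; it imports it from Proposition 3.1 of \cite{VANCE}, so your argument has to stand on its own. Your skeleton is reasonable: the split $E\subseteq A\cup B$, the reduction to at most two monotone pieces from the fixed sign of $\Delta_jf$ (with the harmless caveat that $g=f'/C_2''$ could vanish on an interval, where the hypothesis forces $|f|\ge 2\lambda_0$ and $E$ is absent), the bound for $|E\cap A|$, the double application of the fundamental theorem of calculus, and the pointwise bound $C_2''(s)\gtrsim h_2(2^{-j}s)/(s^2h_2(2^{-j}))$ from (\ref{infinitesimal}) are all correct. The gap is in the step ``inserting both estimates the $h_2$-factors cancel and produce $|f(t)-f(t_0)|\gtrsim\lambda_0(t-t_0)^2$.'' First, $|\Delta_jf|\ge\lambda_0/2$ is only available on $B$, and restricting the inner integral to $[t_0,s]\cap B$ does not ``preserve the estimate'': it replaces $(s-t_0)$ by $|[t_0,s]\cap B|$, so at best you obtain $|f(t)-f(t_0)|\gtrsim\lambda_0\int_{t_0}^{t}|[t_0,s]\cap B|\,ds$. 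This weaker bound does still yield $|E\cap B\cap I_\pm|\lesssim\sqrt{\lambda/\lambda_0}$ (for instance via a Chebyshev-type argument on the monotone function $s\mapsto|[t_0,s]\cap B|$, using that $E\cap I_\pm$ is an interval), but it does not give the interval containment you assert, and you do not supply that missing step.

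Second, and more seriously, the claimed cancellation only occurs on one of the two monotone pieces. Say $\Delta_jf\ge0$, so $g$ is nondecreasing and your $t_0$ is the left endpoint of the increasing piece but the right endpoint of the decreasing piece. On the increasing piece the inner variable satisfies $r\le s$, so $h_2(2^{-j}r)\le h_2(2^{-j}s)$ and $C_2''(s)\cdot h_2(2^{-j})/h_2(2^{-j}s)\gtrsim1$: your pairing works. On the decreasing piece the inner variable satisfies $r\ge s$, your monotonicity step only gives $\int_s^{t_0}h_2(2^{-j})/h_2(2^{-j}r)\,dr\ge(t_0-s)\,h_2(2^{-j})/h_2(2^{-j}t_0)$, and after multiplying by $C_2''(s)$ you are left with a factor $h_2(2^{-j}s)/h_2(2^{-j}t_0)$, which has no lower bound uniform in $j$ because $h_2$ is not assumed doubling (with $h_2(x)=e^{-1/x}$ this ratio is of size $e^{-c2^j}$ on $[2,8]$); so on that piece your chain of inequalities proves nothing. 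The needed bound there is true, but it requires a different pairing: exchange the order of integration and use (\ref{infinitesimal}) in integrated form, namely $C_2'(r)-C_2'(t)\ge\frac{1}{8}\bigl(h_2(2^{-j}r)-h_2(2^{-j}t)\bigr)/h_2(2^{-j})$ together with $\frac{d}{dr}\log h_2(2^{-j}r)\ge c/8$ on $[2,8]$, so that $\frac{h_2(2^{-j})}{h_2(2^{-j}r)}\bigl(C_2'(r)-C_2'(t)\bigr)\gtrsim 1-e^{-c(r-t)/8}\gtrsim r-t$. Without this (or some equivalent use of $h_2'\gtrsim h_2/t$ beyond the pointwise lower bound on $C_2''$), the proof is incomplete on half of the domain, so as written there is a genuine gap.
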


\begin{lemma}\label{lowerbound}
For each $j\in \mathbb{N}$, denote
\begin{align*}
B_j^-(t)=
\begin{pmatrix}
1 & 0 & \quad 0\\
C_2'(t) & tC_2''(t) & \quad 0\\
(C_3^-)'(t) & t(C_3^-)''(t) & \quad \big(\frac{(C_3^-)''(t)}{C_2''(t)}\big)' \frac{h_2(2^{-j}t)}{h_2(2^{-j})},
\end{pmatrix},\\
B_j^+ (t)=
\begin{pmatrix}
1 & 0 & \quad 0\\
C_2'(t) & tC_2''(t) & \quad 0\\
(C_3^+)'(t) & t(C_3^+)''(t) & \quad \big(\frac{(C_3^+)''(t)}{C_2''(t)}\big)' \frac{h_2(2^{-j}t)}{h_2(2^{-j})},
\end{pmatrix}.
\end{align*}
we have
$$
\|(
B_j^-(t))^{-1}\|, \quad \|(
B_j^+(t))^{-1}\|\lesssim 1, \quad t\in [2,8],
$$
where the implicit constant is independent of $j$.
\end{lemma}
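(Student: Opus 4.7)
The plan is to exploit the fact that $B_j^-(t)$ and $B_j^+(t)$ are lower triangular with strictly positive diagonal entries (positivity of the $(1,1), (2,2)$ entries follows from Lemma \ref{monotone}, of $(3,3)$ from Lemma \ref{standardmonotone} plus positivity of $h_2$). For a $3\times 3$ lower triangular matrix
\[
B = \begin{pmatrix} 1 & 0 & 0 \\ b_{21} & b_{22} & 0 \\ b_{31} & b_{32} & b_{33} \end{pmatrix},
\]
the Frobenius norm of $B^{-1}$ is controlled by the quantities $1/b_{22}$, $1/b_{33}$, $b_{21}/b_{22}$, $b_{32}/(b_{22}b_{33})$, and $(b_{21}b_{32}-b_{22}b_{31})/(b_{22}b_{33})$. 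Thus it suffices, uniformly in $j\in\mathbb{N}$ and $t\in[2,8]$, to produce lower bounds on $b_{22}, b_{33}$ and upper bounds on those ratios.

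Setting $s := 2^{-j}t$ and using the identities $h_2'(s) = s\alpha''(s)$ and $\bar h_3'(s) = s\bar D_3(s)/\alpha''(s)$, direct computation from (\ref{coefficient}) and the definition of $C_3^-$ yields
\[
tC_2''(t) = \frac{2^{-j}h_2'(s)}{h_2(2^{-j})}, \qquad \Bigl(\tfrac{(C_3^-)''(t)}{C_2''(t)}\Bigr)'\cdot\tfrac{h_2(s)}{h_2(2^{-j})} = \frac{2^{-j}\bar h_3'(s)\,h_2(s)}{h_2'(s)\bigl(2^{-j}h_2(2^{-j}) + \bar h_3(2^{-j})\bigr)}.
\]
The lower bound $tC_2''(t)\gtrsim 1$ then follows at once from the first inequality in (\ref{infinitesimal}), $h_2'(s)\gtrsim h_2(s)/s$, combined with $s\geq 2^{-j}$ and monotonicity of $h_2$. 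For the $(3,3)$ lower bound, I combine $\bar h_3'(s)\gtrsim \bar h_3(s)/s$ with the third inequality $\bar h_3\gtrsim \int_0^{\cdot}h_2$, which lets me dominate $2^{-j}h_2(2^{-j})+\bar h_3(2^{-j})$ by $\bar h_3(s)$ up to a constant for $t\geq 2$: indeed $\int_{2^{-j}}^{s} h_2 \geq (s-2^{-j})h_2(2^{-j})\gtrsim 2^{-j}h_2(2^{-j})$, so $\bar h_3(s)\gtrsim 2^{-j}h_2(2^{-j})$, while $\bar h_3(s)\geq \bar h_3(2^{-j})$ by monotonicity. The remaining ratio $\bar h_3'(s)h_2(s)/(h_2'(s)\bar h_3(s))$ is then bounded below using $s\bar h_3'(s)\gtrsim \bar h_3(s)$ and $h_2'(s)\gtrsim h_2(s)/s$ with cancellation. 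Upper bounds on the ratios $b_{21}/b_{22}$ etc.\ reduce, by parallel substitution, to quantities of the form $h_2(s)/(s h_2'(s))$ and $\bar h_3(s)/(s\bar h_3'(s))$, both $\lesssim 1$ by (\ref{infinitesimal}); the numerator of $(b_{21}b_{32}-b_{22}b_{31})$ involves the corresponding ``cross Wronskian'' of $C_2$ and $C_3^-$, handled by the same identities together with (\ref{ratioincrease}).

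The $B_j^+$ case is parallel: differentiating $\bar\beta^+ = \bar\beta + \int_0^{\cdot}h_2$ shows $\alpha''(\bar\beta^+)''' - \alpha'''(\bar\beta^+)'' = \bar D_3 + (\alpha'')^2 > 0$, which supplies the positivity needed in place of Lemma \ref{standardmonotone}; the rest of the chain carries through unchanged, with $\bar\beta$ replaced by $\bar\beta^+$.

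The main obstacle is that individual entries such as $b_{22}=tC_2''(t)$ and $b_{33}$ are \emph{not} uniformly bounded above in $j$: the ratio $h_2(2^{-j}t)/h_2(2^{-j})$ may blow up when $h_2$ grows very rapidly (e.g., for curves like $\alpha(s)=e^{-1/s}$). Thus the argument cannot bound the norms of $B_j^\pm(t)$ themselves, and must instead verify that every ``large'' factor appearing in a diagonal entry is matched by a compensating large factor in the corresponding off-diagonal entries, so that the ratios in the inverse formula remain bounded. Locating these compensations—and in particular showing that the third condition $\bar h_3\gtrsim \int h_2$ produces exactly the cancellation required in the $(3,3)$ diagonal—is the technical heart of the proof.
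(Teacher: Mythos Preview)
Your approach is essentially the same as the paper's: write out the inverse of the lower triangular matrix and bound each entry using the hypotheses in (\ref{infinitesimal}), with the $B_j^+$ case handled by the analogue $\bar h_3^+:=\bar h_3+\int_0^{\cdot}h_2$. One small correction: the identity $\bar h_3'(s)=s\bar D_3(s)/\alpha''(s)$ is not quite right---the correct formula is $\bar h_3'(s)=h_2(s)\bar D_3(s)/(\alpha''(s))^2$, which gives the $(3,3)$ entry as $\dfrac{2^{-j}\bar h_3'(s)}{2^{-j}h_2(2^{-j})+\bar h_3(2^{-j})}$ without the extra $h_2(s)/h_2'(s)$ factor; with this correction your lower bound goes through directly (no ``cancellation'' step needed).
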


\begin{definition}\label{product}
For each $j\in \mathbb{N}$, we denote
$$
x \odot_j y= \big(x_1+y_1, x_2+y_2, x_3+y_3 + \xi_j(x_1y_2-x_2y_1)\big).
$$
\end{definition}

\begin{proof}[Proof of Proposition \ref{density}]
Using the notation in Definition \ref{product}, what we need to prove is that there exists $\sigma>0$ such that for each $j\in \mathbb{N}$,
\begin{align*}
&\int |\kappa(z)|\,dz \lesssim 1, \quad \int \big|\kappa(z)-\kappa\big(\tilde y \odot_j z\big) \big|\,dz \lesssim \delta^\sigma,\\
&\int |\hat \kappa(w)|\,dw \lesssim 1, \quad \int \big|\hat \kappa(w)-\hat \kappa \big(w\odot_j \tilde v^{-1}\big)\big|\,dw\lesssim \delta^\sigma,\\
&\int |\chi(\zeta)|\,d\zeta\lesssim 1, \quad \int \big|\chi(\zeta)-\chi \big(\tilde y \odot_j \zeta\big)\big|\,d\zeta \lesssim \delta^\sigma,\\
&\int |\hat \chi(\zeta)|\,d\zeta\lesssim 1, \quad \int \big| \hat \chi(\zeta)-\hat \chi \big(\zeta\odot_j  \tilde v^{-1}\big)\big|\,d\zeta \lesssim \delta^\sigma,
\end{align*}
for $0\leq |\tilde y|, |\tilde v|\lesssim \delta \leq 1$, where the implicit constant is independent of $j$. We only prove the inequalities for $\kappa$. The proof of the inequalities for $\hat \kappa, \chi, \hat \chi$ is similar (see Remark \ref{modification}). 

Denote $J:=\det D\Phi(t,s,u)$. By (\ref{determinant}),
\begin{align*}
J(t,s,u)
&=C_2'(s)(C_3^-)'(u) - C_2'(u)\doublehat C_3'(s) +C_2'(t)\hat C_3'(s)-C_2'(s)\big(C_3'(t)+\xi_j\big(tC_2'(t)-C_2(t)\big)\big)\\
&\quad -C_2'(t)(C_3^-)'(u) +C_2'(u)\big(C_3'(t)+\xi_j\big(tC_2'(t)-C_2(t)\big)\big) -2\xi_jC_2(s)C_2'(s)-2\xi_jsC_2'(t)C_2'(u).
\end{align*}
Note $J(t,s,u)$ depends on $j$. Denote $W^u:= (W_1^u, W_2^u, W_3^u)$ and
\begin{equation}\label{wu}
\begin{aligned}
W_1^u&:= C_2'(t)\hat C_3'(s)-C_2'(s)\big(C_3'(t)+\xi_j\big(tC_2'(t)-C_2(t)\big)\big)-2\xi_jC_2(s)C_2'(s),\\
W_2^u&:= \big(C_3'(t)+\xi_j\big(tC_2'(t)-C_2(t)\big)\big) -\doublehat C_3'(s) -2\xi_j sC_2'(t)_,\\
W_3^u&:=C_2'(s)-C_2'(t).
\end{aligned}
\end{equation}
We have
\begin{align*}
J=W_1^u+ C_2'(u) W_2^u+ (C_3^-)'(u) W_3^u,
\end{align*}
and $W^u$ is independent of $u$. By Lemma \ref{monotone},
$$
\{(t,s)\in [2,8]^2: |W^u|=0\}\subseteq \{(t,s)\in [2,8]^2: W_3^u=0\}= \{(t,s)\in [2,8]^2: t=s\},
$$
is of measure $0$. Recall functions $C_2(u), C_3^-(u)$ depend on $j$. For this $j$, and for $(t,s)\in [2,8]^2$ with $t\neq s$, we can apply Lemma \ref{mj} to
$$
f(u):=\frac{J}{|W^u|},
$$
as a functions of $u$. In fact, by Lemma \ref{standardmonotone},
$$
\Delta_jf(u) = \frac{W_3^u}{|W^u|} \Big( \frac{(C_3^-)''(u)}{C_2''(u)}\Big)' \frac{h_2(2^{-j}u)}{h_2(2^{-j})}
$$
does not change sign for $u\in[2,8]$. Since 
$$
\frac{1}{|W^u|}(W_1^u, W_2^u, W_3^u) B_j^-(u) = \big(f(u), uf'(u), \Delta_jf(u)\big),
$$
by Lemma \ref{lowerbound},
$$
1\leq \|(B_j^-(u))^{-1}\|\cdot \big|\big(f(u), uf'(u), \Delta_jf(u)\big)\big| \lesssim \big|\big(f(u), uf'(u), \Delta_jf(u)\big)\big|,
$$
where the implicit constant is independent of $j, t,s$. Therefore by Lemma \ref{mj}, there exists $\lambda_0>0$ independent of $j, t,s$, such that for every $0< \lambda \leq \lambda_0$,
\begin{equation}\label{measureu}
|\{u\in [2,8]: \frac{|J|}{|W^u|}<\lambda\}|\lesssim \lambda^{\frac{1}{2}},
\end{equation}
where the implicit constant is independent of $j, t, s (t\neq s)$. In particular, for almost every $(t,s)\in [2,8]^2$, 
$$
|\{u\in [2,8]: J=0\}|=0,
$$
and hence by Fubini's theorem $J=0$ for almost every $(t,s,u)\in [2,8]^3$.

Similarly, by denoting $W^t=(W_1^t, W_2^t, W_3^t)$, $\widetilde W^t = (\widetilde W_1^t, \widetilde W_2^t, \widetilde W_3^t)$, and
\begin{equation}\label{wt}
\begin{aligned}
&W_1^t= C_2'(s)(C_3^-)'(u)-C_2'(u) \doublehat C_3'(s)-2\xi_jC_2(s)C_2'(s), \quad \widetilde W_1^t=\frac{\int_0^{2^{-j}}h_2}{2^{-j}h_2(2^{-j})+\bar h_3(2^{-j})}W_3^t+ W_1^t,\\
&W_2^t= \hat C_3'(s)-(C_3^-)'(u)-2\xi_jsC_2'(u), \quad \widetilde W_2^t=\frac{2^{-j}h_2(2^{-j})-\int_0^{2^{-j}}h_2}{2^{-j}h_2(2^{-j})+\bar h_3(2^{-j})}W_3^t+ W_2^t,\\
&W_3^t= C_2'(u) -C_2'(s), \quad \widetilde W_3^t=W_3^t,
\end{aligned}
\end{equation}
due to (\ref{C3}),
\begin{equation}\label{combo}
J= W_1^t+ C_2'(t) W_2^t+ \Big( C_3'(t) +\xi_j \big(tC_2'(t)-C_2(t)\big)\Big) W_3^t =\widetilde W_1^t + C_2'(t) \widetilde W_2^t+(C_3^+)'(t)\widetilde W_3^t,
\end{equation}
where $W^t, \widetilde W^t$ are independent of $t$. We have $|W^t|\lesssim |\widetilde W^t| \lesssim |W^t|$ by (\ref{smallsize}). For $(s,u)\in [2,8]$ with $s\neq u$, by letting
$$
f(t):=\frac{J}{|W^t|}
$$
in Lemma \ref{mj}, since
$$
\Delta_jf(t) = \frac{\widetilde W_3^t}{|W^t|} \Big( \frac{(C_3^+)''(t)}{C_2''(t)}\Big)' \frac{h_2(2^{-j}t)}{h_2(2^{-j})}
$$
does not change sign for $t\in[2,8]$, and 
$$
1\lesssim \frac{1}{|W^t|}|(\widetilde W_1^t, \widetilde W_2^t, \widetilde W_3^t) | =\big|\big(f(t), tf'(t), \Delta_jf(t)\big)(B_j^+(t))^{-1}\big| \lesssim \big|\big(f(t), tf'(t), \Delta_jf(t)\big)\big|,
$$
we have there exists $\lambda_0>0$ independent of $j, s,u$, such that for every $0< \lambda \leq \lambda_0$,
\begin{equation}\label{measuret}
|\{t\in [2,8]: \frac{|J|}{|W^t|}<\lambda\}|\lesssim \lambda^{\frac{1}{2}},
\end{equation}
where the implicit constant is independent of $j, u, s (u\neq s)$.

Recall $R_1=\{(t,s,u)\in [2,8]^3: t\leq u\}$. let $\partial R_1$ denote the boundary of $R_1$. Define the following hyperplanes in $\mathbb{R}^3$: 
\begin{align*}
&L_1=\{(t,s,u): u=8\}, \quad L_2=\{(t,s,u):t=2\}, \\
&L_3=\{(t,s,u): t=u\}, \quad L_4=\{(t,s,u): s=2\},\\
&L_5=\{(t,s,u):s=8\}. 
\end{align*}
We have
$$
\begin{aligned}
\partial R_1 &= \{(t,s,u)\in L_1: 2\leq t,s\leq 8\}\cup \{(t,s,u)\in L_2: 2\leq u,s\leq 8\}\\
&\quad \cup \{(t,s,u)\in L_3: 2\leq t,s\leq 8\} \cup \{(t,s,u)\in L_4: 2\leq t\leq u\leq 8\}\\
&\quad \cup \{(t,s,u)\in L_5: 2 \leq t\leq u \leq 8\}.
\end{aligned}
$$
\color{black}
Thus $\partial R_1$ is of measure $0$. Denote $\tau = (t,s,u)$.  For every $0<\sigma <1, 1\leq i \leq 5, \tau\in R_1$,
\begin{equation}\label{distance}
\text{dist}\,(\tau, \partial R_1)\geq \min_i \text{dist}\,(\tau,L_i), \qquad |\nabla_\tau\text{dist}\,(\tau,L_i)|\lesssim 1, \qquad \int_{R_1} \frac{1}{\text{dist}\,(\tau,L_i)^\sigma}\,d\tau \lesssim_\sigma 1.
\end{equation}

Since the curve $\Gamma(t) \in C^3(0,\infty)$, we have $D\Phi(\tau), J(\tau)\in C^2$ on a neighborhood of $[2,8]^3$. Thus we can let
$$
M_j = \max\{1, \sup_{[2,8]^3}| \nabla J|, \sup_{[2,8]^3} |2\times 2 \text{ minors of } D\Phi|, \sup_{[2,8]^3} |\text{derivative of each entry of }D\Phi|\}<\infty.
$$
For every $\tau \in R_1$ with $J(\tau) \neq 0$, construct a Euclidean ball $B(\tau, \frac{|J(\tau)|}{80 M_j^2})$. For $\tau' \in B(\tau, \frac{|J(\tau)|}{10 M_j^2})$,
$$
|J(\tau')-J(\tau)| \leq \frac{1}{10}|J(\tau)|.
$$
If $\tau_1, \tau_2\in R_1\backslash\{J=0\}$ with $B(\tau_1, \frac{|J(\tau_1)|}{20 M_j^2})\cap B(\tau_2, \frac{|J(\tau_2)|}{20 M_j^2})\neq \emptyset$, then either $\tau_1\in B(\tau_2, \frac{|J(\tau_2)|}{10 M_j^2})$ or $\tau_2\in B(\tau_1, \frac{|J(\tau_1)|}{10 M_j^2})$. Thus
$$
\frac{9}{11}|J(\tau_1)| \leq |J(\tau_2)| \leq \frac{11}{9}|J(\tau_1)|.
$$

Consider a maximal disjoint collection of balls $\{B(\tau_m, c\frac{|J(\tau_m)|}{80 M_j^2}): \tau_m\in R_1, J(\tau_m)\neq 0\}$, where $0<c<1$ is a small constant to be determined. Denote $B_m := B(\tau_m, c\frac{|J(\tau_m)|}{20 M_j^2})$. We claim that
$$
R_1\backslash \{J=0\} \subseteq \bigcup_m B_m.
$$
In fact, if $\tau\in R_1$ with $J(\tau)\neq 0$, either $\tau$ is one of the center $\tau_m$, or it is not. In the latter case, the ball $B(\tau, c\frac{J(\tau)}{80M_j^2})$ must intersect one of the balls $B(\tau_m, c\frac{J(\tau_m)}{80M_j^2})$, then $J(\tau)\leq \frac{11}{9}J(\tau_m)$, and hence $\tau \in B_m$.

There exists $K<\infty$, independent of $j$, such that no point is covered by more than $K$ of the balls $B_m$. In fact, all balls $B(\tau_m, c\frac{J(\tau_m)}{80M_j^2})$ for which $\tau \in B_m$ have comparable radii, are disjoint, and are contained in a ball of comparable radius. Similarly, each $B_m$ intersects with boundedly many balls in $\{B_m\}$.

Note $\Phi$ has the Jacobian $\geq \frac{9}{10}|J(\tau_m)|>0$ on $B_m$. Restricted on $B_m$, let
$$
G(\tau,z) = \tau + D\Phi(\tau_m)^{-1}(z- \Phi(\tau))
$$
Then 
\begin{align*}
\frac{\partial G}{\partial \tau} = I - D\Phi(\tau_m)^{-1} D\Phi(\tau) = D\Phi(\tau_m)^{-1} \big( D\Phi(\tau_m) - D\Phi(\tau)\big)
\end{align*}
By Cramer's rule, 
\begin{align*}
&\|D\Phi(\tau_m)^{-1}\| \lesssim \frac{M_j}{|J(\tau_m)|},\\
&\|D\Phi(\tau_m) - D\Phi(\tau)\| \lesssim M_j\cdot \frac{c|J(\tau_m)|}{10M_j^2},
\end{align*}
where $\|\cdot\|$ denotes the Frobenius norm of the matrix. Thus 
$$
\Big\|\frac{\partial G}{\partial \tau}\Big\| \lesssim c
$$
Thus by choosing $c$ sufficiently small, the contraction mapping condition on $G$ is satisfied on $B_m$, so that $\Phi$ is invertible on $B_m$. Denote the inverse map of $\Phi|_{B_m}$ to be $\Phi_m^{-1}$.

We then choose a partition of unity $\{\eta_m\}$, consisting of $C^\infty$ functions supported in $B_m$, $0\leq \eta_m \leq 1$, with $\sum \eta_m =1$ on $R_1\backslash\{J=0\}$. Let 
$$
d\mu_m = \Phi_*\big(\phi_j(t)\phi_j(s)\phi_j(u)\eta_m(\tau)1_{R_1}(\tau)\,d\tau\big).
$$ 
Denote $Q(\tau)= |J(\tau)|^{-1} \phi_j(t)\phi_j(s)\phi_j(u) \in C^2([2,8]^3\backslash \{J=0\})$. Then 
$$
d\mu_m(z) = 
\left\{
\begin{aligned}
&Q(\Phi_m^{-1}(z)) \eta_m(\Phi_m^{-1}(z))1_{R_1}(\Phi_m^{-1}(z))\,dz, \quad z\in \Phi(B_m),\\
&0, \quad z\not\in \Phi(B_m).
\end{aligned}
\right.
$$
Denote its density by $\kappa_m(z)$. 
We have
$$
\big\|\sum_m \kappa_m\big\|_1 \leq \sum \int |\kappa_m(z)|\,dz \lesssim \sum \int_{B_m}|\eta_m(\tau)| 1_{R_1}(\tau)\,d\tau \lesssim K.
$$
Therefore $\mu = \sum \mu_m$ with density $\kappa = \sum \kappa_m$ is absolutely continuous, and $\kappa\in L^1(\mathbb{R}^3)$ uniformly in $j$.

Now we start to show 
$$
\int \big|\kappa(z)-\kappa \big(\tilde y\odot_j z\big) \big|\,dz \lesssim \delta^\sigma, \quad \forall 0\leq |\tilde y|\lesssim \delta \leq 1,
$$
for some $\sigma >0$. Fix a $C^\infty$ function $0\leq \rho_0\leq 1$ supported on $[1,\infty)$ with $\rho_0\equiv 1$ on $[2,\infty)$. For $l\geq 1$, let $\rho_l(r)= \rho_0(2^l r)-\rho_0(2^{l-1}r)\in C_c^\infty[2^{-l}, 2^{2-l}]$. Then $\sum_{l\geq 0} \rho_l =1_{(0,\infty)}$, and $0\leq \rho_l \leq 1$ for each $l$. For $\tau \in [2,8]^3$, define 
\begin{equation}\label{bigf}
0\leq F(\tau)=\frac{1}{\sqrt{(\frac{1}{J})^2+(\frac{|W^t|}{J})^2+(\frac{|W^u|}{J})^2 +(\frac{2\xi_jC_2'(t)C_2'(u)}{J})^2+(\frac{2\xi_jC_2(s)C_2'(s)}{J})^2 +\sum_{i=1}^5\frac{1}{\text{dist}\,(\tau,L_i)^2}}} <\infty.
\end{equation}
We have $F(\tau)>0$ for almost every $\tau\in [2,8]^3$.

\begin{remark}\label{before}
Although \cite{withoutfourier} proves the $\mathbb{R}^n$ case without using the Fourier transform, it uses the fact that the zero set of the Jacobian $J$ in the $\mathbb{R}^n$ case is the union of $\binom{n}{2}$ many hyperplanes $H_i$. The $B_m$, which decomposes the region $[2,8]^n\backslash\{J=0\}$, is taken to be with the radius comparable to its distance to $\{J=0\}$. An indispensable estimate used in \cite{withoutfourier} is that for such $\{B_m\}$,
\begin{equation}\label{indispensable}
\sum_m |B_m|^{1-\sigma} \lesssim 1, \quad \text{for some } \sigma >0.
\end{equation}
In fact,
\begin{align*}
\sum_m |B_m|^{1-\sigma} &\approx \sum_m \int_{B_m} \frac{1}{\text{dist}\,(\tau, \{J=0\})^{n\sigma}}\,d\tau\\
&\approx \int_{[2,8]^n} \frac{1}{\text{dist}\,(\tau, \{J=0\})^{n\sigma}}\,d\tau \leq \int_{[2,8]^n} \sum_{i=1}^{\binom{n}{2}}\frac{1}{\text{dist}\,(\tau, H_i)^{n\sigma}}\,d\tau \lesssim_{n,\sigma} 1, \quad \text{for every } 0<\sigma<\frac{1}{n},
\end{align*}
where the implicit constant is independent of $j$. Such estimate requires the fact that the $(n-1)$-dimensional Hausdorff measure of $\{J=0\}$ is bounded as $j$ varies. However, in cases that are not translation-invariant, we no longer have $\{J=0\}$ is the union of hyperplanes, and the same fact seems not to hold for the zero set $\{J=0\}$, even though our Jacobian $J$ is a perturbation of the Jacobian in the $\mathbb{R}^n$ case. Since (\ref{indispensable}) is not available for cases that are not translation-invariant, we need to take an alternative route in the rest of the proof, and that is the reason we construct the function $F(\tau)$.
\end{remark}

Let $\psi_l(\tau)=\rho_l(F(\tau))1_{R_1}(\tau)\in C^2$. Then $\sum_{l\geq 0} \psi_l(\tau)=1$ for almost every $\tau\in R_1$, and $0\leq \psi_l \leq 1$ for each $l$. Define
\begin{equation}\label{redecompose}
\bar \kappa_l(z)=\sum_m \kappa_m(z)\psi_l(\Phi_m^{-1}(z)) =
\sum_m Q(\Phi_m^{-1}(z)) \eta_m(\Phi_m^{-1}(z)) \psi_l(\Phi_m^{-1}(z)).
\end{equation}
Since
$$
\int \sum_{l,m} |\kappa_m(z)\psi_l(\Phi_m^{-1}(z))|=\int \sum_{l,m} \big| Q(\Phi_m^{-1}(z)) \eta_m(\Phi_m^{-1}(z)) \psi_l(\Phi_m^{-1}(z)) \big|\,dz \lesssim \sum_{l,m} \int \eta_m(\tau)\psi_l(\tau)\,d\tau \lesssim 1,
$$
we have $\sum_{l,m} \big| \kappa_m(z)\psi_l(\Phi_m^{-1}(z))\big|<\infty$ for almost every $z$.
Thus for almost every $z$,
$$
\kappa(z)=\sum_m\kappa_m(z)= \sum_m\kappa_m(z)\cdot \big(\sum_l\psi_l(\Phi_m^{-1}(z))\big)=\sum_l \bar \kappa_l(z).
$$
Therefore
$$
\int |\kappa(z)-\kappa(\tilde y\odot_j z) \big|\,dz \leq \sum_l \int \big|\bar \kappa_l(z)-\bar \kappa_l(\tilde y \odot_j z)\big|\,dz.
$$

For every $l\geq 0$, define the region 
\begin{equation}\label{el}E_l:=\{\tau\in R_1: F(\tau)\in [2^{-l}, 2^{2-l}]\}.
\end{equation}
We have
$$
\text{supp}\, \psi_l \subseteq E_l \subseteq \{\tau\in R_1: |J|\geq 2^{-l}, \text{dist}\,(\tau, \partial R_1) \geq 2^{-l}\}
$$ 
is a compact subset of the interior of $R_1\backslash\{J=0\}$. Thus $\text{supp}\,\psi_l$ can be covered by finitely many $B_m$. Recall each $B_m$ intersects with boundedly many balls in $\{B_m\}$. Hence $\text{supp}\,\psi_l$ intersects with only finitely many $B_m$. Therefore (\ref{redecompose}) is a finite sum. 
By Proposition \ref{good} below, there exists $0<\sigma <1$ such that
$$
|E_l| \leq 2^{-\sigma(l-2)}\int_{R_1} F(\tau)^{-\sigma}\,d\tau \lesssim 2^{-\sigma l},
$$
where the implicit constant is independent of $j$. Hence
\begin{align*}
&\quad \int \big|\bar \kappa_l(z)- \bar \kappa_l(\tilde y \odot_j z) \big|\,dz \leq 2\int |\bar \kappa_l(z)|\,dz \leq 2\sum_m \int \big|Q(\Phi_m^{-1}(z))\eta_m(\Phi_m^{-1}(z)) \psi_l(\Phi_m^{-1}(z))\big|\,dz  \\
&\lesssim \sum_m \int \eta_m(\tau)\psi_l(\tau)\,d\tau= \int \sum_m \eta_m(\tau)\psi_l(\tau)\,d\tau=\int \psi_l(\tau)\,d\tau \leq |E_l| \lesssim 2^{-\sigma l}.
\end{align*}

We have
\begin{align*}
&\quad \int \big|\bar \kappa_l(z)- \bar \kappa_l(\tilde y \odot_j z) \big|\,dz \\
&=\int\Big| \sum_m \Big( Q(\Phi_m^{-1}(z))\eta_m(\Phi_m^{-1}(z))\psi_l(\Phi_m^{-1}(z)) - Q(\Phi_m^{-1}(\tilde y\odot_j z))\eta_m(\Phi_m^{-1}(\tilde y \odot_j z))\psi_l(\Phi_m^{-1}(\tilde y \odot_j z))\Big)\Big|\,dz\\
&=\int\Big| \sum_m \int_0^1 \frac{d}{d\theta} \Big(Q(\Phi_m^{-1}(\theta \tilde y\odot_j z))\eta_m(\Phi_m^{-1}(\theta \tilde y \odot_j z))\psi_l(\Phi_m^{-1}(\theta \tilde y \odot_j z))\Big)\,d\theta \Big|\,dz\\
&\leq \int\Big| \sum_m \int_0^1  Q(\Phi_m^{-1}(\theta \tilde y\odot_j z))\eta_m(\Phi_m^{-1}(\theta \tilde y \odot_j z))\frac{d}{d\theta}\psi_l(\Phi_m^{-1}(\theta \tilde y \odot_j z))\,d\theta \Big|\,dz\\
&\quad + \int \Big| \sum_m \int_0^1 \eta_m(\Phi_m^{-1}(\theta \tilde y \odot_j z)) \psi_l(\Phi_m^{-1}(\theta \tilde y \odot_j z)) \frac{d}{d\theta}Q(\Phi_m^{-1}(\theta \tilde y \odot_j z))\,d\theta\Big|\,dz \\
&\quad + \int \Big| \sum_m \int_0^1 Q(\Phi_m^{-1}(\theta \tilde y \odot_j z)) \psi_l(\Phi_m^{-1}(\theta \tilde y \odot_j z)) \frac{d}{d\theta}\eta_m(\Phi_m^{-1}(\theta \tilde y \odot_j z))\,d\theta\Big|\,dz\\
&=:P_1 + P_2 + P_3.
\end{align*}

Fix $\tilde y$ with $|\tilde y|\lesssim \delta \leq 1$. Define a vector field $U$ on $R_1\backslash\{J=0\}$ be such that for every $C^1$ function $f$,
\begin{equation}\label{vectorfield}
Uf(\tau)=\nabla f(\tau) \cdot D\Phi(\tau)^{-1} \cdot (\tilde y_1, \tilde y_2, \tilde y_3+\xi_j(\tilde y_1 z_2-\tilde y_2 z_1))^T,
\end{equation}
where the superscript $T$ denotes the transpose of a matrix, and $(z_1, z_2, z_3)=\Phi(\tau)$. Thus when $\theta \tilde y \cdot z =\Phi(\tau)$ for some $\tau \in B_m$,
$$
Uf(\tau)=\frac{d}{d\theta} f(\Phi_m^{-1}(\theta \tilde y \odot_j z)),
$$
which is independent of the choice of $B_m \ni \tau$, due to (\ref{vectorfield}).

Since $\text{supp}\,\psi_l$ intersects with only finitely many $B_m$ and (\ref{redecompose}) is a finite sum, we have
\begin{align*}
P_3&=\int \Big| \sum_m \int_0^1 Q(\Phi_m^{-1}(\theta \tilde y \odot_j z)) \psi_l(\Phi_m^{-1}(\theta \tilde y \odot_j z)) \frac{d}{d\theta}\eta_m(\Phi_m^{-1}(\theta \tilde y \odot_j z))\,d\theta\Big|\,dz\\
&=\int \Big|  \int_0^1 \sum_m Q(\Phi_m^{-1}(\theta \tilde y \odot_j z)) \psi_l(\Phi_m^{-1}(\theta \tilde y \odot_j z)) \frac{d}{d\theta}\eta_m(\Phi_m^{-1}(\theta \tilde y \odot_j z))\,d\theta\Big|\,dz\\
&=\int \Big|  \int_0^1 \sum_{\tau: \theta \tilde y\odot_j z= \Phi(\tau)} \sum_{m: \tau\in B_m} Q(\Phi_m^{-1}(\theta \tilde y \odot_j z)) \psi_l(\Phi_m^{-1}(\theta \tilde y \odot_j z)) \frac{d}{d\theta}\eta_m(\Phi_m^{-1}(\theta \tilde y \odot_j z))\,d\theta\Big|\,dz\\
&=\int \Big|  \int_0^1 \sum_{\tau: \theta \tilde y\odot_j z= \Phi(\tau)} \sum_{m: \tau\in B_m} Q(\tau) \psi_l(\tau) U\eta_m(\tau)\,d\theta\Big|\,dz\\
&=\int \Big|  \int_0^1 \sum_{\tau: \theta \tilde y\odot_j z= \Phi(\tau)} Q(\tau) \psi_l(\tau) U\Big(\sum_{m: \tau\in B_m} \eta_m\Big)(\tau)\,d\theta\Big|\,dz=0.
\end{align*}

By Proposition \ref{estimateofderivative} below, we have
\begin{align*}
P_2 &= \int \Big| \sum_m \int_0^1 \eta_m(\Phi_m^{-1}(\theta \tilde y \odot_j z)) \psi_l(\Phi_m^{-1}(\theta \tilde y \odot_j z)) \frac{d}{d\theta}Q(\Phi_m^{-1}(\theta \tilde y \odot_j z))\,d\theta\Big|\,dz\\
&\leq \sum_m \int_0^1 \int  \Big|\eta_m(\Phi_m^{-1}(\theta \tilde y \odot_j z)) \psi_l(\Phi_m^{-1}(\theta \tilde y \odot_j z)) \frac{d}{d\theta}Q(\Phi_m^{-1}(\theta \tilde y \odot_j z))\Big|\,dz\,d\theta\\
&\lesssim \sum_m \int_0^1 \int \eta_m(\tau) \psi_l(\tau) |UQ(\tau) J(\tau)|\,d\tau\,d\theta\\
&= \sum_m \int \eta_m(\tau) \psi_l(\tau) |UQ(\tau) J(\tau)|\,d\tau\\
&= \int \sum_m \eta_m(\tau) \psi_l(\tau) |UQ(\tau) J(\tau)|\,d\tau\\
&= \int \psi_l(\tau) |UQ(\tau) J(\tau)|\,d\tau\\
&\leq \int_{E_l} |UQ(\tau) J(\tau)|\,d\tau \lesssim 2^{2l} \delta,
\end{align*}
and
\begin{align*}
P_1 &=\int\Big| \sum_m \int_0^1  Q(\Phi_m^{-1}(\theta \tilde y\odot_j z))\eta_m(\Phi_m^{-1}(\theta \tilde y \odot_j z))\frac{d}{d\theta}\psi_l(\Phi_m^{-1}(\theta \tilde y \odot_j z))\,d\theta \Big|\,dz\\
& \leq \sum_m \int_0^1 \int  \Big|Q(\Phi_m^{-1}(\theta \tilde y\odot_j z))\eta_m(\Phi_m^{-1}(\theta \tilde y \odot_j z))\frac{d}{d\theta}\psi_l(\Phi_m^{-1}(\theta \tilde y \odot_j z))\Big|\,dz\,d\theta\\
&\lesssim \sum_m \int_0^1 \int \eta_m(\tau) |U\psi_l(\tau)| \,d\tau\,d\theta\\
&=\sum_m \int \eta_m(\tau)|U\psi_l(\tau)|\,d\tau\\
&=\int \sum_m \eta_m(\tau)|U\psi_l(\tau)|\,d\tau\\
&=\int |U\psi_l(\tau)|\,d\tau \lesssim 2^{2l}\delta,
\end{align*}
where the implicit constants are independent of $j$. Therefore
\begin{align*}
\int \big|\kappa(z)-\kappa\big(\tilde y \odot_j z\big)\big|\,dz \leq \sum_l \int \big|\bar \kappa_l(z)- \bar \kappa_l \big(\tilde y \odot_j z\big)\big|\,dz \lesssim \sum_l \big(2^{-\sigma l}\big)^{1-\frac{\sigma}{3}} \big(2^{2l}\delta)^{\frac{\sigma}{3}} \lesssim_\sigma \delta^{\frac{\sigma}{3}}.
\end{align*}
\end{proof}


Roughly speaking, the following two propositions give an estimation of the Jacobian, and control the regularity of the Jacobian and the partition of unity functions $\psi_l$ on each $E_l$. We postpone their proofs to Subsections \ref{jacobian} and \ref{unity}, respectively. 
\begin{proposition}\label{good}
There exists $0<\sigma <1$ such that for the function $F(\tau)$ defined in (\ref{bigf}),
$$
\int_{R_1} F(\tau)^{-\sigma}\,d\tau \lesssim 1, \quad \forall j\in \mathbb{N},
$$
where the implicit constant is independent of $j$.
\end{proposition}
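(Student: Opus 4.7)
The plan is to exploit the pointwise subadditivity $(\sum_i a_i)^{\sigma/2} \leq \sum_i a_i^{\sigma/2}$ (valid for $0<\sigma\leq 2$ and $a_i\geq 0$) to bound
$$
F(\tau)^{-\sigma} \lesssim \frac{1}{|J|^{\sigma}} + \frac{|W^t|^{\sigma}}{|J|^{\sigma}} + \frac{|W^u|^{\sigma}}{|J|^{\sigma}} + \frac{(2\xi_j|C_2'(t)C_2'(u)|)^{\sigma}}{|J|^{\sigma}} + \frac{(2\xi_j|C_2(s)C_2'(s)|)^{\sigma}}{|J|^{\sigma}} + \sum_{i=1}^{5} \frac{1}{\mathrm{dist}(\tau,L_i)^{\sigma}},
$$
and integrate each of the six terms over $R_1$ separately. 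The distance-to-hyperplane terms are handled directly by (\ref{distance}) for any $0<\sigma<1$. For the last two $\xi_j$-terms, since $\xi_j\leq 1/2$ and $C_2,C_2'$ are bounded on $[2,8]$ uniformly in $j$ by Lemma \ref{monotone}, they are pointwise majorized by a constant multiple of $|J|^{-\sigma}$, so they reduce to the $|J|^{-\sigma}$ estimate below.

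For the terms $(|W^u|/|J|)^{\sigma}$ and $(|W^t|/|J|)^{\sigma}$, the key structural fact is that $W^u$ depends only on $(t,s)$ and $W^t$ depends only on $(s,u)$. Applying (\ref{measureu}) with $f(u) = J/|W^u|$ (treating $|W^u|$ as a constant in $u$) gives $|\{u\in[2,8]:|W^u|/|J|>\mu\}|\lesssim \mu^{-1/2}$ for $\mu\geq 1/\lambda_0$, so layer-cake yields
$$
\int_2^8 \Big(\frac{|W^u|}{|J|}\Big)^{\sigma}\,du \lesssim_{\sigma} 1 \qquad \text{uniformly in } (t,s),
$$
provided $\sigma<1/2$; Fubini in the remaining variables over $R_1$ then gives a bound $\lesssim 1$. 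The $|W^t|/|J|$ term is handled identically using (\ref{measuret}) and integrating $t$ first.

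The main obstacle is the term $|J|^{-\sigma}$, for which I proceed in two steps. First, an identical layer-cake application of (\ref{measureu}), but tracking the scaling by $|W^u|$, yields
$$
\int_2^8 |J|^{-\sigma}\,du \;\lesssim_{\sigma}\; |W^u|^{-\sigma} \qquad \text{uniformly in } (t,s), \quad \text{for } \sigma<1/2.
$$
Second, since $|W^u|\geq |W_3^u| = |C_2'(s)-C_2'(t)|$, it remains to show $\iint_{[2,8]^2}|C_2'(s)-C_2'(t)|^{-\sigma}\,dt\,ds \lesssim 1$. For this I would establish a uniform (in $j$) lower bound $C_2''(t)\gtrsim 1$ on $[2,8]$: the hypothesis $h_2'(t)\gtrsim h_2(t)/t$ with $h_2'(t)=t\alpha''(t)$ gives $\alpha''(r)\gtrsim h_2(r)/r^2$; substituting $r=2^{-j}t$ and using that $h_2$ is increasing so that $h_2(2^{-j}t)\geq h_2(2^{-j})$ for $t\geq 1$, one obtains
$$
C_2''(t) \;=\; \frac{2^{-2j}\alpha''(2^{-j}t)}{h_2(2^{-j})} \;\gtrsim\; \frac{h_2(2^{-j}t)}{t^{2}\,h_2(2^{-j})} \;\gtrsim\; 1
$$
uniformly for $t\in[2,8]$ and $j\in\mathbb{N}$. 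Consequently $|C_2'(s)-C_2'(t)|\gtrsim |s-t|$, and $\iint_{[2,8]^{2}} |s-t|^{-\sigma}\,dt\,ds<\infty$ for every $\sigma<1$.

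Choosing any fixed $\sigma\in(0,1/2)$ accommodates all six estimates simultaneously, and summing them produces the claimed bound $\int_{R_1} F(\tau)^{-\sigma}\,d\tau\lesssim 1$ with implicit constants independent of $j$. The delicate point throughout is that the normalization defining $C_2, C_3^{\pm},\widetilde A_j$ is engineered precisely so that the bounds coming from Lemmas \ref{monotone}–\ref{lowerbound} and the infinitesimal doubling hypotheses in (\ref{infinitesimal}) upgrade to the uniform-in-$j$ lower bound on $C_2''$; without this upgrade the final reduction to $\iint |s-t|^{-\sigma}$ would fail to be uniform in $j$.
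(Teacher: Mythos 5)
Your decomposition of $F(\tau)^{-\sigma}$ into six terms, your treatment of the distance terms via (\ref{distance}), your layer-cake handling of $(|W^u|/|J|)^\sigma$ and $(|W^t|/|J|)^\sigma$ using (\ref{measureu}) and (\ref{measuret}), and your reduction of $|J|^{-\sigma}$ through $|W^u|^{-\sigma}$ and then $|W_3^u| = |C_2'(s)-C_2'(t)|\gtrsim |s-t|$ all match the paper's strategy in spirit (the paper uses Cauchy--Schwarz plus Lemma \ref{lowdim} where you use Fubini and layer-cake directly, but both work). Your lower bound $C_2''(t)\gtrsim 1$ via (\ref{infinitesimal}) is correct and is the same argument used in the paper's proof of Lemma \ref{lowdim}.

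However, there is a genuine gap in your handling of the two $\xi_j$-terms. You claim that $2\xi_j C_2'(t)C_2'(u)$ and $2\xi_j C_2(s)C_2'(s)$ are uniformly bounded in $j$ because ``$C_2, C_2'$ are bounded on $[2,8]$ uniformly in $j$ by Lemma \ref{monotone}.'' Lemma \ref{monotone} asserts only positivity of $C_2, C_2', C_2'', C_3^-, (C_3^-)', (C_3^-)''$ together with the relation $C_2(t)\leq 7 C_2'(t)$; it does \emph{not} supply a $j$-uniform upper bound on $C_2'$. Indeed, since $C_2''(t) = 2^{-j}h_2'(2^{-j}t)/(t\,h_2(2^{-j}))$ and the hypotheses control $h_2'$ and $h_2$ only from below, $C_2''$ (and hence $C_2'$) can grow without bound as $j\to\infty$ under the stated assumptions. (The derived inequality $\|(B_j^{\pm})^{-1}\|\lesssim 1$ in Lemma \ref{lowerbound} controls only the reciprocals of these entries, not the entries themselves.) Consequently, the pointwise majorization $\xi_j C_2'(t)C_2'(u)\lesssim 1$ does not hold, and these two terms cannot be reduced to the $|J|^{-\sigma}$ estimate. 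The paper handles them by two substantive lemmas: Lemma \ref{ss}, which applies Lemma \ref{mj} in the variable $t$ with $f(t) = W_1^u/(2\xi_j C_2(s)C_2'(s))$ to get a sublevel bound and hence $\iint (2\xi_j C_2(s)C_2'(s)/|W_1^u|)^{\sigma}\,dt\,ds \lesssim 1$, and Lemma \ref{ut}, which isolates the $s$-integral to show $\iiint (2\xi_j C_2'(t)C_2'(u)/|J - W_1^u - W_1^t - 2\xi_j C_2(s)C_2'(s)|)^{\sigma}\,d\tau\lesssim 1$; the two $\xi_j$-terms in $F$ are then controlled by Cauchy--Schwarz combining these with the $|W^u|/|J|$ estimate. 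These terms arise from the Heisenberg commutator and are precisely what distinguishes the present problem from the translation-invariant case, so bypassing them would make $F$'s careful construction unnecessary. Your proof needs these two lemmas (or a substitute); without them the argument does not close.
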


\begin{proposition}\label{estimateofderivative}
For every $j, l\in \mathbb{N}$, for the vector field $U$ defined in (\ref{vectorfield}),
$$
\int_{E_l} |UQ(\tau)J(\tau)|\,d\tau \lesssim 2^{2l} \delta, \qquad \int |U\psi_l(\tau)|\,d\tau \lesssim 2^{2l} \delta, \quad \forall 0\leq |\tilde y|\lesssim \delta \leq 1,
$$
where the implicit constants are independent of $j, l$.
\end{proposition}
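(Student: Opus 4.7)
The plan is to estimate the vector field $U$ pointwise on $E_l$ using the defining lower bound $F(\tau) \geq 2^{-l}$ supplied by the construction of $\psi_l$, and then integrate over $E_l \subseteq [2,8]^3$, a set of bounded Euclidean measure.

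First I would bound $|\vec{v}| := |D\Phi(\tau)^{-1} \vec{a}|$ where $\vec{a} = (\tilde y_1, \tilde y_2, \tilde y_3 + \xi_j(\tilde y_1 z_2 - \tilde y_2 z_1))^T$. Since $\xi_j \leq \frac{1}{2}$ and $|z| = |\Phi(\tau)| \lesssim 1$ for $\tau \in [2,8]^3$, one has $|\vec{a}| \lesssim \delta$. The entries of $D\Phi$ are uniformly bounded on $[2,8]^3$ in $j$ (the functions $C_2', C_3'$, etc., are bounded on $[2,8]$ uniformly in $j$ by the normalization), so the $2\times 2$ cofactors of $D\Phi$ are uniformly bounded, which gives $\|D\Phi(\tau)^{-1}\| \lesssim 1/|J(\tau)|$. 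Since $F(\tau) \geq 2^{-l}$ on $E_l$ forces $1/|J(\tau)| \leq 2^l$, we conclude $|\vec{v}(\tau)| \lesssim 2^l \delta$ on $E_l$.

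Next I would handle $\int_{E_l} |UQ(\tau) J(\tau)|\,d\tau$. Writing $Q = |J|^{-1} \phi_j(t)\phi_j(s)\phi_j(u)$ and applying $U$,
\begin{equation*}
UQ \cdot J = -\operatorname{sgn}(J)\,\frac{UJ}{J}\,\phi_j(t)\phi_j(s)\phi_j(u) + \operatorname{sgn}(J)\,U\bigl(\phi_j(t)\phi_j(s)\phi_j(u)\bigr).
\end{equation*}
Since $|\nabla J|$ and $|\nabla(\phi_j(t)\phi_j(s)\phi_j(u))|$ are bounded uniformly in $j$ on $[2,8]^3$, the pointwise bound $|\vec v| \lesssim 2^l \delta$ together with $1/|J| \leq 2^l$ yields $|UJ/J| \lesssim 2^{2l}\delta$ and $|U(\phi_j\phi_j\phi_j)| \lesssim 2^l\delta$, so $|UQ\cdot J| \lesssim 2^{2l}\delta$ on $E_l$. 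Integrating over the bounded set $E_l$ gives the first estimate.

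For the second estimate I would use that $\psi_l = \rho_l(F) \mathbf{1}_{R_1}$ is in fact smooth, because the distance terms in $F$ force $\mathrm{supp}\,\psi_l$ to lie in the interior of $R_1$. Then $U\psi_l = \rho_l'(F)\,UF$ with $|\rho_l'| \lesssim 2^l$, and the task reduces to proving $|\nabla F| \lesssim 1$ on $E_l$. Writing $F = \Sigma^{-1/2}$ with $\Sigma$ the sum of squares in \eqref{bigf}, one has $|\nabla F| = \tfrac{1}{2}\Sigma^{-3/2}|\nabla \Sigma|$ and $\Sigma \sim 2^{2l}$ on $E_l$, so $\Sigma^{-3/2}\sim 2^{-3l}$. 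For each term $(g/J)^2$ in $\Sigma$, I would compute $\nabla((g/J)^2) = \tfrac{2g\nabla g}{J^2} - \tfrac{2g^2\nabla J}{J^3}$ and use $|g/J| \leq 2^l$, $1/|J| \leq 2^l$, and the uniform bounds on $\nabla g, \nabla J$ to obtain $|\nabla((g/J)^2)| \lesssim 2^{3l}$; for each term $1/\operatorname{dist}(\tau,L_i)^2$, one gets $|\nabla(1/\operatorname{dist}^2)| \leq 2/\operatorname{dist}^3 \lesssim 2^{3l}$. Summing, $|\nabla\Sigma| \lesssim 2^{3l}$ and therefore $|\nabla F|\lesssim 1$. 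Combining, $|U\psi_l|\lesssim 2^l \cdot 1 \cdot 2^l\delta = 2^{2l}\delta$, and integration over the bounded support finishes.

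The main obstacle I expect is the verification that $|\nabla F|\lesssim 1$ on $E_l$: this requires the precise cancellation between the $2^{-3l}$ decay of $\Sigma^{-3/2}$ and the $2^{3l}$ growth of $|\nabla\Sigma|$, which in turn depends on the structural observation that every term in $\Sigma$, whether of the form $(g/J)^2$ or $1/\operatorname{dist}^2$, has its gradient controlled by the cube of the size of its own square root. This cancellation is exactly what makes the function $F$ the right choice and what allows the final interpolation $\sum_l (2^{-\sigma l})^{1-\sigma/3}(2^{2l}\delta)^{\sigma/3}$ to converge for small $\sigma$.
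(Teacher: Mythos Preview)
Your argument rests on several uniform-in-$j$ pointwise bounds that do not hold for the flat curves at the heart of Theorem~\ref{1}. You assert that the entries of $D\Phi$, the cofactors $w_{ij}$, the components $z_2 = -C_2(t)+C_2(s)-C_2(u)$, and the derivatives $|\nabla J|$, $|\nabla W_i^t|$, $|\nabla W_i^u|$ are all bounded on $[2,8]^3$ uniformly in $j$. None of this is true in general: the normalization $C_2'(1)=1$ does not control $C_2'$ on $[2,8]$, and the hypothesis $h_2'(t)\gtrsim h_2(t)/t$ gives only a \emph{lower} bound on $h_2'$. For a genuinely flat curve such as $\alpha(t)=e^{-1/t}$ one computes $C_2'(t)\sim t^{-2}e^{2^{j}(1-1/t)}$ on $[2,8]$, which grows super-exponentially in $j$; the same blow-up contaminates $C_2''$, the cofactors, $z_2$, and $\nabla J$. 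Consequently the chain $|\vec a|\lesssim\delta$, $\|D\Phi^{-1}\|\lesssim 1/|J|$, $|UJ/J|\lesssim 2^{2l}\delta$, $|\nabla((g/J)^2)|\lesssim 2^{3l}$ all break, and the pointwise estimate $|UQ\cdot J|\lesssim 2^{2l}\delta$ is not available.

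The paper's proof is organized precisely to avoid any such pointwise control. Instead of bounding $\|D\Phi^{-1}\|$, it computes the combinations $g_{ij}:=w_{ij}+\xi_j z_2 w_{3j}$ (and analogues) algebraically and shows, using the specific structure of $W^t,W^u$ and the bounds built into $F$, that $|g_{ij}|\lesssim 2^l|J|$ on $E_l$. Then, rather than bounding $|\partial_t J|$, it exploits Lemmas~\ref{sign}--\ref{smonotonic} (each relevant quantity changes monotonicity at most once in each variable) to write sets like $\{t:|W_1^t|+2\xi_jC_2(s)C_2'(s)\le 2\cdot2^l|J|\}$ as finitely many intervals on which $J$ is monotone and of fixed sign, and integrates $|\partial_t J|/J^2=\bigl|\partial_t(1/J)\bigr|$ exactly. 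The same integration-by-parts device handles the pieces $2\xi_jC_2(t)C_2'(u)$ and $2\xi_jC_2(s)C_2'(t)$ that you would have absorbed into a (nonexistent) uniform bound. Your approach would work for, say, polynomial curves, but for the flat regime the monotonicity-and-integration argument is not a convenience---it is the substance of the proof.
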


\begin{remark}\label{after}
As mentioned in Remark \ref{before}, we no longer have that $\{J=0\}$ is a union of hyperplanes, and the estimate (\ref{indispensable}) is out of reach in cases that are not translation-invariant. To resolve this issue, we construct the quantity $F(\tau)$ and decompose the complement of $\{J=0\}$ into $E_l$'s based on the value of $F(\tau)$, in contrast to decomposing the complement of $\{J=0\}$ based on the distance to $\{J=0\}$ in the $\mathbb{R}^n$ case. We then finish the proof by obtaining the decay of the measure of the $E_l$, derived from Proposition \ref{good}, and the control of the regularity of the Jacobian and the partition of unity functions on each $E_l$, as in Proposition \ref{estimateofderivative}.

The quantity $F(\tau)$ is selected with deliberation so that the two regularity inequalities in Proposition \ref{estimateofderivative} hold simultaneously. Without confronting the issue of losing hyperplane properties or without carefully choosing $F(\tau)$ would lead to unnecessary conditions to be added in Theorem \ref{1}. We would then largely lose the advantage gained from the techniques of Carbery, Christ, Vance, Wainger, and Watson \cite{CARBERYCHRIST, VANCE}.

Our method works the same for the $\mathbb{R}^n$ case. Compared to the $D\Phi$ in (\ref{determinant}), the Jacobian matrix in the $\mathbb{R}^n$ case has a neat form:
$$
D\Phi(t_1, \ldots, t_n)=
\begin{pmatrix}
-1 & 1 & \cdots & (-1)^n\\
-C_2'(t_1) & C_2'(t_2) & \cdots & (-1)^n C_2'(t_n)\\
\cdots & \cdots & \cdots & \cdots\\
-C_n'(t_1) & C_n'(t_2) & \cdots & (-1)^nC_n'(t_n)
\end{pmatrix}.
$$
For the $\mathbb{R}^n$ case, we simply take 
$$
F(\tau) =\frac{1}{\sqrt{\frac{1+\sum_{i,j=1}^n w_{i,j}^2}{J^2}+ \sum_i \frac{1}{\text{dist}\,(\tau, H_i)^2}}}, \quad \tau=(t_1, \ldots, t_n),
$$
where $J=\det D\Phi$, $(w_{i,j})_{i,j}$ is the adjoint matrix $D\Phi^*$, and the $H_i$ are the finitely many hyperplanes mentioned above. The rest of the proof follows similarly, except for significantly fewer calculations in the $\mathbb{R}^n$ case. 

Our method is likely to be generalized to other stratified nilpotent groups. One needs to find the correct $F(\tau)$ and the corresponding estimates to aim at.  

\end{remark}

\begin{remark}
The interval $[2,8]$ that we choose for the positive part of the support of $\phi_j$ when we define $\mathcal{H}_j$ can be replaced by any $[a,b]$ with $a>1$ and $\frac{b}{a}>2$. We need $a>1$ for Lemma \ref{lowerbound} to hold, and then we can apply Lemma \ref{mj} to obtain (\ref{measureu}) and (\ref{measuret}), so that the case $j<k$ in the proof of Theorem \ref{main} follows. Because we decompose $\mathcal{H}_\Gamma$ into $\mathcal{H}_j$ dyadically, we require $\frac{b}{a}>2$ so that the kernel $\frac{1}{t}$, as well as other Calder\'on-Zygmund kernels, can be decomposed into $\sum_j 2^j\phi_j(2^j \cdot)$ with every $\phi_j$ supported on $[a,b]$.
\end{remark}

The rest of this subsection is devoted to the proofs of Lemmas \ref{monotone}, \ref{standardmonotone}, and \ref{lowerbound}.
\begin{proof}[Proof of Lemma \ref{monotone}]
We have
$$
C_2''(t)=\frac{2^{-2j}\alpha''(2^{-j}t)}{h_2(2^{-j})}>0, \quad t>0,
$$
and
$$
C_2'(1)=\frac{2^{-j}\alpha'(2^{-j})-\alpha(2^{-j})}{h_2(2^{-j})}=1.
$$
Thus $C_2'(t)>0$ for $t\geq 1$. And since $C_2(1)=0$, we have $C_2(t)>0$ for $t>1$, and for $t\in [2,8]$,
$$
C_2(t)= \int_1^t C_2'(s)\,ds \leq 7C_2'(t).
$$
For $t\geq 1$,
\begin{align*}
(C_3^-)''(t)&=\frac{2^{-2j}\bar \beta''(2^{-j}t) -  2^{-2j}\alpha''(2^{-j}t)\bar h_\beta(2^{-j})/h_2(2^{-j})}{2^{-j}h_2(2^{-j}) + \bar h_3(2^{-j})}\\
&= \frac{2^{-2j}\alpha''(2^{-j}t)}{2^{-j}h_2(2^{-j})+\bar h_3(2^{-j})} \Big(\frac{\bar \beta''(2^{-j}t)}{\alpha''(2^{-j}t)} - \frac{\bar h_\beta(2^{-j})}{h_2(2^{-j})}\Big)\\
&= \frac{2^{-2j}\alpha''(2^{-j}t)}{2^{-j}h_2(2^{-j})+\bar h_3(2^{-j})} \Big(\frac{\bar \beta''(2^{-j}t)}{\alpha''(2^{-j}t)} - \frac{\bar h_\beta(2^{-j}t)}{h_2(2^{-j}t)} +\frac{\bar h_\beta(2^{-j}t)}{h_2(2^{-j}t)} - \frac{\bar h_\beta(2^{-j})}{h_2(2^{-j})}\Big)\\
&= \frac{2^{-2j}\alpha''(2^{-j}t)}{2^{-j}h_2(2^{-j})+\bar h_3(2^{-j})} \Big(\frac{\bar h_3(2^{-j}t)}{h_2(2^{-j}t)}+\frac{\bar h_\beta(2^{-j}t)}{h_2(2^{-j}t)} - \frac{\bar h_\beta(2^{-j})}{h_2(2^{-j})}\Big)\\
&>0,
\end{align*}
where the last inequality is due to (\ref{ratioincrease}). And since $C_3^-(1)= (C_3^-)'(1)=0$, we have $C_3^-(t), (C_3^-)'(t)>0$ for $t>1$.
\end{proof}

\begin{proof}[Proof of Lemma \ref{standardmonotone}]
Recall $\xi_j$ from Definition \ref{jmatrix}. For $t\in [2,8]$,
\begin{align*}
\Big(\frac{(C_3^-)''(t)}{C_2''(t)}\Big)'&=2\xi_j\frac{\bar D_3(2^{-j}t)}{(\alpha''(2^{-j}t))^2}>0,\\
\Big(\frac{(C_3^+)''(t)}{C_2''(t)}\Big)'&=2\xi_j\frac{\bar D_3(2^{-j}t)+(\alpha''(2^{-j}t))^2}{(\alpha''(2^{-j}t))^2}>0,\\
\Big(\frac{\hat C_3''(t)}{C_2''(t)}\Big)'&=2\xi_j\frac{\bar D_3(2^{-j}t)+2(\alpha''(2^{-j}t))^2-\alpha'''(2^{-j}t)\big(\alpha'(2^{-j}t)-2^j\alpha(2^{-j})\big)}{(\alpha''(2^{-j}t))^2}\\
&\geq 2\xi_j \frac{\min\{\bar D_3(2^{-j}t)+2(\alpha''(2^{-j}t))^2, \hat D_3(2^{-j}t)\}}{(\alpha''(2^{-j}t))^2}>0,\\
\Big(\frac{\doublehat C_3''(t)}{C_2''(t)}\Big)'&=2\xi_j \frac{\alpha''(2^{-j}t)\big(\bar \beta'''(2^{-j}t)-\alpha''(2^{-j}t)\big) - \alpha'''(2^{-j}t)\big(\bar \beta''(2^{-j}t)-\alpha'(2^{-j}t) + 2^j\alpha(2^{-j})\big)}{(\alpha''(2^{-j}t))^2}\\
&\geq 2\xi_j \frac{\min \{\doublehat D_3(2^{-j}t), \triplehat D_3(2^{-j}t)\}}{(\alpha''(2^{-j}t))^2}>0.
\end{align*}
\end{proof}

\begin{proof}[Proof of Lemma \ref{lowerbound}]
We have
$$
(B_j^-(t))^{-1}=
\begin{pmatrix}
1 & 0 & 0\\
& &\\
-\frac{C_2'(t)}{tC_2''(t)} & \frac{1}{tC_2''(t)} & 0\\
& &\\
\frac{C_2'(t)(C_3^-)''(t)-C_2''(t)(C_3^-)'(t)}{C_2''(t)\big(\frac{(C_3^-)''(t)}{C_2''(t)}\big)' \frac{h_2(2^{-j}t)}{h_2(2^{-j})}} & -\frac{(C_3^-)''(t)}{C_2''(t)\big(\frac{(C_3^-)''(t)}{C_2''(t)}\big)' \frac{h_2(2^{-j}t)}{h_2(2^{-j})}} & \frac{1}{\big(\frac{(C_3^-)''(t)}{C_2''(t)}\big)' \frac{h_2(2^{-j}t)}{h_2(2^{-j})}}
\end{pmatrix}.
$$
By (\ref{infinitesimal}), for $t\in [2,8]$,
\begin{align*}
&0\leq \frac{1}{tC_2''(t)} = \frac{h_2(2^{-j})}{2^{-j}h_2'(2^{-j}t)} \leq 8\frac{h_2(2^{-j}t)}{(2^{-j}t)h_2'(2^{-j}t)} \lesssim 1,\\
&0\leq \frac{C_2'(t)}{tC_2''(t)} = \frac{h_2(2^{-j}t)}{(2^{-j}t)h_2'(2^{-j}t)} + \frac{\int_{2^{-j}}^{2^{-j}t}\frac{h_2(s)}{s^2}\,ds}{h_2'(2^{-j}t)} \lesssim \frac{h_2(2^{-j}t)}{(2^{-j}t)h_2'(2^{-j}t)} \lesssim 1,\\
&0\leq \frac{1}{\big(\frac{(C_3^-)''(t)}{C_2''(t)}\big)' \frac{h_2(2^{-j}t)}{h_2(2^{-j})}}=\frac{\bar h_3(2^{-j})+ 2^{-j}h_2(2^{-j})}{2^{-j}\bar h_3'(2^{-j}t)} \leq \frac{\bar h_3(2^{-j}t)+\int_0^{2^{-j}t} h_2}{2^{-j}\bar h_3'(2^{-j}t)} \lesssim 1,\\
&0\leq \frac{(C_3^-)''(t)}{C_2''(t)\big(\frac{(C_3^-)''(t)}{C_2''(t)}\big)' \frac{h_2(2^{-j}t)}{h_2(2^{-j})}} = \frac{\frac{\bar h_3(2^{-j}t)}{h_2(2^{-j}t)} + \int_{2^{-j}}^{2^{-j}t} \frac{h_2'(s)}{h_2(s)^2} \bar h_3(s)\,ds}{2^{-j}\frac{\bar h_3'(2^{-j}t)}{h_2(2^{-j})}} \lesssim \frac{\bar h_3(2^{-j}t)}{(2^{-j}t) \bar h_3'(2^{-j}t)}\lesssim 1.
\end{align*}
Since for $t\in [2,8]$,
\begin{align*}
\int_{2^{-j}}^{2^{-j}t} \Big( \frac{\bar \beta''(2^{-j}t)}{\alpha''(2^{-j}t)} h_2(s) - \bar h_\beta (s)\Big) \frac{1}{s^2}\,ds =  \int_{2^{-j}}^{2^{-j}t} \Big( \frac{\bar h_3(2^{-j}t)}{h_2(2^{-j}t)} + \int_s^{2^{-j}t} \frac{h_2'(r)\bar h_3(r)}{h_2(r)^2}\,dr\Big) \frac{h_2(s)}{s^2}\,ds\geq 0,
\end{align*}
we have
\begin{align*}
0&\leq \frac{C_2'(t)(C_3^-)''(t)-C_2''(t)(C_3^-)'(t)}{C_2''(t)\big(\frac{(C_3^-)''(t)}{C_2''(t)}\big)' \frac{h_2(2^{-j}t)}{h_2(2^{-j})}} 
=\frac{\bar h_3(2^{-j}t)}{(2^{-j}t)\bar h_3'(2^{-j}t)} + \frac{\int_{2^{-j}}^{2^{-j}t} \Big( \frac{\bar \beta''(2^{-j}t)}{\alpha''(2^{-j}t)} h_2(s) - \bar h_\beta (s)\Big) \frac{1}{s^2}\,ds}{\bar h_3'(2^{-j}t)}\\
&= \frac{\bar h_3(2^{-j}t)}{(2^{-j}t)\bar h_3'(2^{-j}t)} +\frac{\bar h_3(2^{-j}t)}{2^{-j}\bar h_3'(2^{-j}t)} -\frac{\int_{2^{-j}}^{2^{-j}t}\int_s^{2^{-j}t} \int_r^{2^{-j}t} \frac{\bar h_3'(u)}{h_2(u)}\,du h_2'(r)\,dr \frac{1}{s^2}\,ds}{\bar h_3'(2^{-j}t)} \lesssim 1.
\end{align*}
Hence $\|(B_j^-(t))^{-1}\|\lesssim 1$. For $B_j^+(t)$, by denoting 
$$
\bar h_3^+(t) := \frac{(\bar \beta^+)''(t)}{\alpha''(t)} h_2(t)- \bar h_\beta^+(t) = \bar h_3(t) +\int_0^t h_2 \geq \bar h_3(t), \quad \text{ for } t>0,
$$
we still have $(\bar h_3^+)'(t)\gtrsim \frac{\bar h_3^+(t)}{t}$ for $t>0$, and thus the above argument works similarly for $B_j^+(t)$.
\end{proof}

\subsection{An estimation for the Jacobian}\label{jacobian}
The goal of this subsection is to prove Proposition \ref{good}. Recall that $W^u$ and $W^t$ are defined in (\ref{wu}) and (\ref{wt}), respectively. We first use Proposition 3.1 of \cite{VANCE} (see Lemma \ref{mj}) or its ideas to prove the following three lemmas:
\begin{lemma}\label{lowdim}
For every $0<\sigma<1$, we have
$$
\iint_{[2,8]^2} |W_3^u|^{-\sigma}\,dt\,ds \lesssim 1, \quad \forall j\in \mathbb{N},
$$
where the implicit constant is independent of $j$.
\end{lemma}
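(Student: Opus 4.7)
The plan is to reduce the claim to a uniform pointwise lower bound on $C_2''$ over $[2,8]$, after which the estimate collapses to the elementary one-dimensional integral $\iint |s-t|^{-\sigma}\,dt\,ds$. Recall from the proof of Lemma \ref{monotone} that
$$
C_2''(t) = \frac{2^{-2j}\alpha''(2^{-j}t)}{h_2(2^{-j})}.
$$
Since $h_2(r) = r\alpha'(r) - \alpha(r)$ gives $h_2'(r) = r\alpha''(r)$, this can be rewritten as
$$
C_2''(t) = \frac{2^{-j}\,h_2'(2^{-j}t)}{t\,h_2(2^{-j})}.
$$

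Next I would apply the infinitesimal doubling hypothesis $h_2'(r) \gtrsim h_2(r)/r$ from (\ref{infinitesimal}) together with the monotonicity of $h_2$. For $t \in [2,8]$ (so $t \geq 1$), the latter yields $h_2(2^{-j}t) \geq h_2(2^{-j})$, whence
$$
C_2''(t) \gtrsim \frac{h_2(2^{-j}t)}{t^2\,h_2(2^{-j})} \gtrsim \frac{1}{t^2} \gtrsim 1,
$$
with the implicit constant independent of $j$. Integrating in $r$, this produces the uniform lower bound
$$
|W_3^u| = |C_2'(s) - C_2'(t)| = \Big|\int_t^s C_2''(r)\,dr\Big| \gtrsim |s - t|
$$
for all $(t,s) \in [2,8]^2$ and all $j \in \mathbb{N}$.

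The proof is then finished by the elementary comparison
$$
\iint_{[2,8]^2} |W_3^u|^{-\sigma}\,dt\,ds \lesssim \iint_{[2,8]^2} |s-t|^{-\sigma}\,dt\,ds \lesssim_\sigma 1 \quad \text{for every } 0 < \sigma < 1.
$$
I do not anticipate any real obstacle: the only subtlety is that the lower bound on $C_2''$ must be uniform in $j$, and this is exactly what the hypothesis $h_2'(r) \gtrsim h_2(r)/r$, combined with $h_2$ increasing and $t \geq 1$, is designed to deliver. This lemma should therefore serve as the simplest warm-up among the sublevel-set estimates needed for Proposition \ref{good}.
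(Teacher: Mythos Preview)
Your proof is correct and follows essentially the same approach as the paper: both establish the uniform lower bound $|C_2'(s)-C_2'(t)|\gtrsim |s-t|$ on $[2,8]^2$ via $C_2''(r)=\frac{2^{-j}h_2'(2^{-j}r)}{r\,h_2(2^{-j})}$, the hypothesis $h_2'\gtrsim h_2/r$, and the monotonicity of $h_2$. The only cosmetic differences are that the paper integrates first and then bounds (obtaining $\frac{1}{s}-\frac{1}{t}\geq \frac{t-s}{64}$), and finishes with a layer-cake argument rather than direct comparison; your pointwise bound $C_2''\gtrsim 1$ is slightly cleaner.
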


\begin{proof}
We have
\begin{align*}
\iint_{[2,8]^2} |W_3^u|^{-\sigma}\,dt\,ds &= \iint_{s< t} \big(C_2'(t)-C_2'(s)\big)^{-\sigma}\,dt\,ds + \iint_{t< s} \big(C_2'(s)-C_2'(t)\big)^{-\sigma}\,dt\,ds.
\end{align*}
For $2\leq s< t\leq 8$, by (\ref{infinitesimal}),
\begin{align*}
C_2'(t)-C_2'(s) =\int_s^t C_2''(r)\,dr =\int_s^t \frac{2^{-j}h'(2^{-j}r)}{rh_2(2^{-j})}\,dr \gtrsim \int_s^t \frac{h(2^{-j}r)}{r^2h_2(2^{-j})}\,dr \geq \frac{1}{s}-\frac{1}{t} \geq \frac{t-s}{64},
\end{align*}
and we write $C_2'(t)-C_2'(s) \geq K_0(t-s)$ for some constant $K_0>0$ independent of $j$. Thus for $\lambda>0$,
\begin{align*}
|\{s\in [2,t): \big(C_2'(t)-C_2'(s)\big)^{-\sigma}>\lambda\}| \leq |\{s\in [2,t): K_0^{-\sigma} (t-s)^{-\sigma} >\lambda\}| \leq \frac{1}{K_0} \lambda^{-\frac{1}{\sigma}}.
\end{align*}
Hence
\begin{align*}
\iint_{s\leq t} \big(C_2'(t)-C_2'(s)\big)^{-\sigma}\,ds\,dt &= \int \int_0^\infty|\{s\in [2,t):  \big(C_2'(t)-C_2'(s)\big)^{-\sigma}>\lambda\}| \,d\lambda \,dt\\
&\lesssim \int \int_0^\infty\min\{6, \lambda^{-\frac{1}{\sigma}}\}\,d\lambda \,dt \lesssim 1.
\end{align*}
Similarly, $\iint_{t< s} \big(C_2'(s)-C_2'(t)\big)^{-\sigma}\,dt\,ds\lesssim 1$.
\end{proof}

\begin{lemma}\label{ss}
For every $0<\sigma <\frac{1}{2}$, we have
$$
\iint_{[2,8]^2} \Big(\frac{2\xi_jC_2(s)C_2'(s)}{|W_1^u|}\Big)^\sigma\,dt\,ds \lesssim 1, \quad \forall j\in \mathbb{N},
$$
where the implicit constant is independent of $j$.
\end{lemma}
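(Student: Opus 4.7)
The plan is to mirror the three–step strategy already used in the section (zero set $\to$ Lemma~\ref{mj} $\to$ layer cake), adapted here to $W_1^u$ viewed as a function of $t$ for fixed $s$.

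First I would show that $W_1^u$ vanishes on the diagonal $t=s$: using $\hat C_3'(s)=C_3'(s)+\xi_j C_2(s)+\xi_j s C_2'(s)$, one checks directly $W_1^u(s,s)=0$. Then integrating $\partial_t W_1^u$ from $s$ to $t$ and reorganising with the identity $(C_3^-)'(t)=C_3'(t)-\xi_j(tC_2'(t)-C_2(t))$ from (\ref{C3}) gives the Wronskian–type representation
\[
W_1^u(t,s)=C_2'(t)(C_3^-)'(s)-C_2'(s)(C_3^-)'(t)-2\xi_j C_2'(s)\int_s^t(u-s)C_2''(u)\,du.
\]

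Next, for fixed $s\in[2,8]$ I would set $f(t):=W_1^u(t,s)/C_2'(s)$ and $\tilde C_3'(t):=C_3'(t)+\xi_j(tC_2'(t)-C_2(t))$; a direct computation gives
\[
\frac{f'(t)}{C_2''(t)}=\frac{\hat C_3'(s)}{C_2'(s)}-\frac{\tilde C_3''(t)}{C_2''(t)},\qquad \Bigl(\frac{\tilde C_3''}{C_2''}\Bigr)'=\Bigl(\frac{(C_3^-)''}{C_2''}\Bigr)'+2\xi_j>0
\]
on $[2,8]$ by Lemma~\ref{standardmonotone}, so $\Delta_j f$ has constant sign, and Lemma~\ref{lowerbound} yields $|\Delta_j f(t)|\gtrsim 1$ uniformly in $j$. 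Lemma~\ref{mj} then gives an absolute $\lambda_0>0$ with
\[
\bigl|\{t\in[2,8]:|W_1^u(t,s)|<\lambda\,C_2'(s)\}\bigr|\lesssim \lambda^{1/2}\quad(\lambda\le\lambda_0).
\]

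With $A:=2\xi_j C_2(s)C_2'(s)$, a layer–cake decomposition splitting at $\mu_\star=A/(\lambda_0 C_2'(s))=2\xi_j C_2(s)/\lambda_0$, using the trivial bound $|[2,8]|=6$ below $\mu_\star$ and the Lemma~\ref{mj} bound above, produces
\[
\int_2^8 \Bigl(\frac{A}{|W_1^u(t,s)|}\Bigr)^{\!\sigma}dt\ \lesssim\ \mu_\star^{\sigma}+\Bigl(\frac{A}{C_2'(s)}\Bigr)^{\!1/2}\!\!\int_{\mu_\star}^{\infty}\!\lambda^{\sigma-3/2}\,d\lambda\ \lesssim\ (\xi_j C_2(s))^\sigma,
\]
where the $\lambda$-integral converges precisely because $\sigma<1/2$ (which is where the hypothesis enters). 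It then remains to integrate in $s$ and conclude $\int_2^8(\xi_j C_2(s))^\sigma ds\lesssim 1$.

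The main obstacle will be this last uniform-in-$j$ bound $\xi_j C_2(s)\lesssim 1$ on $[2,8]$. Expanding via the area identity $\alpha(2^{-j}s)-s\alpha(2^{-j})=2^{-j}s\int_{2^{-j}}^{2^{-j}s}h_2(r)/r^2\,dr$, one has
\[
\xi_j C_2(s)=\frac{2^{-j}\bigl[\alpha(2^{-j}s)-s\alpha(2^{-j})\bigr]}{2\bigl(2^{-j}h_2(2^{-j})+\bar h_3(2^{-j})\bigr)},
\]
and the delicate point is that neither $h_2$ nor $\bar h_3$ satisfies an upper‐type doubling at dyadic scales by itself. I would handle this by exploiting the doubling of the composite quantity $th_2(t)+\bar h_3(t)$ from (\ref{rivere}) and the lower bound $\bar h_3\gtrsim\int_0^t h_2$ in (\ref{infinitesimal}) to absorb the numerator into the denominator at the scale $t=2^{-j+3}$; the monotonicity $h_2$ increasing, together with $C_2(s)\le 7C_2'(s)$ from Lemma~\ref{monotone}, should then close the estimate uniformly in $j$.
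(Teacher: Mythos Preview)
Your sublevel–set machinery is sound: $\Delta_j f$ indeed has constant sign, and via Lemma~\ref{lowerbound} one does get $|(f,tf',\Delta_j f)|\gtrsim 1$ with your normalization $f=W_1^u/C_2'(s)$, so Lemma~\ref{mj} applies and the layer cake yields $\int_2^8(A/|W_1^u|)^\sigma\,dt\lesssim(\xi_jC_2(s))^\sigma$.

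The genuine gap is the last step. The bound $\xi_jC_2(s)\lesssim 1$ does \emph{not} follow from the hypotheses, and your sketch for it cannot work: the inequality~(\ref{rivere}) only says $\dfrac{t'h_2(t')+\bar h_3(t')}{s'h_2(s')+\bar h_3(s')}\le C_0(t'/s')^\epsilon$ for $t'\le s'$, i.e.\ a \emph{lower}-type growth for the composite, whereas to pass from scale $2^{-j}$ to $2^{-j+3}$ in the denominator you would need the opposite inequality. Likewise $\bar h_3\gtrsim\int_0^t h_2$ and $C_2\le 7C_2'$ only reduce matters to bounding $h_2(2^{-j+3})/h_2(2^{-j})$, and nothing in the assumptions prevents this ratio from being arbitrarily large (the infinitesimal hypothesis $h_2'\gtrsim h_2/t$ is a lower bound on growth, not an upper one). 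So the residual factor $(\xi_jC_2(s))^\sigma$ cannot be absorbed.

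The paper sidesteps this entirely by choosing the normalization differently: set
\[
f(t):=\frac{W_1^u}{2\xi_jC_2(s)C_2'(s)}.
\]
Because $W_1^u$ already contains the $t$-independent term $-2\xi_jC_2(s)C_2'(s)$ (see~(\ref{wu})), when one writes $W_1^u=\widetilde W_1+C_2'(t)\widetilde W_2+(C_3^+)'(t)\widetilde W_3$ using~(\ref{C3}), the constant coefficient satisfies $\widetilde W_1/(2\xi_jC_2(s)C_2'(s))\le -1$. Hence the vector $(f,tf',\Delta_j f)(B_j^+(t))^{-1}$ has norm $\ge 1$ automatically, Lemma~\ref{mj} gives $|\{t:|f|<\lambda\}|\lesssim\lambda^{1/2}$, and the layer cake finishes with $\lesssim_\sigma 1$ \emph{with no residual factor}. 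The moral: normalize by the full quantity $2\xi_jC_2(s)C_2'(s)$ rather than $C_2'(s)$; the extra factor $2\xi_jC_2(s)$ is exactly what your argument leaves dangling.
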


\begin{proof}
By letting 
\begin{align*}
f(t)&:=\frac{W_1^u}{2\xi_jC_2(s)C_2'(s)} = \frac{C_2'(t)\hat C_3'(s)-C_2'(s)\big(C_3'(t)+\xi_j(tC_2'(t)-C_2(t))\big) -2\xi_j C_2(s)C_2'(s)}{2\xi_jC_2(s)C_2'(s)} \\
&= \frac{C_2'(t)\hat C_3'(s)-C_2'(s)\big((C_3^+)'(t)+\frac{2^{-j}h_2(2^{-j})-\int_0^{2^{-j}}h_2}{2^{-j}h_2(2^{-j})+\bar h_3(2^{-j})}C_2'(t) +\frac{\int_0^{2^{-j}}h_2}{2^{-j}h_2(2^{-j})+\bar h_3(2^{-j})}\big) -2\xi_j C_2(s)C_2'(s)}{2\xi_jC_2(s)C_2'(s)},
\end{align*}
we can apply Lemma \ref{mj}. In fact, by (\ref{C3}) and Lemma \ref{standardmonotone},
$$
\Delta_jf(t) 
= \frac{-C_2'(s)}{2\xi_jC_2(s)C_2'(s)}\Big(\frac{(C_3^+)''(t)}{C_2''(t)}\Big)' \frac{h_2(2^{-j}t)}{h_2(2^{-j})}
$$
does not change sign for $t\in [2,8]$, and by Lemma \ref{lowerbound},
\begin{align*}
1&\lesssim \Big|\Big(\frac{-2\xi_jC_2(s)C_2'(s)-\frac{\int_0^{2^{-j}}h_2}{2^{-j}h_2(2^{-j})+\bar h_3(2^{-j})}C_2'(s)}{2\xi_jC_2(s)C_2'(s)}, \frac{\hat C_3'(s) -\frac{2^{-j}h_2(2^{-j})-\int_0^{2^{-j}}h_2}{2^{-j}h_2(2^{-j})+\bar h_3(2^{-j})} C_2'(s)}{2\xi_jC_2(s)C_2'(s)}, \frac{-C_2'(s)}{2\xi_jC_2(s)C_2'(s)}\Big) \Big| \\
&=\big|\big(f(t), tf'(t), \Delta_jf(t)\big)(B_j^+(t))^{-1}\big| \lesssim \big|\big(f(t), tf'(t), \Delta_jf(t)\big)\big|.
\end{align*}
Thus by Lemma \ref{mj}, there exists $\lambda_0>0$ independent of $j, s,u$, such that for every $0< \lambda \leq \lambda_0$,
\begin{equation}
|\{t\in [2,8]: \frac{|W_1^u|}{2\xi_jC_2(s)C_2'(s)}<\lambda\}|\lesssim \lambda^{\frac{1}{2}},
\end{equation}
where the implicit constant is independent of $j, u, s$. Hence for $\sigma<\frac{1}{2}$,
\begin{align*}
\iint_{[2,8]^2} \Big(\frac{2\xi_jC_2(s)C_2'(s)}{|W_1^u|}\Big)^\sigma\,dt\,ds & = \int \int_0^\infty |\{t\in [2,8]: \Big(\frac{2\xi_jC_2(s)C_2'(s)}{|W_1^u|}\Big)^\sigma>\lambda \}| \,d\lambda \,ds\\
&\lesssim \int \Big(\int_0^{\lambda_0^{-\sigma}} 6\,d\lambda + \int_{\lambda_0^{-\sigma}}^\infty \lambda^{-\frac{1}{2\sigma}}\,d\lambda\Big)\,ds \lesssim_\sigma 1.
\end{align*}
\end{proof}

\begin{lemma}\label{ut}
For every $0<\sigma<1$, we have
$$
\iiint_{[2,8]^3} \Big(\frac{2\xi_jC_2'(u)C_2'(t)}{|J-W_1^u-W_1^t-2\xi_jC_2(s)C_2'(s)|}\Big)^\sigma \,dt\,ds\,du \lesssim 1, \quad \forall j\in \mathbb{N},
$$
where the implicit constant is independent of $j$.
\end{lemma}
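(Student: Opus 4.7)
The plan is to exploit a dramatic cancellation that makes the quantity $J - W_1^u - W_1^t - 2\xi_j C_2(s) C_2'(s)$ an affine function of $s$ whose slope is exactly $-2\xi_j C_2'(t) C_2'(u)$, i.e., minus the numerator of the integrand. The first step is the algebraic identity
\begin{equation*}
J - W_1^u - W_1^t - 2\xi_j C_2(s) C_2'(s) \;=\; C_2'(u) A(t) \,-\, C_2'(t) (C_3^-)'(u) \,-\, 2\xi_j s\, C_2'(t) C_2'(u),
\end{equation*}
where $A(t) := C_3'(t) + \xi_j(t C_2'(t) - C_2(t))$. I would derive this directly from the explicit formula for $J$ displayed just before (\ref{measureu}) and the expressions for $W_1^u$ and $W_1^t$ in (\ref{wu}) and (\ref{wt}). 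The $\hat C_3'(s)$, $\doublehat C_3'(s)$, and $C_2'(s) A(t)$ terms all pair off, the three copies of $-2\xi_j C_2(s) C_2'(s)$ (one from $J$ and one each from $W_1^u$, $W_1^t$) recombine with the subtracted copy, and what remains is the linear-in-$s$ expression above.

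Factoring, this gives
\begin{equation*}
J - W_1^u - W_1^t - 2\xi_j C_2(s) C_2'(s) \;=\; -\,2\xi_j C_2'(t) C_2'(u)\, \bigl( s - s_0(t,u) \bigr), \qquad s_0(t,u) := \frac{C_2'(u) A(t) - C_2'(t) (C_3^-)'(u)}{2\xi_j C_2'(t) C_2'(u)}.
\end{equation*}
By Lemma \ref{monotone} we have $C_2'(t), C_2'(u) > 0$ on $[2,8]$, and $\xi_j > 0$ by definition, so both the $2\xi_j$ and the $C_2'(t) C_2'(u)$ factors cancel cleanly in the integrand, leaving
\begin{equation*}
\iiint_{[2,8]^3} \left(\frac{2\xi_j C_2'(u) C_2'(t)}{|J - W_1^u - W_1^t - 2\xi_j C_2(s) C_2'(s)|}\right)^{\!\sigma} dt\, ds\, du \;=\; \iiint_{[2,8]^3} \frac{ds\, dt\, du}{|s - s_0(t,u)|^\sigma}.
\end{equation*}

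Applying Fubini and integrating in $s$ first, for any fixed $(t,u) \in [2,8]^2$ and any $\sigma \in (0,1)$ we have
\begin{equation*}
\int_2^8 \frac{ds}{|s - s_0(t,u)|^\sigma} \;\leq\; \int_{-6}^{6} \frac{dr}{|r|^\sigma} \;=\; \frac{2 \cdot 6^{1-\sigma}}{1-\sigma},
\end{equation*}
uniformly in $(t,u)$ and in $j$. Integrating this bound over $(t,u) \in [2,8]^2$ finishes the proof with a constant depending only on $\sigma$. The only real step that requires care is the cancellation in the first display; everything that follows is a routine one-dimensional integration, and the estimate is uniform in $j$ precisely because the $\xi_j$-dependent prefactors cancel exactly.
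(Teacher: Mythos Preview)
Your proof is correct and follows essentially the same approach as the paper's own proof. Both arguments identify the key algebraic identity that reduces $J - W_1^u - W_1^t - 2\xi_j C_2(s) C_2'(s)$ to $-2\xi_j C_2'(t) C_2'(u)\,(s - s_0(t,u))$, cancel the common factor $2\xi_j C_2'(t) C_2'(u)$, and then bound the resulting one-dimensional integral $\int_2^8 |s - s_0|^{-\sigma}\,ds$ uniformly in $(t,u)$ and $j$; the paper's proof is simply a more compressed version of yours (and uses the slightly sharper bound $\int_{-3}^3 |s|^{-\sigma}\,ds$ in place of your $\int_{-6}^6$, but this is immaterial).
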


\begin{proof}
We have
\begin{align*}
&\quad \iiint_{[2,8]^3} \Big(\frac{2\xi_jC_2'(u)C_2'(t)}{|J-W_1^u-W_1^t-2\xi_jC_2(s)C_2'(s)|}\Big)^\sigma \,dt\,ds\,du \\
&= \iint \Big(\int_2^8 \frac{1}{\Big|s-\frac{C_2'(u)\big(C_3'(t)+\xi_j(tC_2'(t)-C_2(t))\big) -C_2'(t)(C_3^-)'(u)}{2\xi_jC_2'(t)C_2'(u)}\Big|^\sigma} \,ds\Big)\,dt\,du\\
&\leq \iint \int_{-3}^3 \frac{1}{|s|^\sigma}\,ds \,dt\,du \lesssim_\sigma 1.
\end{align*}
\end{proof}

\begin{proof}[Proof of Proposition \ref{good}]
Take $0< \sigma<1$ to be determined. We have
\begin{align*}
&\quad \int_{R_1} F(\tau)^{-\sigma}\,d\tau \\
& =\int_{R_1} \Big( \frac{1+|W^t|^2+|W^u|^2 +(2\xi_jC_2'(t)C_2'(u))^2 +(2\xi_jC_2(s)C_2'(s))^2}{J^2}+\sum_i \frac{1}{\text{dist}\,(\tau, L_i)^2} \Big)^{\frac{\sigma}{2}}\,d\tau\\
&\lesssim_\sigma \int_{R_1} \big(\frac{1}{|J|}\big)^\sigma + \big( \frac{|W^t|}{|J|}\big)^\sigma + \big( \frac{|W^u|}{|J|}\big)^\sigma + \big(\frac{2\xi_jC_2'(t)C_2'(u)}{|J|}\big)^\sigma +\big(\frac{2\xi_jC_2(s)C_2'(s)}{|J|}\big)^\sigma + \sum_i \frac{1}{\text{dist}\,(\tau, L_i)^\sigma} \,d\tau.
\end{align*}
By (\ref{distance}), for each $i$,
$$
\int_{R_1} \frac{1}{\text{dist}\,(\tau, L_i)^\sigma}\,d\tau \lesssim_\sigma 1, \quad \forall 0<\sigma<1.
$$

By (\ref{measureu}) and (\ref{measuret}),
$$
|\{u\in [2,8]: \big(\frac{|W^u|}{|J|}\big)^\sigma > \lambda\}| \lesssim \lambda^{-\frac{1}{2\sigma}}, \quad \forall \lambda \geq \lambda_0^{-\sigma}, \quad \forall a.e. \,(t,s)\in [2,8]^2,
$$
where the implicit constant is independent of $j, t,s$, and
$$
|\{t\in [2,8]: \big(\frac{|W^u|}{|J|}\big)^\sigma >\lambda \}| \lesssim \lambda^{-\frac{1}{2\sigma}}, \quad \forall \lambda \geq \lambda_0^{-\sigma}, \quad \forall a.e. \,(s,u)\in [2,8]^2,
$$
where the implicit constant is independent of $j, s,u$. Thus for $\sigma<\frac{1}{2}$,
\begin{align*}
\int_{[2,8]^3} \big(\frac{|W^u|}{|J|}\big)^\sigma\,d\tau & = \iint \int_0^\infty |\{u\in [2,8]: \big(\frac{|W^u|}{|J|}\big)^\sigma>\lambda \}| \,d\lambda \,dt\,ds\\
&\lesssim \iint \Big(\int_0^{\lambda_0^{-\sigma}} 6\,d\lambda + \int_{\lambda_0^{-\sigma}}^\infty \lambda^{-\frac{1}{2\sigma}}\,d\lambda\Big)\,dt\,ds \lesssim_\sigma 1.
\end{align*}
Similarly, for $\sigma<\frac{1}{2}$, $\int_{[2,8]^3} \big(\frac{|W^t|}{|J|}\big)^\sigma\,d\tau\lesssim_\sigma 1$.

By Lemma \ref{lowdim}, for $\sigma<\frac{1}{4}$,
\begin{align*}
\int_{[2,8]^3} \big(\frac{1}{|J|}\big)^\sigma \,d\tau \leq \Big(\int \Big(\frac{1}{|W_3^u|}\Big)^{2\sigma}\,d\tau\Big)^{\frac{1}{2}} \Big(\int \Big(\frac{|W_3^u|}{|J|}\Big)^{2\sigma}\,d\tau\Big)^{\frac{1}{2}} \lesssim 1.
\end{align*}

By Lemma \ref{ss}, for $\sigma <\frac{1}{4}$,
\begin{align*}
\int_{[2,8]^3} \Big(\frac{2\xi_j C_2(s)C_2'(s)}{|J|}\Big)^\sigma \,d\tau \leq \Big(\int \Big(\frac{2\xi_j C_2(s)C_2'(s)}{|W_1^u|}\Big)^{2\sigma} \,d\tau\Big)^{\frac{1}{2}} \Big(\int \Big(\frac{|W_1^u|}{|J|}\Big)^{2\sigma} \,d\tau\Big)^{\frac{1}{2}} \lesssim 1.
\end{align*}
Thus by Lemma \ref{ut}, for $\sigma<\frac{1}{8}$,
\begin{align*}
&\quad \int_{[2,8]^3} \Big(\frac{2\xi_jC_2'(t)C_2'(u)}{|J|}\Big)^\sigma\,d\tau \\
&\leq \Big(\int \Big(\frac{2\xi_jC_2'(t)C_2'(u)}{|J-W_1^u -W_1^t-2\xi_jC_2(s)C_2'(s)|}\Big)^{2\sigma}\,d\tau\Big)^{\frac{1}{2}} \Big(\int \Big(\frac{|J-W_1^u -W_1^t-2\xi_jC_2(s)C_2'(s)|}{|J|}\Big)^{2\sigma}\,d\tau\Big)^{\frac{1}{2}} \lesssim 1.
\end{align*}
We then obtain the desired inequality by taking $\sigma<\frac{1}{8}$.
\end{proof}

\subsection{Regularity of the Jacobian and the partition of unity functions}\label{unity}
The goal of this subsection is to prove Proposition \ref{estimateofderivative}. The regularity inequality of the Jacobian in the proposition is inspired by Lemma 5.9 of Carbery, Wainger, and Wright \cite{CWW}.

We first need several lemmas, whose proof will be postponed to the end of this subsection.

\begin{lemma}\label{sign}
For each fixed $(s,u)\in [2,8]^2$, $J$ changes monotonicity at most once and changes sign at most twice on $[2,8]$ as a function of $t$. For each fixed $(t,s)\in [2,8]^2$, $J$ changes monotonicity at most once and changes sign at most twice on $[2,8]$ as a function of $u$. For each fixed $(s,u)\in [2,8]^2$, $\tilde J:=J-W_1^t + \xi_jC_2'(u)C_2'(t)$ changes monotonicity at most once and changes sign at most twice on $[2,8]$ as a function of $t$.
\end{lemma}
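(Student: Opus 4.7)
The plan is to reduce each of the three claims to a single sign computation of Mj-type, combined with the monotonicity of the ratios $(C_3^\pm)''/C_2''$ that Lemma \ref{standardmonotone} already provides. The underlying observation is this: if, for some function $\psi(t)$ on $[2,8]$, the quantity $\partial_t(\partial_t \psi / C_2''(t))$ has constant sign, then because $C_2''(t)>0$ on $[2,8]$ by Lemma \ref{monotone}, the quantity $\partial_t \psi / C_2''(t)$ is monotonic and has at most one zero. Hence $\partial_t \psi$ has at most one zero on $[2,8]$, so $\psi$ changes monotonicity at most once; and a function that changes monotonicity at most once on an interval has at most two zeros.

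For the first statement I will use the decomposition (\ref{combo}) in the form $J = \widetilde W_1^t + C_2'(t)\widetilde W_2^t + (C_3^+)'(t)\widetilde W_3^t$, where $\widetilde W_1^t,\widetilde W_2^t,\widetilde W_3^t$ depend only on $s,u$. Two differentiations in $t$ followed by division by $C_2''(t)$ give the clean identity
\begin{equation*}
\partial_t \Big(\frac{\partial_t J}{C_2''(t)}\Big) = \Big(\frac{(C_3^+)''(t)}{C_2''(t)}\Big)'\, \widetilde W_3^t = \Big(\frac{(C_3^+)''(t)}{C_2''(t)}\Big)'\big(C_2'(u)-C_2'(s)\big).
\end{equation*}
By Lemma \ref{standardmonotone} the first factor is strictly positive on $[2,8]$, and the second factor is constant in $t$, so the product has constant sign, which is all that is needed. (The degenerate case $s=u$ makes the right side identically zero, which only strengthens the conclusion.)

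The second statement is symmetric: using the decomposition $J = W_1^u + C_2'(u)W_2^u + (C_3^-)'(u)W_3^u$ one obtains
\begin{equation*}
\partial_u\Big(\frac{\partial_u J}{C_2''(u)}\Big) = \Big(\frac{(C_3^-)''(u)}{C_2''(u)}\Big)'\big(C_2'(s)-C_2'(t)\big),
\end{equation*}
which is again of constant sign by Lemma \ref{standardmonotone}. For the $\tilde J$ statement, I will first substitute the $C_3^+$ identity from (\ref{C3}) to rewrite $C_3'(t)+\xi_j(tC_2'(t)-C_2(t)) = (C_3^+)'(t) + aC_2'(t)+ b$ with $a,b$ constants (in $t$), and then absorb the extra $\xi_jC_2'(u)C_2'(t)$ term into the coefficient of $C_2'(t)$. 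This exhibits $\tilde J$ in the same structural shape $(\text{const in }t)+C_2'(t)\cdot(\text{const in }t)+(C_3^+)'(t)\cdot W_3^t$, and the identical computation closes the case.

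The only real obstacle, which is purely algebraic bookkeeping, is the third case: one must spot that after the $C_3^+$ rewrite and the absorption of $\xi_jC_2'(u)C_2'(t)$ into the $C_2'(t)$ coefficient, the entire $t$-dependence in the ``third component'' lives in $(C_3^+)'(t)$, so that Lemma \ref{standardmonotone} can be invoked verbatim. Once this structural form is recognized, all three statements follow from the same one-line Mj-type sign check.
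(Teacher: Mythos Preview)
Your proof is correct and follows essentially the same approach as the paper's: both use the decomposition (\ref{combo}) together with the $C_3^+$ identity from (\ref{C3}) to compute $\big(\partial_t J/C_2''(t)\big)' = \big((C_3^+)''(t)/C_2''(t)\big)'\,W_3^t$, then invoke Lemma~\ref{standardmonotone}, and handle the $u$-variable and $\tilde J$ cases by the same mechanism. Your write-up is somewhat more explicit in articulating the general principle and in spelling out the $\tilde J$ case, but the argument is the same.
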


\begin{lemma}\label{sign2}
For each fixed $(s,u)\in [2,8]^2$, $\frac{J-W_1^t}{C_2'(t)}$ changes monotonicity at most once on $[2,8]$ as a function of $t$.
\end{lemma}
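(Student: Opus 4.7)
The plan is to exploit (\ref{combo}) together with (\ref{wt}) to rewrite $J-W_1^t$ so that its $t$-dependence is packaged into a single ratio involving $(C_3^+)'(t)/C_2'(t)$, which is controlled by Lemma \ref{standardmonotone}. First I would substitute the identities from (\ref{wt}) into (\ref{combo}) to obtain
$$
J - W_1^t = C_2'(t)\bigl(W_2^t + c_1 W_3^t\bigr) + W_3^t\bigl((C_3^+)'(t) + c_2\bigr),
$$
where $c_1 = \frac{2^{-j}h_2(2^{-j})-\int_0^{2^{-j}}h_2}{2^{-j}h_2(2^{-j})+\bar h_3(2^{-j})}$ and $c_2 = \frac{\int_0^{2^{-j}}h_2}{2^{-j}h_2(2^{-j})+\bar h_3(2^{-j})}$ are constants in $t$, while $W_2^t, W_3^t$ are independent of $t$ by their definitions.

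Dividing through by $C_2'(t)$, which is positive on $[2,8]$ by Lemma \ref{monotone}, yields
$$
\frac{J - W_1^t}{C_2'(t)} = (W_2^t + c_1 W_3^t) + W_3^t\cdot \frac{(C_3^+)'(t) + c_2}{C_2'(t)}.
$$
The first summand is constant in $t$, so the entire $t$-dependence is a scalar multiple (by $W_3^t$) of the function $\Psi(t) := \bigl((C_3^+)'(t)+c_2\bigr)/C_2'(t)$. A direct computation gives
$$
\Psi'(t) = \frac{C_2''(t)}{C_2'(t)^2}\,\tilde G(t), \qquad \tilde G(t) := \frac{(C_3^+)''(t)}{C_2''(t)}C_2'(t) - (C_3^+)'(t) - c_2.
$$

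Differentiating once more and cancelling the $(C_3^+)''(t)$ terms, I obtain $\tilde G'(t) = \bigl(\frac{(C_3^+)''(t)}{C_2''(t)}\bigr)' C_2'(t)$, which is strictly positive on $[2,8]$ by Lemma \ref{standardmonotone} and Lemma \ref{monotone}. Hence $\tilde G$ is strictly increasing on $[2,8]$ and changes sign at most once there. Since $C_2''(t), C_2'(t)>0$ by Lemma \ref{monotone}, the sign of the derivative of $(J-W_1^t)/C_2'(t)$ agrees with the sign of $W_3^t\,\tilde G(t)$: if $W_3^t = 0$ the quotient is constant, and otherwise $W_3^t\,\tilde G(t)$ changes sign at most once, giving at most one change of monotonicity. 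I do not anticipate a serious obstacle; the only subtlety is recognizing, via (\ref{C3}), that the combination $C_3'(t)+\xi_j(tC_2'(t)-C_2(t))$ appearing in $J$ decomposes cleanly as $(C_3^+)'(t)+c_1 C_2'(t)+c_2$, so that the troublesome $\xi_j$-dependent terms are absorbed into the $t$-independent constant factor $c_1 W_3^t$.
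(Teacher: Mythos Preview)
Your proof is correct and follows essentially the same route as the paper's own argument: both use (\ref{combo}) and (\ref{C3}) to express $\frac{J-W_1^t}{C_2'(t)}$ as a constant plus $W_3^t$ times a function whose derivative is $\frac{C_2''(t)}{C_2'(t)^2}$ times a strictly increasing quantity (via Lemma \ref{standardmonotone}), yielding at most one sign change. Your presentation just substitutes the identity from (\ref{C3}) slightly earlier, packaging the $c_1 W_3^t$ term into the constant summand, but your $\tilde G(t)$ coincides exactly with the paper's bracket.
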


\begin{definition}\label{decomposej}
Define 
\begin{align*}
J_1&:=C_2'(s)(C_3^-)'(u)-C_2'(u)\doublehat C_3'(s),\\
J_2&:=C_2'(t)\hat C_3'(s) -C_2'(s)\big(C_3'(t)+\xi_j(tC_2'(t)-C_2(t))\big),\\
J_3&:=C_2'(u)\big(C_3'(t)+\xi_j(tC_2'(t)-C_2(t))\big)-C_2'(t)(C_3^-)'(u) -2\xi_jsC_2'(t)C_2'(u),\\
J_4&:=-2\xi_jC_2(s)C_2'(s).
\end{align*}
\end{definition}

\begin{lemma}\label{monotonic}
We have
$$
J= J_1+J_2 + J_3  +J_4.
$$
Denote
\begin{align*}
\tilde J_1&:= J_1+\xi_jC_2'(t)C_2'(s),\\
\tilde J_2&:= J_2+\xi_jC_2'(t)C_2'(s).
\end{align*}
For each fixed $(t,u)$, $J_1, \ldots, J_4$ and $\tilde J_1, \tilde J_2$ change monotonicity at most once and changes sign at most twice on $[2,8]$ as functions of $s$. Moreover, for each fixed $(t,u)$, $\frac{J_1}{C_2'(s)}$ and $\frac{J_2}{C_2'(s)}$ change monotonicity at most once and change sign at most twice on $[2,8]$ as functions of $s$.

For each $l\in \mathbb{N}$, on $E_l$,
$$
|J_i|\lesssim 2^l|J|, \quad \forall 1\leq i \leq 4.
$$
\end{lemma}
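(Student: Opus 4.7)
The plan is to verify the three assertions of the lemma in order. The identity $J = J_1+J_2+J_3+J_4$ is purely algebraic: matching the nine summands in the expression for $J$ displayed between (\ref{wt}) and Definition \ref{decomposej} against the four groupings of Definition \ref{decomposej}, with the only summand $-2\xi_j s C_2'(t)C_2'(u)$ having explicit $s$-dependence outside of $C_2$ and $C_3^{\pm}$ absorbed into $J_3$. No inequalities are used for this step.

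For the monotonicity and sign-change claims on $J_1, J_2, J_3, J_4, \tilde J_1, \tilde J_2$ with $(t,u)$ fixed, I would differentiate in $s$ and factor out $C_2''(s)>0$. For instance, $\partial_s J_1 = C_2''(s)\bigl[(C_3^-)'(u) - C_2'(u)\,\doublehat C_3''(s)/C_2''(s)\bigr]$, whose bracket is strictly decreasing by Lemma \ref{standardmonotone}; the analogous brackets for $J_2, \tilde J_1, \tilde J_2$ are strictly monotone for the same reason (the additive $\xi_j C_2'(t)$ in $\tilde J_i$ does not affect monotonicity of the bracket). Thus each derivative changes sign at most once. $J_3$ is affine in $s$ with constant derivative $-2\xi_j C_2'(t) C_2'(u) \leq 0$, and $J_4 = -2\xi_j C_2(s) C_2'(s)$ has strictly negative derivative on $[2,8]$ by Lemma \ref{monotone} and is sign-definite there. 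A single monotonicity change implies at most two sign changes.

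The main obstacle is the monotonicity of $J_1/C_2'(s)$ and $J_2/C_2'(s)$, since the ratios $\doublehat C_3'(s)/C_2'(s)$ and $\hat C_3'(s)/C_2'(s)$ are not covered directly by Lemma \ref{standardmonotone}. To handle this I would set $f(s) = \doublehat C_3'(s)/C_2'(s)$ (respectively $\hat C_3'(s)/C_2'(s)$) and compute
\[
f'(s) = \frac{C_2''(s)}{C_2'(s)}\bigl(g(s) - f(s)\bigr),
\]
where $g(s) = \doublehat C_3''(s)/C_2''(s)$ (respectively $\hat C_3''(s)/C_2''(s)$) is strictly increasing by Lemma \ref{standardmonotone}. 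At any critical point $s_0$ one has $f(s_0) = g(s_0)$ and $(g-f)'(s_0) = g'(s_0) > 0$, so $g-f$ passes from negative to positive at $s_0$ and $f$ changes from decreasing to increasing; every critical point is therefore a local minimum, forcing $f$ to have at most one on $[2,8]$. Since $C_2'(u), C_2'(t) > 0$, this transfers directly to $J_1/C_2'(s)$ and $J_2/C_2'(s)$, giving at most one monotonicity change and hence at most two sign changes.

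For the size bound on $E_l$, the definition of $F$ in (\ref{bigf}) together with $F(\tau)\geq 2^{-l}$ on $E_l$ forces each summand inside the square root of $F^{-2}$ to be at most $2^{2l}$, so $|W^u|, |W^t|, 2\xi_j|C_2(s)C_2'(s)|\leq 2^l|J|$. Direct inspection of (\ref{wu}), (\ref{wt}) and Definition \ref{decomposej} shows $W_1^u = J_2 + J_4$ and $W_1^t = J_1 + J_4$; combined with $|W_1^u|\leq |W^u|$ and $|W_1^t|\leq |W^t|$, this yields $|J_4|, |J_2|, |J_1|\lesssim 2^l|J|$, and then the decomposition $J_3 = J - J_1 - J_2 - J_4$ gives $|J_3|\lesssim 2^l|J|$.
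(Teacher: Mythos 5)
Your proof is correct and follows essentially the same route as the paper: the decomposition $J = J_1 + J_2 + J_3 + J_4$ is checked by matching the eight summands of the displayed formula for $J$, the monotonicity claims for $J_1, \ldots, J_4, \tilde J_1, \tilde J_2$ are reduced via the quotient $\partial_s J_i / C_2''(s)$ to Lemma \ref{standardmonotone} exactly as the paper does, and the size bounds on $E_l$ exploit $W_1^t = J_1 + J_4$, $W_1^u = J_2 + J_4$ together with the definition of $F$ in (\ref{bigf}) in the same way. The only presentational divergence is the handling of $J_1/C_2'(s)$ and $J_2/C_2'(s)$: you argue locally that every critical point of $f = \doublehat C_3'/C_2'$ (resp.\ $\hat C_3'/C_2'$) is a strict local minimum because $(g-f)'(s_0) = g'(s_0) > 0$ at any zero $s_0$ of $g-f$, while the paper observes the equivalent global fact that $C_2'(s)\bigl(g(s)-f(s)\bigr) = \frac{\doublehat C_3''(s)}{C_2''(s)}C_2'(s) - \doublehat C_3'(s)$ is strictly increasing because its derivative telescopes to $C_2'(s)g'(s) > 0$, so this is a cosmetic rather than a substantive difference.
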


\begin{lemma}\label{tmonotone}
Denote
$$
\widehat W_i^u = W_i^u + \xi_jC_2'(t)C_2'(u), \quad 1\leq i\leq 3.
$$
For every fixed $(s,u)\in [2,8]^2$, $W_1^u, W_2^u, W_3^u, \widehat W_1^u, \widehat W_2^u, \widehat W_3^u, \frac{W_1^u}{C_2'(t)}, \frac{W_2^u}{C_2'(t)}, \frac{W_3^u}{C_2'(t)}$ change monotonicity at most once and change sign at most twice on $[2,8]$ as functions of $t$. We have $|\widehat W_i^u|\lesssim 2^l|J|$ on $E_l$ for $1\leq i \leq 3$.
\end{lemma}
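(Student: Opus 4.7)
The plan is to reduce every claim to the monotonicity of a single auxiliary function in $t$. By (\ref{C3}) we have
\[
G(t) := \frac{C_3''(t)}{C_2''(t)} + \xi_j t = \frac{(C_3^-)''(t)}{C_2''(t)} + 2\xi_j t,
\]
so $G'(t) = \bigl(\tfrac{(C_3^-)''(t)}{C_2''(t)}\bigr)' + 2\xi_j > 0$ on $[2,8]$ by Lemma \ref{standardmonotone}. With this in hand I would split the nine functions into three groups.

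First I would handle the trivially monotone cases: $W_3^u = C_2'(s) - C_2'(t)$ and $\widehat W_3^u = C_2'(s) - (1 - \xi_j C_2'(u))\,C_2'(t)$ have derivative of constant sign (since $C_2''(t) > 0$), and $\tfrac{W_3^u}{C_2'(t)} = \tfrac{C_2'(s)}{C_2'(t)} - 1$ is strictly decreasing. Each therefore changes monotonicity zero times on $[2,8]$.

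Next, a direct computation using the identities in (\ref{C3}) yields
\[
(W_2^u)' = C_2''(t)\bigl(G(t) - 2\xi_j s\bigr), \qquad (\widehat W_2^u)' = C_2''(t)\bigl(G(t) - 2\xi_j s + \xi_j C_2'(u)\bigr),
\]
and
\[
(W_1^u)' = C_2''(t)\bigl(\hat C_3'(s) - C_2'(s)\, G(t)\bigr), \qquad (\widehat W_1^u)' = (W_1^u)' + \xi_j C_2''(t)\, C_2'(u).
\]
Since $G$ is strictly increasing, each parenthetical factor is strictly monotone in $t$, hence changes sign at most once, so each of $W_1^u, W_2^u, \widehat W_1^u, \widehat W_2^u$ changes monotonicity at most once on $[2,8]$.

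For the quotient cases $\tfrac{W_i^u}{C_2'(t)}$ ($i=1,2$), the quotient rule, after cancellation of the $\xi_j (C_2'(t))^2$ terms, gives
\[
\tfrac{d}{dt}\Bigl(\tfrac{W_i^u}{C_2'(t)}\Bigr) = \frac{c_i\, C_2''(t)}{(C_2'(t))^2}\, h_i(t),
\]
where $c_i$ is $t$-independent (a sign times $C_2'(s)$) and
\[
h_i(t) = \tfrac{C_3''(t)\, C_2'(t)}{C_2''(t)} - C_3'(t) + \xi_j C_2(t) + b_i(s,u),
\]
with $b_i(s,u)$ independent of $t$. A direct differentiation using $\tfrac{d}{dt}\bigl(\tfrac{C_3''(t)\,C_2'(t)}{C_2''(t)}\bigr) = C_3''(t) + C_2'(t)\bigl(\tfrac{C_3''(t)}{C_2''(t)}\bigr)'$ gives $h_i'(t) = C_2'(t)\, G'(t) > 0$, so $h_i$ is strictly monotone in $t$ and $\tfrac{W_i^u}{C_2'(t)}$ changes monotonicity at most once. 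The ``changes sign at most twice'' conclusion for every one of the nine functions is then automatic (a function with one monotonicity change consists of two monotone pieces, each contributing at most one zero).

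Finally the bound $|\widehat W_i^u| \lesssim 2^l |J|$ on $E_l$ is immediate from (\ref{bigf}) and (\ref{el}): on $E_l$ one has $F(\tau) \geq 2^{-l}$, which by the definition of $F$ yields both $|W^u|/|J| \leq 2^l$ and $|2\xi_j C_2'(t) C_2'(u)|/|J| \leq 2^l$; hence $|\widehat W_i^u| \leq |W_i^u| + \xi_j C_2'(t) C_2'(u) \lesssim 2^l |J|$.

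The main obstacle is the quotient-rule computation for $\tfrac{W_i^u}{C_2'(t)}$: the cancellation that removes all $t$-dependence except through the single combination $\tfrac{C_3'' C_2'}{C_2''} - C_3' + \xi_j C_2$ is the essential identity and deserves a careful line-by-line check. Once one case is written out, the others follow by the same uniform application of the monotonicity of $G$.
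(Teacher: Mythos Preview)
Your proof is correct and follows essentially the same approach as the paper, which simply says ``Similar to the proof of Lemma \ref{monotonic}.'' That proof computes $\bigl(\partial_s J_i / C_2''(s)\bigr)'$ and invokes Lemma \ref{standardmonotone}; your organization via the single auxiliary function $G(t)=\tfrac{C_3''(t)}{C_2''(t)}+\xi_j t$ (equivalently $\tfrac{(C_3^+)''(t)}{C_2''(t)}+\text{const}$, so $G'(t)=\bigl(\tfrac{(C_3^+)''(t)}{C_2''(t)}\bigr)'>0$ directly by Lemma \ref{standardmonotone}) is a clean way to unify the cases but is not materially different.
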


\begin{lemma}\label{smonotonic}
Denote
\begin{align*}
&\doublehat W_i^u :=W_i^u +2\xi_jC_2(s)C_2'(s) + \xi_jC_2'(s)C_2'(t),\\
&\doublehat W_i^t :=W_i^t +2\xi_jC_2(s)C_2'(s) + \xi_jC_2'(s)C_2'(t),\\
&\big(2\xi_jC_2(s)C_2'(s)\doublehat{\big)}:=2\xi_jC_2(s)C_2'(s)+\xi_jC_2'(s)C_2'(t).
\end{align*}
For each fixed $(t,u)\in [2,8]^2$, as functions of $s$, $W_1^u+2\xi_jC_2(s)C_2'(s)$, $\frac{W_1^u+2\xi_jC_2(s)C_2'(s)}{C_2'(s)}$, $W_1^t+2\xi_jC_2(s)C_2'(s)$, $\frac{W_1^t+2\xi_jC_2(s)C_2'(s)}{C_2'(s)}$, $W_2^u+2\xi_jsC_2'(t)$, $\frac{W_2^u+2\xi_jsC_2'(t)}{C_2'(s)}$,$W_2^t+2\xi_jsC_2'(u)$, $W_3^u$, $\frac{W_3^u}{C_2'(s)}$, $W_3^t$, $\frac{W_3^t}{C_2'(s)}$, $2\xi_jC_2(s)C_2'(s)$, $\frac{2\xi_jC_2(s)C_2'(s)}{C_2'(s)}$, $\doublehat W_1^u, \doublehat W_1^t, \doublehat W_2^u, \doublehat W_2^t, \doublehat W_3^u, \doublehat W_3^t$, $(2\xi_jC_2(s)C_2'(s)\doublehat{)}$ change monotonicity at most once and change sign at most twice on $[2,8]$.
\end{lemma}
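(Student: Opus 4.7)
The plan is to verify each monotonicity claim by differentiating in $s$ and showing the $s$-derivative changes sign at most once on $[2,8]$; Rolle's theorem then yields the at-most-two sign-change bound for the function itself.

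The three workhorses are: (a) $C_2''(s) > 0$ on $[2,8]$ (Lemma \ref{monotone}); (b) each of $\hat C_3''/C_2''$, $\doublehat C_3''/C_2''$, and $(C_3^\pm)''/C_2''$ is strictly monotone on $[2,8]$ (Lemma \ref{standardmonotone}); (c) $C_2(s), C_2'(s) > 0$ on $[2,8]$. The guiding principle they yield is: whenever $F''/C_2''$ is strictly monotone and $a, b, c$ are constants in $s$, the function $aC_2'(s) + bF'(s) + c$ has $s$-derivative $C_2''(s)[a + b F''(s)/C_2''(s)]$, the product of a positive function and a strictly monotone one, so changes sign at most once. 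The quotient variant
\[
\Bigl(\frac{F'(s)}{C_2'(s)}\Bigr)' = \frac{C_2''(s)}{(C_2'(s))^2}\Bigl[\frac{F''(s)}{C_2''(s)}\,C_2'(s) - F'(s)\Bigr],
\]
in which the bracket's $s$-derivative equals $(F''/C_2'')'(s)\,C_2'(s)$ and has constant sign, handles ratios $F'/C_2'$.

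With this principle most of the list becomes routine. The quantities $W_3^u$, $W_3^t$ and their $/C_2'(s)$ ratios are strictly monotone by direct computation. The modified $W_2$-type quantities and $\doublehat W_1^u, \doublehat W_1^t$ (where the added $2\xi_j C_2(s)C_2'(s)$ is designed precisely to cancel the matching $-2\xi_j C_2(s)C_2'(s)$ already present in $W_1^u, W_1^t$) fall directly under the principle with $F \in \{\hat C_3, \doublehat C_3, C_3^-\}$, as do the corresponding $/C_2'(s)$ ratios via the quotient identity. The pure corrections $2\xi_j C_2(s)C_2'(s)$, $2\xi_j C_2(s)$, $(2\xi_j C_2(s)C_2'(s)\doublehat{)}$, and $\doublehat W_3^u = C_2'(s)[1 + 2\xi_j C_2(s) + \xi_j C_2'(t)] - C_2'(t)$ have $s$-derivatives that are manifest sums of strictly positive terms, so they are strictly increasing.

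The main obstacle is the three non-canceling cases $\doublehat W_2^u, \doublehat W_2^t, \doublehat W_3^t$: the added $2\xi_j C_2(s)C_2'(s)$ does not cancel any term in $W_2$ or in this instance of $W_3$, so differentiation produces a mixed expression of the form $C_2''(s) A(s) + 2\xi_j\bigl[(C_2'(s))^2 - C_2'(\ast)\bigr]$ with $A$ strictly monotone but the residual summand sitting outside the $C_2''(s)$ factor. Here I plan to rewrite the troublesome terms as single total derivatives, using the antiderivative identity
\[
-\doublehat C_3'(s) + 2\xi_j C_2(s) C_2'(s) = \tfrac{d}{ds}\bigl[-\doublehat C_3(s) + \xi_j C_2(s)^2\bigr]
\]
for $\doublehat W_2^u$ and $\doublehat W_2^t$, and, for $\doublehat W_3^t = C_2'(u) + C_2'(s)\mu(s)$ with $\mu(s) := -1 + 2\xi_j C_2(s) + \xi_j C_2'(t)$ strictly increasing, the identity $\tfrac{d}{ds}\doublehat W_3^t = \tfrac{d}{ds}[C_2'(s)\mu(s)]$, so that zeros of the $s$-derivative correspond exactly to critical points of the product $C_2'(s)\mu(s)$. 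I then intend to show that this product has at most one critical point on $[2,8]$ by splitting on the sign of $\mu$ (strictly monotone, so at most one sign change at some $s_\mu$) and using (a)--(c) to rule out a second critical point on each of $[2, s_\mu]$ and $[s_\mu, 8]$. The analogous argument for $\doublehat W_2^u, \doublehat W_2^t$ uses the strict monotonicity of $\psi = \doublehat C_3''/C_2''$ in place of the monotonicity of $\mu$, reducing the derivative to a monotone-times-positive piece plus a controlled remainder whose sign changes are then counted directly.
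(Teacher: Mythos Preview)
For the bulk of the list your plan is correct and is exactly what the paper intends by ``similar to the proof of Lemma~\ref{monotonic}'': the $(\partial_s F/C_2'')'$ device and the quotient identity you state dispose of every function whose $s$-derivative is $C_2''(s)$ times an expression linear in one of $\hat C_3''/C_2'',\ \doublehat C_3''/C_2'',\ (C_3^\pm)''/C_2''$. You have also correctly singled out the three genuinely different cases $\doublehat W_2^u,\ \doublehat W_2^t,\ \doublehat W_3^t$, where the added $2\xi_jC_2(s)C_2'(s)$ does not cancel and its $s$-derivative contributes a $2\xi_j\bigl(C_2'(s)\bigr)^2$ term that cannot be absorbed into a $C_2''(s)$ factor.

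The gap is in your resolution of these three cases. Take $\doublehat W_3^t$: you reduce to showing $g'(s)=C_2''(s)\mu(s)+2\xi_j\bigl(C_2'(s)\bigr)^2$ has at most one zero, and on $\{\mu>0\}$ this is clear. But on $\{\mu<0\}$ a zero forces
\[
-\mu(s)\;=\;\frac{2\xi_j\bigl(C_2'(s)\bigr)^2}{C_2''(s)},
\]
and while the left side is strictly decreasing, the right side is governed by $\bigl(C_2'\bigr)^2/C_2''$, whose monotonicity would require sign information on $2\bigl(C_2''\bigr)^2-C_2'C_2'''$; none of (a)--(c) supplies this, and it is not implied by the hypotheses of Theorem~\ref{1}. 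So ``using (a)--(c) to rule out a second critical point'' is not yet an argument. The same obstruction appears for $\doublehat W_2^u$ and $\doublehat W_2^t$: after your antiderivative rewriting the derivative still contains an uncontrolled $\bigl(C_2'\bigr)^2/C_2''$ contribution, and ``monotone-times-positive plus controlled remainder whose sign changes are counted directly'' does not explain how that count is carried out.

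The paper's own proof of this lemma is the single line ``Similar to the proof of Lemma~\ref{monotonic}'' and gives no further detail for these three functions, so you are not overlooking an argument written elsewhere. To close the gap you will need either an independent monotonicity bound on $\bigl(C_2'\bigr)^2/C_2''$ (one might try to extract it from the infinitesimal doubling condition $h_2'\gtrsim h_2/t$, which already yields $C_2'\lesssim sC_2''$ in Lemma~\ref{lowerbound}) or a different grouping of the terms in $\partial_s\doublehat W_i^{\bullet}$ that avoids this ratio altogether.
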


\begin{proof}[Proof of Proposition \ref{estimateofderivative}]
We need to show that for the vector field $U$ defined in (\ref{vectorfield}),
$$
\int_{E_l} |UQ(\tau)J(\tau)|\,d\tau \lesssim 2^{2l} \delta, \qquad \int |U\psi_l(\tau)|\,d\tau \lesssim 2^{2l} \delta, \quad \forall 0\leq |\tilde y|\lesssim \delta \leq 1, \quad \forall j,l\in \mathbb{N}.
$$

Recall from (\ref{zw}) that for $z=\Phi(\tau)$, $z_1=-t+s-u, z_2=-C_2(t)+C_2(s)-C_2(u)$. Recall $Q(\tau) = |J(\tau)|^{-1} \phi_j(t)\phi_j(s)\phi_j(u)$ is $C^2$ on $E_l \subseteq R_1\backslash\{J=0\}$. Denote the adjoint matrix $D\Phi(\tau)^*$ by
\begin{align*}
\begin{pmatrix}
w_{11} & -w_{21} & w_{31}\\
-w_{12} & w_{22} & -w_{32}\\
w_{13} & -w_{23} &w_{33}
\end{pmatrix}.
\end{align*}

\textbf{Proof of } $\bm{\int_{E_l} |UQ(\tau)J(\tau)|\,d\tau \lesssim 2^{2l} \delta}$:

We have
\begin{equation}\label{gij}
\begin{aligned}
UQ(\tau) &= \nabla Q(\tau) \cdot \frac{D\Phi(\tau)^*}{J(\tau)} \cdot \big(\tilde y_1, \tilde y_2, \tilde y_3 + \xi_j(\tilde y_1 z_2 -\tilde y_2 z_1)\big)^T\\
&=\frac{\tilde y_1\big(\partial_tQ w_{11}-\partial_sQ w_{12} +\partial_uQ w_{13}\big) +\tilde y_2\big(-\partial_tQ w_{21} +\partial_sQ w_{22} -\partial_uQ w_{23}\big)}{J}\\
&\quad +\frac{\big(\tilde y_3 +\xi_j(\tilde y_1 z_2-\tilde y_2z_1)\big) \big(\partial_tQ w_{31} -\partial_sQ w_{32} + \partial_uQ w_{33}\big)
}{J}\\
&=\tilde y_1\frac{\partial_tQ (w_{11}+\xi_j z_2w_{31})-\partial_sQ(w_{12}+\xi_j z_2w_{32})+\partial_uQ(w_{13}+\xi_jz_2w_{33})}{J}\\
&\quad -\tilde y_2 \frac{\partial_tQ (w_{21}+\xi_jz_1w_{31})-\partial_sQ(w_{22}+\xi_jz_1w_{32}) +\partial_uQ(w_{23}+\xi_jz_1w_{33})}{J}\\
&\quad +\tilde y_3 \frac{\partial_tQ w_{31} -\partial_sQ w_{32} +\partial_uQ w_{33}}{J}\\
&=: \tilde y_1 \frac{\partial_tQ g_{11} - \partial_sQ g_{12} + \partial_u Q g_{13}}{J} - \tilde y_2 \frac{\partial_tQ g_{21} - \partial_sQ g_{22} + \partial_uQ g_{23}}{J} \\
&\quad + \tilde y_3 \frac{\partial_tQ g_{31}-\partial_sQ g_{32} + \partial_uQg_{33}}{J},
\end{aligned}
\end{equation}
and
\begin{align*}
\int_{E_l} |UQ(\tau)J(\tau)|\,d\tau \lesssim \delta \int_{E_l}\big(|\partial_tQ| \sum_{i=1}^3|g_{i1}| + |\partial_sQ|\sum_{i=1}^3 |g_{i2}| + |\partial_uQ|\sum_{i=1}^3|g_{i3}|\big)\,d\tau.
\end{align*}

\textbf{Estimation of }$\bm{\int_{E_l} |g_{11}\partial_tQ|\,d\tau}$:

Recall the matrix $D\Phi(t,s,u)$ from (\ref{determinant}). We have
\begin{align*}
g_{11}&=w_{11}+\xi_jz_2w_{31}\\
&=C_2'(u)C_3'(s)-C_2'(s)C_3'(u)+\xi_j\big(uC_2'(u)C_2'(s)-sC_2'(s)C_2'(u)\\
&\quad -2C_2(t)C_2'(s)+2C_2(s)C_2'(s)+2C_2(t)C_2'(u)-C_2(u)C_2'(s)-C_2(s)C_2'(u)\big)\\
&= -W_1^t + 2\xi_jC_2(t)\big(C_2'(u)-C_2'(s)\big).
\end{align*}
Recall the region $E_l$ from (\ref{el}). On $E_l$, 
\begin{equation}\label{smallquantities}
\frac{1}{|J|},\quad \frac{|W^t|}{|J|},\quad \frac{|W^u|}{|J|},\quad \frac{2\xi_jC_2'(t)C_2'(u)}{|J|},\quad \frac{2\xi_jC_2(s)C_2'(s)}{|J|}\leq 2^l.
\end{equation}
By (\ref{regions}), $t\leq u$ on $E_l\subseteq R_1$. Then by the monotonicity in Lemma \ref{monotone},
\begin{align*}
|g_{11}| &\leq |W_1^t| +2\xi_j C_2(t)C_2'(u)+ \max\{2\xi_jC_2(t)C_2'(u), 2\xi_jC_2(s)C_2'(s)\}\leq 3\cdot 2^l|J|, \quad \text{on } E_l.
\end{align*}
Since
$$
|\partial Q| \lesssim \frac{|\partial J|}{J^2} + \frac{1}{|J|}, \quad \text{on } R_1\backslash\{J=0\},
$$
we have
\begin{align*}
\int_{E_l} |g_{11}\partial_tQ|\,d\tau &\lesssim \int_{E_l} \Big(\frac{|g_{11}\partial_tJ|}{J^2} + \Big|\frac{g_{11}}{J}\Big|\Big)\,d\tau \\
&\lesssim \int_{E_l} \Big(\frac{\big(|W_1^t|+2\xi_jC_2(s)C_2'(s)\big)|\partial_tJ|}{J^2} +\frac{2\xi_jC_2(t)C_2'(u)|\partial_tJ|}{J^2} + \Big|\frac{g_{11}}{J}\Big|\Big)\,d\tau.
\end{align*}
We have
$$
\int_{E_l} \Big|\frac{g_{11}}{J}\Big|\,d\tau \lesssim 2^l.
$$

By Lemma \ref{sign}, for each fixed $(s,u)\in [2,8]^2$, $J$ changes monotonicity at most once on $[2,8]$ as a function of $t$. And since $|W_1^t|+2\xi_jC_2(s)C_2'(s)$ is independent of $t$, for each fixed $(s,u)$, $\{t\in [2,8]: |W_1^t|+2\xi_jC_2(s)C_2'(s)\leq 2\cdot 2^l|J|\}$ is the union of at most three intervals, on each of which $J$ does not change sign. Thus we can write
$$
\{t\in [2,8]: |W_1^t|+2\xi_jC_2(s)C_2'(s)\leq 2\cdot 2^l|J|\}=[b_1, b_2]\cup [b_3, b_4] \cup [b_5, b_6] \cup [b_7, b_8],
$$
where $J$ is monotone and does not change sign on each $[b_{2i-1}, b_{2i}]$, and $2\leq b_1\leq \cdots \leq b_8 \leq 8$ depends on $(s,u)$.
Therefore we have
\begin{align*}
&\quad \int_{E_l}\frac{\big(|W_1^t|+2\xi_jC_2(s)C_2'(s)\big)|\partial_tJ|}{J^2}\,d\tau \\
& \leq \int_{\substack{\tau\in [2,8]^3\\|W_1^t|+2\xi_jC_2(s)C_2'(s)\leq 2\cdot 2^l|J|}} \frac{\big(|W_1^t|+2\xi_jC_2(s)C_2'(s)\big)|\partial_tJ|}{J^2}\,d\tau\\
&=\iint_{[2,8]^2} \Big(\int_{\substack{t\in [2,8]\\ |W_1^t|+2\xi_jC_2(s)C_2'(s)\leq 2\cdot 2^l|J|}} \frac{|\partial_tJ|}{J^2}\,dt\Big)\big(|W_1^t|+2\xi_jC_2(s)C_2'(s)\big)\,ds\,du\\
&=\iint_{[2,8]^2} \sum_{i=1}^4\Big| \int_{b_{2i-1}}^{b_{2i}} \frac{\partial_tJ}{J^2}\,dt \Big|\cdot \big(|W_1^t|+2\xi_jC_2(s)C_2'(s)\big)\,ds\,du\\
&\leq \iint_{[2,8]^2} \sum_{i=1}^8 \frac{|W_1^t|+2\xi_jC_2(s)C_2'(s)}{|J|}\Big|_{t=b_i}\,ds\,du\leq \iint_{[2,8]^2} \sum_{i=1}^8 2\cdot 2^l\,ds\,du \lesssim 2^l.
\end{align*}

Since $W_1^t$ is independent of $t$, by Lemma \ref{sign}, for each fixed $(s,u)\in [2,8]^2$, $J - W_1^t$ changes monotonicity at most once on $[2,8]$ as a function of $t$. By Lemma \ref{sign2}, for each fixed $(s,u)\in [2,8]^2$, $\frac{J-W_1^t}{C_2'(t)}$ changes monotonicity at most once on $[2,8]$ as a function of $t$. Thus $\{t\in [2,8]:\frac{1}{2}\xi_j C_2'(u)C_2'(t) \leq |J- W_1^t|\}$ is the union of at most three intervals, on each of which $J-W_1^t$ does not change sign. And $\{t\in [2,8]:\frac{1}{2}\xi_j C_2'(u)C_2'(t) \geq |J- W_1^t|\}$ is the union of at most two intervals. Thus we can write
\begin{align*}
\{t\in [2,8]:\frac{1}{2}\xi_j C_2'(u)C_2'(t) \leq |J- W_1^t|\}=[b_1, b_2]\cup [b_3, b_4] \cup [b_5, b_6] \cup [b_7, b_8],
\end{align*}
where $J-W_1^t$ is monotone and does not change sign on each $[b_{2i-1}, b_{2i}]$, and $2\leq b_1\leq \cdots \leq b_8 \leq 8$ depends on $(s,u)$. We have
\begin{align*}
\int_{E_l} \frac{2\xi_j C_2(t)C_2'(u) |\partial_tJ|}{J^2}\,d\tau &= \int_{E_l} \frac{2\xi_j C_2(t)C_2'(u) |\partial_t(J-W_1^t)|}{J^2}\,d\tau\\
&= \int_{\substack{E_l\\ \frac{1}{2}\xi_jC_2'(u)C_2'(t)\leq |J-W_1^t|}} \frac{2\xi_j C_2(t)C_2'(u) |\partial_t(J-W_1^t)|}{J^2}\,d\tau \\
&\quad + \int_{\substack{E_l\\ \frac{1}{2}\xi_jC_2'(u)C_2'(t)\geq |J-W_1^t|}} \frac{2\xi_j C_2(t)C_2'(u) |\partial_t(J-W_1^t)|}{J^2}\,d\tau.
\end{align*}
Recall $C_2(t)\lesssim C_2'(t)$. Thus
\begin{align*}
&\quad \int_{\substack{E_l\\ \frac{1}{2}\xi_jC_2'(u)C_2'(t)\leq |J-W_1^t|}} \frac{2\xi_j C_2(t)C_2'(u) |\partial_t(J-W_1^t)|}{J^2}\,d\tau \\
&\lesssim 2^{2l} \int_{\substack{\tau\in [2,8]^3\\\frac{1}{2}\xi_jC_2'(u)C_2'(t)\leq |J-W_1^t|}} \frac{2\xi_j C_2'(u)C_2(t) |\partial_t(J-W_1^t)|}{(J-W_1^t)^2} \,d\tau \\
&=2^{2l}\iint_{[2,8]^2} \sum_{i=1}^4 \Big|\int_{b_{2i-1}}^{b_{2i}} 2\xi_j C_2'(u)C_2(t) \partial_t\big(\frac{1}{J-W_1^t}\big)\,dt\Big|\,ds\,du\\
&= 2^{2l} \iint_{[2,8]^2} \sum_{i=1}^4\Big|\int_{b_{2i-1}}^{b_{2i}} \Big(\frac{2\xi_j C_2'(u)C_2'(t)}{J-W_1^t}- \partial_t\big(\frac{2\xi_jC_2(t)C_2'(u)}{J-W_1^t}\big)\Big)\,dt\Big|\,ds\,du\\
&\lesssim 2^{2l}.
\end{align*}
We have
\begin{align*}
&\quad \int_{E_l} \frac{2\xi_jC_2(t)C_2'(u)\partial_t\big(\xi_jC_2'(u)C_2'(t)\big)}{J^2}\,d\tau\\
&\leq 2^{2l} \int_{[2,8]^3} \frac{2\xi_jC_2(t)C_2'(u)\partial_t\big(\xi_jC_2'(u)C_2'(t)\big)}{\big(\xi_jC_2'(u)C_2'(t)\big)^2}\,d\tau\\
&=2^{2l}\int_{[2,8]^3} \Big(\frac{2\xi_jC_2'(u)C_2'(t)}{\xi_jC_2'(u)C_2'(t)} -\partial_t\big(\frac{2\xi_jC_2(t)C_2'(u)}{\xi_jC_2'(u)C_2'(t)}\big)\Big)\,d\tau\\
&\lesssim 2^{2l}.
\end{align*}
By Lemma \ref{sign}, for each fixed $(s,u)\in [2,8]^2$,
$$
\tilde J:=J-W_1^t + \xi_jC_2'(u)C_2'(t)
$$
changes monotonicity at most once and changes sign at most twice on $[2,8]$ as a function of $t$. Since $\{t\in [2,8]:\frac{1}{2}\xi_j C_2'(u)C_2'(t) \geq |J- W_1^t|\}$ is the union of at most two intervals, we can write
\begin{align*}
\{t\in [2,8]:\frac{1}{2}\xi_j C_2'(u)C_2'(t) \geq |J- W_1^t|\} = [a_1, a_2]\cup [a_3, a_4]\cup \cdots \cup [a_9, a_{10}],
\end{align*}
where $\tilde J$ is monotone and does not change sign on each $[a_{2i-1}, a_{2i}]$, and $2\leq a_1\leq \cdots \leq a_{10} \leq 8$ depends on $(s,u)$. Since $\frac{1}{2}\xi_jC_2'(u)C_2'(t)\geq |J-W_1^t|$ implies $\tilde J \geq \frac{1}{2}\xi_jC_2'(u)C_2'(t)$, we have
\begin{align*}
&\quad \int_{\substack{E_l\\ \frac{1}{2}\xi_jC_2'(u)C_2'(t)\geq |J-W_1^t|}} \frac{2\xi_jC_2(t)C_2'(u)|\partial_t\tilde J|}{J^2}\,d\tau\\
&\lesssim 2^{2l} \int_{\substack{\tau\in [2,8]^3\\\frac{1}{2}\xi_jC_2'(u)C_2'(t)\geq |J-W_1^t|}} \frac{2\xi_jC_2(t)C_2'(u)|\partial_t\tilde J|}{\tilde J^2}\,d\tau\\
&=2^{2l} \iint_{[2,8]^2} \sum_{i=1}^5\Big|\int_{a_{2i-1}}^{a_{2i}} 2\xi_jC_2(t)C_2'(u)\partial_t\big(\frac{1}{\tilde J}\big)\,dt\Big|\,ds\,du\\
&=2^{2l} \iint_{[2,8]^2} \sum_{i=1}^5\Big|\int_{a_{2i-1}}^{a_{2i}} \frac{2\xi_jC_2'(u)C_2'(t)}{\tilde J} - \partial_t\big(\frac{2\xi_jC_2(t)C_2'(u)}{\tilde J}\big)\,dt\Big|\,ds\,du\\
&\lesssim 2^{2l}.
\end{align*}
Thus 
\begin{align*}
&\quad \int_{\substack{E_l\\ \frac{1}{2}\xi_jC_2'(u)C_2'(t)\geq |J-W_1^t|}} \frac{2\xi_jC_2(t)C_2'(u)|\partial_t(J-W_1^t)|}{J^2}\,d\tau\\
&\leq \int_{\substack{E_l\\ \frac{1}{2}\xi_jC_2'(u)C_2'(t)\geq |J-W_1^t|}} \frac{2\xi_jC_2(t)C_2'(u)|\partial_t\tilde J|}{J^2}\,d\tau + \int_{\substack{E_l\\ \frac{1}{2}\xi_jC_2'(u)C_2'(t)\geq |J-W_1^t|}} \frac{2\xi_jC_2(t)C_2'(u)|\partial_t\big(\xi_jC_2'(u)C_2'(t)\big)|}{J^2}\,d\tau\\
&\lesssim 2^{2l}.
\end{align*}
Hence
\begin{align*}
&\quad \int_{E_l} \frac{2\xi_j C_2(t)C_2'(u) |\partial_t(J-W_1^t)|}{J^2}\,d\tau\\
&\leq \int_{\substack{E_l\\ \frac{1}{2}\xi_jC_2'(u)C_2'(t)\leq |J-W_1^t|}} \frac{2\xi_j C_2(t)C_2'(u) |\partial_t(J-W_1^t)|}{J^2}\,d\tau + \int_{\substack{E_l\\ \frac{1}{2}\xi_jC_2'(u)C_2'(t)\geq |J-W_1^t|}} \frac{2\xi_j C_2(t)C_2'(u) |\partial_t(J-W_1^t)|}{J^2}\,d\tau\\
&\lesssim 2^{2l}.
\end{align*}
Hence 
\begin{equation}\label{g11bound}
\int_{E_l} \frac{|g_{11}\partial_tJ|}{J^2}\,d\tau \lesssim 2^{2l}.
\end{equation}

\textbf{Estimation of } $\bm{\int_{E_l} |g_{12}\partial_sQ|\,d\tau}$:

We have
\begin{align*}
g_{12}&=w_{12} + \xi_j z_2 w_{32} = C_2'(t)C_3'(u)-C_2'(u)C_3'(t)+\xi_j\big(2sC_2'(t)C_2'(u)-tC_2'(t)C_2'(u)\\
&\quad -uC_2'(u)C_2'(t)-C_2(t)C_2'(u)+C_2(u)C_2'(t)-2C_2(s)C_2'(t)+2C_2(t)C_2'(t)\big)\\
&=W_1^t +W_1^u -J + 2\xi_j \big(C_2(t)C_2'(t)-C_2(t)C_2'(u)-C_2(s)C_2'(t) +C_2(s)C_2'(s)\big).
\end{align*}
On $E_l\subseteq R_1$, $t\leq u$. Thus either 
$$
C_2(t)C_2'(t)\leq C_2(s)C_2'(s), \quad C_2(s) C_2'(t) \leq C_2(s) C_2'(s),
$$
or 
$$
C_2(t)C_2'(t) \leq C_2(t)C_2'(u)\leq C_2'(u)C_2'(t), \quad C_2(s)C_2'(t) \leq C_2(u)C_2'(t) \leq C_2'(t)C_2'(u).
$$
Recall (\ref{smallquantities}). Therefore $|g_{12}|\leq 7\cdot 2^l|J|$ on $E_l$. Denote 
$$
\tilde g_{12} :=W_1^t +W_1^u-J+2\xi_j\big(C_2(s)C_2'(s)-sC_2'(u)C_2'(t)+C_2(t)C_2'(t)-C_2(t)C_2'(u)\big).
$$
It is straightforward to check that $\tilde g_{12}$ is independent of $s$. Since $s\leq 8$, we have $|\tilde g_{12}|\leq 14\cdot 2^l|J|$. Thus
\begin{align*}
\int_{E_l} |g_{12}\partial_sQ |\,d\tau &\lesssim \int_{E_l} \Big(\frac{|g_{12}\partial_sJ|}{J^2} + \Big|\frac{g_{12}}{J}\Big|\Big)\,d\tau\\
&\lesssim \int_{E_l} \Big(\frac{\big(|\tilde g_{12}|+2\xi_jC_2'(t)C_2'(u) \big)|\partial_sJ|}{J^2} +\frac{2\xi_jC_2'(t)C_2(s)|\partial_sJ|}{J^2} + \Big|\frac{g_{12}}{J}\Big|\Big)\,d\tau.
\end{align*}
We have
$$
\int_{E_l} \Big|\frac{g_{12}}{J}\Big|\,d\tau \lesssim \int_{E_l} 2^l\,d\tau \lesssim 2^l.
$$

By Lemma \ref{monotonic}, we have 
\begin{align*}
&\quad \int_{E_l}\frac{\big(|\tilde g_{12}| + 2\xi_jC_2'(t)C_2'(u)\big)|\partial_sJ|}{J^2}\,d\tau \leq \sum_{i=1}^4 \int_{E_l}\frac{\big(|\tilde g_{12}| + 2\xi_jC_2'(t)C_2'(u)\big)|\partial_sJ_i|}{J^2}\,d\tau.
\end{align*}
By Lemma \ref{monotonic}, for each fixed $(t,u)\in [2,8]^2$, $J_1$ changes monotonicity at most once on $[2,8]$ as a function of $s$. Thus $\{s\in [2,8]:|J_1|\leq |\tilde g_{12}| + 2\xi_jC_2'(t)C_2'(u)\}$ is the union of at most two intervals. And $\{s\in [2,8]:|J_1|\geq |\tilde g_{12}| + 2\xi_jC_2'(t)C_2'(u)\}$ is the union of at most three intervals, on each of which $J_1$ does not change sign. Hence we can write
\begin{align*}
&\{s\in [2,8]:|J_1|\leq |\tilde g_{12}| + 2\xi_jC_2'(t)C_2'(u) \} = [a_1, a_2]\cup [a_3, a_4]\cup \cdots \cup [a_9, a_{10}],\\
&\{s\in [2,8]:|J_1|\geq |\tilde g_{12}| + 2\xi_jC_2'(t)C_2'(u) \} = [b_1, b_2]\cup [b_3, b_4]\cup[b_5, b_6]\cup [b_7, b_8],
\end{align*}
where $J_1$ is monotone and does not change sign on each $[a_{2i-1}, a_i]$ and each $[b_{2i-1}, b_{2i}]$, and $2\leq a_1\leq \cdots \leq a_{10}\leq 8$ and $2\leq b_1\leq \cdots \leq b_8\leq 8$ depends on $(t,u)$. By Lemma \ref{monotonic},
\begin{align*}
&\quad \int_{E_l}\frac{\big(|\tilde g_{12}| + 2\xi_jC_2'(t)C_2'(u)\big)|\partial_sJ_1|}{J^2}\,d\tau\\
&= \int_{\substack{E_l\\|J_1|\leq |\tilde g_{12}| + 2\xi_jC_2'(t)C_2'(u)}}\frac{\big(|\tilde g_{12}| + 2\xi_jC_2'(t)C_2'(u)\big)|\partial_sJ_1|}{J^2}\,d\tau \\
&\quad + \int_{\substack{E_l\\|J_1|\geq |\tilde g_{12}| + 2\xi_jC_2'(t)C_2'(u)}}\frac{\big(|\tilde g_{12}| + 2\xi_jC_2'(t)C_2'(u)\big)|\partial_sJ_1|}{J^2}\,d\tau\\
&\lesssim 2^{2l}\Big(\iint_{[2,8]^2}\sum_{i=1}^5 \Big|\int_{a_{2i-1}}^{a_{2i}} \frac{\partial_sJ_1}{|\tilde g_{12}| + 2\xi_jC_2'(t)C_2'(u)}\,ds\Big|\,dt\,du\\
&\qquad + \iint_{[2,8]^2}\sum_{i=1}^4 \Big|\int_{b_{2i-1}}^{b_{2i}} \frac{(|\tilde g_{12}| + 2\xi_jC_2'(t)C_2'(u)\big)\partial_sJ_1}{J_1^2}\,ds\Big|\,dt\,du\Big) \\
&\lesssim 2^{2l}.
\end{align*}
The estimation of the other four terms is similar, and thus
$$
\int_{E_l}\frac{\big(|\tilde g_{12}| + 2\xi_jC_2'(t)C_2'(u)\big)|\partial_sJ|}{J^2}\,d\tau\lesssim 2^{2l}.
$$
In particular,
$$
\int_{E_l}\frac{ 2\xi_jC_2'(t)C_2'(u)|\partial_sJ|}{J^2}\,d\tau\lesssim 2^{2l}.
$$

We have
$$
\int_{E_l} \frac{2\xi_jC_2'(t)C_2(s)|\partial_sJ|}{J^2}\,d\tau = \int_{\substack{E_l\\ C_2(s)\leq C_2'(u)}} \frac{2\xi_jC_2'(t)C_2(s)|\partial_sJ|}{J^2}\,d\tau + \int_{\substack{E_l\\ C_2'(u)\leq C_2(s)}} \frac{2\xi_jC_2'(t)C_2(s)|\partial_sJ|}{J^2}\,d\tau,
$$
and
$$
\int_{\substack{E_l\\ C_2(s)\leq C_2'(u)}} \frac{2\xi_jC_2'(t)C_2(s)|\partial_sJ|}{J^2}\,d\tau \leq \int_{E_l} \frac{2\xi_jC_2'(u)C_2'(t)|\partial_sJ|}{J^2}\,d\tau \lesssim 2^{2l}.
$$
For the other term,
\begin{align*}
&\quad \int_{\substack{E_l\\ C_2'(u)\leq C_2(s)}} \frac{2\xi_jC_2'(t)C_2(s)|\partial_sJ|}{J^2} \leq \sum_{i=1}^4\int_{\substack{E_l\\ C_2'(u)\leq C_2(s)}} \frac{2\xi_jC_2'(t)C_2(s)|\partial_sJ_i|}{J^2}.
\end{align*}
Recall $t\leq u$ on $E_l\subseteq R_1$. Thus
\begin{align*}
&\quad \int_{\substack{E_l\\ C_2'(u)\leq C_2(s)}} \frac{2\xi_jC_2'(t)C_2(s)|\partial_sJ_3|}{J^2}= \int_{\substack{E_l\\ C_2'(u)\leq C_2(s)}} \frac{2\xi_jC_2'(t)C_2(s)\cdot 2\xi_jC_2'(t)C_2'(u)}{J^2} \\
&\leq \int_{E_l} \frac{2\xi_j\max\{C_2'(t)C_2'(u), C_2(s)C_2'(s)\}\cdot 2\xi_jC_2'(t)C_2'(u)}{J^2} \lesssim 2^{2l}.
\end{align*}
Since $\{s\in [2,8]: C_2'(u)\leq C_2(s)\}\subseteq \{s\in [2,8]: C_2'(t)\leq C_2(s)\}$, which is an interval, we have
\begin{align*}
\int_{\substack{E_l\\ C_2'(u)\leq C_2(s)}} \frac{2\xi_jC_2'(t)C_2(s)|\partial_sJ_4|}{J^2}&= \int_{\substack{E_l\\ C_2'(u)\leq C_2(s)}} \frac{2\xi_jC_2'(t)C_2(s)\partial_s\big(2\xi_jC_2(s)C_2'(s)\big)}{J^2}\\
&\leq 2^{2l}\int_{C_2'(t)\leq C_2(s)} \frac{2\xi_jC_2'(t)C_2(s)\partial_s\big(2\xi_jC_2(s)C_2'(s)\big)}{\big(2\xi_jC_2(s)C_2'(s)\big)^2}\\
&=2^{2l}\int_{C_2'(t)\leq C_2(s)} \Big(\frac{2\xi_jC_2'(t)C_2'(s)}{2\xi_jC_2(s)C_2'(s)}-\partial_s\big(\frac{2\xi_jC_2'(t)C_2(s)}{2\xi_jC_2(s)C_2'(s)}\big)\Big) \\
&\lesssim 2^{2l}.
\end{align*}
By Lemma \ref{monotonic}, for each fixed $(t,u)$, $J_1$ and $\frac{J_1}{C_2'(s)}$ change monotonicity at most once and change sign at most twice on $[2,8]$ as functions of $s$. Thus the set $\{s\in [2,8]: |J_1|\geq \frac{1}{2}\xi_jC_2'(t)C_2'(s)\}$ is the union of at most three intervals, on each of which $J_1$ does not change sign. And $\{s\in [2,8]: |J_1|\leq \frac{1}{2}\xi_jC_2'(t)C_2'(s)\}$ is the union of at most two intervals. We can write
\begin{align*}
\{s\in [2,8]: |J_1| \geq \frac{1}{2}\xi_j C_2'(t)C_2'(s)\} = [b_1, b_2] \cup [b_3, b_4]\cup [b_5, b_6] \cup [b_7, b_8],
\end{align*}
where $J_1$ is monotone and does not change sign on each interval $[b_{2i-1}, b_{2i}]$, and $2\leq b_1\leq \cdots \leq b_8\leq 8$ depends on $(t,u)$. Since $C_2(s)\lesssim C_2'(s)$, we have
\begin{align*}
&\quad \int_{\substack{E_l\\C_2'(u)\leq C_2(s)\\|J_1|\geq \frac{1}{2}\xi_jC_2'(t)C_2'(s)}} \frac{2\xi_jC_2'(t)C_2(s)|\partial_sJ_1|}{J^2}\,d\tau\\
&\lesssim 2^{2l} \int_{|J_1|\geq \frac{1}{2}\xi_jC_2'(t)C_2'(s)} \frac{2\xi_jC_2'(t)C_2(s)|\partial_sJ_1|}{J_1^2}\,d\tau\\
&=2^{2l} \iint_{[2,8]^2} \sum_{i=1}^4 \Big|\int_{b_{2i-1}}^{b_{2i}} \frac{2\xi_jC_2'(t)C_2(s)\partial_sJ_1}{J_1^2}\,ds\Big|\,dt\,du\\
&=2^{2l} \iint_{[2,8]^2} \sum_{i=1}^4 \Big|\int_{b_{2i-1}}^{b_{2i}}\Big(\frac{2\xi_jC_2'(t)C_2'(s)}{J_1}-\partial_s\big(\frac{2\xi_jC_2'(t)C_2(s)}{J_1}\big)\Big)\,ds\Big|\,dt\,du\\
&\lesssim 2^{2l}.
\end{align*}
Denote 
$$
\tilde J_1=J_1 + \xi_jC_2'(t)C_2'(s).
$$
By Lemma \ref{monotonic}, $|J_1|\lesssim 2^l|J|$. Since $t\leq u$ on $E_l\subseteq R_1$, when $C_2'(u)\leq C_2(s)$, 
\begin{equation}\label{ts}
2\xi_jC_2'(t)C_2'(s)\leq 2\xi_jC_2(s)C_2'(s)\leq 2^l|J|,
\end{equation}
and thus $|\tilde J|\lesssim 2^l|J|$. By Lemma \ref{monotonic}, for each fixed $(t,u)$, $\tilde J_1$ changes monotonicity at most once as a function of $s$. Since $\{s\in [2,8]: |J_1|\leq \frac{1}{2}\xi_jC_2'(t)C_2'(s)\}$ is the union of at most two intervals, we can write
$$
\{s\in [2,8]:|J_1|\leq \frac{1}{2}\xi_jC_2'(t)C_2'(s)\} = [a_1, a_2]\cup \cdots [a_9, a_{10}],
$$
where $\tilde J_1$ is monotone and does not change sign on each $[a_{2i-1}, a_{2i}]$, and $2\leq a_1\leq \cdots \leq a_{10}\leq 8$ depends on $(t,u)$. Note $|J_1|\leq \frac{1}{2}\xi_jC_2'(t)C_2'(s)$ implies $\tilde J_1 \geq \frac{1}{2}\xi_jC_2'(t)C_2'(s)$. We have
\begin{align*}
&\quad \int_{\substack{E_l\\C_2'(u)\leq C_2(s)\\|J_1|\leq \frac{1}{2}\xi_jC_2'(t)C_2'(s)}} \frac{2\xi_jC_2'(t)C_2(s)|\partial_s\tilde J_1|}{J^2}\,d\tau\\
&\lesssim 2^{2l} \int_{|J_1|\leq \frac{1}{2}\xi_jC_2'(t)C_2'(s)} \frac{2\xi_jC_2'(t)C_2(s)|\partial_s\tilde J_1|}{\tilde J_1^2}\,d\tau\\
&=2^{2l} \iint_{[2,8]^2} \sum_{i=1}^5 \Big|\int_{a_{2i-1}}^{a_{21}} \frac{2\xi_jC_2'(t)C_2(s)\partial_s\tilde J_1}{\tilde J_1^2}\,ds\Big|\,dt\,du\\
&=2^{2l} \iint_{[2,8]^2} \sum_{i=1}^5 \Big|\int_{a_{2i-1}}^{a_{21}} \Big(\frac{2\xi_jC_2'(t)C_2'(s)}{\tilde J_1}-\partial_s\big(\frac{2\xi_jC_2'(t)C_2(s)}{\tilde J_1}\big)\Big)\,ds\Big|\,dt\,du\\
&\lesssim 2^{2l}.
\end{align*}
By (\ref{ts}), and since $C_2(s)\lesssim C_2'(s)$,
\begin{align*}
&\quad \int_{\substack{E_l\\ C_2'(u)\leq C_2(s)}} \frac{2\xi_jC_2'(t)C_2(s)\partial_s\big(2\xi_jC_2'(s)C_2'(t)\big)}{J^2}\,d\tau\\
&\leq 2^{2l} \int_{[2,8]^3} \frac{2\xi_jC_2'(t)C_2(s)\partial_s\big(2\xi_jC_2'(s)C_2'(t)\big)}{\big(2\xi_jC_2'(s)C_2'(t)\big)^2}\,d\tau\\
&= 2^{2l} \int_{[2,8]^3} \Big(\frac{2\xi_jC_2'(t)C_2'(s)}{2\xi_jC_2'(s)C_2'(t)} - \partial_s\big(\frac{2\xi_jC_2'(t)C_2(s)}{2\xi_jC_2'(s)C_2'(t)}\big)\Big)\,d\tau \\
&\lesssim 2^{2l}.
\end{align*}
Therefore
\begin{align*}
&\quad \int_{\substack{E_l\\C_2'(u)\leq C_2(s)\\|J_1|\leq \frac{1}{2}\xi_jC_2'(t)C_2'(s)}} \frac{2\xi_jC_2'(t)C_2(s)|\partial_sJ_1|}{J^2}\,d\tau\\
&\leq \int_{\substack{E_l\\C_2'(u)\leq C_2(s)\\|J_1|\leq \frac{1}{2}\xi_jC_2'(t)C_2'(s)}} \frac{2\xi_jC_2'(t)C_2(s)|\partial_s\tilde J_1|}{J^2}\,d\tau + \int_{\substack{E_l\\C_2'(u)\leq C_2(s)\\|J_1|\leq \frac{1}{2}\xi_jC_2'(t)C_2'(s)}} \frac{2\xi_jC_2'(t)C_2(s)\partial_s\big(\xi_jC_2'(t)C_2'(s)\big)}{J^2}\,d\tau\\
&\lesssim 2^{2l}.
\end{align*}
Hence
$$
\int_{\substack{E_l\\C_2'(u)\leq C_2(s)}} \frac{2\xi_jC_2'(t)C_2(s)|\partial_sJ_1|}{J^2}\,d\tau \lesssim 2^{2l}.
$$
Similarly,
$$
\int_{\substack{E_l\\C_2'(u)\leq C_2(s)}} \frac{2\xi_jC_2'(t)C_2(s)|\partial_sJ_2|}{J^2}\,d\tau \lesssim 2^{2l}.
$$
Hence 
\begin{equation}\label{g12bound}
\int_{E_l} \frac{|g_{12}\partial_sJ|}{J^2}\,d\tau \lesssim 2^{2l}.
\end{equation}

\textbf{Estimation of } $\bm{\int_{E_l} |g_{13}\partial_uQ|\,d\tau}$:

We have 
\begin{align*}
g_{13}&=w_{13} + \xi_j z_2w_{33}\\
&=C_2'(s)C_3'(t)-C_2'(t)C_3'(s)+\xi_j\big(C_2(t)C_2'(s)+C_2(s)C_2'(t) -sC_2'(s)C_2'(t) + tC_2'(s)C_2'(t)-2C_2(t)C_2'(t)\big)\\
&=-W_1^u + 2\xi_j\big(C_2(s)C_2'(t)+C_2(t)C_2'(s)-C_2(t)C_2'(t)-C_2(s)C_2'(s)\big).
\end{align*}
Thus $g_{13}$ is independent of $u$. On $E_l$, since $t\leq u$,
$$
|g_{13}|\leq |W_1^u| + 2\xi_j\big(2\max\{C_2'(t)C_2'(u), C_2(s)C_2'(s)\} +C_2'(u)C_2'(t)+C_2(s)C_2'(s)\big)\leq 5\cdot 2^l|J|.
$$
We have 
\begin{align*}
\int_{E_l} |g_{13}\partial_uQ|\,d\tau &\lesssim \int_{E_l} \Big(\frac{|g_{13}\partial_uJ|}{J^2} + \Big|\frac{g_{13}}{J}\Big|\Big)\,d\tau \lesssim \int_{E_l} \frac{|g_{13}\partial_uJ|}{J^2}\,d\tau + 2^l.
\end{align*}
By Lemma \ref{sign}, for each fixed $(t,s)\in [2,8]^2$, $J$ changes monotonicity at most once on $[2,8]$ as a function of $u$. Thus $\{u\in [2,8]: |g_{13}|\leq 5\cdot 2^l|J|\}$ is the union of at most three intervals, on each of which $J$ does not change sign. We can write
$$
\{u\in [2,8]: |g_{13}|\leq 5\cdot 2^l|J|\} =[b_1, b_2]\cup [b_3, b_4]\cup [b_5, b_6]\cup [b_7, b_8],
$$
where $J$ is monotone and does not change sign on each $[b_{2i-1}, b_{2i}]$, and $2\leq b_1 \leq \cdots \leq b_8\leq 8$ depends on $(t,s)$. Therefore we have
\begin{align*}
\int_{E_l} \frac{|g_{13}\partial_uJ|}{J^2}\,d\tau & \leq \int_{\substack{\tau\in [2,8]^3\\ |g_{13}|\leq 5\cdot 2^l|J|}} |g_{13}|\big|\partial_u\big(\frac{1}{J}\big)\big|\,d\tau\\
&=\iint_{[2,8]^2} \sum_{i=1}^4 \Big|\int_{b_{2i-1}}^{b_{2i}} \partial_u\big(\frac{|g_{13}|}{J}\big)\,du\Big|\,dt\,ds \lesssim 2^l.
\end{align*}
Hence 
\begin{equation}\label{g13bound}
\int_{E_l} \frac{|g_{13}\partial_uJ|}{J^2}\,d\tau \lesssim 2^{2l}.
\end{equation}

The estimations of 
$$
\int_{E_l} |g_{21}\partial_tQ|\,d\tau, \int_{E_l} |g_{22}\partial_sQ|\,d\tau, \int_{E_l} |g_{23}\partial_uQ|\,d\tau, \int_{E_l} |g_{31}\partial_tQ|\,d\tau, \int_{E_l} |g_{32}\partial_sQ|\,d\tau, \int_{E_l} |g_{33}\partial_uQ|\,d\tau
$$  
follow in a similar way and are simpler. We therefore omit their calculations and finish the proof of $\int_{E_l}|UQ(\tau)J(\tau)|\,d\tau\lesssim 2^{2l}\delta$.

\textbf{Proof of } $\bm{\int |U\psi_l(\tau)|\,d\tau \lesssim 2^{2l} \delta}$:

Recall the $g_{ij}$ from (\ref{gij}). We have
\begin{align*}
\int |U\psi_l(\tau)|\,d\tau &=\int_{E_l} |U\psi_l(\tau)|\,d\tau =\int_{E_l} \Big| \nabla \psi_l(\tau)\cdot \frac{D\Phi(\tau)^*}{J(\tau)} \cdot \big(\tilde y_1, \tilde y_2, \tilde y_3 + \xi_j(\tilde y_1z_2-\tilde y_2z_1)\big)^T \Big|\,d\tau\\
&=\int_{E_l} \Big| \tilde y_1 \frac{g_{11}\partial_t\psi_l -g_{12}\partial_s\psi_l +g_{13}\partial_u\psi_l}{J} -\tilde y_2 \frac{g_{21}\partial_t\psi_l-g_{22}\partial_s\psi_l + g_{23}\partial_u\psi_l}{J}\\
&\qquad \qquad \qquad \qquad \qquad \qquad \qquad \qquad \quad +\tilde y_3 \frac{g_{31}\partial_t\psi_l -g_{32}\partial_s\psi_l + g_{33}\partial_u\psi_l}{J}\Big|\,d\tau\\
&\lesssim \delta \int_{E_l} \frac{|\partial_t\psi_l|\sum_{i=1}^3 |g_{i1}| + |\partial_s\psi_l| \sum_{i=1}^3|g_{i2}| + |\partial_u\psi_l|\sum_{i=1}^3 |g_{i3}|}{|J|}\,d\tau.
\end{align*}
On $E_l\subseteq R_1$, $\psi_l(\tau)= \rho_l(F(\tau)) \cdot 1_{R_1}(\tau) =\rho_l(F(\tau))$. Thus on $E_l$,
\begin{align*}
|\partial_t\psi_l(\tau)|&= \big|\partial_t \big(\rho_l(F(\tau))\big)\big| = |\rho_l'(F(\tau))|\cdot |\partial_tF(\tau)| \lesssim 2^l \cdot |\partial_tF(\tau)|\\
&= 2^l \cdot |F(\tau)|^3\\
&\quad \cdot \Big|\frac{1}{J} \partial_t\frac{1}{J} + \sum_{i=1}^3\frac{W_i^t}{J}\partial_t \frac{W_i^t}{J} + \sum_{i=1}^3\frac{W_i^u}{J} \partial_t \frac{W_i^u}{J} + \frac{2\xi_jC_2'(t)C_2'(u)}{J} \partial_t \frac{2\xi_jC_2'(t)C_2'(u)}{J} \\
& \qquad + \frac{2\xi_jC_2(s)C_2'(s)}{J} \partial_t \frac{2\xi_jC_2(s)C_2'(s)}{J} + \sum_{i=1}^5 \frac{1}{\text{dist}\,(\tau, L_i)} \partial_t \frac{1}{\text{dist}\,(\tau, L_i)}\Big|\\
&\leq 2^l \cdot |F(\tau)|^3 \cdot |F(\tau)|^{-1} \cdot \Big(\Big| \partial_t \frac{1}{J}\Big| + \sum_{i=1}^3\Big|\partial_t \frac{W_i^t}{J}\Big| + \sum_{i=1}^3\Big|\partial_t \frac{W_i^u}{J}\Big|  \\
&\qquad + \Big|\partial_t \frac{2\xi_jC_2'(t)C_2'(u)}{J}\Big| + \Big|\partial_t \frac{2\xi_j C_2(s)C_2'(s)}{J}\Big| + \sum_{i=1}^5\Big|\partial_t \frac{1}{\text{dist}\,(\tau, L_i)}\Big|\Big)\\
&\leq  2^{4-l} \cdot \Big(\Big| \partial_t \frac{1}{J}\Big| + \sum_{i=1}^3\Big|\partial_t \frac{W_i^t}{J}\Big| + \sum_{i=1}^3\Big|\partial_t \frac{W_i^u}{J}\Big| + \Big|\partial_t \frac{2\xi_jC_2'(t)C_2'(u)}{J}\Big| + \Big|\partial_t \frac{2\xi_j C_2(s)C_2'(s)}{J} \Big|\Big) \\
&\quad+ 2^{4-l}\cdot \sum_{i=1}^5\Big| \frac{\partial_t \text{dist}\,(\tau, L_i)}{\text{dist}\,(\tau, L_i)^2}\Big|\\
&\lesssim 2^{-l}\cdot \frac{|\partial_tJ|}{|J|} \cdot\Big(\frac{1}{|J|}+ \sum_{i=1}^3 \frac{|W_i^t|}{|J|} + \sum_{i=1}^3\frac{|W_i^u|}{|J|} + \frac{2\xi_jC_2'(t)C_2'(u)}{|J|} + \frac{2\xi_j C_2(s)C_2'(s)}{|J|}\Big) \\
&\quad + 2^{-l} \cdot \frac{\sum_{i=1}^3|\partial_tW_i^t| +\sum_{i=1}^3|\partial_tW_i^u| + \partial_t\big(2\xi_jC_2'(t)C_2'(u)\big) + \partial_t\big(2\xi_jC_2(s)C_2'(s)\big)}{|J|} \\
&\quad + 2^l\\
&\lesssim \frac{|\partial_tJ|}{|J|} + 2^{-l} \cdot \frac{\sum_{i=1}^3|\partial_tW_i^t| +\sum_{i=1}^3|\partial_tW_i^u| + \partial_t\big(2\xi_jC_2'(t)C_2'(u)\big) + \partial_t\big(2\xi_jC_2(s)C_2'(s)\big)}{|J|} + 2^l\\
&=\frac{|\partial_tJ|}{|J|} + 2^{-l} \cdot \frac{\sum_{i=1}^3|\partial_tW_i^u| + \partial_t\big(2\xi_jC_2'(t)C_2'(u)\big)}{|J|} + 2^l.
\end{align*}
Similarly, on $E_l$ we have
\begin{align*}
|\partial_s \psi_l(\tau)|&\lesssim \frac{|\partial_sJ|}{|J|} + 2^{-l} \cdot \frac{\sum_{i=1}^3|\partial_sW_i^t| +\sum_{i=1}^3|\partial_sW_i^u| + \partial_s\big(2\xi_jC_2(s)C_2'(s)\big)}{|J|} + 2^l,
\end{align*}
and
\begin{align*}
|\partial_u \psi_l(\tau)|&\lesssim \frac{|\partial_uJ|}{|J|} + 2^{-l} \cdot \frac{\sum_{i=1}^3|\partial_uW_i^t| + \partial_u\big(2\xi_jC_2'(t)C_2'(u)\big)}{|J|} + 2^l.
\end{align*}

\textbf{Estimation of } $\bm{\int_{E_l}|g_{11}\partial_t\psi_l|/|J|\,d\tau}$:

By (\ref{g11bound}), we have
\begin{align*}
\int_{E_l} \frac{|g_{11}\partial_t\psi_l|}{|J|}\,d\tau & \lesssim \int_{E_l} \frac{|g_{11}\partial_tJ|}{J^2}\,d\tau +2^{-l} \sum_{i=1}^3 \int_{E_l} \frac{|g_{11}\partial_tW_i^u|}{J^2}\,d\tau\\
&\quad +2^{-l}\int_{E_l} \frac{|g_{11}|\partial_t\big(2\xi_jC_2'(t)C_2'(u)\big)}{J^2}\,d\tau + 2^l\int_{E_l}\frac{|g_{11}|}{|J|}\,d\tau\\
&\lesssim 2^{2l} +2^{-l} \sum_{i=1}^3 \int_{E_l} \frac{|g_{11}\partial_tW_i^u|}{J^2}\,d\tau+2^{-l}\int_{E_l} \frac{|g_{11}|\partial_t\big(2\xi_jC_2'(t)C_2'(u)\big)}{J^2}\,d\tau.
\end{align*}

Recall $|g_{11}|\lesssim |W_1^t| + 2\xi_j C_2(s)C_2'(s) + 2\xi_jC_2(t)C_2'(u)$, and $|W_1^t| + 2\xi_j C_2(s)C_2'(s)$ is independent of $t$. By Lemma \ref{tmonotone}, for each fixed $(s,u)\in [2,8]^2$, $W_1^u$ changes monotonicity at most once and changes sign at most twice on $[2,8]$ as a functions of $t$. Thus $\{t\in [2,8]: |W_1^u|\leq |W_1^t| + 2\xi_j C_2(s)C_2'(s)\}$ is the union of at most two intervals. And $\{t\in [2,8]: |W_1^u|\geq |W_1^t| + 2\xi_j C_2(s)C_2'(s)\}$ is the union of at most three intervals, on each of which $W_1^u$ does not change sign. We can write
\begin{align*}
&\{t\in [2,8]: |W_1^u|\leq |W_1^t| + 2\xi_j C_2(s)C_2'(s)\}=[a_1, a_2]\cup [a_3, a_4]\cup \cdots \cup [a_9, a_{10}],\\
&\{t\in [2,8]: |W_1^u|\geq |W_1^t| + 2\xi_j C_2(s)C_2'(s)\} =[b_1, b_2]\cup [b_3, b_4]\cup [b_5, b_6]\cup [b_7, b_8],
\end{align*}
where $W_1^u$ is monotone and does not change sign on each $[a_{2i-1}, a_{2i}]$ and each $[b_{2i-1}, b_{2i}]$, and $2\leq a_1\leq \cdots \leq a_{10}\leq 8$ and $2\leq b_1\leq \cdots \leq b_8\leq 8$ depends on $(s,u)$. We have
\begin{align*}
&\quad \int_{E_l} \frac{\big(|W_1^t|+2\xi_jC_2(s)C_2'(s)\big)|\partial_tW_1^u|}{J^2}\,d\tau\\
&\lesssim 2^{2l} \int_{|W_1^u|\leq |W_1^t|+2\xi_jC_2(s)C_2'(s)} \frac{|\partial_tW_1^u|}{|W_1^t|+2\xi_jC_2(s)C_2'(s)}\,d\tau\\
&\quad +2^{2l} \int_{|W_1^u|\geq |W_1^t|+2\xi_jC_2(s)C_2'(s)} \frac{\big(|W_1^t|+2\xi_jC_2(s)C_2'(s)\big)|\partial_tW_1^u|}{|W_1^u|^2}\,d\tau\\
&= 2^{2l}\iint_{[2,8]^2}\sum_{i=1}^5\Big|\int_{a_{2i-1}}^{a_{2i}} \frac{\partial_tW_1^u}{|W_1^t|+2\xi_jC_2(s)C_2'(s)}\,dt\Big|\,ds\,du\\
&\quad + 2^{2l}\iint_{[2,8]^2}\sum_{i=1}^4\Big|\int_{b_{2i-1}}^{b_{2i}} \partial_t\Big(\frac{|W_1^t|+2\xi_jC_2(s)C_2'(s)}{W_1^u}\Big)\,dt\Big|\,ds\,du\\
&\lesssim 2^{2l}.
\end{align*}
Similarly,
$$
\int_{E_l} \frac{\big(|W_1^t|+2\xi_jC_2(s)C_2'(s)\big)\partial_t\big(2\xi_jC_2'(t)C_2'(u)\big)}{J^2}\,d\tau \lesssim 2^{2l}.
$$

By Lemma \ref{tmonotone}, for each fixed $(s,u)\in [2,8]^2$, $\frac{W_1^u}{C_2'(t)}$ changes monotonicity at most once and changes sign at most twice on $[2,8]$ as a function of $t$. Thus $\{t\in [2,8]: |W_1^u|\geq \frac{1}{2}\xi_jC_2'(t)C_2'(u)\}$ is the union of at most three intervals, on each of which $W_1^u$ does not change sign. And $\{t\in [2,8]: |W_1^u|\leq \frac{1}{2}\xi_jC_2'(t)C_2'(u)\}$ is the union of at most two intervals. We can write
$$
\{t\in [2,8]: |W_1^u|\geq \frac{1}{2}\xi_jC_2'(t)C_2'(u)\} =[b_1, b_2]\cup [b_3, b_4]\cup [b_5, b_6]\cup [b_7, b_8],
$$
where $W_1^u$ is monotone and does not change sign on each $[b_{2i-1}, b_{2i}]$, and $2\leq b_1\leq \cdots \leq b_8\leq 8$ depends on $(s,u)$. Since $C_2(t)\lesssim C_2'(t)$,
\begin{align*}
&\quad \int_{\substack{E_l\\ |W_1^u|\geq \frac{1}{2}\xi_jC_2'(t)C_2'(u)}} \frac{2\xi_jC_2(t)C_2'(u)|\partial_tW_1^u|}{J^2}\,d\tau\\
&\leq 2^{2l} \int_{|W_1^u|\geq \frac{1}{2}\xi_jC_2'(t)C_2'(u)} \frac{2\xi_jC_2(t)C_2'(u)|\partial_tW_1^u|}{|W_1^u|^2}\,d\tau\\
&= 2^{2l}\iint_{[2,8]^2} \sum_{i=1}^4 \Big| \int_{b_{2i-1}}^{b_{2i}} \Big(\frac{2\xi_jC_2'(u)C_2'(t)}{W_1^u} - \partial_t\big(\frac{2\xi_jC_2'(u)C_2(t)}{W_1^u}\big)\Big)\,dt\Big|\,ds\,du\\
&\lesssim 2^{2l}.
\end{align*}
We have 
\begin{align*}
&\quad \int_{E_l} \frac{2\xi_jC_2(t)C_2'(u)\partial_t\big(2\xi_jC_2'(t)C_2'(u)\big)}{J^2}\,d\tau\\
&\leq 2^{2l} \int_{[2,8]^3} \frac{2\xi_jC_2(t)C_2'(u)\partial_t\big(2\xi_jC_2'(t)C_2'(u)\big)}{\big(2\xi_jC_2'(t)C_2'(u)\big)^2}\,d\tau\\
&=2^{2l}\int_{[2,8]^3} \Big(\frac{2\xi_jC_2'(u)C_2'(t)}{2\xi_jC_2'(t)C_2'(u)} - \partial_t\big(\frac{2\xi_jC_2(t)C_2'(u)}{2\xi_jC_2'(t)C_2'(u)}\big)\Big)\,d\tau\\
&\lesssim 2^{2l}.
\end{align*}

Denote
$$
\widehat W_1^u = W_1^u+\xi_jC_2'(t)C_2'(u).
$$
By Lemma \ref{tmonotone}, $\widehat W_1^u\lesssim 2^l|J|$ on $E_l$. By Lemma \ref{tmonotone}, for each fixed $(s,u)\in [2,8]^2$, $\widehat W_1^u$ changes monotonicity at most once and changes sign at most twice on $[2,8]$ as a function of $t$. Since $\{t\in [2,8]: |W_1^u|\leq \frac{1}{2}\xi_jC_2'(t)C_2'(u)\}$ is the union of at most two intervals. We can write
$$
\{t\in [2,8]: |W_1^u|\leq \frac{1}{2}\xi_jC_2'(t)C_2'(u)\} = [a_1, a_2]\cup \cdots \cup [a_9, a_{10}],
$$
where $\widehat W_1^u$ is monotone and does not change sign on each $[a_{2i-1}, a_{2i}]$, and $2\leq a_1\leq \cdots \leq a_{10}\leq 8$ depends on $(s,u)$. Since $|W_1^u|\leq \frac{1}{2}\xi_jC_2'(t)C_2'(u)$ implies $\widehat W_1^u \geq  \frac{1}{2}\xi_jC_2'(t)C_2'(u)$, we have
\begin{align*}
&\quad \int_{\substack{E_l\\ |W_1^u|\leq \frac{1}{2}\xi_jC_2'(t)C_2'(u)}} \frac{2\xi_jC_2(t)C_2'(u)\big|\partial_t \widehat W_1^u \big|}{J^2}\,d\tau\\
&\lesssim 2^{2l} \int_{|W_1^u|\leq \frac{1}{2}\xi_jC_2'(t)C_2'(u)} \frac{2\xi_jC_2(t)C_2'(u)\big|\partial_t\widehat W_1^u\big|}{|\widehat W_1^u|^2}\,d\tau\\
&=2^{2l}\iint_{[2,8]^2} \sum_{i=1}^5 \Big|\int_{a_{2i-1}}^{a_{2i}} \Big(\frac{2\xi_jC_2'(u)C_2'(t)}{\widehat W_1^u} -\partial_t\big( \frac{2\xi_jC_2'(u)C_2(t)}{\widehat W_1^u}\big)\Big)\,dt\Big|\,ds\,du\\
&\lesssim 2^{2l}.
\end{align*}
Thus
\begin{align*}
&\quad \int_{\substack{E_l\\ |W_1^u|\leq \frac{1}{2}\xi_jC_2'(t)C_2'(u)}} \frac{2\xi_jC_2(t)C_2'(u)|\partial_tW_1^u|}{J^2}\,d\tau\\
&\leq \int_{\substack{E_l\\ |W_1^u|\leq \frac{1}{2}\xi_jC_2'(t)C_2'(u)}} \frac{2\xi_jC_2(t)C_2'(u)\big|\partial_t \widehat W_1^u \big|}{J^2}\,d\tau + \int_{E_l} \frac{2\xi_jC_2(t)C_2'(u)\partial_t\big(2\xi_jC_2'(t)C_2'(u)\big)}{J^2}\,d\tau\\
&\lesssim 2^{2l}.
\end{align*}
Hence
$$
\int_{E_l} \frac{2\xi_jC_2(t)C_2'(u)|\partial_tW_1^u|}{J^2}\,d\tau \lesssim 2^{2l}.
$$
Similarly,
$$
\int_{E_l} \frac{2\xi_jC_2(t)C_2'(u)|\partial_tW_2^u|}{J^2}\,d\tau, \quad \int_{E_l} \frac{2\xi_jC_2(t)C_2'(u)|\partial_tW_3^u|}{J^2}\,d\tau \lesssim 2^{2l}.
$$
Therefore
$$
\sum_{i=1}^3 \int_{E_l} \frac{|g_{11}\partial_tW_i^u|}{J^2}\,d\tau+\int_{E_l} \frac{|g_{11}|\partial_t\big(2\xi_jC_2'(t)C_2'(u)\big)}{J^2}\,d\tau \lesssim 2^{2l}.
$$

\textbf{Estimation of } $\bm{\int_{E_l} |g_{12}\partial_s\psi_l|/|J|\,d\tau}$:

By (\ref{g12bound}), we have
\begin{align*}
\int_{E_l} \frac{|g_{12}\partial_s\psi_l|}{|J|}\,d\tau &\lesssim \int_{E_l} \frac{|g_{12}\partial_sJ|}{J^2}\,d\tau + 2^{-l}\sum_{i=1}^3\int_{E_l} \frac{|g_{12}\partial_sW_i^t|}{J^2}\,d\tau + 2^{-l}\sum_{i=1}^3 \int_{E_l} \frac{|g_{12}\partial_sW_i^u|}{J^2}\,d\tau \\
&\quad +2^{-l}\int_{E_l} \frac{|g_{12}|\partial_s\big(2\xi_jC_2(s)C_2'(s)\big)}{J^2}\,d\tau + 2^l\int_{E_l} \frac{|g_{12}|}{|J|}\,d\tau\\
&\lesssim 2^{2l} +2^{-l}\sum_{i=1}^3\int_{E_l} \frac{|g_{12}\partial_sW_i^t|}{J^2}\,d\tau + 2^{-l}\sum_{i=1}^3 \int_{E_l} \frac{|g_{12}\partial_sW_i^u|}{J^2}\,d\tau \\
&\quad +2^{-l}\int_{E_l} \frac{|g_{12}|\partial_s\big(2\xi_jC_2(s)C_2'(s)\big)}{J^2}\,d\tau.
\end{align*}
Recall $|g_{12}|\lesssim |\tilde g_{12}| + 2\xi_jC_2'(t)C_2'(u) + 2\xi_jC_2'(t)C_2(s)$, and $|\tilde g_{12}| + 2\xi_jC_2'(t)C_2'(u)$ is independent of $s$. By Lemma \ref{smonotonic}, for each fixed $(t,u)\in [2,8]^2$, $W_1^u+2\xi_jC_2(s)C_2'(s)$ changes monotonicity at most once and changes sign at most twice on $[2,8]$ as a function of $s$. Thus $\{s\in [2,8]: |W_1^u+2\xi_jC_2(s)C_2'(s)|\geq |\tilde g_{12}|+2\xi_jC_2'(t)C_2'(u)\}$ is the union of at most three intervals, on each of which $W_1^u+2\xi_jC_2(s)C_2'(s)$ does not change sign. And $\{s\in [2,8]: |W_1^u+2\xi_jC_2(s)C_2'(s)|\leq |\tilde g_{12}|+2\xi_jC_2'(t)C_2'(u)\}$ in the union of at most two intervals. We can write
\begin{align*}
\{s\in [2,8]: |W_1^u+2\xi_jC_2(s)C_2'(s)|\leq |\tilde g_{12}|+2\xi_jC_2'(t)C_2'(u)\}=[a_1, a_2]\cup [a_3, a_4]\cup \cdots \cup [a_9, a_{10}],\\
\{s\in [2,8]: |W_1^u+2\xi_jC_2(s)C_2'(s)|\geq |\tilde g_{12}|+2\xi_jC_2'(t)C_2'(u)\}=[b_1, b_2]\cup [b_3, b_4]\cup [b_5, b_6]\cup [b_7, b_8],
\end{align*}
where $W_1^u+2\xi_jC_2(s)C_2'(s)$ is monotone and does not change sign on each $[a_{2i-1}, a_{2i}]$ and each $[b_{2i-1}, b_{2i}]$, and $2\leq a_1 \leq \cdots \leq a_{10}\leq 8$ and $2\leq b_1\leq \cdots \leq b_8\leq 8$ depends on $(t,u)$. We have
\begin{align*}
&\quad \int_{E_l} \frac{\big(|\tilde g_{12}| + 2\xi_jC_2'(t)C_2'(u)\big)|\partial_s\big(W_1^u +2\xi_jC_2(s)C_2'(s)\big)|}{J^2}\,d\tau\\
&\lesssim 2^{2l} \int_{|W_1^u+2\xi_jC_2(s)C_2'(s)|\leq |\tilde g_{12}|+2\xi_jC_2'(t)C_2'(u)} \frac{|\partial_s\big(W_1^u +2\xi_jC_2(s)C_2'(s)\big)|}{|\tilde g_{12}|+2\xi_jC_2'(t)C_2'(u)}\,d\tau\\
&\quad + 2^{2l} \int_{|W_1^u+2\xi_jC_2(s)C_2'(s)|\geq |\tilde g_{12}|+2\xi_jC_2'(t)C_2'(u)} \frac{\big(|\tilde g_{12}| + 2\xi_jC_2'(t)C_2'(u)\big)|\partial_s\big(W_1^u +2\xi_jC_2(s)C_2'(s)\big)|}{\big|W_1^u +2\xi_jC_2(s)C_2'(s)\big|^2}\,d\tau\\
&=2^{2l}\iint_{[2,8]^2}\sum_{i=1}^5\Big|\int_{a_{2i-1}}^{a_{2i}} \frac{\partial_s\big(W_1^u +2\xi_jC_2(s)C_2'(s)\big)}{|\tilde g_{12}|+2\xi_jC_2'(t)C_2'(u)}\,ds\Big|\,dt\,du\\
&\quad +2^{2l} \iint_{[2,8]^2}\sum_{i=1}^4\Big|\int_{b_{2i-1}}^{b_{2i}}\partial_s\Big( \frac{|\tilde g_{12}| + 2\xi_jC_2'(t)C_2'(u)}{W_1^u +2\xi_jC_2(s)C_2'(s)}\Big)\,ds\Big|\,ds\,du\\
&\lesssim 2^{2l}.
\end{align*}
Similarly,
\begin{align*}
&\int_{E_l} \frac{\big(|\tilde g_{12}| + 2\xi_jC_2'(t)C_2'(u)\big)|\partial_s\big(W_1^t +2\xi_jC_2(s)C_2'(s)\big)|}{J^2}\,d\tau \lesssim 2^{2l},\\
&\int_{E_l} \frac{\big(|\tilde g_{12}| + 2\xi_jC_2'(t)C_2'(u)\big)|\partial_s\big(W_2^u +2\xi_jsC_2'(t)\big)|}{J^2}\,d\tau\lesssim 2^{2l},\\
&\int_{E_l} \frac{\big(|\tilde g_{12}| + 2\xi_jC_2'(t)C_2'(u)\big)|\partial_s\big(W_2^t +2\xi_jsC_2'(u)\big)|}{J^2}\,d\tau\lesssim 2^{2l},\\
&\int_{E_l} \frac{\big(|\tilde g_{12}| + 2\xi_jC_2'(t)C_2'(u)\big)|\partial_sW_3^u|}{J^2}\,d\tau \lesssim 2^{2l},\\
&\int_{E_l} \frac{\big(|\tilde g_{12}| + 2\xi_jC_2'(t)C_2'(u)\big)|\partial_sW_3^t|}{J^2}\,d\tau \lesssim 2^{2l},\\
&\int_{E_l} \frac{\big(|\tilde g_{12}| + 2\xi_jC_2'(t)C_2'(u)\big)|\partial_s\big(2\xi_jC_2(s)C_2'(s)\big)|}{J^2}\,d\tau\lesssim 2^{2l}.
\end{align*}
And since
\begin{align*}
&\int_{E_l} \frac{\big(|\tilde g_{12}| + 2\xi_jC_2'(t)C_2'(u)\big)|\partial_s\big(2\xi_jsC_2'(t)\big)|}{J^2}\,d\tau=\int_{E_l} \frac{\big(|\tilde g_{12}| + 2\xi_jC_2'(t)C_2'(u)\big)2\xi_jC_2'(t)}{J^2}\,d\tau\lesssim 2^{2l},\\
&\int_{E_l} \frac{\big(|\tilde g_{12}| + 2\xi_jC_2'(t)C_2'(u)\big)|\partial_s\big(2\xi_jsC_2'(u)\big)|}{J^2}\,d\tau=\int_{E_l} \frac{\big(|\tilde g_{12}| + 2\xi_jC_2'(t)C_2'(u)\big)2\xi_jC_2'(u)}{J^2}\,d\tau\lesssim 2^{2l},
\end{align*}
for $i=1,2,3$,
\begin{align*}
&\int_{E_l} \frac{\big(|\tilde g_{12}| + 2\xi_jC_2'(t)C_2'(u)\big)|\partial_sW_i^u|}{J^2}\,d\tau \lesssim 2^{2l},\\
&\int_{E_l} \frac{\big(|\tilde g_{12}| + 2\xi_jC_2'(t)C_2'(u)\big)|\partial_sW_i^t|}{J^2}\,d\tau \lesssim 2^{2l}.
\end{align*}

Similarly, for $i=1,2,3$,
\begin{align*}
&\int_{\substack{E_l\\ C_2(s)\leq C_2'(u)}} \frac{2\xi_jC_2(s)C_2'(t)|\partial_sW_i^u|}{J^2}\,d\tau \leq \int_{E_l} \frac{2\xi_jC_2'(t)C_2'(u)|\partial_sW_i^u|}{J^2}\,d\tau \lesssim 2^{2l},\\
&\int_{\substack{E_l\\ C_2(s)\leq C_2'(u)}} \frac{2\xi_jC_2(s)C_2'(t)|\partial_sW_i^t|}{J^2}\,d\tau \leq \int_{E_l} \frac{2\xi_jC_2'(t)C_2'(u)|\partial_sW_i^t|}{J^2}\,d\tau \lesssim 2^{2l},\\
&\int_{\substack{E_l\\C_2(s)\leq C_2'(u)}} \frac{2\xi_jC_2(s)C_2'(t)\partial_s\big(2\xi_jC_2(s)C_2'(s)\big)}{J^2}\,d\tau \leq \int_{E_l} \frac{2\xi_jC_2'(t)C_2'(u)\partial_s\big(2\xi_jC_2(s)C_2'(s)\big)}{J^2}\,d\tau \lesssim 2^{2l}.
\end{align*}
By Lemma \ref{smonotonic}, for each fixed $(t,u)\in [2,8]^2$, $\frac{W_1^u+2\xi_jC_2(s)C_2'(s)}{C_2'(s)}$ changes monotonicity at most once and changes sign at most twice on $[2,8]$ as a functions of $s$. Thus $\{s\in [2,8]: |W_1^u+2\xi_jC_2(s)C_2'(s)|\leq \frac{1}{2}\xi_jC_2'(s)C_2'(t)\}$ is the union of at most two intervals. And $\{s\in [2,8]: |W_1^u+2\xi_jC_2(s)C_2'(s)|\geq \frac{1}{2}\xi_jC_2'(s)C_2'(t)\}$ is the union of at most three intervals, on each of which $W_1^u+2\xi_jC_2(s)C_2'(s)$ does not change sign. We can write
$$
\{s\in [2,8]: |W_1^u+2\xi_jC_2(s)C_2'(s)|\geq \frac{1}{2}\xi_jC_2'(s)C_2'(t)\} =[b_1, b_2]\cup [b_3, b_4]\cup [b_5, b_6]\cup [b_7, b_8],
$$
where $W_1^u +2\xi_jC_2(s)C_2'(s)$ is monotone and does not change sign on each $[b_{2i-1}, b_{2i}]$, and $2\leq b_1\leq \cdots \leq b_8\leq 8$ depends on $(t,u)$. We have
\begin{align*}
&\quad \int_{\substack{E_l\\ C_2(s)\geq C_2'(u)\\|W_1^u+2\xi_jC_2(s)C_2'(s)|\geq \frac{1}{2}\xi_jC_2'(s)C_2'(t)}} \frac{2\xi_jC_2(s)C_2'(t) |\partial_s\big(W_1^u +2\xi_jC_2(s)C_2'(s)\big)|}{J^2}\,d\tau\\
&\lesssim 2^{2l} \int_{|W_1^u+2\xi_jC_2(s)C_2'(s)|\geq \frac{1}{2}\xi_jC_2'(s)C_2'(t)}\frac{2\xi_jC_2(s)C_2'(t) |\partial_s\big(W_1^u +2\xi_jC_2(s)C_2'(s)\big)|}{\big|W_1^u +2\xi_jC_2(s)C_2'(s)\big|^2}\,d\tau\\
&=2^{2l} \iint_{[2,8]^2} \sum_{i=1}^4\Big|\int_{b_{2i-1}}^{b_{2i}} \Big(\frac{2\xi_jC_2'(s)C_2'(t)}{W_1^u +2\xi_jC_2(s)C_2'(s)}-\partial_s \big(\frac{2\xi_jC_2(s)C_2'(t)}{W_1^u +2\xi_jC_2(s)C_2'(s)}\big) \Big)\,ds\Big|\,dt\,du \\
&\lesssim 2^{2l}.
\end{align*}

Denote
$$
\doublehat W_1^u:=W_1^u +2\xi_jC_2(s)C_2'(s)+\xi_jC_2'(s)C_2'(t).
$$
On $E_l$, since $t\leq u$, we have when $C_2(s)\geq C_2'(u)$,
$$
2\xi_jC_2'(s)C_2'(t)\leq 2\xi_j C_2'(s)C_2(s) \leq 2^l|J|,
$$
and thus $|\doublehat W_1^u|\lesssim 2^l|J|$. By Lemma \ref{smonotonic}, for each fixed $(t,u)\in [2,8]^2$, $\doublehat W_1^u$ changes monotonicity at most once and changes sign at most twice on $[2,8]$ as a functions of $s$. Since $\{s\in [2,8]: |W_1^u+2\xi_jC_2(s)C_2'(s)|\leq \frac{1}{2}\xi_jC_2'(s)C_2'(t)\}$ is the union of at most two intervals, we can write
$$
\{s\in [2,8]: |W_1^u+2\xi_jC_2(s)C_2'(s)|\leq \frac{1}{2}\xi_jC_2'(s)C_2'(t)\}=[a_1, a_2]\cup \cdots \cup [a_9,a_{10}],
$$
where $\doublehat W_1^u$ is monotone and does not change sign on each $[b_{2i-1}, b_{2i}]$, and $2\leq b_1\leq \cdots \leq b_8\leq 8$ depends on $(t,u)$. And since $|W_1^u+2\xi_jC_2(s)C_2'(s)|\leq \frac{1}{2}\xi_jC_2'(s)C_2'(t)$ implies $\doublehat W_1^u \geq \frac{1}{2}\xi_jC_2'(s)C_2'(t)$, we have
\begin{align*}
&\quad \int_{\substack{E_l\\ C_2(s)\geq C_2'(u)\\|W_1^u+2\xi_jC_2(s)C_2'(s)|\leq \frac{1}{2}\xi_jC_2'(s)C_2'(t)}} \frac{2\xi_jC_2(s)C_2'(t) |\partial_s\doublehat W_1^u|}{J^2}\,d\tau\\
& \lesssim 2^{2l} \int_{|W_1^u+2\xi_jC_2(s)C_2'(s)|\leq \frac{1}{2}\xi_jC_2'(s)C_2'(t)} \frac{2\xi_jC_2(s)C_2'(t) |\partial_s\doublehat W_1^u|}{|\doublehat W_1^u|^2}\,d\tau\\
&=2^{2l} \iint_{[2,8]^2} \sum_{i=1}^5 \Big|\int_{a_{2i-1}}^{a_{2i}} \Big(\frac{2\xi_jC_2'(s)C_2'(t)}{\doublehat W_1^u} - \partial_s\big(\frac{2\xi_jC_2(s)C_2'(t)}{\doublehat W_1^u}\big)\Big)\,ds\Big|\,dt\,du\\
&\lesssim 2^{2l}.
\end{align*}
Since
\begin{align*}
&\quad \int_{\substack{E_l\\ C_2(s)\geq C_2'(u)}} \frac{2\xi_jC_2(s)C_2'(t)\partial_s\big(2\xi_jC_2'(s)C_2'(t)\big)}{J^2}\,d\tau\\
&\leq 2^{2l} \int_{[2,8]^3} \frac{2\xi_jC_2(s)C_2'(t)\partial_s \big(2\xi_jC_2'(s)C_2'(t)\big)}{\big(2\xi_jC_2'(s)C_2'(t)\big)^2}\,d\tau\\
&=2^{2l} \int_{[2,8]^3} \Big(\frac{2\xi_jC_2'(s)C_2'(t)}{2\xi_jC_2'(s)C_2'(t)} - \partial_s\big(\frac{2\xi_jC_2(s)C_2'(t)}{2\xi_jC_2'(s)C_2'(t)}\big)\Big)\,d\tau\\
&\lesssim 2^{2l},
\end{align*}
we have
\begin{align*}
&\quad \int_{\substack{E_l\\ C_2(s)\geq C_2'(u)\\|W_1^u+2\xi_jC_2(s)C_2'(s)|\leq \frac{1}{2}\xi_jC_2'(s)C_2'(t)}} \frac{2\xi_jC_2(s)C_2'(t) |\partial_s\big(W_1^u +2\xi_jC_2(s)C_2'(s)\big)|}{J^2}\,d\tau\\
&\leq \int_{\substack{E_l\\ C_2(s)\geq C_2'(u)\\|W_1^u+2\xi_jC_2(s)C_2'(s)|\leq \frac{1}{2}\xi_jC_2'(s)C_2'(t)}} \frac{2\xi_jC_2(s)C_2'(t) |\partial_s\doublehat W_1^u|}{J^2}\,d\tau\\
&\quad + \int_{\substack{E_l\\ C_2(s)\geq C_2'(u)\\|W_1^u+2\xi_jC_2(s)C_2'(s)|\leq \frac{1}{2}\xi_jC_2'(s)C_2'(t)}} \frac{2\xi_jC_2(s)C_2'(t)\partial_s\big(\xi_jC_2'(s)C_2'(t)\big)}{J^2}\,d\tau\\
&\lesssim 2^{2l}.
\end{align*}
Hence
\begin{align*}
\int_{\substack{E_l\\ C_2(s)\geq C_2'(u)}} \frac{2\xi_jC_2(s)C_2'(t) |\partial_s\big(W_1^u +2\xi_jC_2(s)C_2'(s)\big)|}{J^2}\,d\tau\lesssim 2^{2l}.
\end{align*}
Similarly,
\begin{align*}
&\int_{\substack{E_l\\ C_2(s)\geq C_2'(u)}} \frac{2\xi_jC_2(s)C_2'(t) |\partial_s\big(W_1^t +2\xi_jC_2(s)C_2'(s)\big)|}{J^2}\,d\tau\lesssim 2^{2l},\\
&\int_{\substack{E_l\\ C_2(s)\geq C_2'(u)}} \frac{2\xi_jC_2(s)C_2'(t) |\partial_s\big(W_2^u+2\xi_jsC_2'(t)\big)|}{J^2}\,d\tau\lesssim 2^{2l},\\
&\int_{\substack{E_l\\ C_2(s)\geq C_2'(u)}} \frac{2\xi_jC_2(s)C_2'(t) |\partial_s\big(W_2^t+2\xi_jsC_2'(u)\big)|}{J^2}\,d\tau\lesssim 2^{2l},\\
&\int_{\substack{E_l\\ C_2(s)\geq C_2'(u)}} \frac{2\xi_jC_2(s)C_2'(t) |\partial_sW_3^u|}{J^2}\,d\tau\lesssim 2^{2l},\\
&\int_{\substack{E_l\\ C_2(s)\geq C_2'(u)}} \frac{2\xi_jC_2(s)C_2'(t) |\partial_sW_3^t|}{J^2}\,d\tau\lesssim 2^{2l},\\
&\int_{\substack{E_l\\ C_2(s)\geq C_2'(u)}} \frac{2\xi_jC_2(s)C_2'(t) |\partial_s\big(2\xi_jC_2(s)C_2'(s)\big)|}{J^2}\,d\tau\lesssim 2^{2l}.
\end{align*}
And since
\begin{align*}
&\int_{E_l} \frac{2\xi_jC_2(s)C_2'(t) |\partial_s\big(2\xi_jsC_2'(t)\big)|}{J^2}\,d\tau =\int_{E_l} \frac{2\xi_jC_2(s)C_2'(t) 2\xi_jC_2'(t)}{J^2}\,d\tau \lesssim 2^{2l},\\
&\int_{E_l} \frac{2\xi_jC_2(s)C_2'(t) |\partial_s\big(2\xi_jsC_2'(u)\big)|}{J^2}\,d\tau =\int_{E_l} \frac{2\xi_jC_2(s)C_2'(t) 2\xi_jC_2'(u)}{J^2}\,d\tau \lesssim 2^{2l},
\end{align*}
for $i=1,2,3$,
\begin{align*}
\int_{\substack{E_l\\ C_2(s)\geq C_2'(u)}} \frac{2\xi_jC_2(s)C_2'(t) |\partial_sW_i^u|}{J^2}\,d\tau\lesssim 2^{2l},\quad \int_{\substack{E_l\\ C_2(s)\geq C_2'(u)}} \frac{2\xi_jC_2(s)C_2'(t) |\partial_sW_i^t|}{J^2}\,d\tau\lesssim 2^{2l}.
\end{align*}
Hence for $i=1,2,3$,
\begin{align*}
\int_{E_l} \frac{2\xi_jC_2(s)C_2'(t) |\partial_sW_i^u|}{J^2}\,d\tau,\quad \int_{E_l} \frac{2\xi_jC_2(s)C_2'(t) |\partial_sW_i^t|}{J^2}\,d\tau, \quad \int_{E_l} \frac{2\xi_jC_2(s)C_2'(t) \partial_s\big(2\xi_jC_2(s)C_2'(s)\big)}{J^2}\,d\tau \lesssim 2^{2l}.
\end{align*}
Therefore $\int_{E_l} \frac{|g_{12}\partial_s\psi_l|}{|J|}\,d\tau \lesssim 2^{2l}$.

The estimations of
$$
\int_{E_l} \frac{|g_{13}\partial_u \psi_l|}{|J|}\,d\tau, \int_{E_l} \frac{|g_{21}\partial_t \psi_l|}{|J|}\,d\tau, \int_{E_l} \frac{|g_{22}\partial_s \psi_l|}{|J|}\,d\tau, \int_{E_l} \frac{|g_{23}\partial_u \psi_l|}{|J|}\,d\tau, \int_{E_l} \frac{|g_{31}\partial_t \psi_l|}{|J|}\,d\tau, \int_{E_l} \frac{|g_{32}\partial_s \psi_l|}{|J|}\,d\tau, \int_{E_l} \frac{|g_{33}\partial_u \psi_l|}{|J|}\,d\tau
$$
follow in a similar way and are simpler. We therefore omit their calculations and finish the proof.
\end{proof}

The rest of this subsection is devoted to Lemmas \ref{sign}, \ref{sign2}, \ref{monotonic}, \ref{tmonotone}, \ref{smonotonic}.

\begin{proof}[Proof of Lemma \ref{sign}]
As functions of $t$, by (\ref{combo}), (\ref{C3}), and Lemma \ref{standardmonotone}, we have
\begin{align*}
\Big(\frac{\partial_tJ}{C_2''(t)}\Big)'=\Big(W_2^t+\frac{(C_3^+)''(t)+\frac{2^{-j}h_2(2^{-j})-\int_0^{2^{-j}}h_2}{2^{-j}h_2(2^{-j})+\bar h_3(2^{-j})}C_2''(t)}{C_2''(t)}W_3^t\Big)' = \Big(\frac{(C_3^+)''(t)}{C_2''(t)}\Big)'W_3^t
\end{align*}
does not change sign since $W_3^t$ is independent of $t$. Thus $\partial_tJ/C_2''(t)$ is monotone and changes sign at most once. Hence $\partial_tJ$ changes sign at most once. Therefore $J$ changes monotonicity at most once and changes sign at most twice. It is similar for $J$ as a function of $u$, and for $\tilde J$ as a functions of $t$.
\end{proof}

\begin{proof}[Proof of Lemma \ref{sign2}]
By (\ref{C3}), we have
\begin{align*}
\Big(\frac{C_3'(t)+\xi_j(tC_2'(t)-C_2(t))}{C_2'(t)}\Big)' &= \Big(\frac{(C_3^+)'(t) + \frac{2^{-j}h_2(2^{-j})-\int_0^{2^{-j}}h_2}{2^{-j}h_2(2^{-j})+\bar h_3(2^{-j})}C_2'(t)+\frac{\int_0^{2^{-j}}h_2}{2^{-j}h_2(2^{-j})+\bar h_3(2^{-j})}}{C_2'(t)}\Big)'\\
&=\frac{C_2''(t)}{C_2'(t)^2} \Big(\frac{(C_3^+)''(t)}{C_2''(t)}C_2'(t)-(C_3^+)'(t)-\frac{\int_0^{2^{-j}}h_2}{2^{-j}h_2(2^{-j})+\bar h_3(2^{-j})}\Big).
\end{align*}
By Lemma \ref{standardmonotone},
\begin{align*}
\Big(\frac{(C_3^+)''(t)}{C_2''(t)}C_2'(t)-(C_3^+)'(t)-\frac{\int_0^{2^{-j}}h_2}{2^{-j}h_2(2^{-j})+\bar h_3(2^{-j})}\Big)'= \Big(\frac{(C_3^+)''(t)}{C_2''(t)}\Big)'C_2'(t)>0,
\end{align*}
and thus $\big((C_3'(t)+\xi_j(tC_2'(t)-C_2(t)))/C_2'(t)\big)'$ changes sign at most once. Since $W_3^t$ is independent of $t$, by (\ref{combo}), we have
\begin{align*}
\Big(\frac{J- W_1^t}{C_2'(t)}\Big)'= \Big(W_2^t+ \frac{C_3'(t) +\xi_j \big(tC_2'(t)-C_2(t)\big)}{C_2'(t)} W_3^t\Big)' 
\end{align*}
changes sign at most once.
\end{proof}

\begin{proof}[Proof of Lemma \ref{monotonic}]
By Lemma \ref{standardmonotone},
\begin{align*}
&\Big(\frac{\partial_sJ_1}{C_2''(s)}\Big)'=-C_2'(u) \Big(\frac{\doublehat C_3''(s)}{C_2''(s)}\Big)'<0,\\
&\Big(\frac{\partial_sJ_2}{C_2''(s)}\Big)'=C_2'(t) \Big(\frac{\hat C_3''(s)}{C_2''(s)}\Big)'>0.
\end{align*}
Thus $\frac{\partial_sJ_1}{C_2''(s)}$ and $\frac{\partial_sJ_2}{C_2''(s)}$ change sign at most once. Therefore $J_1$ and $J_2$ change monotonicity at most once, and hence change sign at most twice. Similarly, $\tilde J_1$ and $\tilde J_2$ change monotonicity at most once and change sign at most twice. $J_3$ is monotonically decreasing as a functions as $s$, and changes sign at most once. $J_4$ is a monotonically decreasing negative function of $s$.

By Lemma \ref{standardmonotone},
\begin{align*}
&\Big(\frac{J_1}{C_2'(s)}\Big)'= -C_2'(u)\frac{C_2''(s)}{C_2'(s)^2}\Big(\frac{\doublehat C_3''(s)}{C_2''(s)}C_2'(s)-\doublehat C_3'(s)\Big), \quad \Big(\frac{\doublehat C_3''(s)}{C_2''(s)}C_2'(s)-\doublehat C_3'(s)\Big)'=C_2'(s) \Big(\frac{\doublehat C_3''(s)}{C_2''(s)}\Big)'>0,\\
&\Big(\frac{J_2}{C_2'(s)}\Big)'=C_2'(t)\frac{C_2''(s)}{C_2'(s)^2} \Big(\frac{\hat C_3''(s)}{C_2''(s)} C_2'(s)-\hat C_3'(s)\Big), \quad \Big(\frac{\hat C_3''(s)}{C_2''(s)} C_2'(s)-\hat C_3'(s)\Big)'=C_2'(s) \Big(\frac{\hat C_3''(s)}{C_2''(s)}\Big)'>0.
\end{align*}
Thus $\frac{\hat C_3''(s)}{C_2''(s)} C_2'(s)-\hat C_3'(s)$ is monotonically increasing and hence changes sign at most once. Therefore $\frac{J_2}{C_2'(s)}$ changes monotonicity at most once and hence changes sign at most twice. Similarly, $\frac{J_1}{C_2'(s)}$  changes monotonicity at most once and changes sign at most twice.

On $E_l$, we have
\begin{align*}
&|J_1| = |W_1^t + 2\xi_j C_2(s)C_2'(s)|\leq2\cdot 2^l|J|,\\
&|J_2|=|W_1^u+2\xi_jC_2(s)C_2'(s)| \leq 2\cdot 2^l|J|,\\
&|J_4|=2\xi_jC_2(s)C_2'(s)\leq 2^l|J|,\\
&|J_3|=|J-J_1-J_2-J_4| \leq 6\cdot 2^l|J|.
\end{align*}
\end{proof}

\begin{proof}[Proof of Lemma \ref{tmonotone}]
Similar to the proof of Lemma \ref{monotonic}.
\end{proof}

\begin{proof}[Proof of Lemma \ref{smonotonic}]
Similar to the proof of Lemma \ref{monotonic}.
\end{proof}

\subsection{Remarks on the omitted cases}\label{4.4}

\begin{remark}\label{modification}
We have focused on the measure $\kappa(z)\,dz$ derived from the region $R_1=\{(t,s,u)\in [2,8]^3: t\leq u\}$ in this section when we prove Proposition \ref{density}. Proofs for measures $\hat \kappa(w)\,dw, \chi(\zeta)\,d\zeta, \hat \chi(\zeta)\,d\zeta$ follow in a similar way. Take $\hat \chi(\zeta)\,d\zeta$ for an example, we want to show for some $\sigma>0$,
$$
\int |\hat \chi(\zeta)|\,d\zeta\lesssim 1, \quad \int \big|\hat \chi(\zeta)-\hat \chi \big(\zeta \odot_j \tilde v^{-1}\big)\big|\,d\zeta \lesssim \delta^\sigma, \quad \forall j\in \mathbb{N}, 0\leq |\tilde v|\lesssim \delta \leq 1, 2\leq s\leq 8.
$$
Recall
$$
R_4=\{(t,u,r)\in [2,8]^3: t\geq r\geq s, t\geq u\}.
$$
In this case, we have $g_{11}=w_{11}-\xi_j\zeta_2w_{31}$ and $g_{13}=w_{13}-\xi_j\zeta_2w_{33}$, in contrast to (\ref{gij}). The $g_{11}$ in this case is independent of $t$ and plays the role of the $g_{13}$ in Subsection \ref{unity}, and the $g_{13}$ in this case depends on $r$ and plays the role of the $g_{11}$ in Subsection \ref{unity}. 
\end{remark}

\begin{remark}\label{change}
We have focused only on the operator $[\mathcal{D}_k^*\mathcal{H}_j\mathcal{H}_j^*\mathcal{H}_j\mathcal{H}_j^*\mathcal{D}_k]_+$ when we show (\ref{cotlarestimate}) in this section. The proofs for omitted cases are similar. In contrast to (\ref{mainequation}), the part of the operator $\mathcal{D}_k\mathcal{H}_j^*\mathcal{H}_j \mathcal{H}_j^* \mathcal{H}_j \mathcal{D}_k^*$ on the positive region is of the form
\begin{align*}
&\quad [\mathcal{D}_k\mathcal{H}_j^*\mathcal{H}_j \mathcal{H}_j^* \mathcal{H}_j \mathcal{D}_k^*]_+f(x) \\
&= \iiint \!\!\! \iiint_{(t,s,u,r)\in ([2,8]\cap[0,2^j])^4} f\Big(x\cdot e^{-y_1X_k - y_2 Y_k -y_3 T_k} e^{C_1(t)X_j+ C_2(t)Y_j + C_3(t)T_j} e^{-C_1(s)X_j- C_2(s)Y_j - C_3(s)T_j}  \\
&\qquad \qquad \qquad \qquad \qquad \qquad  e^{C_1(u)X_j+ C_2(u)Y_j+ C_3(u)T_j} e^{-C_1(r)X_j-C_2(r)Y_j-C_3(r)T_j} e^{v_1X_k+v_2Y_k+v_3T_k} 0\Big)\\
&\qquad \qquad \qquad \qquad \qquad \qquad \qquad \qquad \qquad \qquad \qquad \bar \varsigma_k(v)\phi_j(r)\bar \phi_j(u) \phi_j(s) \bar \phi_j(t) \varsigma_k(y)    \,dv\,dr\,du\,ds\,dt\,dy.
\end{align*}
By the oddness of the functions $C_1(t), C_2(t), C_3(t)$, the parts of the operators $\mathcal{D}_k^*\mathcal{H}_j\mathcal{H}_j^*\mathcal{H}_j\mathcal{H}_j^*\mathcal{D}_k$ and $\mathcal{D}_k\mathcal{H}_j^*\mathcal{H}_j \mathcal{H}_j^* \mathcal{H}_j \mathcal{D}_k^*$ on the non-positive regions are of the form
\begin{align*}
&\iiint \!\!\! \iiint_{(t,s,u,r)\in ([2,8]\cap[0,2^j])^4} f\Big(x\cdot e^{\pm y_1X_k \pm y_2 Y_k \pm y_3 T_k} e^{\pm C_1(t)X_j\pm C_2(t)Y_j \pm C_3(t)T_j} e^{\pm C_1(s)X_j\pm C_2(s)Y_j \pm C_3(s)T_j}  \\
&\qquad \qquad \qquad \qquad \qquad \qquad  e^{\pm C_1(u)X_j\pm C_2(u)Y_j\pm C_3(u)T_j} e^{\pm C_1(r)X_j\pm C_2(r)Y_j\pm C_3(r)T_j} e^{\pm v_1X_k\pm v_2Y_k\pm v_3T_k} 0\Big)\\
&\qquad \qquad \qquad \qquad \qquad \qquad \qquad \qquad \qquad \qquad \qquad \bar \varsigma_k(v)\phi_j(r)\bar \phi_j(u) \phi_j(s) \bar \phi_j(t) \varsigma_k(y)    \,dv\,dr\,du\,ds\,dt\,dy.
\end{align*}
\end{remark}

\section{Boundedness of  \texorpdfstring{$\mathcal{M}_\Gamma$}{TEXT} and \texorpdfstring{$\mathcal{H}_\Gamma$}{TEXT}}\label{finalsection}

In this section we prove Theorem \ref{1}, given Corollary \ref{completelittlewood} and Theorem \ref{main}. 

The boundedness of $\mathcal{M}_\Gamma$ follows by a well-known bootstrapping argument. See Section 11 of Carbery, Wainger, and Wright \cite{CWW}.

Denote $\mathcal{D}_j=\mathcal{H}_j=0$ for $j<0$. For each $k_1, k_2\in \mathbb{Z}$, define the operator
$$
\mathcal{T}_{k_1,k_2}\{f_j\}_{j\in \mathbb{N}} =\{\mathcal{D}_j\mathcal{H}_{j+k_1} \mathcal{D}_{j+k_2} f_j\}_{j\in \mathbb{N}}.
$$
The boundedness of $\mathcal{H}_\Gamma=\sum_{j\in \mathbb{N}} \mathcal{H}_j$, and thus Theorem \ref{1}, follows from the following two Lemmas:
\begin{lemma}[Street and Stein \cite{LP}]
Fix $1<p<\infty$. If there exists $\epsilon>0$ such that
$$
\big\|\mathcal{T}_{k_1, k_2}\big\|_{L^p(\mathbb{H}^1,l^2(\mathbb{N}))\to L^p(\mathbb{H}^1,l^2(\mathbb{N}))} \lesssim 2^{-\epsilon(|k_1|+|k_2|)},
$$
then $\mathcal{H}_\Gamma$ is bounded on $L^p(\mathbb{H}^1)$.
\end{lemma}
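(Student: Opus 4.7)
The plan is to reduce the $L^p$ bound on $\mathcal{H}_\Gamma$ to a square function estimate via the Littlewood-Paley characterization from Corollary \ref{completelittlewood}, and then exploit the hypothesized operator-valued decay of $\mathcal{T}_{k_1,k_2}$ after a double reindexing $(k_1, k_2) = (i-j', \ell-j')$.

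First I would work with the truncations $\mathcal{H}_\Gamma^{(N)} := \sum_{j=0}^N \mathcal{H}_j$ and aim for a bound $\|\mathcal{H}_\Gamma^{(N)} f\|_p \lesssim \|f\|_p$ uniform in $N$, extending to $\mathcal{H}_\Gamma$ at the end by a standard weak-compactness/density argument. The upper Littlewood-Paley inequality of Corollary \ref{completelittlewood} gives
$$
\|\mathcal{H}_\Gamma^{(N)} f\|_p
\lesssim
\Big\|\Big(\sum_{j'} |\mathcal{D}_{j'} \mathcal{H}_\Gamma^{(N)} f|^2\Big)^{1/2}\Big\|_p.
$$
Inserting the identity $\mathrm{Id} = \sum_\ell \mathcal{D}_\ell$ from (\ref{identity}) after each $\mathcal{H}_j$, reindexing $j = j'+k_1$ and $\ell = j'+k_2$ (with the convention $\mathcal{D}_j = \mathcal{H}_j = 0$ for $j<0$ so that $k_1, k_2$ range over $\mathbb{Z}$), the inside rewrites as
$$
\mathcal{D}_{j'}\mathcal{H}_\Gamma^{(N)}f
= \sum_{k_1, k_2 \in \mathbb{Z}} \mathcal{D}_{j'} \mathcal{H}_{j'+k_1} \mathcal{D}_{j'+k_2} f.
$$
Two applications of Minkowski's inequality (first in $\ell^2(j')$, then in $L^p$) pull the $(k_1, k_2)$-sum outside:
$$
\Big\|\Big(\sum_{j'} |\mathcal{D}_{j'}\mathcal{H}_\Gamma^{(N)}f|^2\Big)^{1/2}\Big\|_p
\le \sum_{k_1, k_2} \Big\|\Big(\sum_{j'} |\mathcal{D}_{j'}\mathcal{H}_{j'+k_1}\mathcal{D}_{j'+k_2}f|^2\Big)^{1/2}\Big\|_p.
$$

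Each inner square function is, by definition, $\|\mathcal{T}_{k_1, k_2}\{f\}_{j'}\|_{L^p(\ell^2)}$ where $\{f\}_{j'}$ denotes the constant sequence. Combining the hypothesized operator bound on $\mathcal{T}_{k_1,k_2}$ with the companion Littlewood-Paley estimate $\|\{\mathcal{D}_{j'+k_2} f\}_{j'}\|_{L^p(\ell^2)} \lesssim \|f\|_p$, which follows from Corollary \ref{completelittlewood} after the harmless shift $j' \mapsto j'+k_2$, I would derive
$$
\Big\|\Big(\sum_{j'} |\mathcal{D}_{j'}\mathcal{H}_{j'+k_1}\mathcal{D}_{j'+k_2}f|^2\Big)^{1/2}\Big\|_p
\lesssim 2^{-\epsilon(|k_1| + |k_2|)} \|f\|_p.
$$
Summing the geometric series over $(k_1, k_2) \in \mathbb{Z}^2$ then produces $\|\mathcal{H}_\Gamma^{(N)} f\|_p \lesssim \|f\|_p$ uniform in $N$, which completes the argument once the truncation is removed.

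The main obstacle is the step where the vector-valued hypothesis on $\mathcal{T}_{k_1,k_2}$ is converted into the scalar square-function bound on $f$, since a naive reading places the constant sequence $\{f\}_{j'}$, which sits outside $L^p(\ell^2)$, as the input. What makes this legitimate is the diagonal structure of $\mathcal{T}_{k_1,k_2}$ together with the fact that its first stage is the frequency localizer $\mathcal{D}_{j'+k_2}$: the output depends on $f$ only through the square-summable shifted Littlewood-Paley sequence $\{\mathcal{D}_{j'+k_2}f\}_{j'}$, whose $L^p(\ell^2)$ norm is $\lesssim \|f\|_p$. This reconciliation, together with the associated bookkeeping to absorb the extra frequency localizer, is exactly the content of the analogous lemma in Street and Stein \cite{LP}, which is invoked here.
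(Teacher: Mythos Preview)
The paper does not supply its own proof of this lemma; it is stated with attribution to Stein--Street \cite{LP} and used as a black box. Your outline therefore goes further than the paper does, and the overall architecture---reverse Littlewood--Paley, insert $\sum_\ell\mathcal{D}_\ell=\mathrm{Id}$, reindex to $(k_1,k_2)$, Minkowski, then sum a double geometric series---is the standard route.

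The step you yourself flag in the final paragraph is, however, a genuine gap rather than mere bookkeeping. The hypothesis bounds $\mathcal{T}_{k_1,k_2}$ on $L^p(\ell^2)$, but the input you need is the constant sequence $\{f\}_{j'}$, which is not in that space. Your fix---observing that the output $\{\mathcal{D}_{j'}\mathcal{H}_{j'+k_1}\mathcal{D}_{j'+k_2}f\}_{j'}$ depends on $f$ only through $\{\mathcal{D}_{j'+k_2}f\}_{j'}$---factors the map as $\{g_{j'}\}\mapsto\{\mathcal{D}_{j'}\mathcal{H}_{j'+k_1}g_{j'}\}$ applied to $g_{j'}=\mathcal{D}_{j'+k_2}f$. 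That stripped operator is \emph{not} $\mathcal{T}_{k_1,k_2}$, so the decay hypothesis you possess does not apply to it directly; you would need a separate argument that it too has $2^{-\epsilon|k_1|}$ decay on $L^p(\ell^2)$. Deferring this to \cite{LP} is circular here, since the lemma you are proving \emph{is} the \cite{LP} result. In Stein--Street the reconciliation uses the parametrix $\mathcal{V}_M$ and the almost-orthogonality $\|\mathcal{D}_j\mathcal{D}_k\|\lesssim 2^{-\epsilon|j-k|}$ (both already available in this paper, cf.\ Lemma~\ref{cotlarstein} and the discussion preceding Corollary~\ref{completelittlewood}) to manufacture a legitimate $L^p(\ell^2)$ input; that is the missing ingredient in your sketch.
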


\begin{lemma}[Street and Stein \cite{LP}]
For every $1<p\leq 2$, there exists $\epsilon>0$ such that
$$
\big\|\mathcal{T}_{k_1, k_2}\big\|_{L^p(\mathbb{H}^1,l^2(\mathbb{N}))\to L^p(\mathbb{H}^1,l^2(\mathbb{N}))} \lesssim 2^{-\epsilon(|k_1|+|k_2|)}.
$$
\end{lemma}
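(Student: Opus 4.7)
The plan is to obtain the bound by Riesz--Thorin interpolation between two endpoint estimates: an $L^2(\mathbb{H}^1,l^2(\mathbb{N}))\to L^2(\mathbb{H}^1,l^2(\mathbb{N}))$ bound of size $2^{-\epsilon(|k_1|+|k_2|)}$ derived from Theorem \ref{main}, and an $L^p(\mathbb{H}^1,l^2(\mathbb{N}))\to L^p(\mathbb{H}^1,l^2(\mathbb{N}))$ bound of size $\lesssim 1$ uniform in $k_1,k_2$ for every $1<p<\infty$ derived from Corollary \ref{completelittlewood}. Interpolating the second at a $p_0$ slightly larger than $1$ against the first at $p=2$ produces decay $2^{-\theta\epsilon(|k_1|+|k_2|)}$ for every $p\in(p_0,2]$, and letting $p_0\downarrow 1$ covers the full range $1<p\leq 2$.

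For the $L^2(l^2)$ bound, since $\mathcal{T}_{k_1,k_2}$ is diagonal, it suffices to estimate $\|\mathcal{D}_j\mathcal{H}_{j+k_1}\mathcal{D}_{j+k_2}\|_{L^2\to L^2}$ uniformly in $j$. Two complementary $TT^*$ manipulations are used. On the left,
$$
\|\mathcal{D}_j\mathcal{H}_{j+k_1}\mathcal{D}_{j+k_2}\|_{2\to 2}^2
= \|\mathcal{D}_j\mathcal{H}_{j+k_1}\mathcal{D}_{j+k_2}\mathcal{D}_{j+k_2}^*\mathcal{H}_{j+k_1}^*\mathcal{D}_j^*\|_{2\to 2}
\lesssim \|\mathcal{D}_j\mathcal{H}_{j+k_1}\|\,\|\mathcal{H}_{j+k_1}^*\mathcal{D}_j^*\|,
$$
which by Theorem \ref{main} is $\lesssim 2^{-2\epsilon|k_1|}$; on the right,
$$
\|\mathcal{D}_j\mathcal{H}_{j+k_1}\mathcal{D}_{j+k_2}\|_{2\to 2}^2
= \|\mathcal{D}_{j+k_2}^*\mathcal{H}_{j+k_1}^*\mathcal{D}_j^*\mathcal{D}_j\mathcal{H}_{j+k_1}\mathcal{D}_{j+k_2}\|_{2\to 2}
\lesssim \|\mathcal{D}_{j+k_2}^*\mathcal{H}_{j+k_1}^*\|\,\|\mathcal{H}_{j+k_1}\mathcal{D}_{j+k_2}\|,
$$
which by Theorem \ref{main} is $\lesssim 2^{-2\epsilon|k_1-k_2|}$. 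The innermost compositions $\mathcal{D}_{j+k_2}\mathcal{D}_{j+k_2}^*$ and $\mathcal{D}_j^*\mathcal{D}_j$ are controlled by Lemma \ref{cotlarstein}. Taking the better of the two resulting estimates and applying the elementary inequality $\max(|k_1|,|k_1-k_2|)\geq(|k_1|+|k_2|)/4$ (a consequence of the triangle inequality $|k_2|\leq|k_1|+|k_1-k_2|$) finishes this step, at the cost of replacing $\epsilon$ by a positive multiple of itself.

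For the uniform $L^p(l^2)$ bound, one applies the reverse half of Corollary \ref{completelittlewood} to peel off the outer $\mathcal{D}_j$:
$$
\|\mathcal{T}_{k_1,k_2}\{f_j\}\|_{L^p(l^2)}
=\Big\|\Big(\sum_j|\mathcal{D}_j(\mathcal{H}_{j+k_1}\mathcal{D}_{j+k_2}f_j)|^2\Big)^{1/2}\Big\|_p
\lesssim \Big\|\Big(\sum_j|\mathcal{H}_{j+k_1}\mathcal{D}_{j+k_2}f_j|^2\Big)^{1/2}\Big\|_p.
$$
Since $\phi_{j+k_1}$ is uniformly bounded and supported in $[-8,-2]\cup[2,8]$, one has the pointwise control $|\mathcal{H}_{j+k_1}h(x)|\lesssim \mathcal{M}_\Gamma h(x)$ with an implicit constant independent of $j,k_1$. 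The already-established $L^p$ boundedness of the positive sublinear operator $\mathcal{M}_\Gamma$ combined with the vector-valued Fefferman--Stein inequality then yields $\|(\sum_j|\mathcal{H}_{j+k_1}h_j|^2)^{1/2}\|_p\lesssim\|(\sum_j|h_j|^2)^{1/2}\|_p$, and a second application of Corollary \ref{completelittlewood} (after re-indexing $j\mapsto j+k_2$) bounds the result by $\|\{f_j\}\|_{L^p(l^2)}$.

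The main obstacle is the routine but somewhat delicate verification that the proof of Theorem \ref{main} in Section \ref{section4} extends, without modification, to the four variants $\|\mathcal{H}_j\mathcal{D}_k\|$, $\|\mathcal{H}_j^*\mathcal{D}_k\|$, $\|\mathcal{H}_j\mathcal{D}_k^*\|$, $\|\mathcal{H}_j^*\mathcal{D}_k^*\|$, all of which appear above after taking adjoints; this is the case because the kernel of $\mathcal{D}_k^*$ shares the support, smoothness, and (for $k>0$) mean-zero properties of that of $\mathcal{D}_k$, so the estimates of Subsections \ref{4.1}--\ref{4.4} apply verbatim. Once this extension and the vector-valued Fefferman--Stein bound for $\mathcal{M}_\Gamma$ are in hand, the Riesz--Thorin interpolation is mechanical.
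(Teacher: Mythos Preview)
The paper does not prove this lemma; it simply cites \cite{LP}. Your sketch is the standard Street--Stein argument, and the $L^2(l^2)$ half is correct: the $TT^*$ reduction to Theorem~\ref{main} and Lemma~\ref{cotlarstein}, together with the arithmetic $\max(|k_1|,|k_1-k_2|)\gtrsim|k_1|+|k_2|$, gives the claimed decay. Your observation that the proof of Theorem~\ref{main} extends verbatim to the variants $\|\mathcal H_j\mathcal D_k\|$, $\|\mathcal H_j^*\mathcal D_k^*\|$ is also right, since Lemma~\ref{ldeltaone} and the mean-zero property are insensitive to replacing $\varsigma_k$ by $\bar\varsigma_k$.

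The one step that needs more care is the uniform $L^p(l^2)$ bound, specifically the passage
\[
\Big\|\Big(\sum_j|\mathcal{H}_{j+k_1}h_j|^2\Big)^{1/2}\Big\|_p\lesssim\Big\|\Big(\sum_j|h_j|^2\Big)^{1/2}\Big\|_p.
\]
You reduce this to a vector-valued inequality for $\mathcal{M}_\Gamma$ and invoke ``Fefferman--Stein''. But the classical Fefferman--Stein theorem requires a weak-type $(1,1)$ bound on the maximal operator, and no such bound is established (or, in this setting, expected) for $\mathcal{M}_\Gamma$; the bootstrapping argument in \cite{CWW} gives only strong $L^p$, $1<p\le\infty$. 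The correct route, standard since Duoandikoetxea--Rubio de Francia, is the weighted-$L^2$/duality argument: for $p>2$ pair $\sum_j|\mathcal{H}_{j+k_1}h_j|^2$ against $0\le w\in L^{(p/2)'}$, apply Cauchy--Schwarz in the defining integral of $\mathcal{H}_{j+k_1}$ to obtain $\int|\mathcal{H}_{j+k_1}h_j|^2\,w\lesssim\int|h_j|^2\,\widetilde{\mathcal{M}}_\Gamma w$ (with $\widetilde{\mathcal{M}}_\Gamma$ a variant of $\mathcal{M}_\Gamma$ enjoying the same scalar $L^q$ bounds, using the oddness of $\Gamma$), sum in $j$, and invoke only the \emph{scalar} boundedness of $\widetilde{\mathcal{M}}_\Gamma$ on $L^{(p/2)'}$. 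The range $1<p<2$ then follows by applying the $p'>2$ case to $\{\mathcal{H}_{j+k_1}^*\}$ and dualizing. This uses nothing beyond what the paper already provides, so your conclusion survives once the justification is corrected.
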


\bibliographystyle{amsalpha}
\bibliography{ref}

\vspace{2em} \noindent Lingxiao Zhang\\ Department of Mathematics\\ University of Connecticut\\ 341 Mansfield Rd, Storrs, CT, 06269, USA\\ email: \href{mailto:lingxiao.zhang@uconn.edu}{lingxiao.zhang@uconn.edu}\\ MSC: primary 42B20, 42B25

\end{document}